\theoremstyle{plain}
\newtheorem{thm}{Theorem}[section]
\newtheorem*{thm*}{Theorem}
\newtheorem{prop}[thm]{Proposition}
\newtheorem*{prop*}{Proposition}
\newtheorem{lemma}[thm]{Lemma}
\newtheorem*{lemma*}{Lemma}
\newtheorem*{corollary*}{Corollary}
\newtheorem{question}[thm]{Open Problem}
\theoremstyle{definition}
\newtheorem{definition}[thm]{Definition}
\newtheorem*{definition*}{Definition}
\newtheorem{example}[thm]{Example}
\theoremstyle{remark}
\newtheorem{remark}[thm]{Remark}
\newtheorem{exercise}[thm]{Exercise}
\newcommand{\R}{\mathbb{R}} 
\newcommand{\E}{\mathbb{E}} % Euclidean space
\newcommand{\h}{\mathbb{H}} 
\newcommand{\N}{\mathbb{N}}
\newcommand{\Z}{\mathbb{Z}}
\newcommand{\define}{\mathrel{\mathop:}=}
\newcommand{\ddefine}{\mathrel{=\mathop:}}
\newcommand{\Stab}{{\mathrm{Stab}}} % stabilizer
\newcommand{\seg}{\mathrm{seg}} % segment of two points
\newcommand{\Aut}{\mathrm{Aut}} % Automorphismen
\newcommand{\cat}{\mathrm{CAT}}
\newcommand{\Isom}{\mathrm{Iso}} % Isometriegruppe
\newcommand{\dom}{\mathrm{dom}} % domain of an isometry
\newcommand{\fix}{\mathrm{Fix}} % fixed-point set
\newcommand{\s}{\mathbb{S}} %round 2-sphere
\newcommand{\gseg}{\overline} %geodesic segment
\newcommand{\cubes}{\mathcal{C}} % set of cubes 
\newcommand{\glue}{\mathcal{S}} % set of gluings 
\newcommand{\proj}{\mathrm{p}} % projection from union  to cube complex
\newcommand{\lk}{\mathrm{lk}} % Link of a vertex
\newcommand{\less}{\preceq} % order on halfspace
\newcommand{\Ends}{e} % ends of a group/graph
\newcommand{\Cay}{\Gamma} % Cayley graph
\newcommand{\gact}{\circlearrowright} % group action
\newcommand{\mvert}{\;\middle\vert\;}
\newcommand{\cH}{\mathcal{H}}% Halfspace-system
\newcommand{\petra}[1]{\textcolor{magenta}{\bf{#1}}}
\numberwithin{equation}{thm}
\begin{document}

\hypersetup{pdfauthor={Petra Schwer},pdftitle={Kubische Komplexe}}.
\noindent

\title{Lecture notes on CAT(0) cube complexes}
\author{Petra Schwer}
\date{\today}

\maketitle

\tableofcontents

%\section*{Introduction}

\section*{On these notes}
These are the notes of a two-hour one-semester course on CAT(0) cube
complexes which I taught in Münster during winter term 2012/2013 and a four-hour one-semester course at KIT during the winter term 2014/15.
Since most of the students did neither have a strong background in
metric geometry nor in group theory I tried to keep the material as
elementary and self-contained as possible.

I am sure there are many typos to find. Please let me know if you spot
one (or two). Any remarks or suggestions on how to improve these
notes are welcome.

%%%%%%%%%%%%%%%%%%%%%%
%
% CAT(0) metric spaces
%
%%%%%%%%%%%%%%%%%%%%%%

\newpage 
\section{CAT(0) metric spaces}\label{Sec_cat0}

We start slowly with this introductory chapter on non-positive curvature. Since metric geometry won't be the focus of this book we will only introduce the basic notions and definitions of $\cat(\kappa)$-spaces and prove some of their elementary properties. More details on the topic can be found in the little green book by Ballmann \cite{Ballmann} or in Bridson and Haefliger's monograph \cite{BH}. 

The notion of a  $\cat(\kappa)$-space is a curvature concept which works in the setting of geodesic metric spaces. It generalizes in a very natural way the notion of a manifold with non-positive sectional curvature. Its definition is by means of comparison with the model spaces $M_\kappa$ of  dimension two and constant sectional curvature $\kappa$. That is either spheres of radius $\sqrt{\kappa}$, hyperbolic space with the appropriately rescaled metric and the flat Euclidean plane. A metric space will be defined of curvature less than or equal to $\kappa$ if its triangles are thinner than the respective comparison triangles in the model space $M_\kappa$. 

We start by reviewing the notion of a geodesic metric space. 

\begin{definition}[Geodesics]\label{def:geodesics}
Let $(X,d)$ be a metric space and $x,y$ points in $X$. A \emph{geodesic} $\gamma$ in $X$ from $x$ to $y$, we write $\gamma: x \rightsquigarrow y$, is a continuous map $\gamma:[0, l]\to X, l \in \R^+$ such that $\gamma(0)=x, \gamma(l)=y$ and 
\begin{equation}\label{eqn:geodesic}
d(\gamma(t), \gamma(t'))=\vert t-t'\vert \forall t,t'\in [0,l]. 
\end{equation}
The \emph{geodesic segment}  $\gseg{xy}$ between $x$ and $y$ is the image of $\gamma$ in $X$. 
A \emph{geodesic ray} is a continuous map $\gamma:[0, \infty)\to X$ such that (\ref{eqn:geodesic}) is  satisfied for all $t,t'\in \R^+$, whereas a \emph{geodesic line} is a continuous map $\gamma:\R\to X$ with  (\ref{eqn:geodesic}) true $\forall t, t'\in \R$. If for a continuous map $\gamma:[a,b]\to X$ there exists $\forall t\in [a, b]$ an $\varepsilon >0$ such that  $\gamma\vert_[t-\varepsilon, t+\varepsilon]$ is a geodesic we call $\gamma$ a  \emph{local geodesic}.
\end{definition}

\begin{definition}[Geodesic metric space]\label{def:geodesicmetricspace}
A  metric space $(X,d)$ is \emph{(uniquely) geodesic} if for all pairs of points $x,y\in X$ there exists a (unique) geodesic $\gamma:x\rightsquigarrow y$.  We say $(X, d)$ is \emph{$r$-uniquely} geodesic iff $\forall x,y\in X$ such that $d(x,y)< r$ there exists a unique geodesic $\gamma:x\rightsquigarrow y$. 
\end{definition}

\begin{example}[(Non-)geodesic metric spaces]
\begin{enumerate}
 \item $\R^2\setminus\{0\}$ is not a geodesic metric space.
 \item $\s^2$ is geodesic and $\pi$-uniquely geodesic.
 \item $\R^2$ with the standard Euclidean metric is uniquely geodesic. Now if we consider $(\R^2, d_1)$ where $d_1(x,y)\define \vert y-x\vert_1$ is the norm of the difference of the vectors in the $l_1$ norm on $\R^2$, i.e. $d_1(x, y)=\vert x_1-y_1\vert +\vert x_2-y_2\vert$. Then $\R^2$ with this metric is not uniquely geodesic as you can see illustrated in Figure ~\ref{fig_02}.  
\end{enumerate}
\end{example}

\begin{figure}[htbp]
\begin{center}
   	\resizebox{!}{0.4\textwidth}{\includegraphics{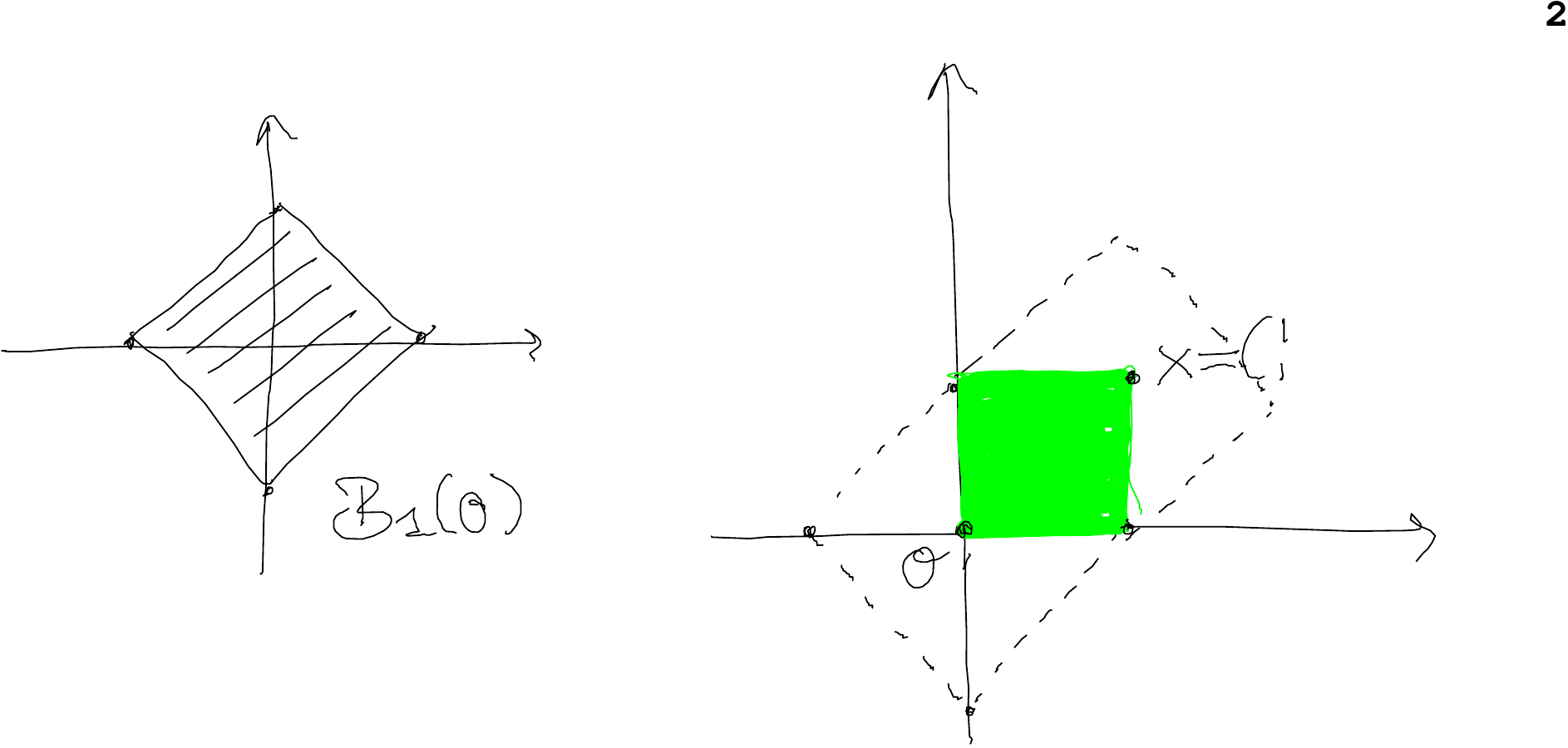}}
\caption[geodesic]{$\R^2$ with the $l_1$-metric is not uniquely geodesic. }
\label{fig_02}
\end{center}
\end{figure}

\begin{definition}[Model spaces]
For $\kappa\in\R$ let the 2-dimensional \emph{model space} $M_\kappa^2$ be the unique differentiable surface with constant sectional curvature $\kappa$. We write $D_\kappa$ for the diameter of $M_\kappa^2$. 
For a detailed account on these spaces see for example Chapter I.6 of \cite{BH}.
Here are the most important examples. Knowing these will be sufficient for the remainder of this course:
\begin{itemize}
 \item $M_1^2 =\s^2$ the round 2-dimensional sphere with the standard metric, that is $\s^2\cong S_1(0)\subset \R^3$, with $D_1=\pi$. 
 \item $M_0^2 = \E^2$ that is $\R^2$ with the standard Euclidean metric with $D_0=\infty$.
 \item $M_{-1}^2 =\h ^2$ the hyperbolic plane having $D_{-1}=\infty$. 
% \item $M_{-1}^2 = \H^2$ the hyperbolic plane having $D_{-1}=\infty$. 
\end{itemize}
All other model spaces are obtained from $M_1^2$ or $M_{-1}^2$ by rescaling the metric. 
\end{definition}
%\marginpar{stimmt die Aussage über S2 metrisch??}

\begin{figure}[htbp]
\begin{center}
   	\resizebox{!}{0.3\textwidth}{\includegraphics{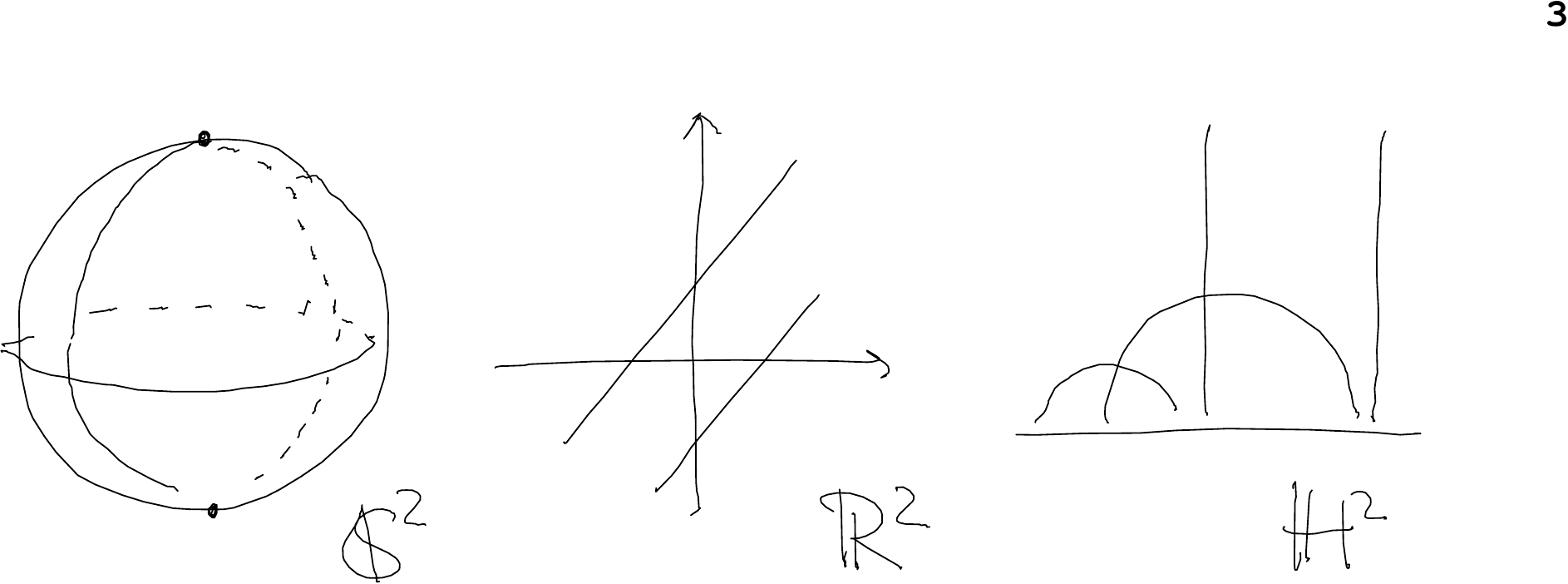}}
\caption[ms]{The model spaces for $\kappa=1, 0, -1$ (left to right). }
\label{fig_03}
\end{center}
\end{figure}

\begin{definition}[Geodesic triangles]
 A \emph{(geodesic) triangle} $\Delta=\Delta(x,y,z)$ in a metric space $(X, d)$ is a collection of three geodesic segments $\seg{xy}$, $\seg{xz}$ and $\seg{zx}$, the \emph{sides} of $\Delta$. 
A  \emph{comparison triangle} in the 2-dimensional model space $(M_\kappa^2, d_\kappa)$  for a given triangle $\Delta(x,y,z)$ in $X$ is a triangle in $M_\kappa^2$ such that 
$d_\kappa(\bar x, \bar y) = d(x,y)$,
$d_\kappa(\bar y, \bar z) = d(y,z)$ and 
$d_\kappa(\bar x, \bar z) = d(x,z)$. 
\end{definition}

\begin{exercise}
	Such a comparison triangle always exists and is unique up to congruence if the circumference of $\Delta$ is less than $\frac{2\pi}{\sqrt{\kappa}}=2D_\kappa$. 
\end{exercise} 
%\marginpar{\petra{add reference for solution of this exercise}}

\begin{definition}[$\cat(\kappa)$ triangles]
 A triangle $\Delta$ in a metric space $(X,d)$ has the \emph{$\cat(\kappa)$ property } ( or is $\cat(\kappa)$) if its circumference is less than $2D_\kappa$ and 
\begin{equation}
 d(p,q)\leq d_\kappa(\bar p, \bar q) \text{ for all points }  p,q \text{ on the sides of } \Delta
\end{equation}
where $\bar p, \bar q$ are comparison points in $\bar\Delta$ for $p$ and $q$. Here a \emph{comparison point} $\bar p$ of a point $p$ on a side $\gseg{ab}$ of $\Delta$ is a point on the side $\gseg{\bar a\bar b}$ of a comparison triangle $\bar \Delta$ such that $d_\kappa (\bar p,\bar a) = d(p,a)$ and $d_\kappa (\bar p,\bar b) = d(p,b)$. 
A $D_\kappa$-geodesic space in which all triangles are $\cat(\kappa)$ in the sense just defined is a \emph{$\cat(\kappa)$-space}. A space $X$ is \emph{locally $\cat(\kappa)$} if for all 
$ x\in X$ there is $ r_x>0$ such that $B_{r_x}(x)$ with the restricted metric is a $\cat(\kappa)$-space. 
\end{definition}

\begin{figure}[htbp]
\begin{center}
   	\resizebox{!}{0.4\textwidth}{\includegraphics{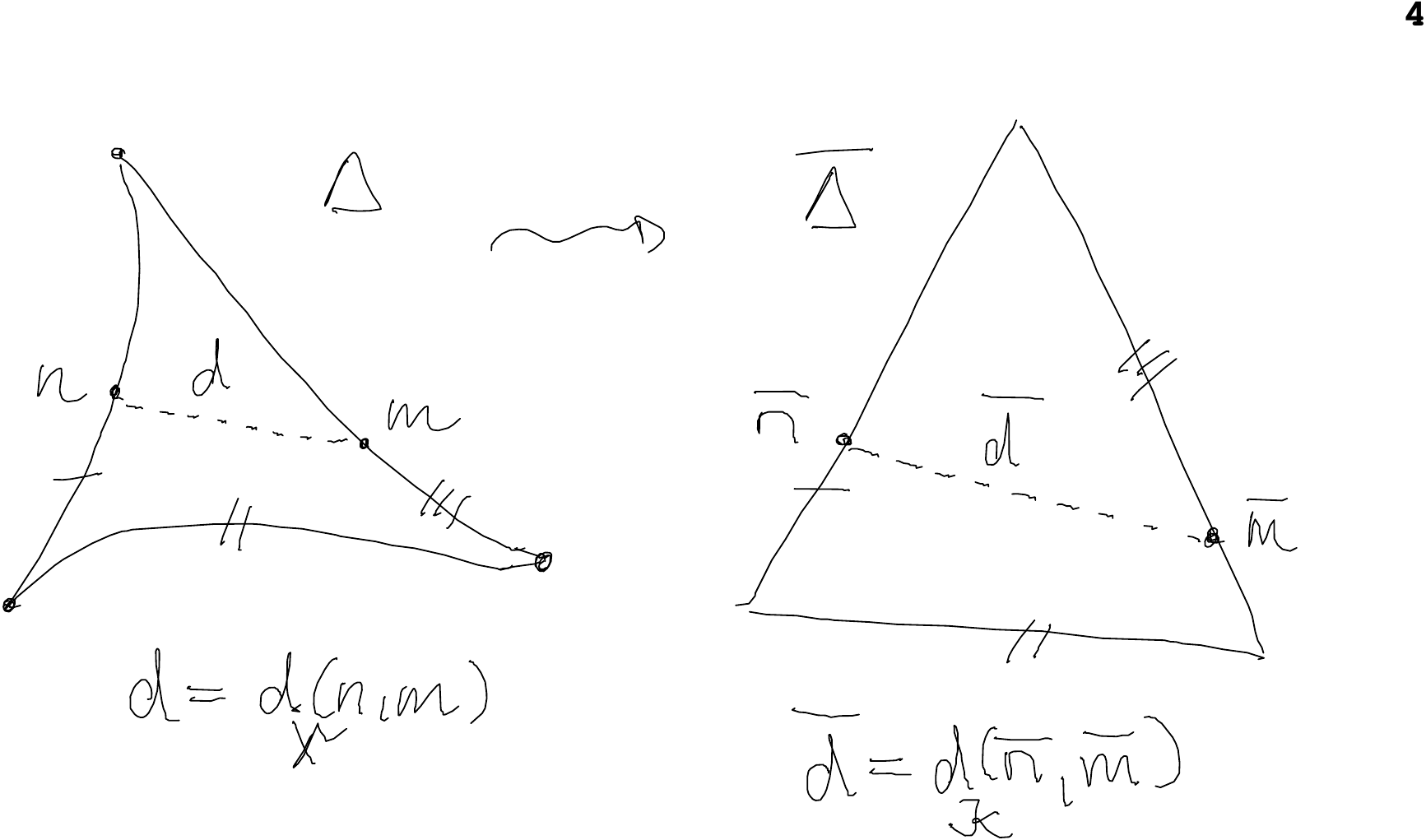}}
\caption[cat]{The $\cat(\kappa)$ property for triangles.}
\label{fig_04}
\end{center}
\end{figure}

\begin{remark}[Origin of the acronym CAT]
The acronym CAT probably stands for {C}artan, {A}lexandrov and {T}oponogov who where among the first describing spaces of this kind. Locally $\cat(\kappa)$ spaces are sometimes calles \emph{non-positively curved} and $\cat(\kappa)$-spaces are also known as Alexandrov spaces with upper curvature bound. %\petra{find a source for this info}
\end{remark}

\begin{exercise}
	Verify the following
	\begin{enumerate}
		\item $CAT(\kappa) \rightarrow CAT(\kappa')$ for all $\kappa' >\kappa$. 
		\item $\E^2$ is CAT(0). Or more generally (and harder) $M_\kappa^2$ is CAT($\kappa$). 
		\item Simplicial trees with the metric induced by defining the length of each simplex to be 1 is CAT($\kappa$) for all $\kappa\leq 0$. So we could say they are CAT($-\infty$). And in fact one can prove that they are uniquely characterized by this property. 
		\item The flat torus (e.g. modelef by the unit square in $\R^2$ woth opposite sides identified) is locally CAT(0) but not CAT(0). 
	\end{enumerate}
\end{exercise}

%\petra{Add references for solutions of these exercises. }

\begin{prop}[Properties of $\cat(\kappa)$ spaces]\label{prop:1.10}
If  $(X,d)$ is a $\cat(\kappa)$ space then
\begin{enumerate}
 \item\label{1.10.1} $X$ is $D_\kappa$-uniquely geodesic.
 \item\label{1.10.2} the ball $B_r$ of radius $r$ is contractible for all $r<D_\kappa$. In particular $X$ is contractible if $\kappa\leq 0$. 
 \item\label{1.10.3} every local geodesic of length $<D_\kappa$ is a geodesic.
\end{enumerate}
\end{prop}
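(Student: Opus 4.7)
The plan is to tackle the three claims in sequence, each by applying the $\cat(\kappa)$ inequality to a well-chosen comparison triangle. For (1), existence of a geodesic between points at distance $<D_\kappa$ is built into the definition of a $D_\kappa$-geodesic $\cat(\kappa)$-space. For uniqueness I would take two geodesics $\gamma_1,\gamma_2\colon x\rightsquigarrow y$ of common length $\ell<D_\kappa$ and view them as two sides of the degenerate triangle $\Delta(x,y,y)$ whose third side is the constant geodesic at $y$. Its comparison triangle in $M_\kappa^2$ collapses to a single segment from $\bar x$ to $\bar y$ of length $\ell$, so the comparison points of $\gamma_1(t)$ and $\gamma_2(t)$ coincide on that segment. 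The $\cat(\kappa)$ inequality then yields $d(\gamma_1(t),\gamma_2(t))=0$, forcing $\gamma_1=\gamma_2$.

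For (2) I would fix $x_0$ and, using (1), define the radial homotopy
\[
H\colon B_r(x_0)\times[0,1]\to B_r(x_0),\qquad H(x,s)=\gamma_x(s\cdot d(x_0,x)),
\]
where $\gamma_x\colon x_0\rightsquigarrow x$ is the unique geodesic. Clearly $H(\cdot,0)\equiv x_0$ and $H(\cdot,1)=\mathrm{id}$, so only continuity of $H$ requires work. To get it I would apply $\cat(\kappa)$ to the triangle $\Delta(x_0,x,y)$ for nearby $x,y\in B_r(x_0)$: the comparison points of $H(x,s)$ and $H(y,s)$ sit on the sides $\gseg{\bar x_0\bar x}$ and $\gseg{\bar x_0\bar y}$ of a planar triangle in $M_\kappa^2$, and continuous dependence of planar triangles on their side lengths forces their $d_\kappa$-distance to zero as $y\to x$. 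Combined with the trivial bound $d(H(x,s),H(x,s'))\leq|s-s'|\,d(x_0,x)$ this yields joint continuity. When $\kappa\leq0$ we have $D_\kappa=\infty$, so the same construction contracts all of $X$.

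The main obstacle is (3). Let $\gamma\colon[0,L]\to X$ be a local geodesic with $L<D_\kappa$ and set
\[
T=\{\,t\in[0,L]:\gamma|_{[0,t]}\text{ is a geodesic}\,\}.
\]
Then $T$ is nonempty (by the local hypothesis at $0$) and closed (by continuity of $d$). I would let $t_0=\sup T$ and suppose for contradiction that $t_0<L$. Choose $\varepsilon>0$ so small that $\gamma|_{[t_0-\varepsilon,t_0+\varepsilon]}$ is a geodesic and $t_0+\varepsilon<D_\kappa$, and form the comparison triangle $\bar\Delta$ in $M_\kappa^2$ of $\Delta(\gamma(0),\gamma(t_0),\gamma(t_0+\varepsilon))$. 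With $p=\gamma(t_0-\varepsilon)$ the local geodesic property gives $d(p,\gamma(t_0+\varepsilon))=2\varepsilon$, so $\cat(\kappa)$ yields $d_\kappa(\bar p,\bar\gamma(t_0+\varepsilon))\geq2\varepsilon$. Combined with $d_\kappa(\bar p,\bar\gamma(t_0))=\varepsilon=d_\kappa(\bar\gamma(t_0+\varepsilon),\bar\gamma(t_0))$, the triangle inequality in $M_\kappa^2$ pins $d_\kappa(\bar p,\bar\gamma(t_0+\varepsilon))=2\varepsilon$ and collinearity of $\bar p,\bar\gamma(t_0),\bar\gamma(t_0+\varepsilon)$. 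This collinearity is the delicate step: since $M_\kappa^2$ is uniquely geodesic below $D_\kappa$, the side from $\bar\gamma(0)$ to $\bar\gamma(t_0+\varepsilon)$ (passing through $\bar\gamma(t_0)$) must therefore have length $t_0+\varepsilon$, forcing $d(\gamma(0),\gamma(t_0+\varepsilon))=t_0+\varepsilon$ and hence $t_0+\varepsilon\in T$, a contradiction. The whole argument hinges on unique geodesicity of $M_\kappa^2$ below $D_\kappa$, which is what upgrades the local geodesic identity near $t_0$ to a global extension of $\gamma$.
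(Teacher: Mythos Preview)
Your argument is correct and follows essentially the same route as the paper: a degenerate comparison triangle for uniqueness in (1), geodesic retraction for (2), and the open--closed argument on $T=\{t:\gamma|_{[0,t]}\text{ is geodesic}\}$ using the triangle $\Delta(\gamma(0),\gamma(t_0),\gamma(t_0+\varepsilon))$ for (3). Your write-up is in fact more explicit than the paper's in two places---you actually justify continuity of the retraction in (2) via the $\cat(\kappa)$ inequality (the paper simply asserts it), and your collinearity computation with $p=\gamma(t_0-\varepsilon)$ spells out what the paper leaves as ``this will give a contradiction''---but the underlying strategy is identical.
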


\begin{proof}
To prove \ref{1.10.1} let $x,y\in X$ be two points with distance $d(x,y)< D_\kappa$. Further let $\gamma:x\rightsquigarrow y $ and $\gamma':x\rightsquigarrow y$ be two different geodesics connecting $x$ and $y$, i.e. $\gseg{xy}=\gamma([0,d(x,y)])\neq \gamma'([0,d(x,y)])=\gseg{xy}'$.
Choose points $p\in \gseg{xy}, p'\in \gseg{xy}'$ with $d(x,p)=d(x,p')$. 
The comparison triangle $\bar\Delta$ for $\Delta=\Delta(x,p,y)$ with sides $\gamma([0, d(x,p)])$, $\gamma([d(x,p), d(x,y)])$ and $\gseg{xy}'$ is degenerate and hence $\bar p= \bar p'$. From the $\cat(\kappa)$ property we deduce $0=d(\bar p, \bar p')\geq d(p,p') \geq 0$ and hence $p=p'$. 

\begin{figure}[htbp]
\begin{center}
   	\resizebox{!}{0.3\textwidth}{\includegraphics{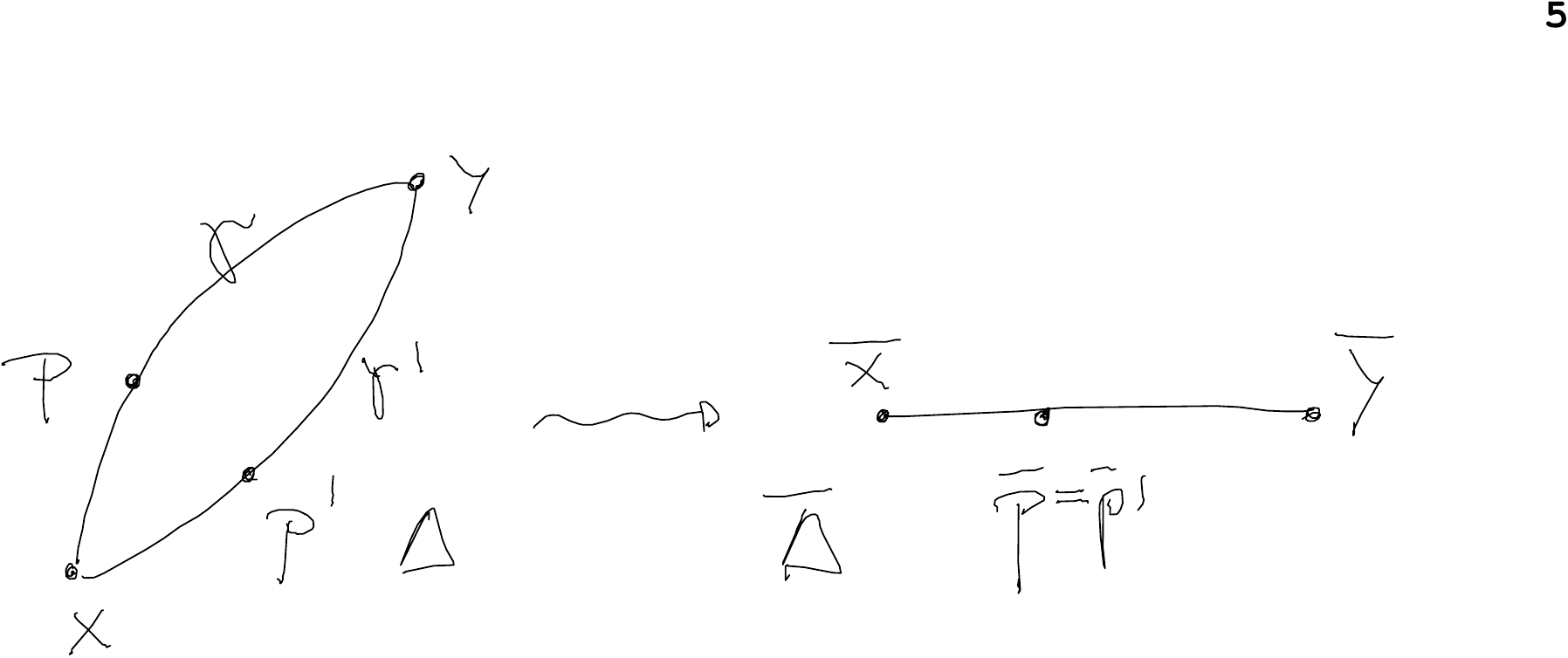}}
\caption[cat]{Illustration of a step in the proof of \ref{prop:1.10}.\ref{1.10.1}.}
\label{fig_05}
\end{center}
\end{figure}

We may prove \ref{1.10.2} using \ref{1.10.1} as follows: For any radius $0<r<D_\kappa$ the map $f:B_r(x)\times [0,1] \to X$ that sends pairs $(y,t)$ to the point $p$ on $\gseg{xy}$ with $d(y,p)=td(x,y)$ is a continuous retraction from $B_r(x)$ to $x$, since by \ref{1.10.1} there is a unique geodesic connecting $x$ and $y$ if $d(x, y)<D_\kappa$. Thus \ref{1.10.2}

\begin{figure}[htbp]
\begin{center}
   	\resizebox{!}{0.3\textwidth}{\includegraphics{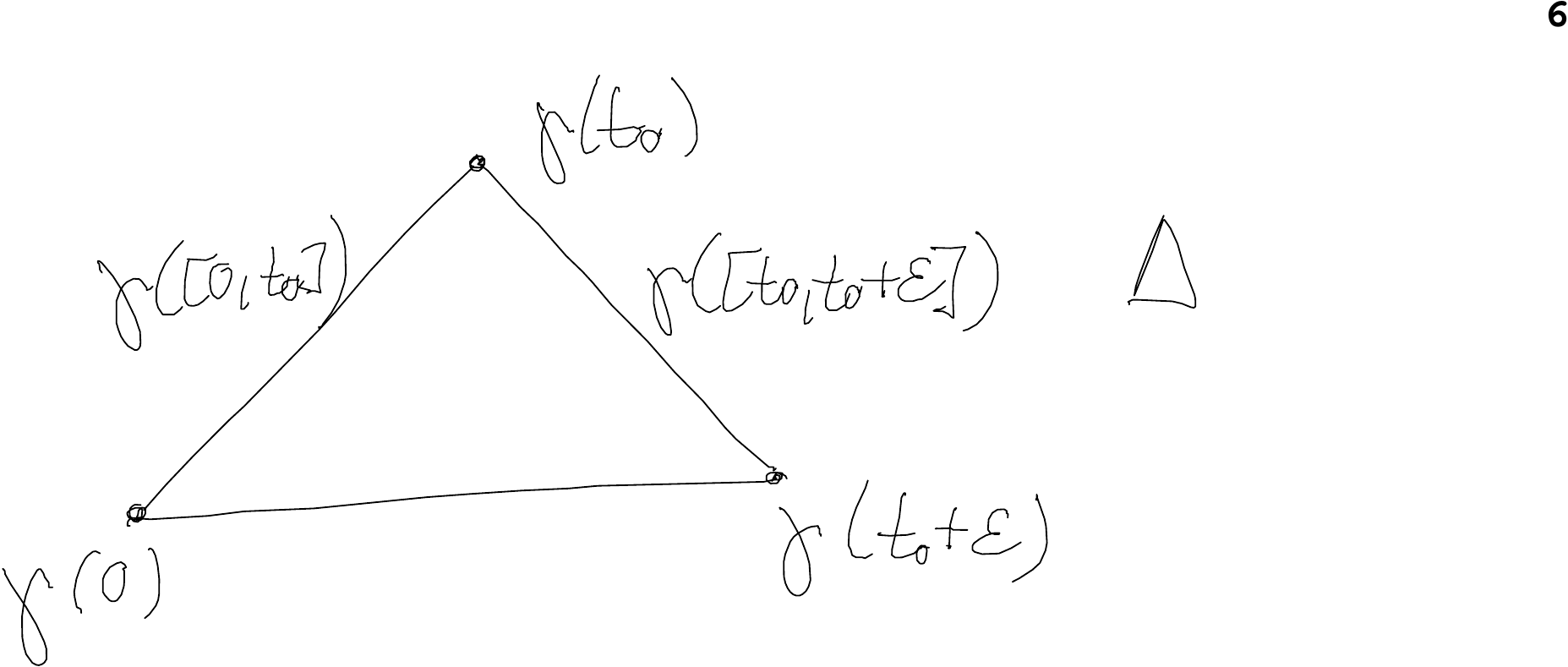}}
\caption[cat]{Illustration of a step in the proof of \ref{prop:1.10}.\ref{1.10.3}.}
\label{fig_06}
\end{center}
\end{figure}

Suppose $\gamma:[0,l] \to X$, $l<D_\kappa$ is a local geodesic and define a set $S\define\{ t\,\vert\, \gamma\vert_{[0,t]} \text{ is a geodesic}\}$. Since $S$ is closed in $[0,l]$ it remains to show that it is also open in $[0,l]$. By definition of local geodesics there is $0<t_0<l$ and $\varepsilon >0$ such that $\gamma$ restricted to $[t_0-\varepsilon, t_0+\varepsilon]$ is a geodesic. 
Consider the triangle $\Delta=\Delta(\gamma(0), \gamma(t_0), \gamma(t_0+\varepsilon))$ where the sides are $\gamma([0,t_0]), \gamma([t_0, t_0+\varepsilon])$ and the unique geodesic $\sigma$ from $\gamma(0)$ to .$\gamma(t_0+\varepsilon)$. 
One can prove that the comparison triangle $\bar \Delta$ is degenerate:  Suppose it was not degenerate and apply the $\cat(\kappa)$ condition to points $x\in \gseg{\gamma(0)\gamma(t_0)}$ and $y\in \gseg{\gamma(t_0)\gamma(t_0+\varepsilon)}$ where both $x$ and $y$ are chosen to lie closer than $\varepsilon$  to $\gamma(t_0)$. This will give a contradiction to the fact that $\gamma\vert_{[t_0-\varepsilon, t_0+\varepsilon]}$ is a geodesic. Thus $\bar\Delta$ is in fact degenerate and with this we may conclude (easily calculate) that $\gamma\vert_{[0, t_0+\varepsilon]}$ is a geodesic. Moreover $d(\gamma(0), \gamma(t_0+\varepsilon)) = t_0+\varepsilon$ and $(t_0, t_0+\varepsilon) \subset S$. Therefore $S$ is open.  
\end{proof}

We will now list several other important properties of non-positively curved spaces. References for proofs will be given. 

\begin{thm}[Cartan-Hadamard]
If a complete, locally $\cat(0)$ metric space is simply connected then it is $\cat(0)$.
\end{thm}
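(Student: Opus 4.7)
The plan is to leverage the local $\cat(0)$ property together with simple connectedness to promote local statements to global ones. Since $\kappa=0$ gives $D_0=\infty$, Proposition~\ref{prop:1.10} supplies strong tools inside every local $\cat(0)$ ball: unique geodesics, contractibility, and agreement of local with global geodesics at arbitrary length.

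First I would show that $X$ is uniquely geodesic. Given $x,y\in X$, choose any continuous path $c$ joining them, cover the compact image $c([0,1])$ by finitely many $\cat(0)$ balls, and iteratively replace short sub-arcs of $c$ by the unique geodesic inside the ball containing them. Completeness together with an Arzelà--Ascoli argument ensure that the shortening procedure converges to a local geodesic $\gamma$ from $x$ to $y$. The technical heart is a lemma asserting that each fixed-endpoint homotopy class of paths contains exactly one local geodesic, which moreover realises the length infimum in its class. Simple connectedness then yields a single homotopy class, hence a unique local geodesic $\gamma$; concatenating Proposition~\ref{prop:1.10}(\ref{1.10.3}) along a finite partition of the domain into local $\cat(0)$ neighbourhoods promotes $\gamma$ to a global geodesic. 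Uniqueness of the geodesic follows by the same homotopy argument applied to any two candidates.

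Next I would verify the $\cat(0)$ triangle comparison for an arbitrary geodesic triangle $\Delta(x,y,z)$. The strategy is subdivision: draw auxiliary geodesics between points on the sides of $\Delta$ until every sub-triangle lies inside a single local $\cat(0)$ ball, so each piece satisfies the $\cat(0)$ comparison by hypothesis. The global inequality for $\Delta$ is then obtained by iterating Alexandrov's patching lemma (Reshetnyak gluing): two $\cat(0)$ triangles sharing a common edge amalgamate into a figure satisfying the $\cat(0)$ comparison, and applying this repeatedly across the subdivision yields the comparison inequality for $\Delta$ itself.

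I expect the main obstacle to be the unique-local-geodesic-per-homotopy-class lemma: it requires careful path-space analysis, exploiting both completeness and the uniform lower bound on local $\cat(0)$-radii available along compact paths. The triangle-comparison step, once geodesics and Alexandrov's lemma are in hand, is essentially mechanical bookkeeping. A full treatment of this theorem can be found in Chapter~II.4 of \cite{BH}.
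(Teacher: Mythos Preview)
The paper does not actually prove this theorem; its entire proof reads ``See \cite[Prop.~II.4.1.(2)]{BH}.'' Your sketch follows precisely the strategy of that reference---unique local geodesics in each homotopy class, then subdivision plus Alexandrov's patching lemma---so you and the paper are pointing at the same argument, with you supplying more detail than the paper does.

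One small correction to your outline: invoking Proposition~\ref{prop:1.10}(\ref{1.10.3}) to upgrade the local geodesic $\gamma$ to a global geodesic is circular, since that proposition presupposes an ambient $\cat(0)$ space, which is exactly what you are trying to establish. In the Bridson--Haefliger argument the fact that the unique local geodesic is length-minimising is not proved first and then used as input to the triangle comparison; rather, it falls out of the subdivision/Alexandrov-lemma step itself (or, equivalently, follows \emph{a posteriori} once the global $\cat(0)$ inequality is in hand). Apart from this ordering issue your plan is sound.
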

\begin{proof} See \cite[Prop. II.4.1.(2)]{BH}. \end{proof}

\begin{definition}
A subset $C$ f a metric space $X$ is \emph{convex} if $\forall x,y\in C$ there exists a geodesic $\gamma:x\rightsquigarrow y$ and all geodesic segments $\gseg{xy}$ are contained in $C$.
\end{definition}

\begin{prop}[Projections onto convex sets]\label{prop:1.13}
Let $X$ be a complete $\cat(0)$-space and $A\subset X$ a  closed convex subset. Then
\begin{enumerate}
 \item $\forall x\in X$ there exists a unique point $\pi_A(x) \in A$ such that
		$$d(x,\pi_A(x)) =\inf_{a\in A} d(x,a) .$$
 \item $\pi_A:X\to A\,:x\mapsto \pi_A(x)$ is distance non-increasing, that is 
		$$d(\pi_A(x),\pi_A(y))\leq d(x,y) \text{ for all } x,y\in X.$$
 \item  if $y\in\gseg{x\pi_A(x)}$ then $\pi_A(x)=\pi_A(y).$ 
\end{enumerate}
\end{prop}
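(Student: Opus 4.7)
The backbone is the $\cat(0)$ inequality in the Reshetnyak (CN) form: for a geodesic $\gamma:[0,1]\to X$ with endpoints $q, r$ and any $p \in X$,
\[
d(p, \gamma(t))^2 \leq (1-t)\, d(p, q)^2 + t\, d(p, r)^2 - t(1-t)\, d(q, r)^2, \qquad t \in [0,1],
\]
which is a direct consequence of the comparison inequality applied in the Euclidean comparison triangle. For (1), I would take a minimising sequence $(a_n) \subset A$ for $\delta := \inf_{a\in A} d(x,a)$. Convexity of $A$ places the midpoint $m_{n,m}$ of $a_n, a_m$ back in $A$, so $d(x, m_{n,m}) \geq \delta$, and applying the above with $t = \tfrac{1}{2}$ to $\gamma = [a_n, a_m]$ and $p = x$ forces $d(a_n, a_m) \to 0$; completeness together with closedness of $A$ then supplies $\pi_A(x)$. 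Uniqueness falls out of the same estimate applied to two purported minimisers.

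Part (3) is the cleanest. Set $a := \pi_A(x)$ and let $y \in \gseg{xa}$. If $a' := \pi_A(y) \neq a$, then $d(y, a') < d(y, a)$ by the uniqueness from (1), and since $y$ lies on the geodesic from $x$ to $a$ we have $d(x,a) = d(x,y) + d(y,a)$. Combining these with the triangle inequality yields $d(x, a') \leq d(x,y) + d(y,a') < d(x,a)$, contradicting minimality of $a$.

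For (2), the key step is the Pythagoras-type bound
\[
d(x, z)^2 \geq d(x, \pi_A(x))^2 + d(\pi_A(x), z)^2 \qquad (z \in A).
\]
I would derive it by running $z_t$ along the geodesic from $a := \pi_A(x)$ to $z$ (which stays inside $A$ by convexity): using $d(x, z_t) \geq d(x, a)$ in the displayed Reshetnyak inequality, dividing by $t$ and letting $t \to 0^+$ produces the bound. Via the Euclidean law of cosines this is precisely the statement that the Alexandrov angle satisfies $\angle_a(x, z) \geq \pi/2$ for every $z \in A$.

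Applying this at both projections with $a := \pi_A(x)$, $b := \pi_A(y)$ and $z \in \{a, b\}$ gives $\angle_a(x,b) \geq \pi/2$ and $\angle_b(y,a) \geq \pi/2$. To finish, let $\gamma, \gamma' : [0,1] \to X$ be the geodesics $a \rightsquigarrow x$ and $b \rightsquigarrow y$; convexity of the $\cat(0)$ metric makes $f(t) := d(\gamma(t), \gamma'(t))$ convex, while the first variation formula identifies
\[
f'(0^+) = -\, d(x,a)\cos\angle_a(x,b) \;-\; d(y,b)\cos\angle_b(y,a) \;\geq\; 0,
\]
so $f$ is non-decreasing on $[0,1]$ and $d(a,b) = f(0) \leq f(1) = d(x,y)$. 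The main obstacle is the appeal to the first variation formula, which requires some care with Alexandrov angles; a more elementary (but bulkier) substitute is Reshetnyak's quadrilateral majorisation of the four points $(x, a, b, y)$, in which the two Pythagoras bounds force the Euclidean model points $\bar x, \bar y$ outside the slab between the perpendiculars to $\bar a \bar b$ at $\bar a$ and $\bar b$, yielding $d(x,y) = d(\bar x, \bar y) \geq d(\bar a, \bar b) = d(a,b)$.
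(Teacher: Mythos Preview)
The paper does not supply its own argument here: its entire proof is the citation ``See \cite[Prop.~II.2.4]{BH}.'' So there is no in-house approach to compare against; your sketch is precisely the sort of argument Bridson--Haefliger give, and it is correct. The midpoint/CN-inequality argument for (1) and the triangle-inequality reduction for (3) are standard and clean. For (2), your derivation of the Pythagoras bound $d(x,z)^2 \geq d(x,\pi_A(x))^2 + d(\pi_A(x),z)^2$ and the resulting angle estimate $\angle_{\pi_A(x)}(x,z)\geq \pi/2$ are exactly the key inputs. The one place to be careful is the two-sided first variation formula you invoke: in a bare $\cat(0)$ space the existence of $f'(0^+)$ for $f(t)=d(\gamma(t),\gamma'(t))$ with \emph{both} endpoints moving is not automatic from the one-sided formula, and making it rigorous takes some work. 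You flag this yourself, and your alternative via Reshetnyak's quadrilateral majorisation (or, equivalently, passing to comparison angles and a planar quadrilateral with two right angles at $\bar a,\bar b$) is the safer route and is essentially what Bridson--Haefliger do.
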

\begin{proof} See \cite[Prop II.2.4]{BH} 
%	\textcolor{red}{include proof here}
\end{proof}

\begin{definition}[Isometry]\label{def:1.15}
An \emph{isometry} $\phi:X\to X'$ between metric spaces $(X,d)$ and  $(X',d')$ is a bijective map such that 
$$d(x,y)=d'(\phi(x), \phi(y)) \,\forall x,y\in X.$$
If $X'=X$ we write $\Isom(X)$ for the group of all isometries $\phi:X\to X$.
\end{definition}

\begin{prop}[Fixedpoint theorem]\label{prop:1.14}
The fixed point set of an isometry %$\phi:X\to X$ 
of a $\cat(0)$-space %$X$ 
is closed and convex.
\end{prop}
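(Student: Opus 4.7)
The plan is to dispatch the two properties separately and to lean on the uniqueness of geodesics in $\cat(0)$-spaces (Proposition~\ref{prop:1.10}.\ref{1.10.1}) for the convexity part.

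For closedness, I would just use that any isometry $\phi:X\to X$ is in particular continuous (it is distance-preserving). Then if $(x_n)\subset \fix(\phi)$ converges to some $x\in X$, applying $\phi$ and using continuity together with $\phi(x_n)=x_n$ gives $\phi(x)=x$, so $x\in\fix(\phi)$. Hence $\fix(\phi)$ is closed in $X$.

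For convexity, pick $x,y\in\fix(\phi)$. Since $D_0=\infty$, Proposition~\ref{prop:1.10}.\ref{1.10.1} tells us that there is a unique geodesic $\gamma:x\rightsquigarrow y$ in $X$. The key observation is that isometries send geodesics to geodesics: if $\gamma:[0,l]\to X$ satisfies $d(\gamma(t),\gamma(t'))=|t-t'|$, then so does $\phi\circ\gamma$, because $d(\phi(\gamma(t)),\phi(\gamma(t')))=d(\gamma(t),\gamma(t'))$. Moreover $\phi\circ\gamma$ starts at $\phi(x)=x$ and ends at $\phi(y)=y$, so by the uniqueness clause of Proposition~\ref{prop:1.10}.\ref{1.10.1} we must have $\phi\circ\gamma=\gamma$. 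In other words every point on the segment $\gseg{xy}$ is fixed by $\phi$, and therefore $\gseg{xy}\subset\fix(\phi)$, proving convexity.

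There is really no serious obstacle: the whole argument is two short applications of basic properties of isometries combined with the uniqueness of geodesics, which is the one nontrivial ingredient and has already been established. The only thing one has to be slightly careful about is the ``$D_0=\infty$'' observation, which ensures that the uniqueness of geodesics is global and not merely local, so that the argument works for any pair $x,y\in\fix(\phi)$ regardless of their distance.
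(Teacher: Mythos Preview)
Your proof is correct and follows essentially the same approach as the paper's sketch: closedness via continuity of the isometry (the paper phrases this as the general fact that fixed-point sets of continuous maps on Hausdorff spaces are closed), and convexity via the uniqueness of geodesics from Proposition~\ref{prop:1.10}.\ref{1.10.1}. Your write-up simply fills in the details the paper leaves to the reader.
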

\begin{proof}(Sketch)\newline
First prove that fixed-point sets in Hausdorff-spaces are closed. Thus $\fix(\phi)$, where $\phi:X\to X$ is an isometry of a $\cat(0)$-space,  needs to be closed. 
In order to see that the fixed-point set is convex it is enough to prove that geodesics between fixed points $x,y$ are pointwise fixed. But this may be deduced from the fact that $\cat(0)$-spaces are uniquely geodesic, see Proposition~\ref{prop:1.10}.\ref{1.10.1}.
\end{proof}

\newpage
\section{Cubical complexes and Gromov's link condition}

We now introduces cubical complexes and Gromov's combinatorial characterization of the CAT(0) condition in this class of spaces. 

\begin{definition}[Cubes] Let $C=[0,1]^n\subset \R^n$ be an \emph{$n$-cube}. A \emph{codimension 1 face} of $C$ is given by $$F_{i, \epsilon}\define\{x\in C\vert x_i=\epsilon\} \text{ for } \epsilon\in\{0,1\}, i=1, \ldots, n.$$
All other (proper) faces of $C$ are non-emty intersections of codimension 1 faces. Sometimes it will be useful to consider $C$ also as a face of itself. We say that $x$ is an \emph{inner point} of $C$ if $x$ is not contained in any (proper) face of $C$. 
\end{definition}

\begin{figure}[htbp]
\begin{center}
   	\resizebox{!}{0.45\textwidth}{\includegraphics{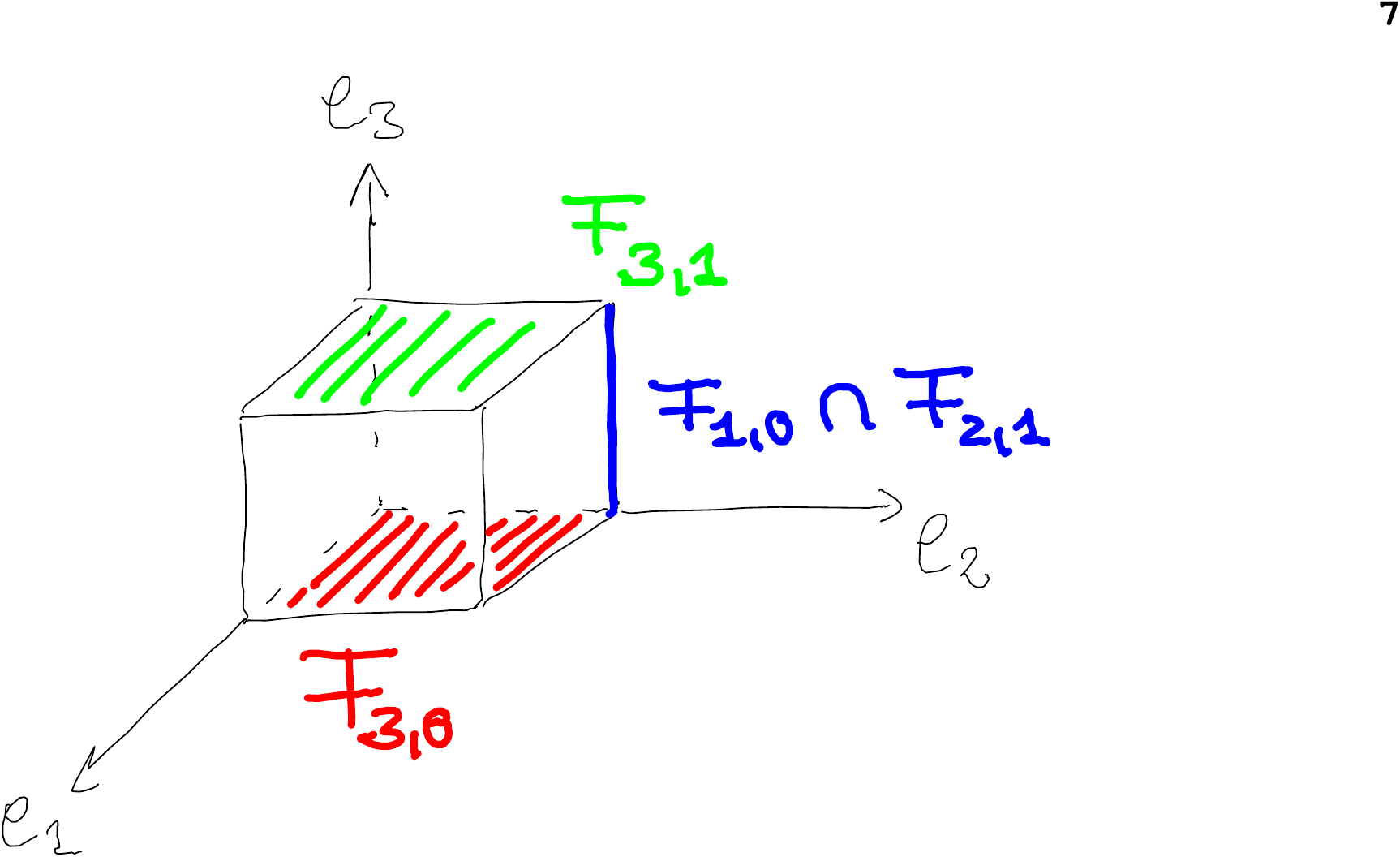}}
\caption[cube]{Faces of a 2-cube. }
\label{fig_07}
\end{center}
\end{figure}

%\textcolor{red}{decide about wording: cubical vs cube complexes}

\begin{definition}[Cubical complexes]\label{def:2.1}
Let $C, C'$ be two cubes with faces $F\subseteq C$ and $F'\subseteq C'$ \footnote{Note that here possibly $F=C$ or $F'=C'$}. A \emph{glueing} of $C$ and $C'$ is an isometry $\phi: F\to F'$.  

Suppose  $\cubes$ is a set of cubes and $\glue$ a family of gluings of elements of $\cubes$, that is $\forall C\in\cubes$ there is $n_C\in\N$ such that $C\cong[0,1]^{n_C}$ and every $\phi\in\glue$ is an isometry $\phi:F\to F'$ where $F,F'$ are faces of cubes $C,C' \in \cubes$. 
Assume $(\cubes, \glue)$ satisfies the following two conditions
\begin{enumerate}
 \item\label{2.1.1} No cube is glued to itself.
 \item\label{2.1.2} For all $C\neq C' \in \cubes$ there is at most one gluing of $C$ and $C'$. 
\end{enumerate}
then this pair defines the set of a \emph{cubical complex} $(X,d)$ by
$$X\define (\bigsqcup_{C\in\cubes} C)\diagup_\sim$$
where $\sim$ is the equivalence relation generated by 
$$\{x\sim\phi(x) \;\vert\; \phi\in\glue, x\in \dom(\phi)\}.$$
The metric $d$ on $X$ is the length metric induced by the restricted Euclidean metric on each cube in $\cubes$. 
\end{definition}

%\marginpar{end of first lecture}

\begin{definition}\label{def:2.2}
The distance of a pair of points $x,y$ of a metric space $(X,d)$ measured in the \emph{length metric} $d_l$  is defined to be the infimum of the length of $\gamma$ where $\gamma$ is a rectifiable curve connecting $x$ and $y$. If no such curve exists we put $d_l(x,y)=\infty$. 
\end{definition}

Here \emph{rectifiable} means to have finite length, where the \emph{length} of a curve $\sigma: [a,b]\to X$ is given by 
$$l(\sigma)=\sup_{a=t_0\leq\ldots \leq t_n=b } \sum_{i=0}^{n-1}d(\sigma(t_i), \sigma(t_{i+1})).$$

\begin{example}\label{ex_2.2}
The length metric on $X\define \R^2\setminus Q_1$, where $Q_1=\{(x,y)\vert x>0, y>0\}$ is the open first quadrant, differs from the restricted Euclidean metric $d_e$ on the same set. In particular $(X, d_l)$ is a geodesic space whereas $(X, d_e)$ is not. (Compare Figure~\ref{fig_08}.)
\end{example}

\begin{figure}[htbp]
\begin{center}
   	\resizebox{!}{0.3\textwidth}{\includegraphics{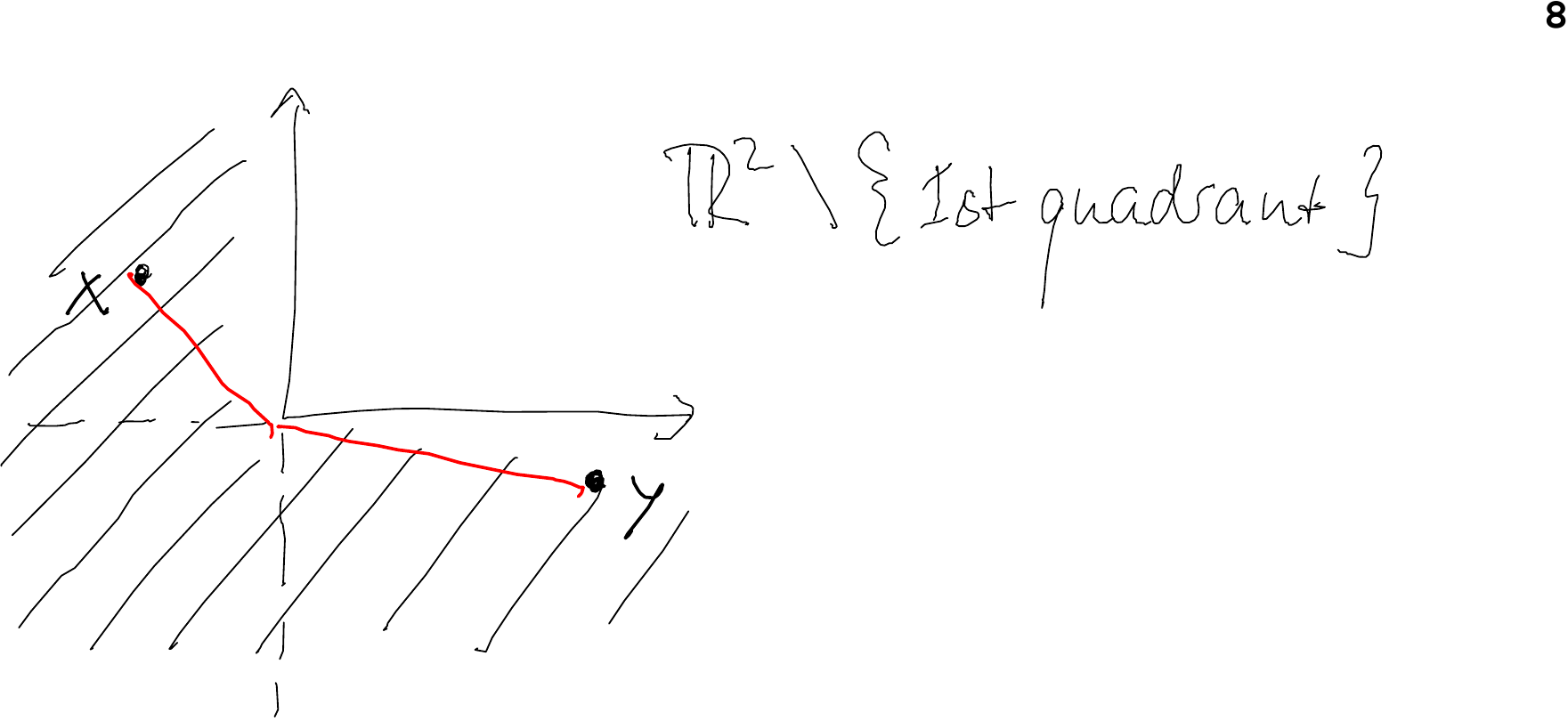}}
\caption[R2-Q1]{The space$X\define \R^2\setminus Q_1$. Compare Example~\ref{ex_2.2}.}
\label{fig_08}
\end{center}
\end{figure}

The following is an easy consequence of Definition~\ref{def:2.1}
.\begin{prop}\label{prop:2.3}
 \begin{itemize}
  \item\label{2.3.1} The restriction of the quotient map $\proj:\bigsqcup_{C\in \cubes}\to X$ to one cube $C\in \cubes$ is injective. 
  \item\label{2.3.2} The intersection of two cubes in $X$ is either empty or a face of both (here a face might be the whole cube).
 \end{itemize}
\end{prop}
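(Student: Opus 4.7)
My plan is to analyse chains of gluings realising the equivalence relation $\sim$ and to use conditions \ref{2.1.1}--\ref{2.1.2} from Definition~\ref{def:2.1} to rule out nontrivial chains. Recall that whenever $\proj(x)=\proj(y)$ there is a finite sequence
$$x = x_0 \sim x_1 \sim \cdots \sim x_n = y,$$
where each step $x_{i-1}\sim x_i$ is realised by a single gluing $\phi_i\in\glue$ (or its inverse) identifying a face of $C_{i-1}\in\cubes$ with a face of $C_i\in\cubes$. I call such a sequence a \emph{chain} from $x$ to $y$ and $n$ its \emph{length}.

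For part (\ref{2.3.1}), fix $x,y\in C$ with $\proj(x)=\proj(y)$ and take a witnessing chain of minimal length $n$; I argue $n=0$ by strong induction on $n$. A chain of length $1$ would require a gluing with both ends in $C$, which is ruled out by \ref{2.1.1}. For $n\ge2$, if a cube is repeated non-trivially, say $C_i=C_j$ with $0\le i<j\le n$ and $j-i<n$, then the sub-chain from $x_i$ to $x_j$ has both endpoints in the same cube $C_i=C_j$ and length $<n$, so by the inductive hypothesis $x_i=x_j$; deleting this sub-chain produces a strictly shorter witnessing chain, contradicting minimality. Hence in a minimal chain all cubes are pairwise distinct except possibly for $C_0=C_n$, and a final bookkeeping step using the uniqueness clause \ref{2.1.2} reduces even this last configuration, forcing $x_0=x_n$.

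For part (\ref{2.3.2}), I use (\ref{2.3.1}) to identify each cube with its image $\proj(C)\subseteq X$. Given $C\neq C'$ and $p\in\proj(C)\cap\proj(C')$, pick $x\in C$, $y\in C'$ with $\proj(x)=\proj(y)=p$ and take a minimal chain from $x$ to $y$. The same strong-induction argument rules out any repetition of cubes along the chain, and a short extra step shows that the chain must have length $1$, hence is realised by the unique gluing $\phi\colon F\to F'$ between $C$ and $C'$ provided by \ref{2.1.2}. Therefore $x\in F$, $y\in F'$, and $\proj(C)\cap\proj(C')=\proj(F)=\proj(F')$, which is a common face of $C$ and $C'$ by construction.

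The main obstacle is the chain-reduction step: one needs a strong induction on chain length interleaved with careful case analysis on cube coincidences $C_i=C_j$. The role of \ref{2.1.1} is to rule out a trivial length-one loop, while \ref{2.1.2} is invoked repeatedly to guarantee that the gluing traversed between any given pair of cubes is uniquely determined, so that successive traversals between the same pair cancel. Once this bookkeeping is in place, both parts of the proposition follow directly from the definition of $\sim$ and of the faces of the cubes $C\in\cubes$.
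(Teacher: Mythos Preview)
Your chain-reduction is correct up to the point where a minimal chain $x_0,\dots,x_n$ has cubes $C_0,\dots,C_n$ pairwise distinct except for $C_0=C_n$. The genuine gap is in what you call ``a final bookkeeping step'' for part~\ref{2.3.1} and ``a short extra step'' for part~\ref{2.3.2}. Condition~\ref{2.1.2} does handle $n=2$: the two steps $C_0\to C_1\to C_0$ use the unique gluing between $C_0$ and $C_1$ and its inverse, so $x_2=x_0$. But for $n\ge 3$ the chain runs once around a cycle $C_0\to C_1\to\cdots\to C_{n-1}\to C_0$ of \emph{pairwise distinct} cubes; consecutive steps involve different pairs, so uniqueness of gluings gives nothing to cancel. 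Neither \ref{2.1.1} nor \ref{2.1.2} forces such a cycle to collapse, and your ``bookkeeping'' cannot be carried out.

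In fact the statement is false under hypotheses \ref{2.1.1} and \ref{2.1.2} alone. Take a $2$-cube $S=[0,1]^2$ and two $1$-cubes $e_1,e_2$; glue the edge $\{0\}\times[0,1]$ of $S$ isometrically onto $e_1$, the edge $\{1\}\times[0,1]$ onto $e_2$, and the vertex $0$ of $e_1$ to the vertex $0$ of $e_2$. There are three cubes and three gluings, one per pair, no self-gluings, so Definition~\ref{def:2.1} is satisfied; yet the length-$3$ chain $(0,0)_S\sim 0_{e_1}\sim 0_{e_2}\sim(1,0)_S$ identifies two distinct points of $S$, so $\proj|_S$ is not injective. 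For part~\ref{2.3.2}, three $1$-cubes glued cyclically via $1_{e_0}\!\sim\!0_{e_1}$, $1_{e_1}\!\sim\!0_{e_2}$, $1_{e_2}\!\sim\!1_{e_0}$ give $\proj(e_1)\cap\proj(e_2)$ consisting of two vertices, not a single face. The paper records the proposition without proof as ``an easy consequence of Definition~\ref{def:2.1}'', but in careful treatments (cf.\ \cite[I.7]{BH}) injectivity of the projection on each cell and the face-intersection property are built into the \emph{definition} of a polyhedral complex rather than deduced from gluing rules of this kind. Your argument cannot be completed because the conclusion simply does not follow from the stated hypotheses.
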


Because of \ref{prop:2.3}.\ref{2.3.1} we may identify a cube $C\in\cubes$ with its image in $C$ and write $C\in X$. 

\begin{example}\label{ex:2.4}
The following examples of cubical complexes will be illustrated in Figure~\ref{fig_09} below.
\begin{enumerate}
 \item Graphs with the metric induced by putting all edges to be geodesic segments isometric to the unit interval $[0,1]$. Each edge is a cube glued to its neighboring edges. In particular trees are cube complexes. 
 \item $\R^n$ carries the structure of a cube complex (i.e.\ may be cubulated). The \emph{standard cubing} of $\R^n$ is such that each subset of the form 
	$$\{(x_1, x_2, \ldots, x_n) \;\vert\; a_i<x_i<a_i+1\},\, a_i\in \Z \text{ for all }i$$
 is the image of a cube.
 \item The torus can be presented as a cubical complex: Take the subset $\{(x,y)\in\R^2 \vert 0\leq x,y\leq 3\}$ pf $\R^2$ with the (restricted) standard cubing and identify pairwise the opposite sides of this square. 
\end{enumerate}
\end{example}

\begin{figure}[htbp]
\begin{center}
   	\resizebox{!}{0.6\textwidth}{\includegraphics{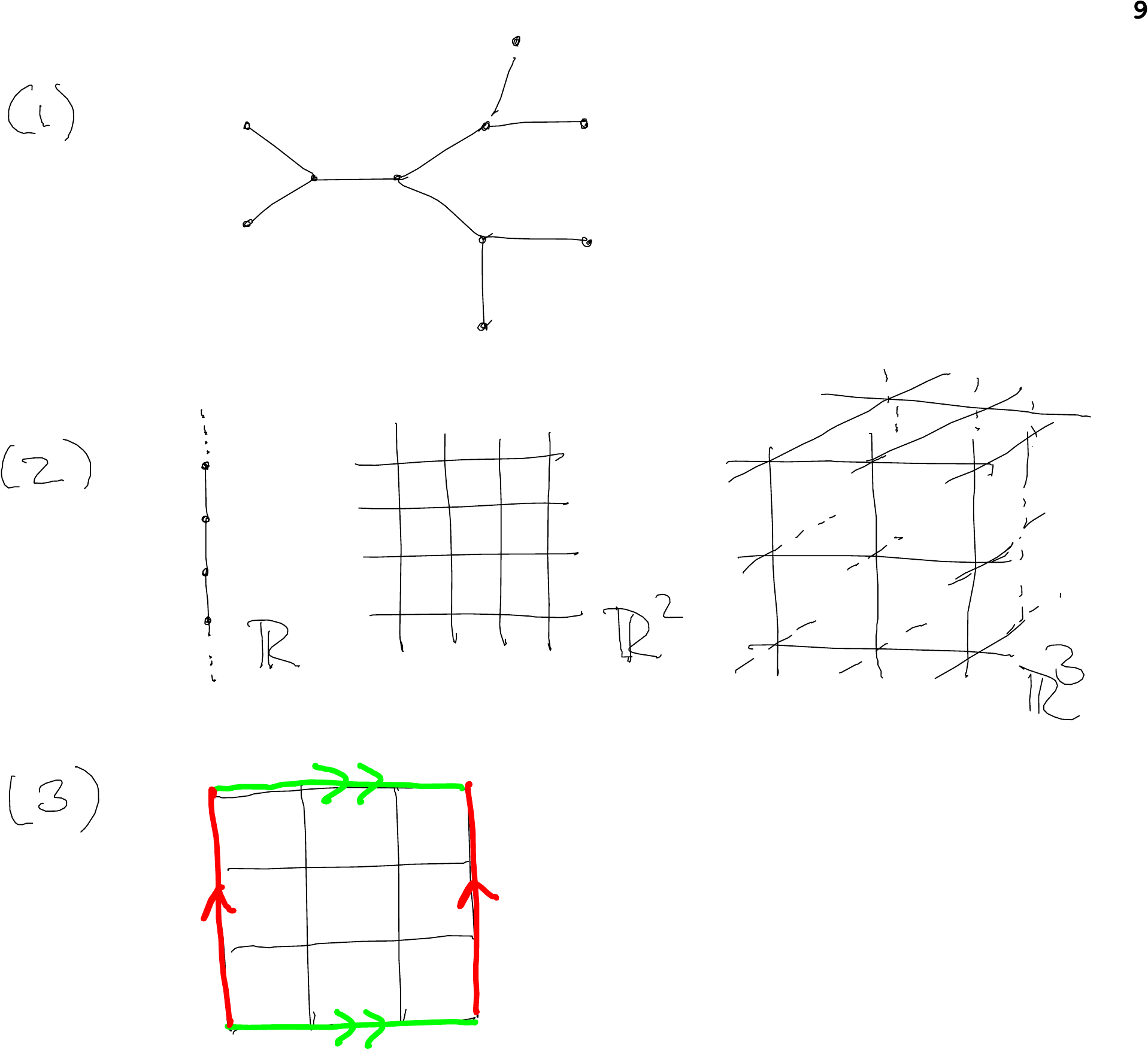}}
\caption[cube complexes]{Examples of cubical complexes.}
\label{fig_09}
\end{center}
\end{figure}

We will now define the polyhedric metric on a cubical complex $X$. 

\begin{definition}
Let $x$ and $y$ be two points in $X$. A \emph{string} $\Sigma$ from $x$ to $y$ is a sequence of points $xi, i=1,\ldots, m$ such that $x_0=x, x_m=y$ and $\forall i=0, \ldots, m$ there exists a cube $C_i$ containing $x_i$ and $x_{i+1}$. 
The \emph{length} of a string $\Sigma$ is given by $l(\Sigma)\define \sum_{i=0}^{m-1} d_{C_i}(x_i, x_{i+1})$ where $d_{C_i}$ is the Euclidean metric on $C_i$. 
\end{definition}

The length of a string is well defined by \ref{def:2.1}.\ref{2.1.2}

\begin{figure}[htbp]
\begin{center}
   	\resizebox{!}{0.3\textwidth}{\includegraphics{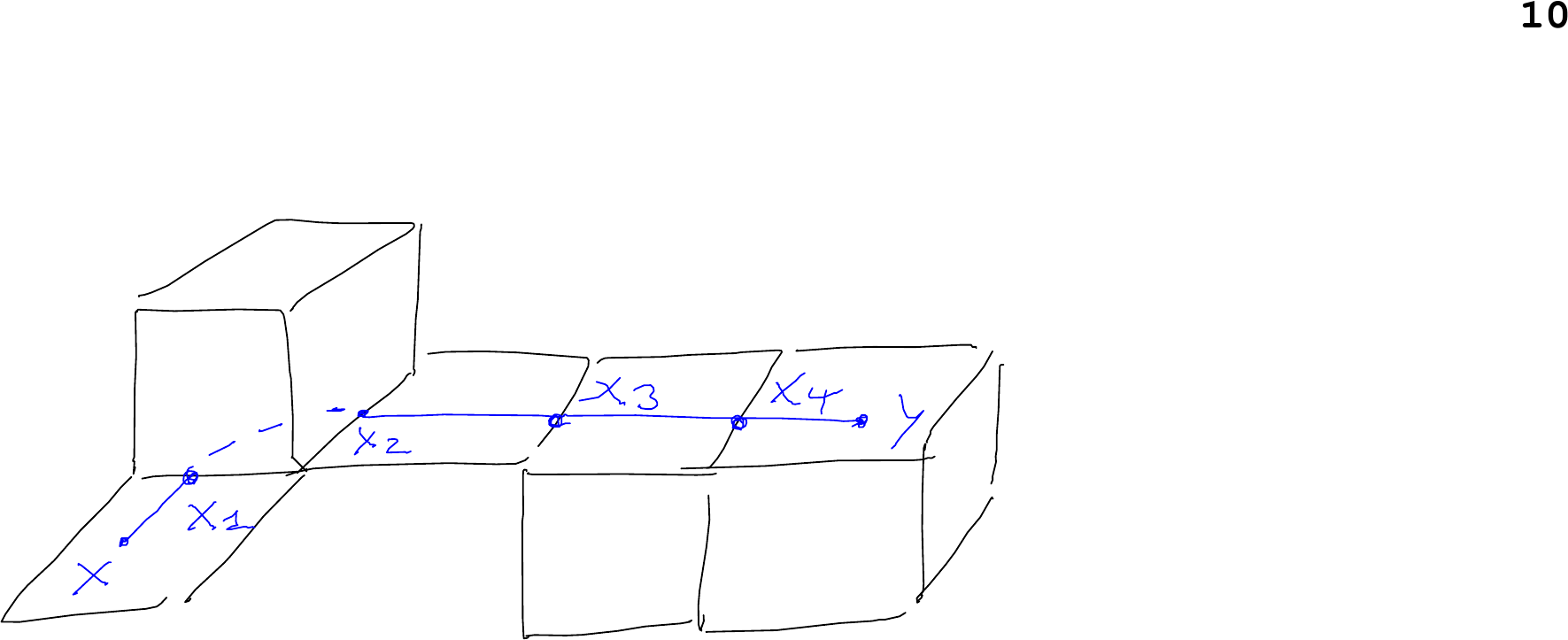}}   	
\caption[string]{A string in a cubical complex.}
\label{fig_10}
\end{center}
\end{figure}

\begin{prop}\label{prop:2.6}
Suppose a cube complex $X$ is string-connected, that is for all $x,y\in X$ there exists a tring from $x$ to $y$, and let $d:X\times X\to \R$ be defined by 
$$d(x,y)\define \inf\{L(\Sigma\vert \Sigma \text{ is a string from } x \text{ to } y\}. $$
Then $(X,d)$ is a metric space and 
$$d(x,y)=\inf\{l(\gamma) \vert \gamma\text{ is a rectifiable curve }x\rightsquigarrow y\},  \;\forall x,y\in X.$$
\end{prop}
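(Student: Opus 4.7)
The plan is to verify the three metric axioms for $d$ and then establish the two inequalities needed for the length-metric identity.

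For the metric axioms, symmetry is immediate by reversing a string $\Sigma=(x_0,\dots,x_m)$: the cubes containing consecutive pairs can be reused and the Euclidean summands are symmetric. The triangle inequality follows by concatenating a string from $x$ to $y$ with a string from $y$ to $z$; the result is a valid string from $x$ to $z$ with length the sum, so taking infima gives $d(x,z)\leq d(x,y)+d(y,z)$. Non-negativity is clear. The subtle axiom is positive definiteness: if $d(x,y)=0$ then $x=y$. For this I would use the local Euclidean structure coming from Proposition~\ref{prop:2.3}.\ref{2.3.2}. Assuming $x\neq y$, choose an open neighborhood $U$ of $x$ inside the union of (closed) cubes containing $x$ with $y\notin\overline{U}$, small enough that $U\cap C$ is a Euclidean ball of some radius $r>0$ for each cube $C$ containing $x$. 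For any string $\Sigma$ from $x$ to $y$, there must be a first consecutive pair $(x_i,x_{i+1})$ with $x_i\in\overline U$ and $x_{i+1}\notin U$; then $d_{C_i}(x_i,x_{i+1})\geq r$, so $L(\Sigma)\geq r$ and hence $d(x,y)\geq r>0$.

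For the length-metric identity, one direction is straightforward: given a string $\Sigma=(x_0,\dots,x_m)$ with witnessing cubes $C_i$, concatenate the Euclidean straight-line segments from $x_i$ to $x_{i+1}$ inside each $C_i$. These project to a continuous rectifiable curve $\gamma:x\rightsquigarrow y$ with $l(\gamma)=\sum d_{C_i}(x_i,x_{i+1})=L(\Sigma)$, so $\inf_\gamma l(\gamma)\leq d(x,y)$.

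The opposite inequality is where the real work lies. Given any rectifiable $\gamma:[a,b]\to X$ from $x$ to $y$, I want to manufacture a string of length at most $l(\gamma)+\varepsilon$. Each $p\in\gamma([a,b])$ has, by Proposition~\ref{prop:2.3}, an open neighborhood $V_p$ contained in the union of the (finitely many, after shrinking) cubes containing $p$, and arranged so that any two points of $V_p$ lie together in some common cube. Since $\gamma([a,b])$ is compact, a Lebesgue number argument produces a subdivision $a=t_0<t_1<\cdots<t_m=b$ such that $\gamma([t_i,t_{i+1}])$ lies inside a single $V_{p_i}$, hence $\gamma(t_i)$ and $\gamma(t_{i+1})$ lie in a common cube $C_i$. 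Setting $x_i=\gamma(t_i)$ gives a string $\Sigma$, and on each piece the Euclidean distance inside $C_i$ is bounded above by the length of $\gamma|_{[t_i,t_{i+1}]}$ (Euclidean distance is bounded by arc length of any curve joining the endpoints inside the cube), so $L(\Sigma)\leq l(\gamma)$. Taking the infimum over $\gamma$ yields $d(x,y)\leq\inf_\gamma l(\gamma)$.

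The main obstacle is the Lebesgue number/subdivision step in the last paragraph: one must argue that a neighborhood of each point really embeds nicely into the star of cubes around it, so that consecutive curve values always lie in a common cube. This relies on condition \ref{def:2.1}.\ref{2.1.2} (uniqueness of gluings) and on Proposition~\ref{prop:2.3}; once this local structure is in place, all other steps reduce to classical facts about Euclidean cubes.
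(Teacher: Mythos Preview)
Your symmetry and triangle-inequality arguments match the paper's. There are, however, two genuine gaps.

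\textbf{Positive definiteness.} The claim that $d_{C_i}(x_i,x_{i+1})\geq r$ for the first exiting pair is false. Already in a single unit square with $x$ the centre and $r=0.3$, the string $x_0=x$, $x_1=(0.5,0.25)$, $x_2=(0.5,0.15)$ has $x_1\in U$, $x_2\notin U$, yet $d_{C_1}(x_1,x_2)=0.1<r$. What one can prove is that the \emph{cumulative} length up to the exit is at least $r$, but across several cubes this requires knowing that every cube $C_j$ met before the exit still contains $x$. That is precisely the content of the paper's Lemma~\ref{le:2.7}: the quantity $\epsilon(x,C)$ is independent of $C$, so any face of a cube through $x$ lying at distance $<\epsilon(x)$ from $x$ must itself contain $x$; an induction then shows each $x_j$ with $j\leq i$ lies in a cube through $x$, and a cube-by-cube triangle inequality gives the bound. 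Your neighbourhood $U$ is the right picture, but the single-step estimate does not hold.

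\textbf{Length-metric identity.} The inequality you flag as ``where the real work lies'' is in fact immediate: by definition $l(\gamma)=\sup\sum d(\gamma(t_i),\gamma(t_{i+1}))$, and the trivial partition $\{a,b\}$ already gives $l(\gamma)\geq d(\gamma(a),\gamma(b))=d(x,y)$, hence $\inf_\gamma l(\gamma)\geq d(x,y)$. Your Lebesgue-number argument is therefore unnecessary, which is fortunate because it is also broken: neighbourhoods $V_p$ with the property ``any two points of $V_p$ lie in a common cube'' do not exist in general. At a tripod vertex (three $1$-cubes meeting at a point $p$) every neighbourhood of $p$ contains points on distinct edges, and two such points lie in no common cube. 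The paper accordingly treats this direction as clear; the only substantive step is the one you handled correctly, turning a string into a piecewise-Euclidean curve of length at most $L(\Sigma)$, which yields $\inf_\gamma l(\gamma)\leq d(x,y)$.
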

In order to prove this proposition we need a technical lemma. 

\begin{lemma}\label{le:2.7}
Let $x$ be a point in a connected cubical complex $X\neq\{pt\}$. Let further $C$ be a cube containing $x$. Then $\epsilon(x,C)$, as defined below,  is independent of the choice of $C$.
$$\epsilon(x,C)\define \inf\{d_C(x,F) \vert F\subset C \text{ and } x\notin F\} .$$
\end{lemma}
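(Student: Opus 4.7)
The plan is to compute $\epsilon(x, C)$ in coordinates and show that the resulting value depends only on the intrinsic position of $x$ within the smallest face of $C$ containing it, and that this smallest face is determined by $x$ alone, independently of $C$.

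I would first reduce the infimum in the definition of $\epsilon(x, C)$ to codimension-1 faces of $C$: any proper face $F' \subseteq C$ with $x \notin F'$ is an intersection of codimension-1 faces, and at least one of these codimension-1 faces must also avoid $x$ (otherwise $x$ would lie in their intersection $F'$); since enlarging a face only decreases its distance to $x$, the infimum is realised on a codimension-1 face. Identifying $C$ with $[0,1]^n$ and writing $x = (x_1, \ldots, x_n)$, the face $F_{j,\varepsilon}$ contains $x$ iff $x_j = \varepsilon$, so
\[
\epsilon(x, C) \;=\; \min\bigl\{\, |x_j - \varepsilon| \;\bigm|\; j \in \{1, \ldots, n\},\, \varepsilon \in \{0,1\},\, x_j \neq \varepsilon \,\bigr\}.
\]
Letting $G$ denote the smallest face of $C$ containing $x$, each coordinate $j$ is either \emph{constrained} in $G$ (with $x_j \in \{0,1\}$, contributing the distance $1$ to the opposite codimension-1 face) or \emph{free} in $G$ (with $0 < x_j < 1$, contributing $\min(x_j, 1-x_j)$). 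In particular, the value $\epsilon(x, C)$ is determined by $x$ together with $G$.

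Next I would verify that $G$ does not actually depend on $C$. Given another cube $C' \in \cubes$ containing $x$, Proposition~\ref{prop:2.3} asserts that $F \define C \cap C'$ is a face of both $C$ and $C'$, and clearly $x \in F$. Since $F$ itself is a face of $C$ containing $x$, the smallest such face $G$ lies inside $F$; faces of $C$ contained in $F$ are precisely the faces of $F$, so $G$ is in fact the smallest face of $F$ containing $x$. Applying the same reasoning to $C'$ shows that the smallest face $G'$ of $C'$ containing $x$ likewise equals the smallest face of $F$ containing $x$. Hence $G = G'$, and the coordinate formula from the previous step then gives $\epsilon(x, C) = \epsilon(x, C')$.

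The one delicate case is $G = \{x\}$, i.e.\ $x$ a vertex of $C$: here the coordinate formula forces $\epsilon(x, C) = 1$ provided $n \geq 1$, but would be vacuous if $n = 0$. The hypotheses that $X$ is connected and $X \neq \{pt\}$ ensure that every point of $X$ lies in some cube of positive dimension, for otherwise a stray $0$-cube $\{x\}$ would be an isolated connected component of $X$; thus the value $\epsilon(x, C) = 1$ is assigned uniformly over every positive-dimensional cube in which $x$ appears as a vertex, and independence again holds.
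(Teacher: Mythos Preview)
Your proof is correct and follows essentially the same strategy as the paper's: both identify the minimal face $G$ (the paper calls it $F_C$) containing $x$, show that $\epsilon(x,C)$ can be computed intrinsically from the position of $x$ in $G$, and then argue that $G$ is independent of $C$. Your version is somewhat more explicit---you reduce to codimension-$1$ faces and write down the coordinate formula, and you invoke Proposition~\ref{prop:2.3} to conclude $G=G'$ exactly, whereas the paper only asserts that $F_C$ and $F_{C'}$ are isometric---but these are differences of presentation rather than of method.
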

\begin{proof}
If for a fixed $x$ there is a unique cube containing $x$ in its interior, then the assertion is clear. In case that $x$ is a corner of $C$ (i.e.\ all coordinates of $x$ are eithhttp://dict.leo.orghttp//www.opensuse.org/de/er $0$ or $1$), then $x$ is a corner in every cube containing it and $\epsilon(x,C)=1$.

Suppose now that $x\in C$ be neither an interior point of $C$  nor a corner. Then let $F_C\subset C$ be the face of minimal dimension such that $x\in F_C$. Such a face exists for every cube containing $x$ and its dimension is always the same. 
The (unique) point  $y\in C$ with $d(x,y)=\epsilon(x,C)$ is contained in a face of $F_C$. 
Thus 
$$\epsilon(x,C)=\inf\{d_{F_C}(x,G) \vert G \text{ is a face of } F_C\}.$$
Since $F_C$ and $F_{C'}$ need to be isometric for cubes $C, C'$ containing $x$ the claim follows.
\end{proof}

\begin{figure}[htbp]
\begin{center}
	\resizebox{!}{0.3\textwidth}{\includegraphics{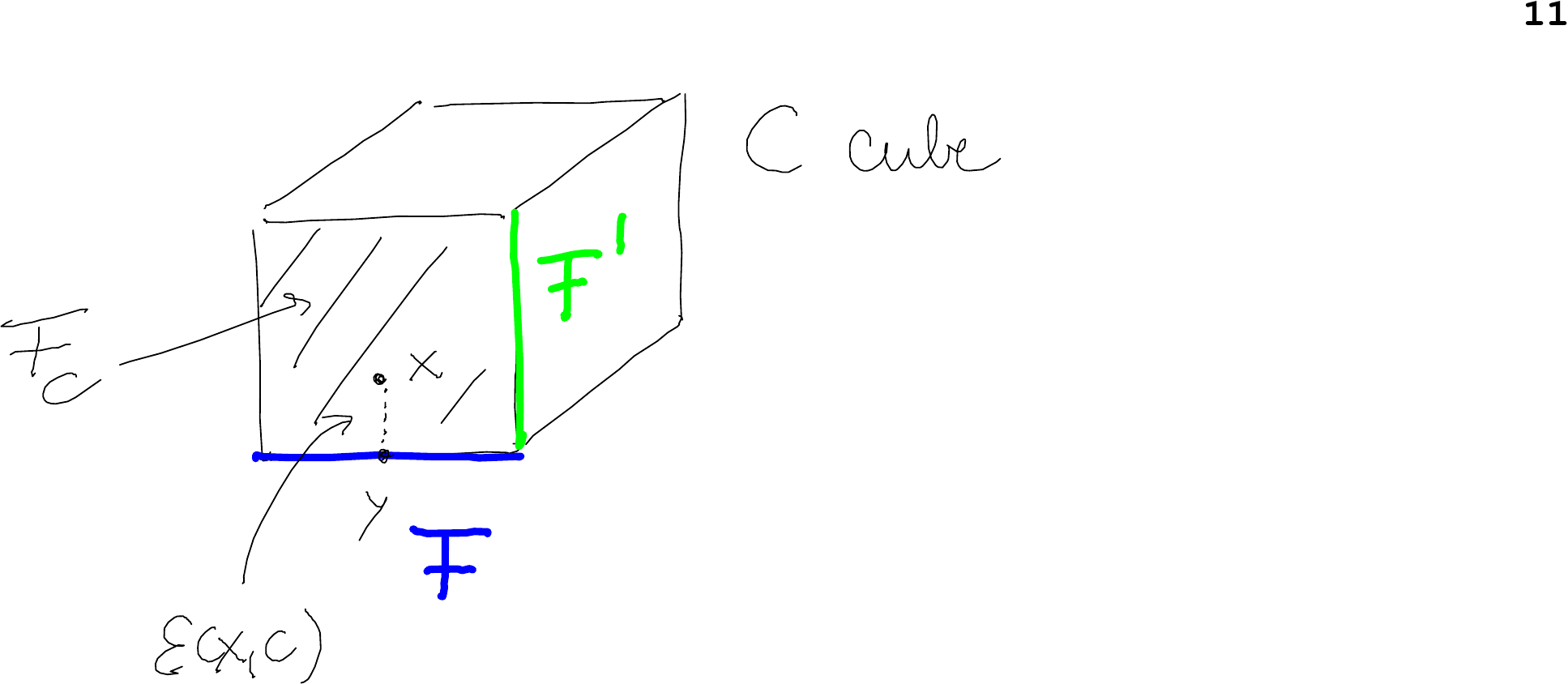}}
\caption[proof]{Illustration of the proof of Lemma~\ref{le:2.7}.}
\label{fig_11}
\end{center}
\end{figure}

\begin{proof}[Proof of Proposition~\ref{prop:2.6}]
We first proof that $d$ is a metric. 
Symmetry of $d$ follows directly from the definition by reading strings backwards.
Connecting strings $\Sigma:x\rightsquigarrow y$ and $\Sigma:y\rightsquigarrow z$ in series gives us strings from $x$ to $z$ and hence the triangle inequality holds. 
Positivity of $d$ comes from the fact that if a cube $C$ contains $x$ then it contains every $y$ with $d(x,y)<\epsilon(x,C)$. Hence in this case $d(x,y)=d_C(x,y)$ where $d_C$ is the (restricted) Euclidean metric on $C$. The fact that $d$ is indeed a length metric is clear from the definition of $d$. 
\end{proof}

\begin{definition}\label{def:2.8}
 We say that a cubical complex $(X,d)$ defined by the pair $(\cubes, \glue)$ is 
\emph{finite dimensional} if there is a global upper bound on the dimension of cubes in $\cubes$.
It is \emph{locally finite} if no point of $X$ is contained in infinitely many cubes. 
\end{definition}

We leave the proof of the following proposition as an exercise. 
\begin{prop}\label{prop:2.9}
 A cubical complex is a complete geodesic metric space if it is either finite dimensional or locally finite. 
\end{prop}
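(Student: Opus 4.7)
The plan is to split the statement into three tasks: (i) describe the local structure around every point using Lemma~\ref{le:2.7}; (ii) prove that Cauchy sequences converge; and (iii) prove that the infimum in the definition of $d$ is actually attained by a path.

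First I would describe small balls concretely. Fix $x\in X$ and set $\varepsilon=\varepsilon(x)$ from Lemma~\ref{le:2.7}. Arguing as in the positivity step of Proposition~\ref{prop:2.6}, any string from $x$ of length less than $\varepsilon$ stays in a single cube containing $x$, so $d(x,y)<\varepsilon$ implies that $d(x,y)$ is realised by a Euclidean segment in one cube. Consequently the open ball $B_\varepsilon(x)$ is isometric to the ``star'' $\mathrm{St}(x)$, obtained as the union of $\varepsilon$-balls around $x$ in each cube $C\ni x$, glued along the images of their faces. In particular pairs of points near $x$ are joined by an honest geodesic, and the local metric structure is completely understood.

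Next I would prove completeness. Let $(x_n)$ be a Cauchy sequence and pass to a subsequence with $d(x_n,x_{n+1})<2^{-n-1}$. In the locally finite case, each $\mathrm{St}(x)$ involves only finitely many cubes, so in particular it is a finite Euclidean polyhedral complex and hence compact; once the Cauchy tail enters a ball of radius less than $\varepsilon(x_N)$ for large $N$, it is trapped in this compact star and must converge by completeness of Euclidean space. In the finite-dimensional case, all cubes have dimension at most some $N$, so although the star at a vertex may involve infinitely many cubes, each of them is a complete metric space and the distance function inside the star is the quotient of the disjoint union of unit cubes identified along faces; a tail with small diameter must eventually be contained in one such cube (since any string crossing through a face of $\mathrm{St}(x)$ has length at least $\varepsilon(x)$), where it converges.

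Finally I would obtain geodesics using Arzelà--Ascoli. Given $x,y\in X$ with $d(x,y)=D$, choose strings $\Sigma_k$ of length $D_k<D+1/k$; reparametrising each by arc length gives $1$-Lipschitz curves $\gamma_k\colon[0,D_k]\to X$ which, after a small rescaling, become a uniformly Lipschitz family $\tilde\gamma_k\colon[0,D]\to X$. In the locally finite case all $\gamma_k$ lie in a locally compact subcomplex (the cubes in a fixed bounded neighbourhood of $x$), so Arzelà--Ascoli extracts a uniform limit $\gamma$, which has length at most $D$ and is therefore a geodesic by Proposition~\ref{prop:2.6}. In the finite-dimensional case, instead of the full complex I would work inside the finite-dimensional subcomplex consisting of cubes actually traversed by the $\Sigma_k$; a counting argument using the dimension bound shows that one can pick the $\Sigma_k$ to use only finitely many cubes at each scale, restoring enough compactness for Arzelà--Ascoli to apply.

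The hard part is the finite-dimensional case without local finiteness, where $X$ itself need not be locally compact (infinitely many cubes can meet at a single vertex). Both the completeness step and the Arzelà--Ascoli step then require pruning the ambient space to a carrier subcomplex containing the chosen minimising strings; making this pruning precise, and in particular guaranteeing that the subcomplex is locally compact, is the principal technical obstacle, and is essentially the place where Bridson's theorem on $M_\kappa$-polyhedral complexes with finitely many shapes of cells is really used.
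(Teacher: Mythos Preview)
The paper does not actually prove this proposition: it leaves it as an exercise, citing \cite[Thm.~I.7.19]{BH} for the finite-dimensional case and suggesting, for the locally finite case, that one first show $X$ is proper via Hopf--Rinow \cite[Thm.~I.3.7]{BH} and then deduce geodesicity. So your task is really to compare your outline against that sketch.

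For the locally finite case your route (compact stars, then Arzel\`a--Ascoli) is correct and is essentially what underlies Hopf--Rinow, but the paper's suggestion is more economical: local finiteness gives local compactness directly, and once you have completeness, Hopf--Rinow yields both properness and geodesicity in one stroke, with no need to build minimising sequences by hand.

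Your completeness argument in the finite-dimensional, non-locally-finite case has a genuine gap. The claim that ``a tail with small diameter must eventually be contained in one such cube'' is false: take a vertex $v$ with infinitely many edges $e_1,e_2,\ldots$ attached and let $x_n$ lie on $e_n$ at distance $1/n$ from $v$. This sequence is Cauchy and converges to $v$, yet no tail sits in a single cube. The correct mechanism is not confinement to one cube but rather that the distances $d(x_n,F)$ to the various faces of the carrier cubes form Cauchy sequences in $\R$, pinning down a limit point in the common face. This is part of what \cite[Thm.~I.7.19]{BH} handles.

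For geodesics in the finite-dimensional case you are right that the crux is controlling how many cubes a near-minimising string can traverse, and you correctly name Bridson's finitely-many-shapes theorem as the real input; your ``counting argument using the dimension bound'' is a placeholder for exactly that theorem, so at this point your outline and the paper's citation coincide.
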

\begin{proof}
For a proof in case of finite dimensional $X$ see \cite[Thm. I.7.19]{BH}.  If $X$ is locally finite show first that  $X$ is proper (use theorem of Hopf-Rinow \cite[Thm I.3.7]{BH} to do so) and show then that $X$ is geodesic. 
\end{proof}

\begin{example}
Infinite dimensional non-complete cubical complexes exist. Take for example an ordered basis of an infinite dimensional Hilbert space $H$ and in $H$ the ascending union of the unit cubes spanned by the first $k$ basis elements. The resulting cubical complex does not have a bound on the dimension of its cube and will not be complete. 
\end{example}

\begin{remark}
There are a priori two topologies on a cube complex $X$. For one the quotient topologie $\mathcal T_p$ and on the other hand the topology $\mathcal T_d$ which is induced by the length metric. 
One has in general $\mathcal T_d\subset \mathcal T_p$ and equality iff $X$ is locally finite. 
\end{remark}

We will now prove a criterion which allows us to easily characterize the $\cat(0)$-cubical complexes via a simple property of their links. This chriterion was established by Gromov 
%\marginpar{give reference for original paper}
and is one of the main reasons why cube complexes are so popular in geometric group theory. In general curvature testing is hard and even for simplicial complexes no good characterizations are known. So it is remarkable how easy testing for $\cat(0)$ in the class of cubical complexes is. 

\begin{figure}[htbp]
\begin{center}
	\resizebox{!}{0.3\textwidth}{\includegraphics{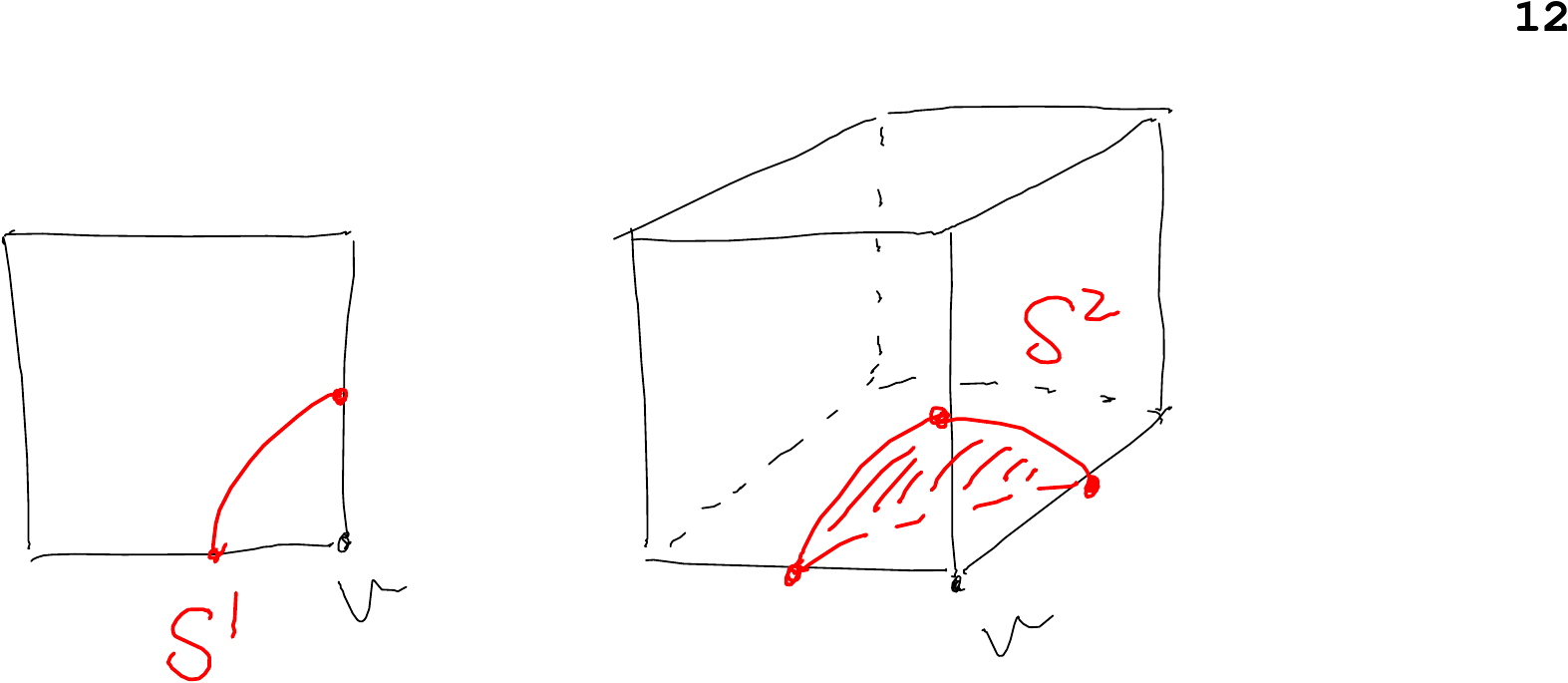}}
\caption[ARSC]{Simplices of an all right spherical complex.}
\label{fig_12}
\end{center}
\end{figure}

\begin{definition}\label{def:2.10}
We consider the following shapes $S^n$ which are  ''rightangles cutouts'' of spheres. That is for $n\geq 1$ fix a cube $C^{n+1}\cong [0,1]^{n+1}$ and some $\varepsilon > 0$. Let $v$ be a corner of $C$, then the \emph{all-right spherical shape} $S^n$ of dimension $n$ is given by 
$$S^n\define \{y\in C \;\vert\; d(v,y) = \varepsilon\}.$$
The \emph{faces} of $S^n$ are the intersections of $S^n$ with faces of $C$ (and are isomorphic to some $S^k$ with $k<n$. Distances of points $p,q$ in $S^n$ are measured in the angular metric, i.e.\ 
$d_{S^n}(p,q)\define \angle_v(\gseg{vp}, \gseg{vq})$, where $\angle_v$ stands for the Eucidean angle at $v$ in $C$.

\begin{figure}[htbp]
\begin{center}
	\resizebox{!}{0.3\textwidth}{\includegraphics{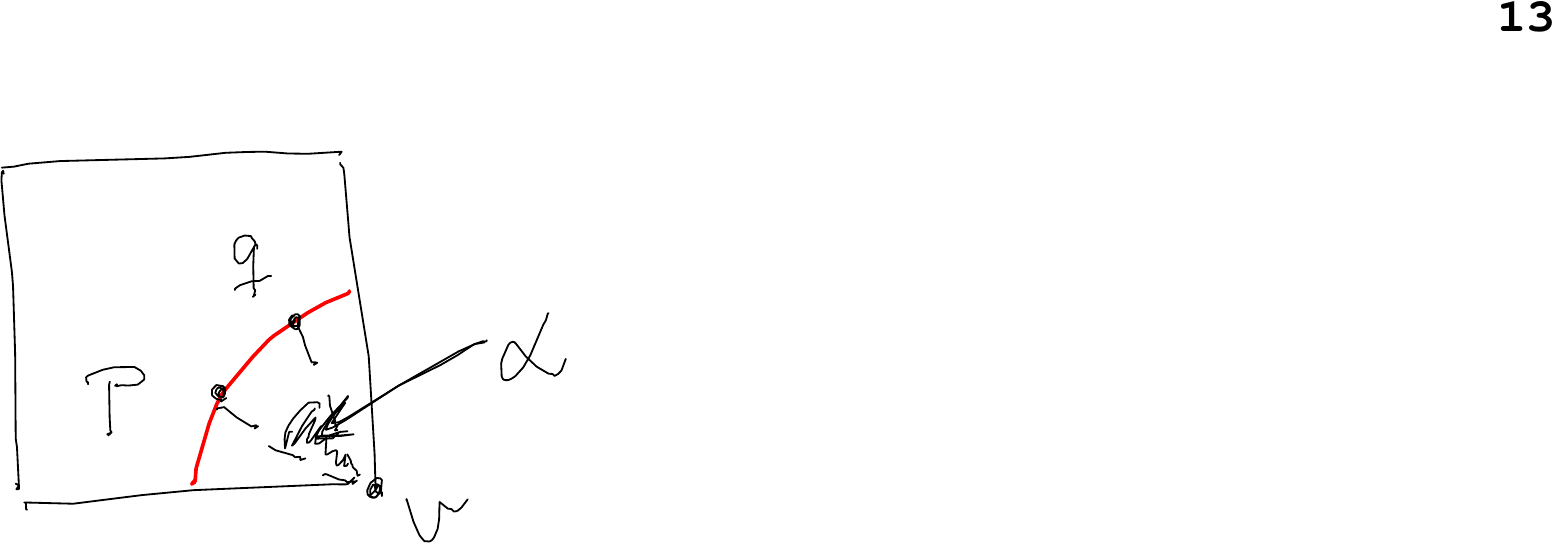}}
\caption[ARSC]{Measuring distances of points in a simplex of an all right spherical complex.}
\label{fig_13}
\end{center}
\end{figure}

An \emph{all-right spherical complex} is a polyhedral complex built out of all-right spherical shapes satisfying gluing rules analogous to the ones given in Definition~\ref{def:2.1} with $\cubes$ replaced by a familiy $\mathcal{S}$ of all-right spherical shapes. 
\end{definition}

Observe that links of all-right spherical complexes are again all-right spherical complexes. 

We will in the following write \emph{spherical complex} when we actually mean an all-right spherical complex, since these are the only ones appearing here. 

\begin{definition}\label{def:2.11}
Let $1>\varepsilon >0$.  The link $\lk(v,X)$ of a corner $v$ in a cubical complex $X$ is the spherical complex obtained by looking at the $\varepsilon$-sphere around $v$ in $X$ equipped with the from $X$ induced simplicial structure. 
\end{definition}

\begin{example}\label{ex:2.12}
An example of links in a cubical complex can be found in Figure~\ref{fig_14}
\end{example}

\begin{figure}[htbp]
\begin{center}
	\resizebox{!}{0.4\textwidth}{\includegraphics{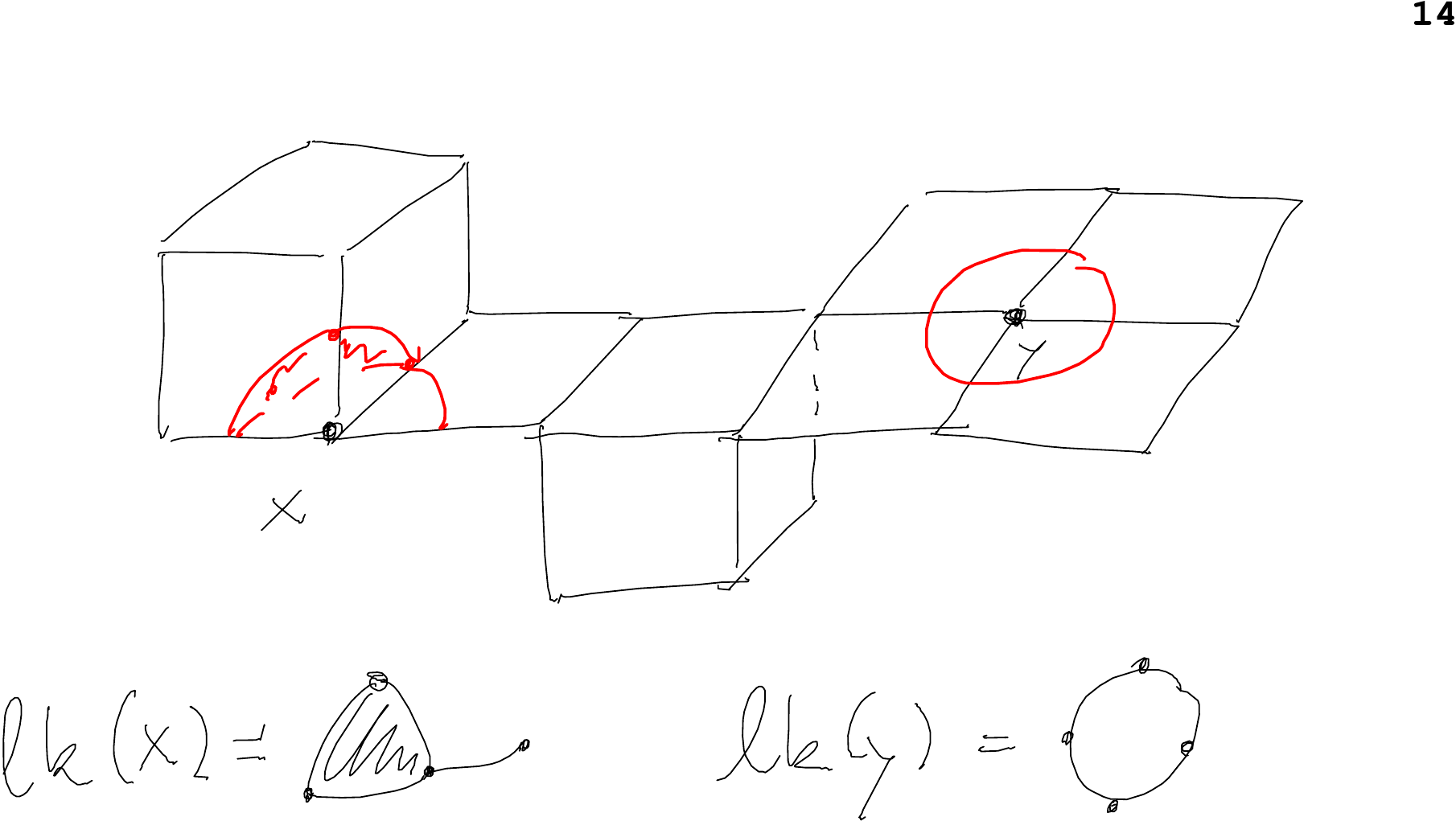}}
\caption[links]{Links in a cubical complex. }
\label{fig_14}
\end{center}
\end{figure}

Links carry in a natural way the structure of an abstract simplicil complex. (Forget the metric and the spherical nature of its simplices.)
\begin{definition}\label{def:2.13}
An abstract simplicial complex $\Delta$ is \emph{flag} iff every subset of vertices $V'\subset \vert(\Delta)$ of the vertices of $\Delta$ which are pairwise connected by an edge span a simplex.  
\end{definition}

In other words a simplicial complex is flag if there are no empty simplices. That is whenever the 1-skeleton of a simplex is there, then the simplex exists. 

\begin{thm}[Gromov's link condition]\label{thm:2.14}
A finite dimensional cubical complex is locally $\cat(0)$ iff all its vertex links are flag simplicial complexes.  
\end{thm}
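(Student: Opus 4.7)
The plan is to reduce the local $\cat(0)$ property of $X$ to a $\cat(1)$ condition on vertex links, and then to characterize the $\cat(1)$ all-right spherical complexes combinatorially as the flag ones. Both halves of the biconditional follow by combining these two equivalences.

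For the first reduction, one exploits that a sufficiently small $\varepsilon$-neighborhood of a vertex $v \in X$ is canonically isometric to the open Euclidean cone of radius $\varepsilon$ over $\lk(v,X)$ (equipped with the angular metric of Definition~\ref{def:2.10}). Berestovskii's theorem (see \cite{BH}) then says that this Euclidean cone is $\cat(0)$ iff $\lk(v,X)$ is $\cat(1)$. Points $x$ in the interior of a positive-dimensional face decompose similarly: their link in $X$ is a spherical join of a round sphere $\s^{k-1}$ with a lower-dimensional all-right spherical complex, and joins with spheres preserve the $\cat(1)$ property, so such points impose no condition beyond what vertex links already enforce. Finite dimensionality (via Proposition~\ref{prop:2.9}) ensures $X$ is a complete geodesic space so that local $\cat(0)$ has its usual meaning.

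It then suffices to show: \emph{an all-right spherical complex $L$ is $\cat(1)$ iff $L$ is flag.} I would argue this by induction on $\dim L$, with the $0$-dimensional case trivial. The key inductive tool (Bowditch's criterion, see \cite{BH}) says $L$ is $\cat(1)$ iff every vertex link in $L$ is $\cat(1)$ and $L$ contains no closed geodesic of length less than $2\pi$. Vertex links of $L$ are again all-right spherical complexes of smaller dimension, and they are flag iff $L$ is flag at that vertex, so the hypothesis transfers cleanly under induction. For the direction $\cat(1) \Rightarrow$ flag, one assumes a minimal missing simplex on vertices $v_0, \dots, v_k$ and produces an explicit closed local geodesic of length $<2\pi$ traversing edges and antipodal arcs near that configuration, contradicting $\cat(1)$.

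The main obstacle, and the heart of Gromov's argument, is the converse: showing that a flag all-right spherical complex admits no closed geodesic of length less than $2\pi$. This requires a delicate combinatorial-geometric analysis of how a local geodesic passes through adjacent simplices. The crucial use of flagness is that whenever a local geodesic crosses a codimension-one face and lands in a configuration of vertices pairwise joined by edges, the corresponding simplex is actually present in $L$, forcing the geodesic to straighten inside that simplex rather than bending around the missing face; this is what guarantees angular length accumulates at the expected rate. Carrying out this estimate rigorously and combining it with the inductive hypothesis on vertex links is the technical core, and I would follow the exposition in \cite{BH}.
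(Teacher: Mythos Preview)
Your overall architecture matches the paper exactly: reduce local $\cat(0)$ to $\cat(1)$ on vertex links (the paper packages this as Proposition~\ref{prop:2.15}, citing \cite{BH}), and then prove that a finite-dimensional all-right spherical complex is $\cat(1)$ iff it is flag (Proposition~\ref{prop:2.16}).

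There is, however, a gap in your argument for the direction $\cat(1)\Rightarrow$ flag. You propose, given a minimal missing simplex on $v_0,\dots,v_k$, to exhibit a closed local geodesic of length $<2\pi$ ``traversing edges and antipodal arcs''. This works only for $k=2$: the empty triangle is a circle of length $\tfrac{3\pi}{2}$. For $k\geq 3$ the edge path $v_0v_1v_2v_0$ is \emph{not} locally geodesic at $v_1$, because the $2$-face $v_0v_1v_2$ is present and the two edge-directions make angle $\tfrac{\pi}{2}<\pi$ in $\lk(v_1)$. Producing an honest short closed geodesic in $\partial\sigma^k$ for $k\geq 3$ is not straightforward, and your sketch does not do it. The paper avoids this entirely: it handles $k=2$ by a direct $\cat(1)$ comparison (in an empty triangle the distance from a vertex to the midpoint of the opposite side is $\tfrac{3\pi}{4}$, which exceeds the corresponding distance in the spherical comparison triangle), and for $k\geq 3$ it inducts on $k$ via links: $\lk(v_0,\Delta)$ is $\cat(1)$, hence flag by induction, and it contains the full $(k-2)$-skeleton of the $(k-1)$-simplex on $v_1,\dots,v_k$, so that simplex is present in the link, which forces the $k$-simplex in $\Delta$.

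For flag $\Rightarrow\cat(1)$ the paper does not defer to \cite{BH} but gives a self-contained argument worth knowing. The key claim is that for any vertex $v$ with $B_{\pi/2}(v)\cap\mathrm{Im}(\gamma)\neq\emptyset$, the arc $c=\mathrm{Im}(\gamma)\cap B_{\pi/2}(v)$ has length exactly $\pi$; this is proved by developing the union of geodesic segments from $v$ to points of $c$ into $\s^2$ and using that local geodesics of length $\leq\pi$ in $\s^2$ are global geodesics. Consequently, if $\ell(\gamma)<2\pi$ then $\gamma$ cannot be covered by two such balls, so there are at least three vertices whose $\tfrac{\pi}{2}$-balls meet $\gamma$; these vertices are pairwise adjacent, and flagness forces them to span a simplex containing $\gamma$, contradicting that $\gamma$ is a nontrivial closed local geodesic.
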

\begin{proof}
Combining Propositions~\ref{prop:2.15} and \ref{prop:2.16} the assertion follows. 
\end{proof}

\begin{prop}\label{prop:2.15}
A finite dimensional $M_\kappa$-polyhedral (e.g.\ cubical or all-right spherical) complex  is locally $\cat(\kappa)$ iff all its vertex links are $\cat(1)$. 
\end{prop}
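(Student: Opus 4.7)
The plan is to reduce the local $\cat(\kappa)$ condition at an arbitrary point $p \in X$ to a $\cat(1)$ condition on the link at $p$, and then to show that vertex links already detect the property at every point.

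\textbf{Step 1 (Local model via cones).} First I would extend the notion of link from Definition~\ref{def:2.11} to an arbitrary point $p\in X$, and show that there exists $r>0$ such that the closed ball $\bar B_r(p)$ is isometric to the ball of radius $r$ about the cone point in the $\kappa$-cone $C_\kappa(\lk(p,X))$ on the link at $p$. Finite dimensionality ensures that only finitely many isometry types of shape meet a neighborhood of $p$, and each $M_\kappa$-polyhedral cell is, near any of its faces, locally isometric to the $\kappa$-cone on the spherical link of that face inside the cell. Consequently, $X$ is locally $\cat(\kappa)$ at $p$ iff a neighborhood of the cone point of $C_\kappa(\lk(p,X))$ is $\cat(\kappa)$.

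\textbf{Step 2 (Berestovskii's cone criterion).} Next I would invoke the classical cone theorem: for a metric space $Y$ of diameter at most $\pi$, the $\kappa$-cone $C_\kappa(Y)$ is $\cat(\kappa)$ iff $Y$ is $\cat(1)$. Finite dimensionality of $X$ guarantees $\lk(p,X)$ is compact with diameter at most $\pi$, so the hypothesis is satisfied. Combined with Step~1 this gives the pointwise equivalence: $X$ is locally $\cat(\kappa)$ at $p$ iff $\lk(p,X)$ is $\cat(1)$.

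\textbf{Step 3 (Reduction to vertex links).} Finally, to pass from ``every link is $\cat(1)$'' to ``every vertex link is $\cat(1)$'', I would use the join decomposition
$$\lk(p,X) \;\cong\; \s^{k-1} \join \lk(C,X)$$
whenever $p$ lies in the relative interior of a $k$-dimensional cell $C$, together with the spherical join theorem: $A \join B$ is $\cat(1)$ iff both factors are. Since $\s^{k-1}$ is $\cat(1)$, the condition at $p$ reduces to $\cat(1)$-ness of $\lk(C,X)$. Choosing any vertex $v$ of $C$, the space $\lk(C,X)$ appears inside $\lk(v,X)$ as the link of the simplex corresponding to $C$, and a second application of the join decomposition inside $\lk(v,X)$ shows that $\cat(1)$-ness of $\lk(v,X)$ propagates to $\lk(C,X)$.

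The main obstacle is Step~2, whose proof requires a careful comparison-geometry argument relating triangles in $C_\kappa(Y)$ to their radial projections in $Y$ (a spherical cosine-law computation combined with the observation that a geodesic in the cone not passing through the cone point projects to a local geodesic in $Y$). For this exposition I would cite it as a black box, e.g.\ from \cite[Thm.~II.3.14]{BH}. Steps~1 and~3 are essentially formal once the notions of link, $\kappa$-cone and spherical join are in place, although the join theorem tacitly used in Step~3 is itself a nontrivial input that one would also want to quote rather than reprove.
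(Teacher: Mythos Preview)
The paper does not actually prove this proposition: its entire proof reads ``See \cite[Thm II.5.2]{BH}.'' So there is no argument to compare against; you have supplied considerably more than the paper does.

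Your outline is essentially the standard proof (and indeed is the structure of the argument in the cited reference), so in that sense you and the paper agree. Two small remarks. In Step~2, the parenthetical claim that finite dimensionality forces $\lk(p,X)$ to be compact with diameter at most $\pi$ is neither true in general (compactness needs local finiteness, and links can have diameter exceeding $\pi$) nor needed: Berestovskii's theorem as stated in \cite[Thm.~II.3.14]{BH} applies to arbitrary metric spaces $Y$, and the $\cat(1)$ condition only constrains triangles of perimeter $<2\pi$. In Step~3, the last sentence hides an induction on dimension: to conclude that $\lk(C,X)$ is $\cat(1)$ from the $\cat(1)$-ness of $\lk(v,X)$, you are really applying the proposition itself (with $\kappa=1$) to the lower-dimensional spherical complex $\lk(v,X)$, so the argument should be phrased as an induction on $\dim X$. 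With those two adjustments the sketch is sound.
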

\begin{proof} See \cite[Thm II.5.2]{BH}. \end{proof}

\begin{prop}\label{prop:2.16}
A finite dimensional spherical complex $\Delta$ is $\cat(1)$ if and only if it is a flag simplicial complex.  
\end{prop}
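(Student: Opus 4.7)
My plan is induction on $n=\dim\Delta$, built around Proposition~\ref{prop:2.15} (local $\cat(1)$ of an $M_\kappa$-polyhedral complex is equivalent to $\cat(1)$ of all its vertex links) and the elementary fact that links of vertices in an all-right spherical complex are themselves all-right spherical complexes, of strictly smaller dimension. The base case $n\leq 1$ is immediate: a one-dimensional all-right spherical complex is a graph with every edge of length $\pi/2$, and $\cat(1)$ holds iff no closed cycle has length less than $2\pi$; since simplicial complexes admit no loops or bigons, this reduces to the absence of a triangle in the $1$-skeleton, which is exactly the flag condition in dimension one.

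For the forward direction of the inductive step, assume $\Delta$ is flag. The link of any vertex of a flag complex is again flag (a simplex of $\lk(v,\Delta)$ corresponds to a simplex of $\Delta$ containing $v$, and flagness is inherited), so by induction each $\lk(v,\Delta)$ is $\cat(1)$. Proposition~\ref{prop:2.15} then yields that $\Delta$ is locally $\cat(1)$. To upgrade to $\cat(1)$ I would invoke the standard fact that a complete, locally $\cat(1)$ space is $\cat(1)$ iff it contains no closed local geodesic of length $<2\pi$. One then shows that any hypothetical short closed geodesic, broken at the vertices it passes through, would produce in some $\lk(v,\Delta)$ a pair of directions at angular distance $<\pi$ corresponding to the incoming and outgoing directions at $v$, contradicting the local-geodesic criterion that these lie at distance $\geq\pi$ in the link.

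For the converse, suppose $\Delta$ is $\cat(1)$. Then $\Delta$ is locally $\cat(1)$, so every $\lk(v,\Delta)$ is $\cat(1)$ by Proposition~\ref{prop:2.15}, and flag by induction. Assume for contradiction that $\Delta$ is not flag, and pick a minimal clique $\{v_0,\dots,v_k\}$ in the $1$-skeleton that fails to span a simplex. Minimality together with the flagness of each vertex link forces $k=2$ (a missing $k$-simplex with $k\geq 3$ would descend to a missing simplex in $\lk(v_0,\Delta)$). The boundary of the resulting \emph{empty triangle} is a closed path of length $3\pi/2$ in the $1$-skeleton; a direct check shows it is a local geodesic, since at each $v_i$ the two adjacent edges correspond to a pair of vertices in $\lk(v_i,\Delta)$ at distance exactly $\pi$ — no $2$-simplex spans them to introduce a shorter link-distance — and this contradicts $\cat(1)$.

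The main obstacle is the geometric link analysis underlying both directions: the lemma that a broken geodesic through a vertex $v$ of an all-right spherical complex is locally geodesic iff the associated pair of directions in $\lk(v,\Delta)$ has angular distance $\geq\pi$. Combined with the inductive $\cat(1)$ property of links, this lemma is precisely what translates the combinatorial flag condition into the metric $\cat(1)$ property, and back.
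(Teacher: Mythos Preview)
Your converse direction ($\cat(1)\Rightarrow$ flag) is essentially the paper's argument: reduce via the inductive flagness of links to an empty triangle, and then exhibit a $\cat(1)$ violation. The paper phrases the final contradiction as a direct distance comparison rather than via the closed-geodesic criterion, but the content is the same; your justification that the two edge-directions are at link-distance $\geq\pi$ is a little thin (``no $2$-simplex spans them'' is not by itself a distance computation), but the claim is correct and can be made precise with a short cone argument.

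The genuine gap is in your forward direction (flag $\Rightarrow$ $\cat(1)$). After obtaining locally $\cat(1)$ from Proposition~\ref{prop:2.15} and invoking Proposition~\ref{prop:2.17}, you must rule out closed local geodesics of length $<2\pi$, and your proposed mechanism does not do this. First, a closed local geodesic in an all-right spherical complex need not pass through any vertex at all, so ``breaking at the vertices it passes through'' may be vacuous. Second, even when $\gamma$ does pass through a vertex $v$, the local-geodesic criterion asserts that the incoming and outgoing directions lie at distance $\geq\pi$ in $\lk(v,\Delta)$; you give no reason why the \emph{shortness} of $\gamma$ would force this distance below $\pi$ anywhere. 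Third, and most telling, your argument never invokes the flag hypothesis---but without flagness the conclusion is false (the boundary of an empty triangle is exactly such a short closed local geodesic), so any valid argument must use it.

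The paper's proof of this direction is substantially more geometric. One shows that for every vertex $v$ with $\mathrm{Im}(\gamma)\cap B_{\pi/2}(v)\neq\emptyset$, the arc $c=\mathrm{Im}(\gamma)\cap \overline{B_{\pi/2}(v)}$ has length exactly $\pi$: this is done by developing the union of geodesic segments from $v$ through points of $c$ onto $\mathbb{S}^2$, where $c$ becomes a local geodesic joining two equatorial points through the open upper hemisphere. Since every point of $\Delta$ lies in some $\overline{B_{\pi/2}(v)}$, a loop of length $<2\pi$ forces the relevant vertices to have pairwise overlapping balls, hence to be pairwise adjacent; \emph{now} flagness enters, placing $\gamma$ inside a single simplex, which admits no closed local geodesic. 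This ``length-$\pi$ arc'' lemma is the missing idea in your outline.
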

%\marginpar{end of second lecture}
\begin{proof}
Suppose first that $\Delta$ is $CAT(1)$. We will prove by induction on the size of an empty simplex that $\Delta$ is flag. 

The case $n=1$ is easy to verify. Let $n=2$. We then need to show that there are no empty triangles. Suppose for a contradiction that there is an empty triangle $x,y,z$. In this empty triangle the distance from $y$ to the midpoint $m$ of the side on $x$ and $z$ is $\frac{3}{4}\pi$. The distance between the comparison points $\bar y$ and $\bar m$ of $y$ and $m$ in the spherical comparison triangle is smaller than $\frac{3}{4}\pi$. But this contradicts the CAT(1) property of $\Delta$. 

We now look at the induction step from $n-1$ to $n$. Suppose the assertion is true for $n-1$. 
Let $\{v_0, \ldots, v_n\}$ be vertices in $\Delta$ that span the $n-1$-Skeleton $S$ of an $n$-Simplex in $\Delta$. Then, by Theorem~\ref{thm:2.14} one has: for all vertices $v=v_i, i=0, \ldots, n$ the link $\lk(v, \Delta)$ is CAT(1) and by induction a flag complex  as the dimension of the link is $n-1$. 
But then the intersection of the link with $S$ is the $n-2$-Skeleton of an $n-1$-Simplex in $\lk(v,\Delta)$ which is in fact the boundary of an $n-1$-Simplex. 
This implies that $S$ is the boundary of an $n$-Simplex and $\Delta$ is flag.

To show the reverse implication we proceed by induction on the dimension of $\Delta$.   
Suppose first $dim(\Delta)=1$. The triangles in a one-dimensional flag simplicial complexes with circumference less than $2\pi$ are automatically degenerate and hence CAT(1). 

For the induction step suppose that $\dim(\Delta)=k$ and that $\Delta$ is flag. Suppose further that the assertion holds for $n-1$-dimensional complexes. It is enough to show that there are no geodesic loops of length less than $2\pi$. Observe that since we are dealing with a right-angled spherical complex one has that 
\[\lk(v,\Delta)\cong S_{\frac{\pi}{2}}=\{x\in\Delta\vert \ d(x,v)=\frac{\pi}{2}\}. \]
Let $\gamma$ be a locally geodesic loop. Let $v$ be a vertex in $\Delta$ such that $c\define \mathrm{Im}(\gamma)\cap B_{\frac{\pi}{2}}(v)\neq\emptyset$.  
 
 We claim that $c$ is a curve of length $\pi$. In case that $v\in c$ the claim follows. In case $v\notin c$ we now construct a surface $S$ in $\overline{B_{\frac{\pi}{2}}(v)}=\{x\in\Delta\vert \ d(x,v)\leq\frac{\pi}{2}\}. $
 The surface $S$ is defined to be the union of all Geodesics of length $\frac{\pi}{2}$ starting in $v$ and containing at least one point of $\gamma$. 
 One then has that $S$ is the union of spherical triangles $D_i$ of hight $\frac{\pi}{2}$ sitting along $\gamma$. 
 See Figure~\ref{fig_15} for an example how this might look like. 
 
 We subdivide $\gamma$ by points $x_i$ defined by the faces of these triangles cutting $\gamma$. 
 The link $\lk(v,\Delta)$ is isomorphic to the CAT(1) cone over $S_{\frac{\pi}{2}}(v)$. This implies that there is an embedding $\iota$ from $S$ to $\mathbb{S}^2$ such that the restriction of $\iota$ onto each triangle $D_i$ is an embedding with the property that $c$ is mapped to a local Geodesic. 
 Assume without loss of generality that $\iota(v)$ is the north pole of the sphere. Then the image $\iota(c)$ connects two points on the equator and intersects the interior of the upper hemisphere non-trivially. Hence $\ell(\iota(c))=\pi$ and thus also $\ell(c)=\pi$ as local geodesics of length up to $\pi$ in $\mathbb{S}^2$ are also global geodesics.\footnote{For details compare Davis-Moussong~\cite{DM} Lemma 2.3.6.} 
 We have thus shown the claim that the length of $c$ is $\pi$. 
  
 For the last step of the proof we suppose that $\ell(\gamma)<\pi$. 
 But then $\gamma$ can not be the union of two curves of length equal to $\pi$ (obviously) and can hence not be contained in the union of  two balls $\bar B_{\frac{\pi}{2}}(v)$ and $\bar B_{\frac{\pi}{2}}(u)$ where $B_{\frac{\pi}{2}}(v)\cap B_{\frac{\pi}{2}}(u) =\emptyset$. 
  
 We hence conclude that the set 
 $B\define \{\text{vertices in }\Delta\vert\ B_{\frac{\pi}{2}}(v)\cap\mathrm{Im}(\gamma)\neq\emptyset\}$
contains at least three elements and the vertices in this set are pairwise connected by edges. But this contradicts the fact that $\gamma$ can not be a union of two geodesics of length $\pi$. 
Thus $\mathrm{Im}(\gamma)\subset \mathrm{span}(B)\subset\Delta$.  
But $ \mathrm{span}(B)$ is a simplex in $\Delta$ as $\Delta$ is flag. Therefore $\gamma$ is contained in a simplex. However, by assumption $\gamma$ is a closed local geodesic. We thus arrive at a contradiction and conclude that the statement must hold. 
\end{proof}

\begin{figure}[htbp]
	\begin{center}
		\resizebox{!}{0.4\textwidth}{\includegraphics{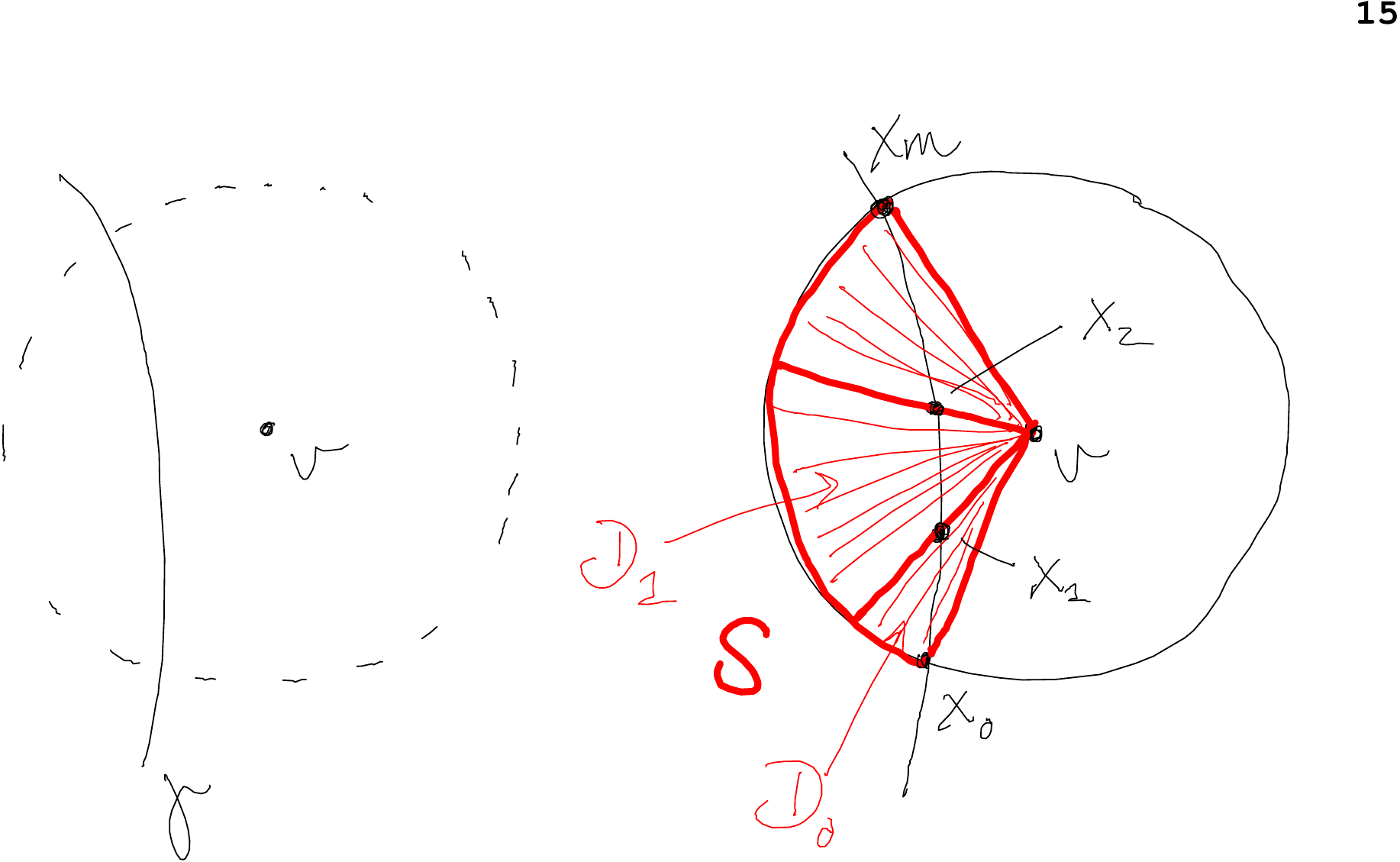}}
		\caption[links]{Compare proof of Proposition~\ref{prop:2.16}.}
		\label{fig_15}
	\end{center}
\end{figure}

In particular a cubical complex $X$ is $\cat(0)$ if and only if it is simply connected and all its vertex links are flag. 

\begin{prop}\label{prop:2.17}
A spherical complex $\Delta$ is $\cat(1)$ if and only if it is locally $\cat(1)$ and there are no locally geodesic circles of length $<2\pi$.  
\end{prop}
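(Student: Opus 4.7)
The plan is to prove the two implications separately; the forward one is a short argument based on standard properties of $\cat(1)$ spaces, while the backward one is a Cartan--Hadamard-style local-to-global argument adapted to the $\kappa=1$ setting.

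For the forward direction, local $\cat(1)$ is immediate from the definition. For the absence of short closed local geodesics, suppose for contradiction that $\gamma$ is a locally geodesic loop of length $L<2\pi$. Pick two points on $\gamma$ that split it into two arcs of length $L/2<\pi$; each arc is a local geodesic of length less than $\pi$, hence a genuine geodesic by the $\cat(1)$ analogue of Proposition~\ref{prop:1.10}(\ref{1.10.3}). They are two geodesics with the same endpoints at distance less than $\pi$, and by the $\cat(1)$ analogue of Proposition~\ref{prop:1.10}(\ref{1.10.1}) such geodesics are unique. Hence the two arcs coincide, contradicting the fact that $\gamma$ is a nondegenerate loop.

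For the backward direction, assume $\Delta$ is locally $\cat(1)$ with no closed local geodesic of length below $2\pi$; I would establish $\cat(1)$ in two steps. \emph{Step 1 (short geodesics exist and are unique).} I would show that any two points $x,y$ with $d(x,y)<\pi$ are joined by a unique geodesic. Existence follows by extending a locally defined geodesic: the supremum of lengths $t<d(x,y)$ for which a geodesic from $x$ of length $t$ can be continued towards $y$ is both open (by local $\cat(1)$) and closed (by completeness/properness of $\Delta$), so the full geodesic exists. Uniqueness follows because two distinct geodesics from $x$ to $y$ of length less than $\pi$ could be concatenated to produce a locally geodesic loop of length less than $2\pi$, contradicting the hypothesis. \emph{Step 2 (triangle comparison).} I would then show every triangle of perimeter less than $2\pi$ satisfies the $\cat(1)$ inequality. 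Small triangles near a given point satisfy it by local $\cat(1)$. For a general such triangle, I would subdivide it into finitely many small triangles each satisfying the $\cat(1)$ inequality, and then assemble the comparison by a Reshetnyak-type gluing argument: the $\cat(1)$ inequalities for the pieces combine, via the gluing lemma for spherical comparison triangles, to the $\cat(1)$ inequality for the whole.

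The main obstacle is Step 1, namely making the continuous extension of geodesics rigorous and ruling out branching phenomena; this is where the hypothesis on closed local geodesics is crucially used. Step 2 is then routine given the local hypothesis and the existence of geodesics. Both steps are carried out in full detail in \cite[Chapter II.4]{BH}, which provides the prototype for this type of local-to-global theorem in the $\cat(\kappa)$ setting with $\kappa>0$.
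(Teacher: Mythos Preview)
The paper does not actually prove this proposition: its entire proof is a reference to \cite[Thm.~II.5.4]{BH}. Your proposal is therefore strictly more detailed than the paper, and since you too ultimately defer to \cite[Chapter~II.4]{BH}, the two are compatible in spirit.

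That said, your sketch of the backward direction has a genuine gap in Step~1. For uniqueness you write that two distinct geodesics from $x$ to $y$, each of length less than $\pi$, ``could be concatenated to produce a locally geodesic loop of length less than $2\pi$.'' This is false as stated: the concatenation is in general \emph{not} locally geodesic at the junction points $x$ and $y$, because there is no reason the two geodesics should meet at Alexandrov angle $\pi$ there. (If they left $x$ in the same direction, local $\cat(1)$ would force them to agree near $x$ and then everywhere; but distinct directions do not yield a local geodesic through $x$.) Turning a geodesic digon into a genuinely locally geodesic closed curve requires a shortening argument, and this is one of the nontrivial points in the actual proof. Your existence argument is also underspecified: ``extending a locally defined geodesic towards $y$'' presupposes you can aim a geodesic at $y$, which is essentially what you are trying to prove.

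You rightly flag Step~1 as the main obstacle and point to \cite{BH} for the details, so your proposal functions as a reasonable roadmap; just be aware that the digon-to-closed-geodesic step is where the real work lies, and your one-line uniqueness argument does not survive as written.
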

\begin{proof}
This proposition is true in a slightly more general setting. For a precise statement and  proof see \cite[Thm. 5.4]{BH}.
\end{proof}

We end this section with a series of examples.

\begin{example}\label{ex:2.18}
Compare Figure~\ref{fig_16} for an illustration of links of the standard cubulation of $\R^n$ as well as a cubulation of a surface of genus 2 (which is actually a VH-complex). 
\end{example}

\begin{figure}[htbp]
	\begin{center}
		\resizebox{!}{0.9\textwidth}{\includegraphics{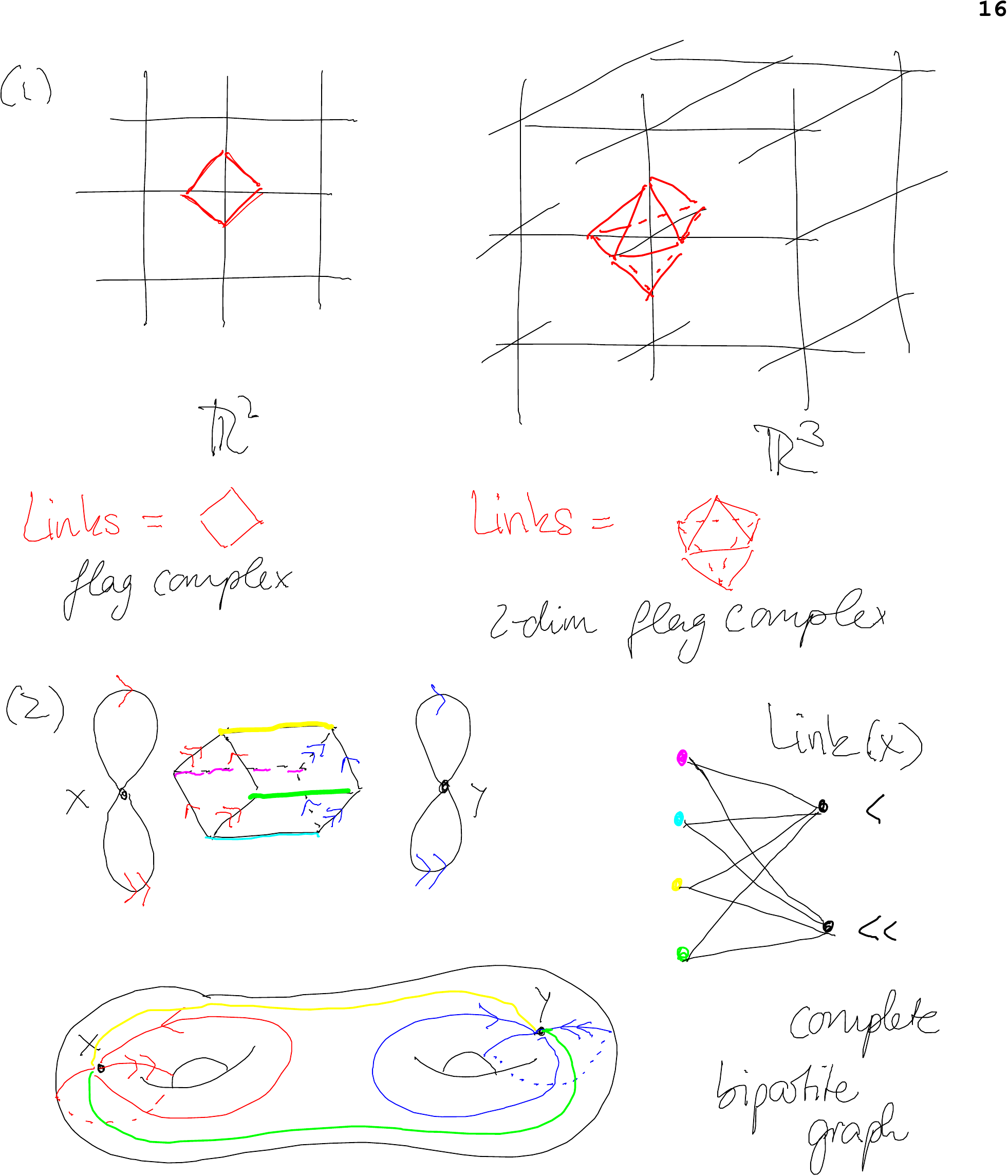}}
		\caption[links]{Links in some cube complexes.}
		\label{fig_16}
	\end{center}
\end{figure}

\begin{example}
The Salvetti complex of a right angled Artin group (RAAG):\newline 
A \emph{right angled Artin group} is defined as follows by means of a simplicial graph $\Gamma=(V,E)$, with $V$ the vertices and $E$ the edges of $\Gamma$.
$$G(\Gamma)\define \{ g_v, v\in V \;\vert\;  [g_u, g_v]=1 \text{ for all edges } (u,v) \in E\}.$$
Is the circumference of $\Gamma$ greater or equal than 4 then the presentation 2-complex of $\Gamma$ is locally $\cat(0)$. However in general higher dimensional cubes are needed in order to obtain a locally $\cat(0)$ complex from a RAAG. 

Define $R(\Gamma)$ to be  the cube complex obtained from the presentation 2-complex by attaching an n-cube for every n-clique\footnote{An \emph{n-clique} is a set of pairwise connected vertices in a graph.} in $\Gamma$. The resulting complex is the \emph{Salvetti complex}. 
\end{example}

One can show the following more general theorem. 

\begin{thm*}
For every finite simplicial graph $\Gamma$ there is a locally $\cat(0)$ cube complex $R(\Gamma)$ such that $G(\Gamma)=\Pi_1(R(\Gamma))$ and the 2-skeleton of $R(\Gamma)$ is the standard presentation 2-complex. 
\end{thm*}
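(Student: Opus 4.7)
The plan is to construct $R(\Gamma)$ explicitly and verify the three claimed properties, with the only real work being the local $\cat(0)$ check via Gromov's link condition.

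First I would describe $R(\Gamma)$ precisely. Start from the standard presentation $2$-complex of $G(\Gamma)$: one vertex $*$, one oriented loop $e_v$ at $*$ for each $v \in V$, and for each edge $(u,v)\in E$ a $2$-cube whose four oriented boundary edges spell $e_u e_v e_u^{-1} e_v^{-1}$. Then, for every $k$-clique $\{v_1,\dots,v_k\}$ of $\Gamma$ with $k\ge 3$, attach a $k$-cube $[0,1]^k$ with all vertices identified with $*$ and all edges in the $i$-th coordinate direction identified with $e_{v_i}$; this automatically glues the $2$-faces of the new cube to the already-present commutator squares of the clique. By construction the $2$-skeleton of $R(\Gamma)$ is the presentation $2$-complex, and since attaching cells of dimension $\ge 3$ leaves $\Pi_1$ unchanged, van Kampen applied to the $2$-skeleton yields $\Pi_1(R(\Gamma))=G(\Gamma)$.

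It remains to prove local $\cat(0)$, for which by Theorem~\ref{thm:2.14} it suffices to show that the vertex link is a flag simplicial complex. Since $R(\Gamma)$ has a single vertex $*$, there is only one link $L \define \lk(*, R(\Gamma))$ to inspect. Each loop $e_v$ contributes two vertices $v^+, v^-$ to $L$, namely its two local ends at $*$; each commutator square for an edge $(u,v)\in E$ contributes the four edges joining $\{u^+, u^-\}$ to $\{v^+, v^-\}$ in all sign combinations; and more generally each $k$-cube attached for a $k$-clique contributes one $(k-1)$-simplex at each of its $2^k$ corners, with vertex set of the form $\{v_1^{\epsilon_1}, \dots, v_k^{\epsilon_k}\}$.

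For flagness, suppose $\{v_1^{\epsilon_1}, \dots, v_k^{\epsilon_k}\}$ is a pairwise adjacent vertex set in $L$. Pairwise adjacency of $v_i^{\epsilon_i}$ and $v_j^{\epsilon_j}$ forces $v_i \ne v_j$ and $(v_i, v_j) \in E$, so $\{v_1, \dots, v_k\}$ is a $k$-clique of $\Gamma$. By construction a $k$-cube for this clique has been glued in, and its corner with sign pattern $(\epsilon_1, \dots, \epsilon_k)$ supplies exactly the required $(k-1)$-simplex. Hence $L$ is flag, Gromov's criterion applies, and $R(\Gamma)$ is locally $\cat(0)$. The main conceptual point, and the step I expect to be the main obstacle, is to notice that every ``signed'' clique in $L$ must already bound a cube of $R(\Gamma)$; this is precisely what forces the Salvetti construction to insert a cube for \emph{every} clique of $\Gamma$ rather than only for its edges.
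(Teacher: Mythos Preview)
The paper does not actually prove this theorem: it introduces the Salvetti complex $R(\Gamma)$ in the paragraph preceding the statement and then records the theorem with the phrase ``One can show the following more general theorem'', but no argument is given. So there is no proof in the paper to compare against.

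Your argument is correct and is the standard one. The description of $R(\Gamma)$ agrees with the paper's definition (one cube per clique, glued along subcliques), the $\Pi_1$ claim follows since attaching cells of dimension $\geq 3$ does not change the fundamental group, and the link computation is accurate: $\lk(*,R(\Gamma))$ has vertex set $\{v^\pm : v\in V\}$, and a set $\{v_1^{\epsilon_1},\dots,v_k^{\epsilon_k}\}$ spans a simplex precisely when $\{v_1,\dots,v_k\}$ is a clique of $\Gamma$. Your flagness check is exactly right, including the observation that $v^+$ and $v^-$ are never adjacent (no square has two consecutive sides labeled by the same generator). One small point worth making explicit is that the link is genuinely \emph{simplicial}: each signed clique $\{v_1^{\epsilon_1},\dots,v_k^{\epsilon_k}\}$ is realized by exactly one corner of exactly one cube, so there are no multiple simplices on the same vertex set. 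With that noted, Gromov's link condition (Theorem~\ref{thm:2.14}) applies and the proof is complete.
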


%Examples for RAAGS are shown in figure \marginpar{add picture}.

It is not known whether in general an Artin group acts proper and freely on a  $\cat(0)$ cubical complex.

\begin{example}
Compare Figure~\ref{fig_17} for another interesting example of a CAT(0) cubical complex due to Dani Wise.  
\end{example}

\begin{figure}[h]
	\begin{center}
		\resizebox{!}{\textwidth}{\includegraphics{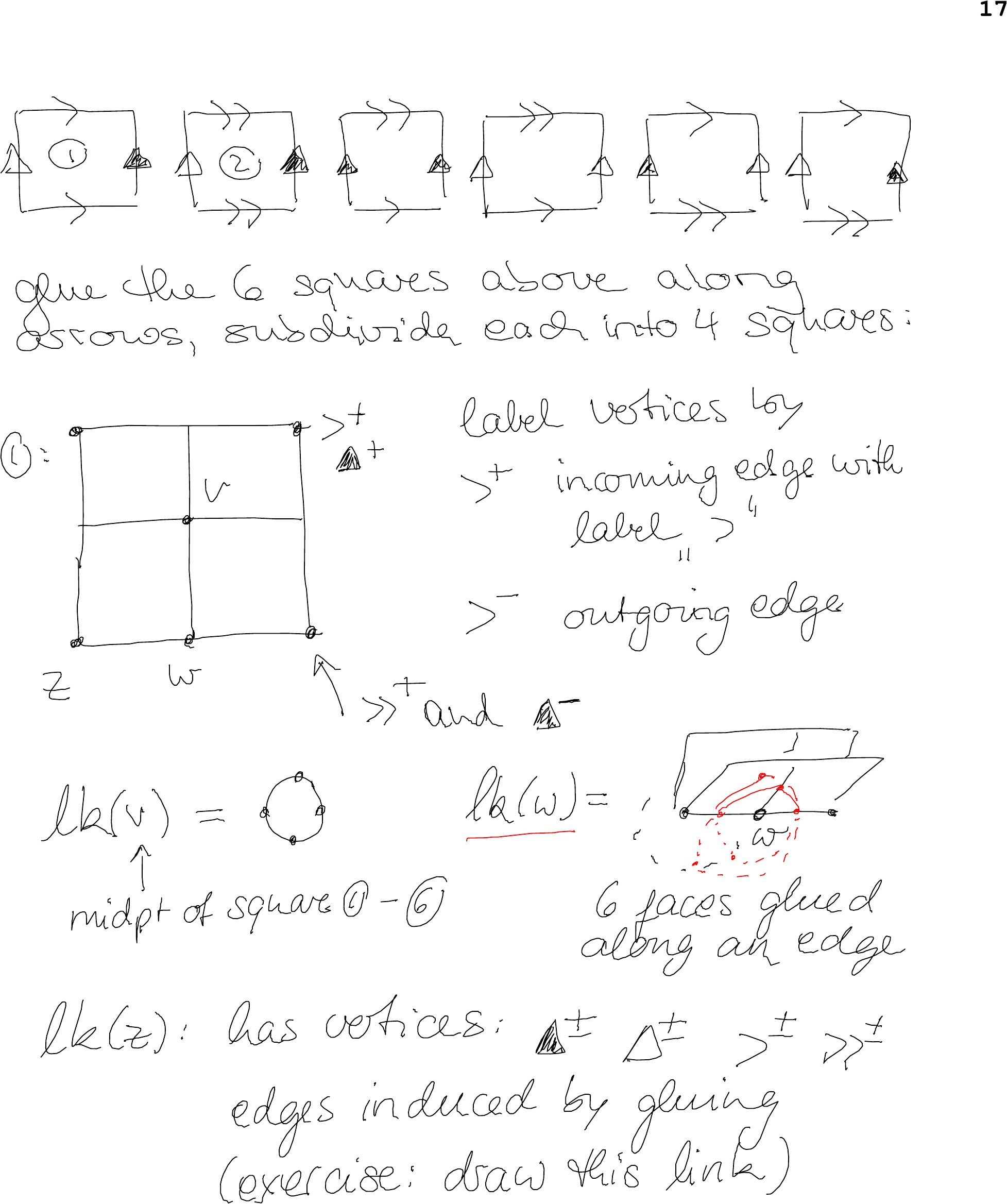}}
		\caption[links]{A nice CAT(0) cube complex.}
		\label{fig_17}
	\end{center}
\end{figure}

%\subsection{Exercises}

%%%%%%%%%%%%%%%%%%%%%%
%
% hyperplanes
%
%%%%%%%%%%%%%%%%%%%%%%

\newpage
\section{Hyperplanes}

One of the nice properties of cubical complexes is that they have hyperplanes. We introduce those in this section and study some of their properties. 

\begin{definition}\label{def:3.1}
A \emph{midcube} (or \emph{hypercube}) $M_i$ of a cube $C= [0,1]^n$ is given by  
$$M_i\define\{x\in C\,\vert\, x_i=\frac{1}{2}\}, i=1,\ldots,n.$$ 

Two edges(=1-cubes) $e, e'$ in a cubical complex $X$ are \emph{square equivalent}, denoted by  $e\sim e'$ if $e$ is opposite $e'$ in some 2-cube in $X$. 
We extend $\sim$ to an equivalence relation on all edges in $X$.\footnote{That is two edges are equivalent if there is a sequence of 2-cubes such that the edges are eventually equivalent along this sequence of cubes.}

A midcube $M$ is \emph{transversal} to an equivalence class $[e]_\sim$ of edges (or simply to $e$)  if the intersection of  $M$ with the 1-skeleton of $X$ consists of midpoints of edges in $[e]_\sim$. We write $M \pitchfork e$.

A \emph{hyperplane} $H$ in $X$ is the union of all midcubes $M$ transversal to a fixed edge $e$. 
$$ H(e)\define \bigcup_{M\pitchfork\ e} M.$$

The \emph{support} $N(H)$. of a hyperplane $H$ is the union of all cubes $C$ in $X$ intersecting $H$ in a midcube. 
\end{definition}

In a $\cat(0)$ cubical complex a hyperplane is a subcomplex intersecting each cube not at all or in precisely one midcube. We will learn more about this later, but first some examples of hyperplanes.  

\begin{example}\label{ex:3.2}
$\cat(0)$ examples:
\begin{enumerate}
 \item Trees
	\begin{figure}[h]
		\begin{center}
		\resizebox{!}{0.15\textwidth}{\includegraphics{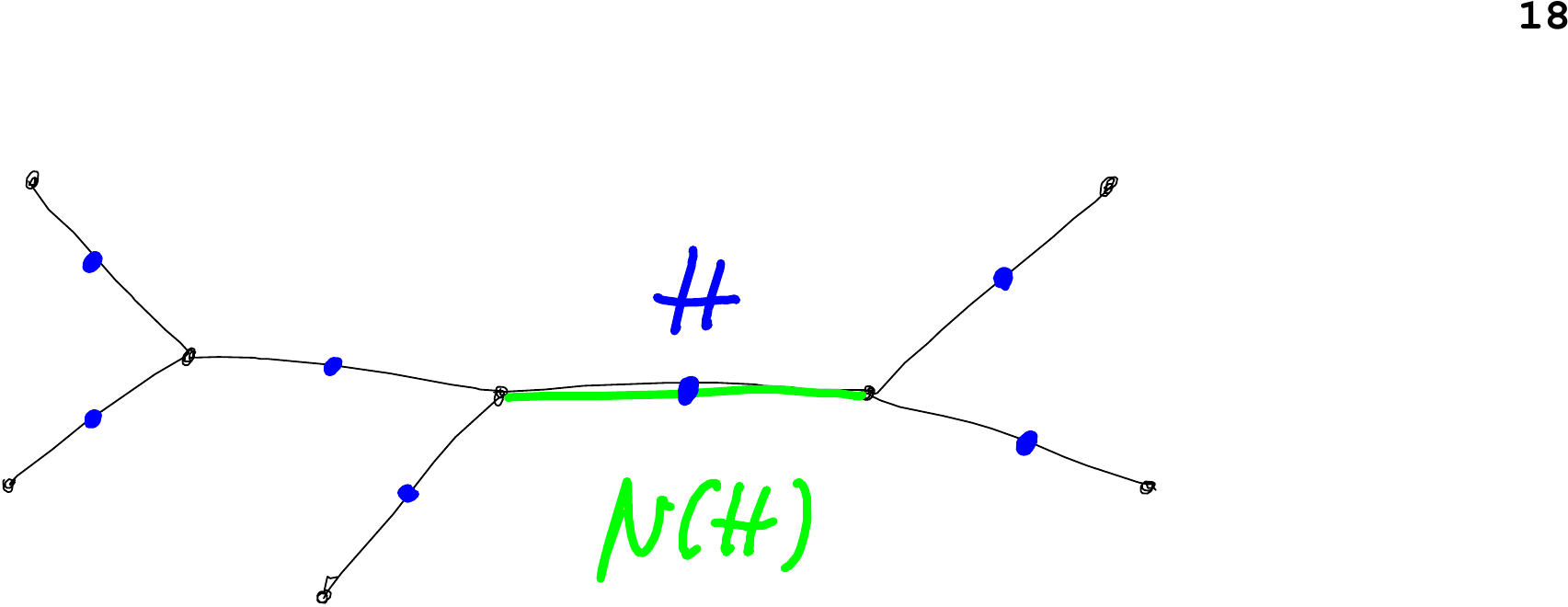}}
		\end{center}
	\end{figure}

 \item $\R^n$ with standard cubing
 \begin{figure}[h]
 	\begin{center}
 		\resizebox{!}{0.175\textwidth}{\includegraphics{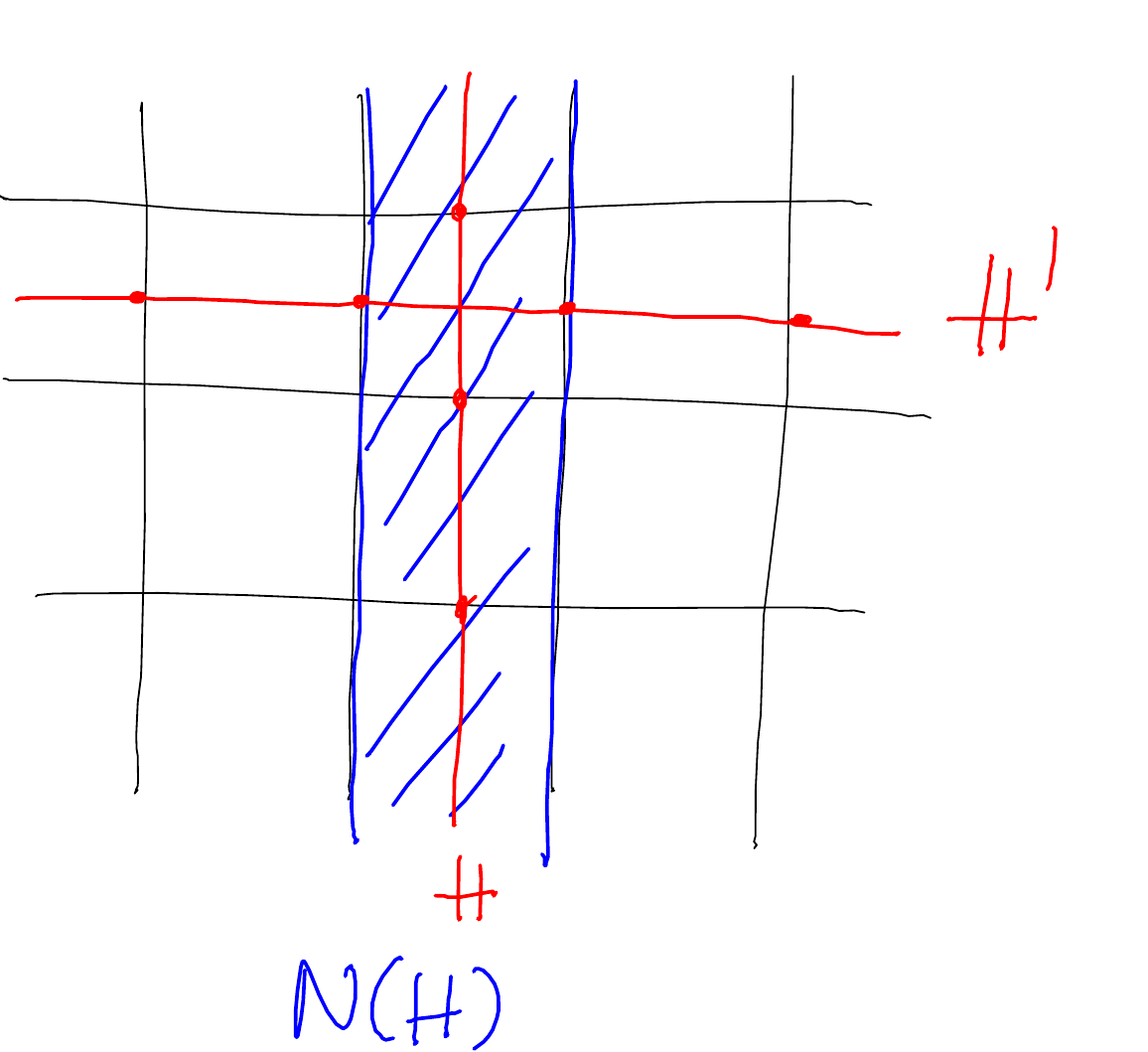}}
 	\end{center}
 \end{figure}

 \item $PSL_2(\Z)$ 
 
  \begin{figure}[h]
	\begin{center}
		\resizebox{!}{0.2\textwidth}{\includegraphics{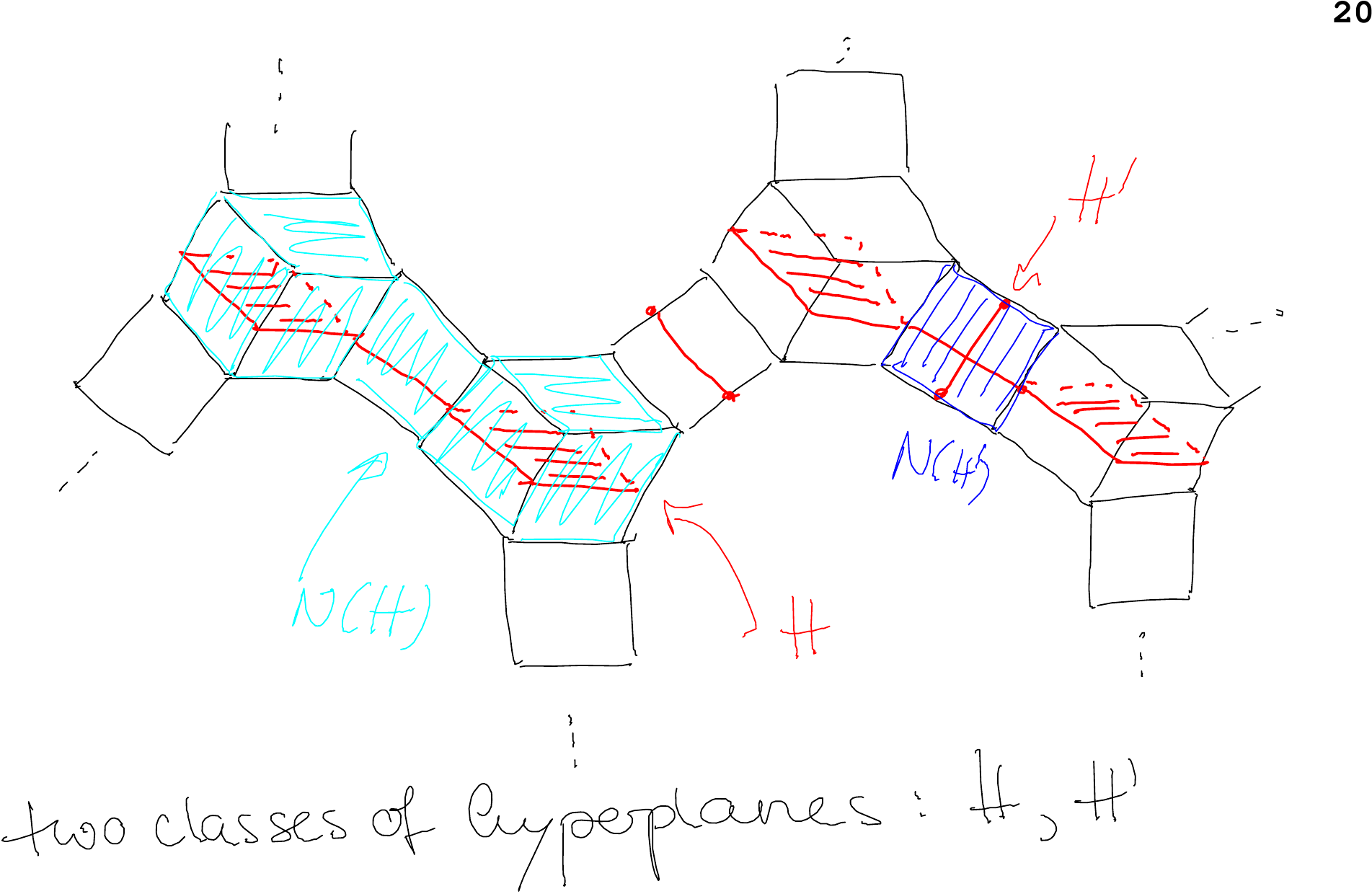}}
	\end{center}
 \end{figure}
 
\end{enumerate}

Non-$\cat(0)$ examples:
\begin{enumerate}
 \item self-intersecting hyperplanes
  \begin{figure}[h]
	\begin{center}
		\resizebox{!}{0.2\textwidth}{\includegraphics{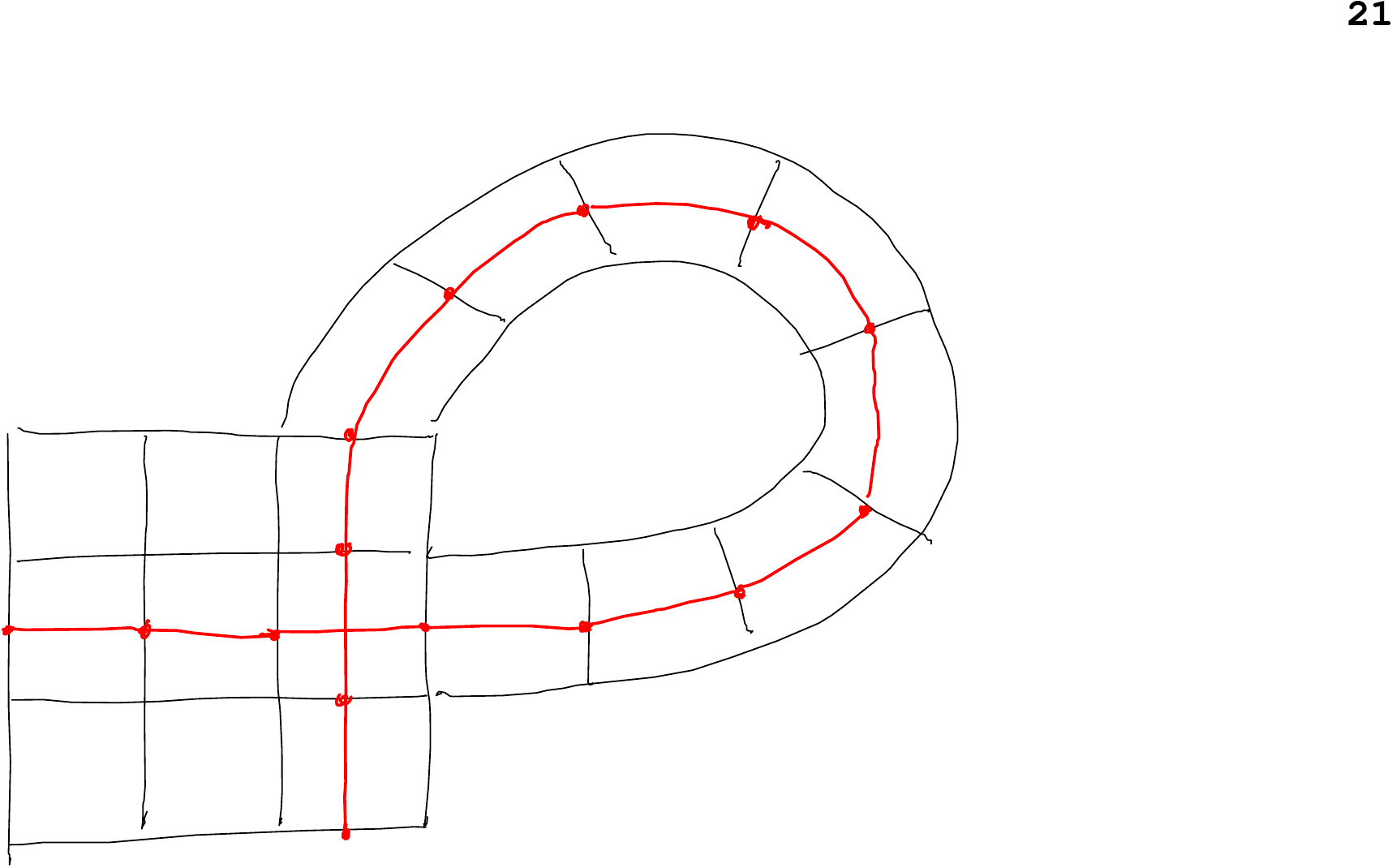}}
	\end{center}
\end{figure}

 \item self-parallel hyperplanes
  \begin{figure}[h]
	\begin{center}
		\resizebox{!}{0.2\textwidth}{\includegraphics{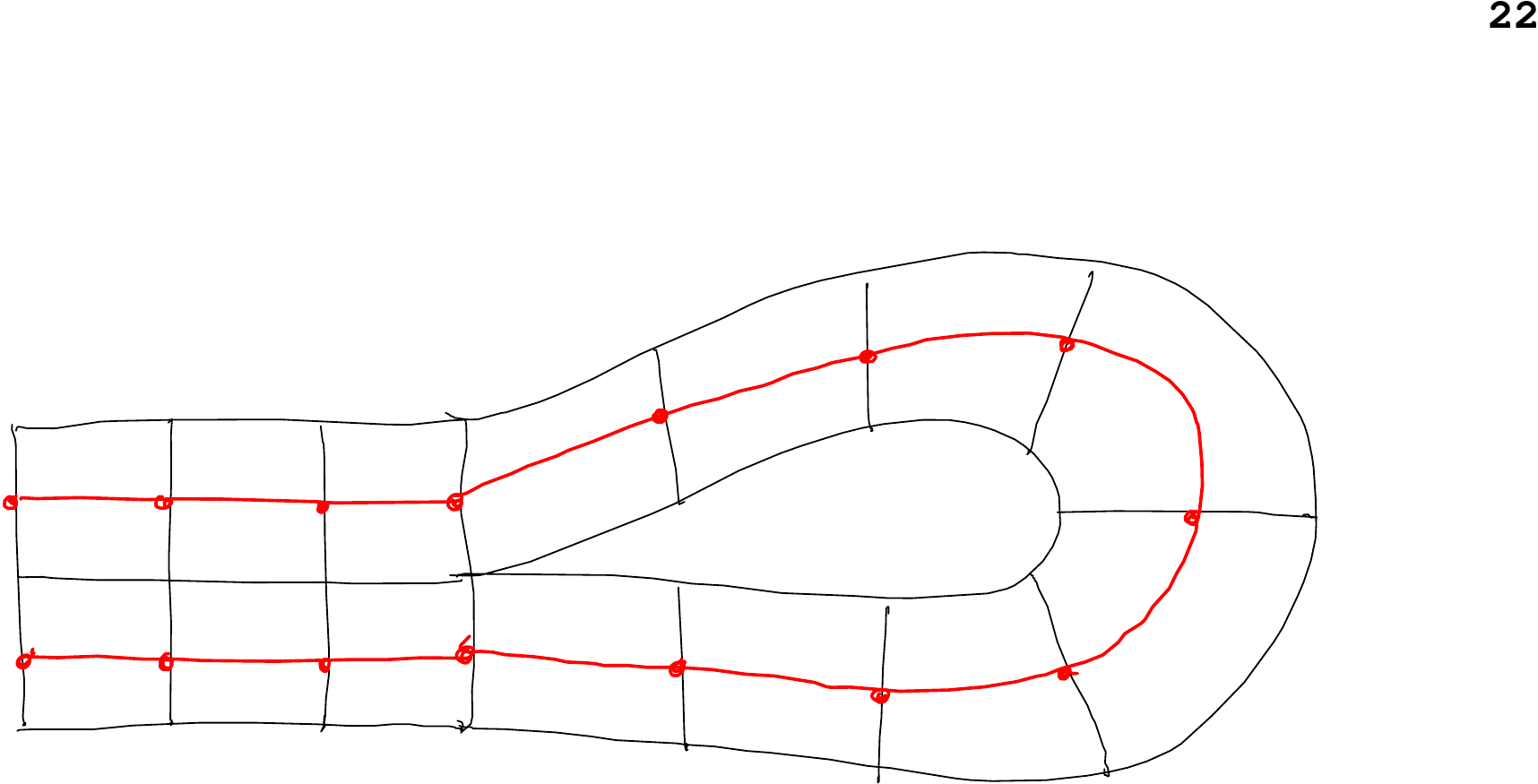}}
	\end{center}
\end{figure}

\end{enumerate}
\end{example}

%\marginpar{lot's of pictures needed for example}

\begin{lemma}\label{le:3.4}
 Every local isometry $g:X\to Y$ from a geodesic metric space $X$ into a $\cat(0)$-space $Y$ is an isometric embedding. 
\end{lemma}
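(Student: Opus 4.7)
The plan is to reduce to a statement about local geodesics. Given any two points $x_0, x_1 \in X$, since $X$ is geodesic we can pick a geodesic $\gamma \colon [0, L] \to X$ with $\gamma(0) = x_0$, $\gamma(L) = x_1$ and $L = d_X(x_0, x_1)$. My strategy is to show that $g \circ \gamma$ is a geodesic in $Y$ of length $L$, which immediately yields $d_Y(g(x_0), g(x_1)) = L$, i.e.\ distance preservation (and hence injectivity).

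First I would show that $g \circ \gamma \colon [0,L] \to Y$ is a \emph{local} geodesic in $Y$. This is the only place the hypothesis that $g$ is a local isometry is used. For each $t \in [0,L]$ pick a neighborhood $U$ of $\gamma(t)$ on which $g$ restricts to an isometric embedding. By continuity of $\gamma$, there exists $\varepsilon > 0$ such that $\gamma([t-\varepsilon, t+\varepsilon]) \subset U$ (intersected with $[0,L]$). Since $\gamma|_{[t-\varepsilon,t+\varepsilon]}$ is already a geodesic in $X$, applying the isometric embedding $g|_U$ preserves the distance $|s - s'|$ between any two parameter values, so $g \circ \gamma|_{[t-\varepsilon,t+\varepsilon]}$ satisfies the geodesic equation~(\ref{eqn:geodesic}) in $Y$. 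This is exactly the definition of $g \circ \gamma$ being a local geodesic.

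Next, I would invoke Proposition~\ref{prop:1.10}.\ref{1.10.3}: in the $\cat(0)$ space $Y$ we have $D_0 = \infty$, so every local geodesic, regardless of its (finite) length, is in fact a genuine geodesic. Applying this to $g \circ \gamma$ yields that $g \circ \gamma$ is a geodesic in $Y$, and in particular
\[
d_Y(g(x_0), g(x_1)) \;=\; d_Y\bigl((g\circ\gamma)(0), (g\circ\gamma)(L)\bigr) \;=\; L \;=\; d_X(x_0, x_1).
\]
Since $x_0, x_1 \in X$ were arbitrary, $g$ preserves distances, and distance-preservation immediately implies injectivity (if $g(x_0) = g(x_1)$ then $d_X(x_0,x_1) = 0$). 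Thus $g$ is an isometric embedding.

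I do not foresee any real obstacle: the argument is essentially the observation that local isometries carry local geodesics to local geodesics, combined with the globalization of local geodesics in $\cat(0)$ spaces already established in Proposition~\ref{prop:1.10}. The only point that requires a moment's care is that the range of parameters for which the local geodesic property is used is the entire $[0,L]$ and one must compactness-argue (or simply point out) that a uniform $\varepsilon(t)$ exists at each $t$; but this is exactly the content of Definition~\ref{def:geodesics} of a local geodesic and of Proposition~\ref{prop:1.10}.\ref{1.10.3}, whose proof already handles the openness-and-closedness argument on $[0,L]$.
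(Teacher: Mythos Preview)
Your proof is correct and follows essentially the same approach as the paper: pick a geodesic $\gamma$ in $X$, observe that $g\circ\gamma$ is a local geodesic in $Y$, and then invoke Proposition~\ref{prop:1.10}.\ref{1.10.3} (with $D_0=\infty$) to upgrade it to a genuine geodesic, yielding $d_Y(g(x),g(y))=d_X(x,y)$. Your version is simply more explicit about why $g\circ\gamma$ is a local geodesic, which the paper states without elaboration.
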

\begin{proof}
Let $g:X\to Y$ be a local isometry  and $\gamma:[a,b]\to X$ a geodesic from $x$ to $y$.Then $g\circ \gamma$ is a local geodesic in $Y$. Since $Y$ is uniquely geodesic \ref{prop:1.10}.\ref{1.10.3} implies that $g\circ\gamma$ is a geodesic and
$$ d_Y(g(x),g(y))=l(g\circ\gamma)=l(\gamma)=d_X(x,y)$$
where the second to last equality holds since the lengths of a curve is invariant under local iometries.
\end{proof} 

\begin{prop}\label{prop:3.3}
For a finite dimensional  $\cat(0)$ cubical complex $X$ the following are true.
\begin{enumerate}
 \item\label{3.3.1} A hyperplane $H$ in $X$ is itself a $\cat(0)$ cubical complex and for each cube $C$ we have that  $H\cap C=\emptyset$ or that $H\cap C$ is a midcube of $C$. 
 \item\label{3.3.2} The support $N(H)$ is isometric to $H\times [0,1]$ and is a convex subcomplex of $X$.  
 \item\label{3.3.3} $X\setminus H$ has precisely two connected components. 
 \item\label{3.3.4} Every hyperplane is closed and convex in $X$. 
\end{enumerate}
\end{prop}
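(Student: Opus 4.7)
The plan is to establish the four items in order, since each depends on its predecessor, and (1) does most of the work. The underlying strategy is to use Gromov's link condition (Theorem~\ref{thm:2.14}) in a combinatorial disk argument: in a $\cat(0)$ cube complex, simple connectivity combines with the flag condition on links to rule out both ``self-intersection'' and ``self-osculation'' of hyperplanes.

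To prove \ref{3.3.1} I would first show the local statement that a cube $C$ contains at most one midcube in $H$, equivalently that two edges $e,e'$ of the same cube $C$ which are square-equivalent must be parallel in $C$. I would argue by contradiction: suppose $e\sim e'$ inside $C$ but $e,e'$ are not parallel; then the square-equivalence exhibits a closed sequence $C=C_0,C_1,\ldots,C_k=C$ of $2$-cubes along which $e$ is carried to $e'$. Composing this with a path in $C$ connecting $e$ to $e'$ produces a loop in the square complex which, by simple connectivity (Cartan--Hadamard, applied to the $\cat(0)$ space $X$), bounds a combinatorial disk. Pick an interior vertex $v$ of this disk whose link in the disk is as small as possible; since the vertex link in $X$ is flag by Theorem~\ref{thm:2.14}, the edges meeting $v$ span a simplex in $\lk(v,X)$, and this spanning simplex corresponds to a higher-dimensional cube of $X$. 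Reducing the disk using this cube yields an induction which collapses the disk and forces $e=e'$. Once this local statement is in hand, the midcubes assembling into $H$ meet along common faces and define a cube subcomplex of $X$; its vertex links are full subcomplexes of flag links, hence flag, and simple connectivity of $H$ will follow once the product structure of \ref{3.3.2} is established, so that $H$ is $\cat(0)$ by Cartan--Hadamard and Theorem~\ref{thm:2.14}.

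For \ref{3.3.2}, item \ref{3.3.1} tells us that every cube $C$ meeting $H$ has $H\cap C$ equal to a unique midcube $M_C$, which canonically factors $C$ as $M_C\times [0,1]$ with the two endpoints corresponding to the two parallel faces of $C$ transversal to the edges in $[e]_\sim$. I would glue these cube-level product decompositions together; consistency of the transverse $[0,1]$-direction across adjacent cubes of $N(H)$ is exactly the condition that $H$ itself is embedded as a subcomplex, which is \ref{3.3.1}. This gives an isometry $N(H)\cong H\times [0,1]$. For convexity of $N(H)$ in $X$: check that the inclusion $N(H)\hookrightarrow X$ is a local isometric embedding, because at every point the link of $N(H)$ is a full subcomplex of the (flag, hence $\cat(1)$) link of $X$; combine this with Lemma~\ref{le:3.4} and the fact that $N(H)\cong H\times[0,1]$ is itself $\cat(0)$ and simply connected to conclude that geodesic segments between points of $N(H)$ computed in $N(H)$ remain geodesics in $X$, hence that $N(H)$ is convex. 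Item \ref{3.3.4} then follows immediately: $H$ is a subcomplex so it is closed, and the slice $H\times\{1/2\}$ is convex in the Euclidean product $H\times[0,1]$, and convexity of $N(H)$ in $X$ transfers this to $H$.

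Finally, for \ref{3.3.3}, I would use the product structure: $N(H)\setminus H$ splits as $H\times[0,1/2)\sqcup H\times(1/2,1]$, which gives at most two local sides of $H$. Define a sign function $\sigma\colon X\setminus H\to\{\pm 1\}$ cube by cube using which half of $C=M_C\times[0,1]$ a point lies in when $C$ meets $H$, and extend by constancy along edges disjoint from $H$. I would verify that $\sigma$ is well-defined (independent of choices) by running the same flag-plus-disk argument as in \ref{3.3.1}: a disagreement would produce a combinatorial loop crossing $H$ an odd number of times, and filling this loop by a disk using simple connectivity and the flag condition yields a contradiction. Since both values of $\sigma$ are attained on $N(H)\setminus H$, there are exactly two components.

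The main obstacle is the combinatorial disk argument underlying \ref{3.3.1}: turning the simple connectivity given by $\cat(0)$ into a combinatorial filling of loops of $2$-cubes by higher-dimensional cubes, and then repeatedly collapsing that filling using the flag condition, is the core geometric content; once that is in place, \ref{3.3.2}--\ref{3.3.4} are essentially bookkeeping with product structures and Lemma~\ref{le:3.4}.
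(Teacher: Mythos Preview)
Your approach is correct in outline and genuinely different from the paper's. The paper proceeds in the order $(4)\to(2)\to(1)\to(3)$ and is almost purely metric: for \ref{3.3.4} it passes to the universal cover $\tilde X_0$ of $N(H)$, observes that in $\tilde X_0$ each cube meets the lifted hyperplane in a single midcube (else $\tilde X_0$ would not be simply connected), so reflection across $\tilde H$ is a global isometry, and then invokes Proposition~\ref{prop:1.14} to conclude that $\tilde H=\fix(f)$ is closed and convex; Lemma~\ref{le:3.4} pushes this down to $X$. For \ref{3.3.3} the paper uses the nearest-point projection $\pi_e$ onto a single edge $e$ and shows $\pi_e^{-1}(\{m\})=H$, so the two halves of $e\setminus\{m\}$ pull back to the two components of $X\setminus H$. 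Your route is the combinatorial one (essentially Sageev's, which the paper in fact cites as an alternative right after its proof): disk diagrams plus the flag condition for \ref{3.3.1}, a parity/sign function for \ref{3.3.3}. The paper's argument is slicker once the $\cat(0)$ toolbox (fixed-point sets, projections) is available and sidesteps disk diagrams entirely; your argument stays closer to the combinatorics and the disk technique is reusable elsewhere in the subject.

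One small logical wrinkle in your write-up: you defer simple connectivity of $H$ to the product structure in \ref{3.3.2}, but then in \ref{3.3.2} you invoke ``$N(H)\cong H\times[0,1]$ is itself $\cat(0)$ and simply connected'' before applying Lemma~\ref{le:3.4}, which is circular as stated. The fix is that Lemma~\ref{le:3.4} only requires the domain to be \emph{geodesic}: $N(H)$ is a finite-dimensional cube complex, hence geodesic by Proposition~\ref{prop:2.9}, so Lemma~\ref{le:3.4} already applies; convexity of $N(H)$ then yields that $N(H)$ is $\cat(0)$, hence simply connected, hence $H$ is simply connected via the product retraction. Alternatively, prove simple connectivity of $H$ directly with the same disk-filling argument you use for embeddedness, as Sageev does.
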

%\marginpar{end of lecture 23.10}
\begin{proof}
We start with the proof of \ref{3.3.4} which is due to Charney and which we've taken from \cite{Roller}. 
Let $H$ be a hyperplane defined by $[e]_\sim$ and let $X_0\define N(H)$ with $\tilde X_0$ being the universal cover of $X_0$ with the induced cubical structure. 
The class of $e$ lifts to an equivalence class $[\tilde e]_\sim$ in $X_0$. Observe that in $\tilde X_0$ each cube contains exactly one midcube transversal to $\tilde e$ since otherwise $\tilde X_0$ would not be simply connected. % self-intersection of $\tilde X_0$ induces a loop 
Hence there exists an isometry $f:\tilde X_0 \to \tilde X_0$ which is uniquely defined by the property that it maps a cube to its reflected image along the midcube transversal to $\tilde e$. 
The fixed point set $\fix(f)=\tilde H$, where $\tilde H$ is the hyperplane defined by $\tilde e$. 
Proposition~\ref{prop:1.14} now implies that $\tilde H$ is closed an convex in $\tilde X_0$. 
>From \ref{le:3.4} we obtain that $g:\tilde X_0\to X_0\hookrightarrow X$,  with $g$ being the concatenation of the covering map from $\tilde X_0$ to $X_0$ followed by the local isometric embedding of $X_0$ into $X$, is in fact an isometric embedding itself. Since $g(\tilde H)=H$ we may deduce that $H$ is closed and convex. 

Assertion~\ref{3.3.2} can easily be deduced from \ref{3.3.4} and its proof using that for a midcube $M$ in $C$ one has $C\cong M\times [0,1]$.  

To verify \ref{3.3.1} observe first that the intersection of $H$ with a cube is a unique midcube by \ref{3.3.4}. Hence to show that $H$ is a cubical complex it is enough to convince oneself that $[e]_\sim$ induces the following equivalence relation on midcubes:
two midcubes $M_i\subset C_i$, i=1,2, are \emph{equivalent}, denoted by $M_1\sim M_2$, if there exists a common face $F$ of $C_1$ and $C_2$ such that $M_1\cap F=M_2\cap F$. We write $[M]_\sim$ for the equivalence class of a midcube $M$ under the equivalence relation induced by $\sim$. The $\cat(0)$-property follows then from \ref{3.3.4} as convex subsets of $\cat(0)$-spaces are again $\cat(0)$. 

We now prove \ref{3.3.3}. Let $H$ be again a  hyperplane defined by $[e]_\sim$ and $m$ the midpoint of the edge $e$. An edge $e$ is closed and convex in $X$ hence there is by Proposition~\ref{prop:1.13} a  projection $\pi_e:X\to X$ such that $d(x, \pi_e(x))=\inf_{y\in e} d(x,y)$. Compare Figure~\ref{fig_24} and \ref{fig_25}. 

  \begin{figure}[h]
	\begin{center}
		\resizebox{!}{0.4\textwidth}{\includegraphics{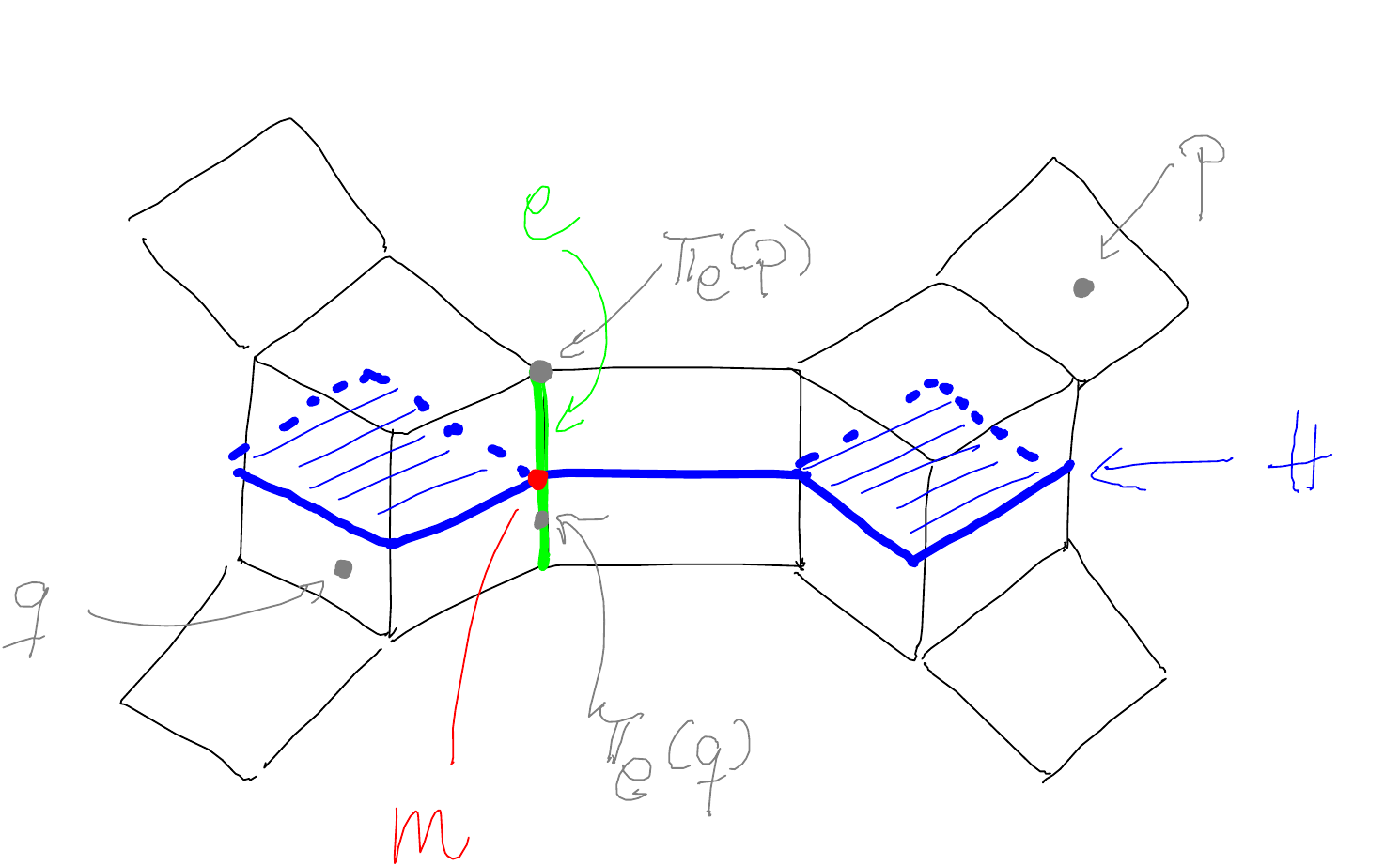}}
		\caption[links]{Illustration of the proof of \ref{prop:3.3}}
\end{center}
		\label{fig_24}
\end{figure}

We claim: $\pi_e^{-1}(\{m\})=H$ and $\pi_e(H)=\{m\}$.\newline
Choose $x\in H$, then $\pi_H(\pi_e(x))=m$ and 
$$d(x,m)=d(\pi_H(x), \pi_H(\pi_e(x)))\leq d(x,\pi_e(x)),$$
where $\pi_H$ is the projection onto $H$ defined in \ref{prop:1.13}. 
The point $\pi_e(x)$ is the unique point in $e$ having minimal distance to $x$. Hence the claim. 

Suppose that $\pi_e(x)=m$ for some $x\in X$. We prove by contradiction that $x\in H$. Assume the contrary. Then the geodesic $\gamma$ connecting $x$ and $m$ shares only $m$ with $H$. Moreover $\gamma$ and $e$ do not form a right angle at $m$ (otherwise $\gamma\cap H$ would contain more than one point). But then $\gamma$ may be shortened to a curve $\tilde\gamma$  perpendicular  to $e$ and connecting $x$ with $e$ which contradicts \ref{prop:1.13}.

 \begin{figure}[h]
	\begin{center}
		\resizebox{!}{0.4\textwidth}{\includegraphics{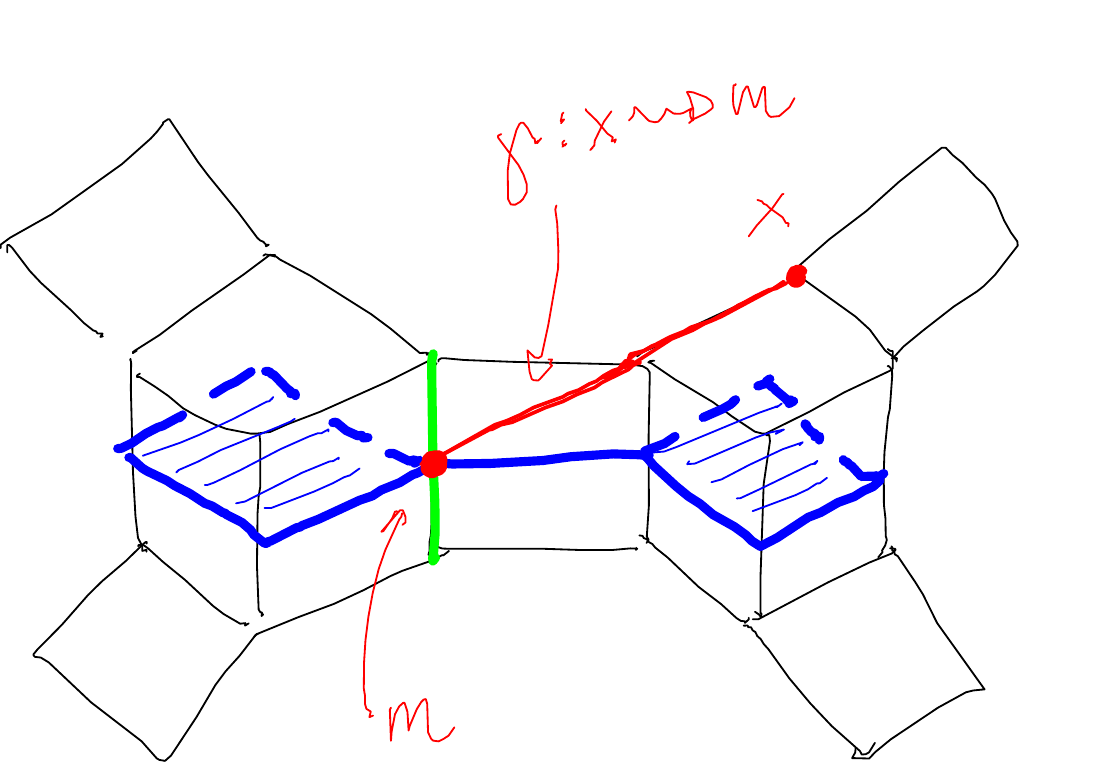}}
	\caption[links]{Illustration of the proof of \ref{prop:3.3}}.
	\end{center}
		\label{fig_25}
\end{figure}

We thus obtain that $\pi_e(X\setminus H) = e\setminus\{m\}$. Recall that $X$ is a geodesic space and hence each point in $X\setminus H$ can be connected by a geodesic to precisely one of the two pieces of $ e\setminus\{m\}$. 
\end{proof}

Sageev has shown  \ref{prop:3.3}.\ref{3.3.1} with a more direct argument. Studying links of vertices in $H$ it is easy to see that $H$ is locally $\cat(0)$ and simply connectedness can be obtained using disk-diagram techniques. Compare~\cite{Sageev}. 

%\marginpar{mehr Details zum alternativen Beweis?}

\begin{definition}\label{def:3.4}
Given a hyperplane $H$ the connected components of $X\setminus H$ are called  \emph{(open) halfspaces} of $X$. We write $h$ and $h^c$ for the two halfspaces determined by $H$.
\end{definition}

Observe  that by definition $X=h\cup H\cup h^c$. 

\begin{prop}(Intersections of hyperplanes)\label{prop:3.5}
Let $X$ be a $\cat(0)$ cubical complex and $H_1, \ldots, H_m$ hyperplanes in $X$ such that $H_i\cap H_j\neq \emptyset$ for all $i\neq j$. Then 
$$ \bigcap_{i=1}^m H_i\neq \emptyset.$$
If $\dim X =n< \infty$ each family $\{H_i\}_{i\in I}$ with $H_i\cap H_j\neq \emptyset$ for all $i\neq j\in I$ contains at most $n$ elements. 
\end{prop}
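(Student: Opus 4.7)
The plan is to prove the common-intersection assertion by first treating the decisive case $m=3$ directly, then obtaining the general case by induction on $m$, and finally reading off the dimension bound from the first assertion. The base case $m=2$ is the hypothesis.

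For the reduction of the general case to $m=3$ I would work inside $H_1$, which is itself a finite-dimensional $\cat(0)$ cube complex by Proposition~\ref{prop:3.3}.\ref{3.3.1}. For each $i\geq 2$, whenever a cube $C$ of $X$ is crossed by both $H_1$ and $H_i$, the set $H_1\cap H_i\cap C$ is the intersection of two distinct midcubes of $C$ and is readily checked to be a midcube of the midcube $H_1\cap C$; hence $H_1\cap H_i$ is itself a hyperplane of the $\cat(0)$ cube complex $H_1$. Thus $H_1\cap H_2,\dots,H_1\cap H_m$ form a family of $m-1$ hyperplanes in $H_1$, and applying the induction hypothesis inside $H_1$ produces a common point, provided they pairwise intersect there. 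But pairwise intersection $(H_1\cap H_i)\cap(H_1\cap H_j)=H_1\cap H_i\cap H_j\neq\emptyset$ is exactly the $m=3$ statement applied to the triple $(H_1,H_i,H_j)$.

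The crux is thus the case $m=3$. My strategy is to exhibit a single cube of $X$ of dimension $\geq 3$ in which each of $H_1,H_2,H_3$ cuts a midcube; its barycenter then lies in the triple intersection. From each pairwise intersection $H_i\cap H_j$ the definition of a hyperplane yields a $2$-cube in $X$ transversely crossed by both, and hence a vertex at which two edges belonging to the respective dual equivalence classes span a square. The main obstacle is to move all three such pairwise incidences to one common vertex $v$: once three edges at $v$ pairwise span $2$-cubes, the corresponding three vertices of $\lk(v,X)$ are pairwise adjacent, and by Gromov's link condition (Theorem~\ref{thm:2.14}) they span a $2$-simplex, which lifts to a $3$-cube of $X$ crossed by all three hyperplanes. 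I would carry out the relocation using the product decomposition $N(H_i)\cong H_i\times[0,1]$ of Proposition~\ref{prop:3.3}.\ref{3.3.2}, which allows one to parallel-transport incidences along the carrier of each hyperplane.

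Granting the first assertion, the dimension bound is immediate. Pick any $p\in\bigcap_{i\in I}H_i$ and let $C$ be the unique cube of $X$ of minimal dimension containing $p$. Each $H_i$ passes through $p$ and hence meets $C$ in a midcube of $C$ by Proposition~\ref{prop:3.3}.\ref{3.3.1}. Distinct hyperplanes belong to distinct equivalence classes of edges and hence select distinct coordinate directions of $C$, yielding $|I|$ pairwise distinct midcubes of $C$. Since an $n_C$-cube has exactly $n_C$ midcubes, we conclude $|I|\leq\dim C\leq n$.
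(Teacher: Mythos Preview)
Your inductive reduction and the dimension bound are exactly how the paper proceeds: one passes to the $\cat(0)$ cube complex $H_m$ (you use $H_1$), observes that each $H_i\cap H_m$ is a hyperplane there, and invokes the induction hypothesis once the pairwise intersections $(H_i\cap H_m)\cap(H_j\cap H_m)=H_i\cap H_j\cap H_m$ are known to be nonempty, which is precisely the case $m=3$. The dimension count via midcubes of a single cube is likewise identical.

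Where your argument and the paper part ways is the base case $m=3$, and here your proposal has a genuine gap. You correctly identify the obstacle---bringing the three pairwise $2$-cubes to a common vertex so that the flag condition produces a $3$-cube---but the sentence ``I would carry out the relocation using the product decomposition $N(H_i)\cong H_i\times[0,1]$'' does not constitute a proof. The product structure lets you slide a single edge dual to $H_i$ to any vertex adjacent to $H_i$, but what you need is to slide it \emph{while remaining adjacent to an edge dual to $H_j$}, i.e.\ to move within $N(H_i)\cap N(H_j)$, and then to coordinate this with the third hyperplane. Making this precise is essentially as hard as the proposition itself; without a concrete mechanism (a distance-decreasing procedure, a disk-diagram argument, or similar) the step is circular.

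The paper bypasses this combinatorial difficulty entirely with a short metric argument. Assuming $H_1\cap H_2\cap H_3=\emptyset$, one chooses $x\in H_1\cap H_3$, $y\in H_2\cap H_3$, $z\in H_1\cap H_2$ and forms the geodesic triangle with sides $\overline{xy}\subset H_3$, $\overline{zx}\subset H_1$, $\overline{zy}\subset H_2$ (using convexity of hyperplanes). Transversality of the hyperplanes forces the angles at $x$ and at $y$ to be $\tfrac{\pi}{2}$, and a Euclidean comparison triangle cannot have two right angles; this contradicts the $\cat(0)$ inequality. This is the key idea you are missing: the $\cat(0)$ geometry does the work directly, with no need to manufacture a $3$-cube by hand.
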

\begin{proof}
The proof is by induction on $m$. 
We consider $m=3$ as a smallest case. Let $H_1, H_2$ and $H_3$ be hyperplanes. Suppose further that $H_1\cap H_2\cap H_3=\emptyset$. 
Let $\gamma$ be a geodesic connecting $H_3\cap H_1$ with $H_2\cap H_3$ inside the hyperplane $H_3$ and let $x$ and $y$ denote its endpoints. Compare Figure~\ref{fig_27}.  

\begin{figure}[h]
	\begin{center}
		\resizebox{!}{0.6\textwidth}{\includegraphics{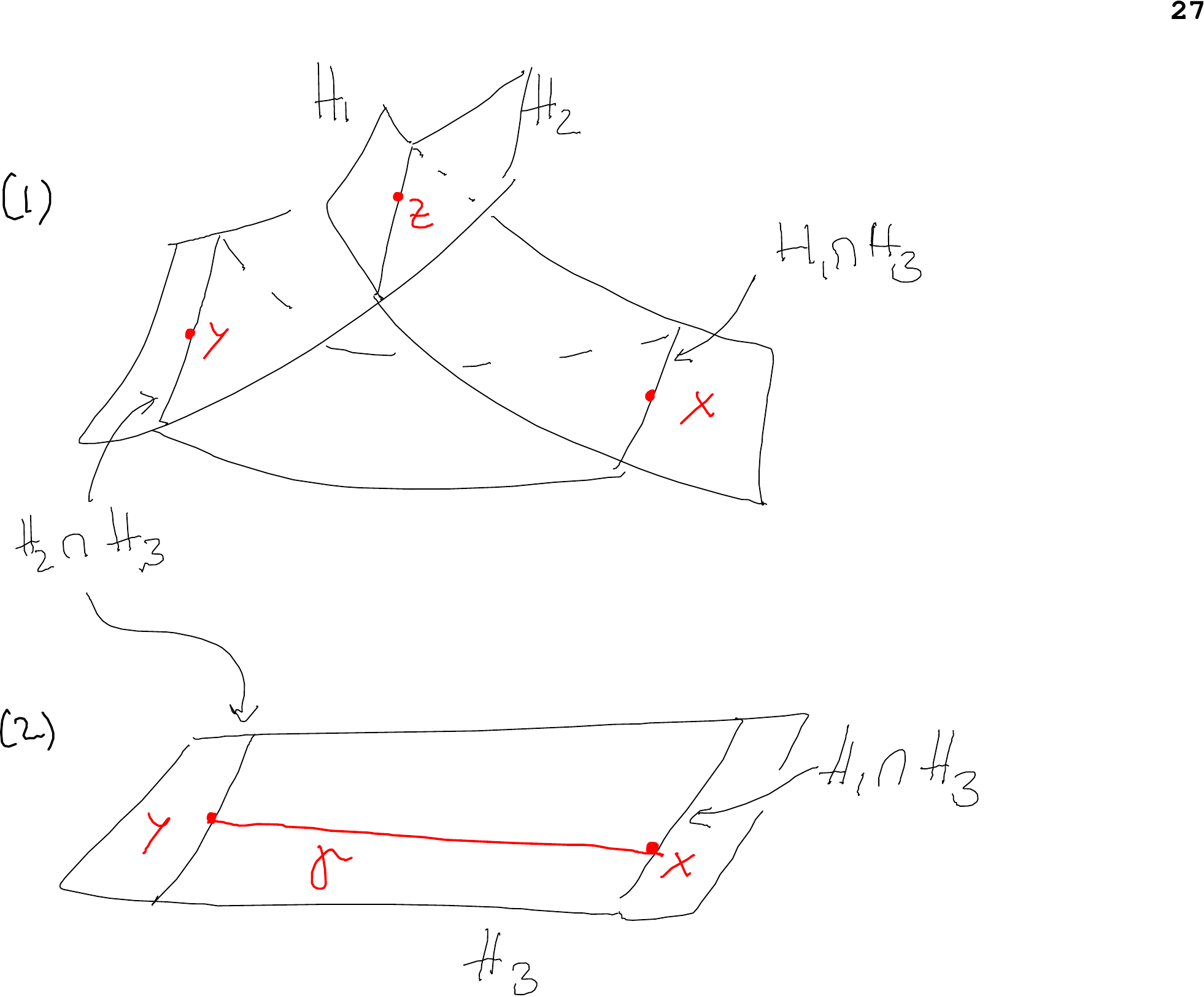}}
		\caption[links]{Illustration of the proof of \ref{prop:3.5}}.
	\label{fig_27}
	\end{center}
\end{figure}

Let now $z$ be a point in $H_1\cap H_2$ and consider geodesics $\overline{zx}\subset H_1$ and $\overline{zy}\subset H_2$. Then consider the triangle $\Delta$ spanned by these geodesics in the vertices $x,y,z$. 
The three hyperplanes intersect transversally therefore the angles in $\Delta$ at $x$ and $y$ are $\frac{\pi}{2}$. This is a contradiction as such a triangle does not exist in $\R^2$ but we have assumed the space to be CAT(0). 

For the induction step suppose $m>3$ and that the assertion is true for $m-1$. 
Let $H_1, H_2, \ldots, H_m$ be a collection of hyperplanes. Hyperplane $H_m$ is in particular a cubical complex itself and intersection $H_i\cap H_m$ are hyperplanes in $H_m$. By induction hypothesis the collection $\{H_i\cap H_m\}_{i=1, \ldots, m-1}$ has a non-empty intersection. Thus $\cap_{i=1}^m H_i\neq\emptyset$. 

It remains to prove that if the dimension of $X$ is $n$, then there exist only finitely many elements in such a collection. Let $\{H_i\}_{i\in I}$ be a collection of hyperplanes with pairwise nonempty intersection. Then $\cap_{i\in I}H_i\neq\emptyset$, i.e. there exist $\vert I\vert$-many midcubes which intersect nontrivially. Each $n$-cube has only $n$ such midcubes. Thuis $\vert I\vert \leq n=dim(X)$. 
\end{proof}

 \begin{figure}[h]
	\begin{center}
		\resizebox{!}{0.4\textwidth}{\includegraphics{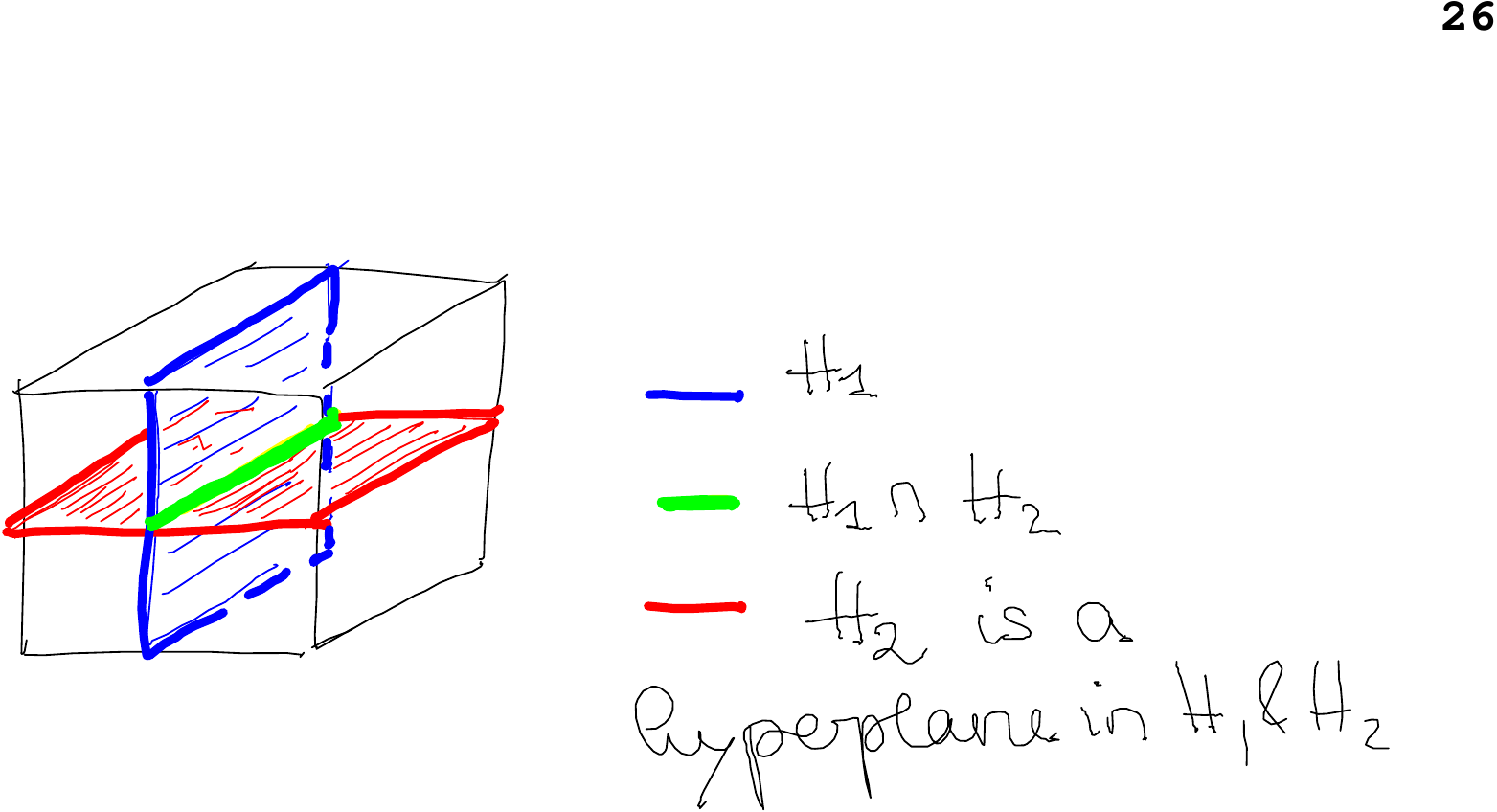}}
		\caption[links]{Illustration of the proof of \ref{prop:3.5}}.
	\end{center}
	\label{fig_26}
\end{figure}

%%%%%%%%%%%%%%%%%%%%%%
%
% Halfspace systems vs.\  cube complexes
%
%%%%%%%%%%%%%%%%%%%%%%

\newpage

\section{Halfspace systems vs.\  cube complexes}

The main goal of this section is to define halfspace systems and construct from these examples of $\cat(0)$ cube complexes. 

\begin{definition}\label{def:4.1}
Consider a tripel $(H, \less, \star)$ with $H$ a set, $\less$ a partial order on $H$ and $\star$ a order reversing involution on $H$, that is a map $\star:H\to H:h\mapsto h^\star$ such that $h^\star\less g^\star$ if $g\less h$. Assume further that the following two conditions are satisfied:
\begin{enumerate}
 \item\label{4.1.1}(finite interval condition)
		$\forall h_1, h_2\in H$ there exist only finitely many $k\in H$ with $h_1\less k\less h_2$. 
 \item\label{4.1.2}(nesting condition)
		for $h,k\in H$ at most one of the following is true:
		$h\less k$, $h\less k^\star$, $h^\star\less k$ and $h^\star\less k^\star$.
\end{enumerate}
Then $(H, \less, \star)$ is a \emph{halfspace system}, elements $h\in H$ are called \emph{half-spaces}. If none of the conditions in \ref{4.1.2} is satified by a fixed pair $h,k$ we say that $h$ and $k$ are \emph{transversal}.

Defining an equivalence relation $h\sim h^\star$ we obtain a set $\bar H\define H\diagup_\sim$ of \emph{hyperplanes} $\bar h\in \bar H$, where we may identify an equivalence class $\bar h$ with the defining set $\{h, h^\star \}$. The \emph{boundary map} $\delta:H\to\bar H$ assigns to a halfspace $h$ its hyperplane $\bar h =\{h, h^\star\}$ 
\end{definition}

The following example illustrates why we may think of $\bar h$ as separating the halfspaces $h$ and $h^\star$ in some kind of ambient space. 

%\marginpar{evtl. alternative def für \ref{4.1.2}} % siehe folgendes Lemma:
% 
% \begin{lemma}
% The nesting condition is equivalent to: 
% Not at the same time  $h\less k$ and $k\less h$.
% \end{lemma}
%
% Lemma als def verwenden und def beweisen. 
%

\begin{example}\label{ex:4.2}
The halfspaces  $h$ and $k$ are transversal, and $h, h'$ are nested. 
\end{example}

\begin{figure}[h]
	\begin{center}
		\resizebox{!}{0.5\textwidth}{\includegraphics{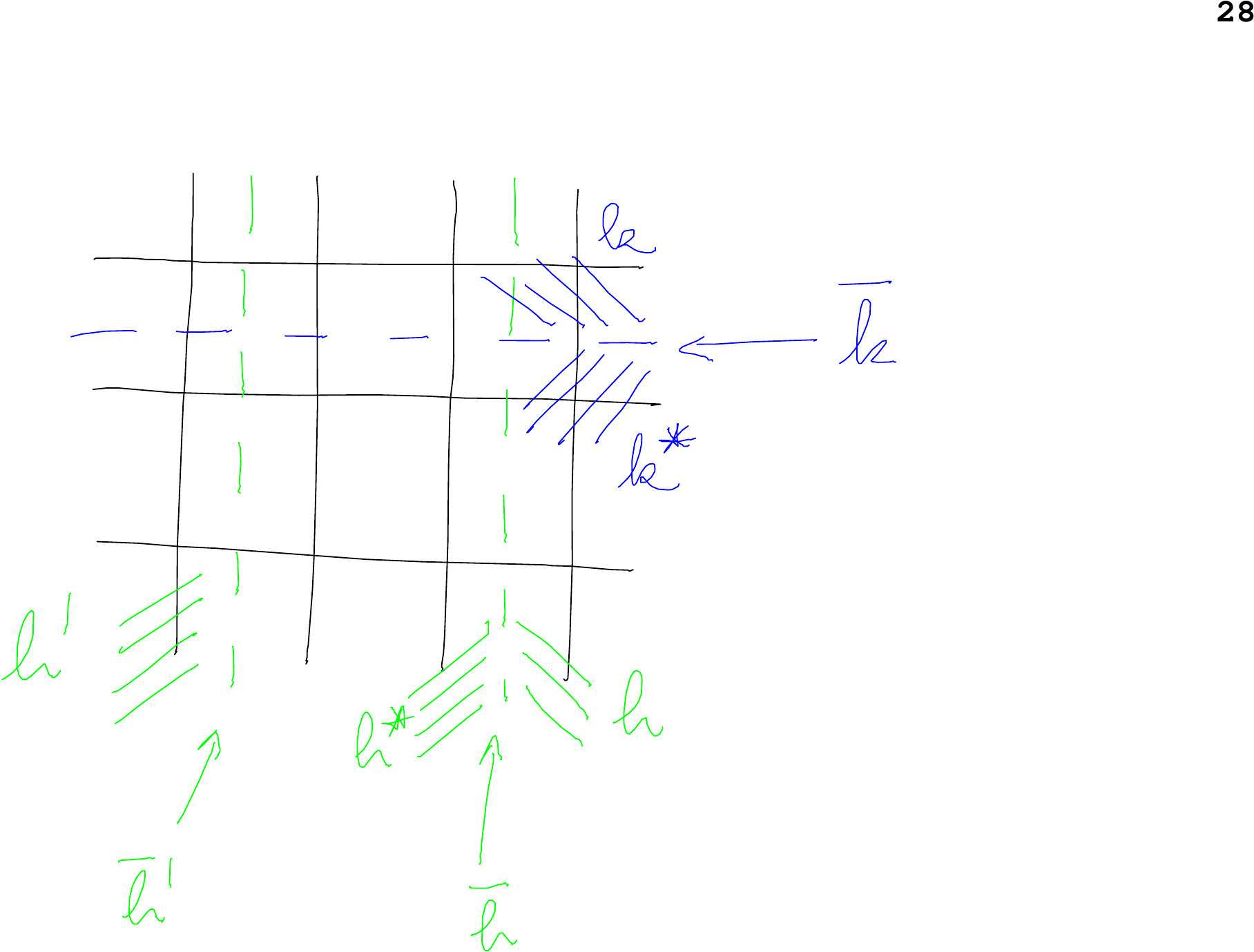}}
		\caption[links]{An example of nested halfspaces is the pair $h,h'$ and $h,k$ are transversal ones. }.
	\end{center}
	\label{fig_28}
\end{figure}

Or next main goal is to prove the following theorem.
The construction of a cubical complex from a half-space system is due to Sageev \cite{Sageev}. But other authors have also contributed to the current formulation. See also Chatterji-Niblo \cite{CN} or Nica \cite{Nica}. 
 
\begin{thm}\label{thm:4.3}
A halfspace system defines in a natural way a cubical complex $X(H)$. Every connected component of $X(H)$ is $\cat(0)$ and maximal cubes are in a one-to-one correspondence with maximal  families of pairwise transversal hyperplanes. The dimension of a maximal cube equals the number of hyperplanes in such a family. 
\end{thm}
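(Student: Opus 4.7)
The plan is to produce $X(H)$ by the standard Sageev construction and then verify the CAT(0) property via Gromov's link condition from Theorem~\ref{thm:2.14}. I will proceed in four steps.

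\textbf{Step 1: Vertices as coherent orientations.} First I would call a subset $V\subset H$ a \emph{vertex} if (a) for every hyperplane $\bar h\in \bar H$, exactly one of the two halfspaces of $\bar h$ lies in $V$, and (b) $V$ is closed upward under $\less$: if $h\in V$ and $h\less k$ then $k\in V$. (One could also impose a descending-chain condition to ensure the complex is locally finite, but coherence suffices for the construction itself.) The finite interval condition \ref{def:4.1}.\ref{4.1.1} guarantees that such orientations exist and that pairs differing only at a minimal halfspace still yield vertices. I take the $0$-skeleton of $X(H)$ to be this set.

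\textbf{Step 2: Higher cubes.} I would declare two vertices $V,V'$ joined by an edge whenever $V\triangle V' = \{h,h^\star\}$ for a single hyperplane $\bar h$; by coherence, $h$ must then be minimal in $V$ (resp.\ $V'$) in the appropriate sense. Inductively, given a vertex $V$ and a set $\{\bar h_1,\ldots,\bar h_k\}$ of pairwise transversal hyperplanes each of which can be individually flipped at $V$, I glue in a $k$-cube $[0,1]^k$ whose vertices are the $2^k$ orientations obtained by independently flipping subsets of the $\bar h_i$. The nesting condition \ref{def:4.1}.\ref{4.1.2} is precisely what is needed so that such independent flips produce valid coherent orientations, and gluing is along faces because a face of such a cube is determined by a smaller transversal family. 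This gives a cubical complex in the sense of Definition~\ref{def:2.1}, and by construction maximal cubes containing $V$ are in bijection with maximal pairwise-transversal families of flippable hyperplanes at $V$, with dimension equal to the size of the family.

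\textbf{Step 3: Links are flag.} Fix a vertex $V$ and consider $\lk(V, X(H))$. Its vertices are the hyperplanes $\bar h$ flippable at $V$, and by the cube-gluing rule a collection $\{\bar h_1,\ldots,\bar h_k\}$ spans a simplex iff they are pairwise transversal. Flagness is therefore immediate: any set of vertices pairwise connected by an edge is a pairwise-transversal family, and such a family already spans a simplex by Step 2. Combined with Proposition~\ref{prop:2.3}, this shows $X(H)$ is finite-dimensional when the transversal families are bounded, and in any case the finite-dimensional subcomplexes satisfy Gromov's link condition, so $X(H)$ is locally CAT(0).

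\textbf{Step 4: CAT(0) on components.} By the Cartan–Hadamard theorem it suffices to show each connected component of $X(H)$ is simply connected. This is the step I expect to be the main obstacle. The cleanest route is a disk-diagram argument: given an edge loop $\gamma$ in a component, represent it by a minimal-area combinatorial disk diagram $D$ and show that the hyperplanes dual to the edges of $D$ cannot self-intersect or self-osculate, using the nesting condition to rule these out — any self-intersection would force two halfspaces to violate \ref{def:4.1}.\ref{4.1.2}. Minimality then forces $D$ to have no $2$-cells at all, contradicting $\gamma$ being nontrivial. The technical heart is keeping track of how the orientations along $\gamma$ change edge by edge and translating geometric configurations in $D$ into order-theoretic statements on $H$; once this dictionary is set up, the rest is bookkeeping, and the maximality/dimension statements follow from Step 2 without further work.
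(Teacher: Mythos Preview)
Your Steps 1--3 match the paper's approach closely: vertices as coherent sections $\bar H\to H$, edges as single flips, higher cubes from pairwise transversal families (this is the paper's Lemma~\ref{le:4.9}), and flagness of links all proceed essentially as you describe.

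Step 4, however, has a logical gap. You propose to ``represent [the loop] by a minimal-area combinatorial disk diagram $D$'' and then argue that minimality forces $D$ to have no $2$-cells. But the existence of \emph{any} disk diagram bounding $\gamma$ is exactly what simple connectedness asserts --- you cannot assume it at the outset. Disk-diagram arguments of the kind you sketch are typically deployed to prove structural facts (hyperplanes embed, do not self-osculate, etc.) about a complex already known to be simply connected, not to establish simple connectedness itself.

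The paper instead gives a direct loop-shortening argument. Fix a basepoint $v_0$ and an edge loop $l$; choose a vertex $v$ on $l$ at maximal $d_1$-distance from $v_0$ (where $d_1$ counts separating hyperplanes), with neighbours $a=v_{\bar h_a}$ and $b=v_{\bar h_b}$ along $l$. One then proves that $h_a$ and $h_b$ are transversal. Minimality of $h_a,h_b$ with respect to $v$ rules out $h_a\less h_b$ and $h_b\less h_a$; minimality of $h_b^\star$ with respect to $b$ rules out $h_a\less h_b^\star$; and the key observation --- forced by the maximality of $d_1(v_0,v)$ --- that $v_0(\bar h_a)=h_a^\star$ and $v_0(\bar h_b)=h_b^\star$ rules out $h_a^\star\less h_b$. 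Transversality then yields a square with fourth vertex $v_{ab}$ satisfying $d_1(v_0,v_{ab})=d_1(v_0,v)-2$, and replacing the subpath $a$--$v$--$b$ by $a$--$v_{ab}$--$b$ is a homotopy across that square. Iterating contracts $l$ to $v_0$. This is the missing idea your outline needs: a concrete induction on a complexity measure of the loop, rather than an appeal to a diagram whose existence is the point at issue.
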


Before we are able to give a proof we need to see how one constructs a cube complex from a halfspace system. Even though vertices will be abstract maps in the definition of $X(H)$ we may first use a more heuristic approach and have a look at a concrete cubulation in order to observe which properties should be satisfied by  the vertices of the desired complex:

{\bf Main idea concerning vertices:}

When looking at the standard cubulation of the plane for example and consider halfspaces defined by vertical or horizontal lines (i.e. cubulated copies of $\R$) in this complex a vertex $v$ is either contained in a halfspace $h$ or in its complement $h^*$.  It may never happen that $v\in h\cap h^*$. Moreover a vertex is uniquely determined by the set of halfspaces containing it. This should be guaranteed by the construction as well. 
Compare Figure~\ref{fig_29} part 1). 

\begin{figure}[h]
	\begin{center}
		\resizebox{!}{0.4\textwidth}{\includegraphics{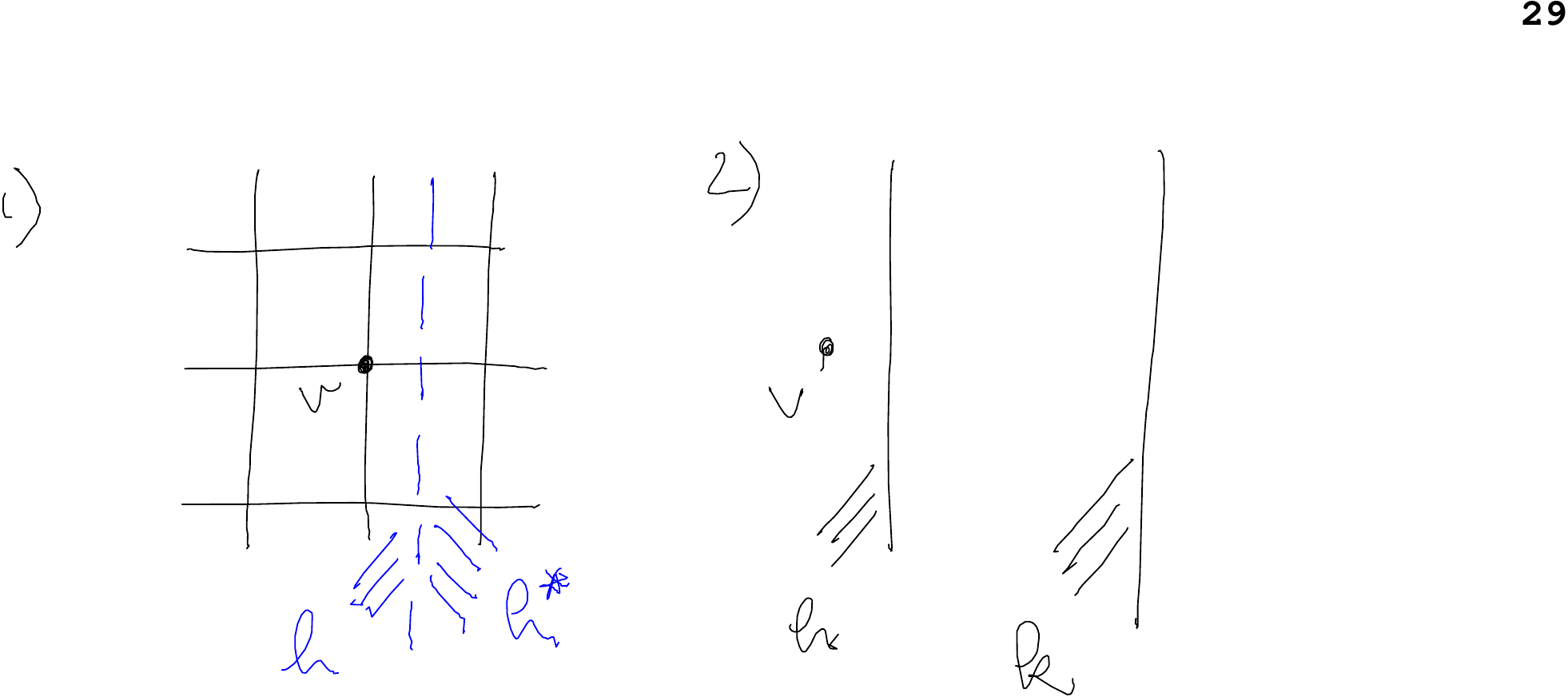}}
		\caption[links]{How half-spaces relate to vertices.}.
	\end{center}
	\label{fig_29}
\end{figure}

We also would like to have the following property satisfied by the constructed cube complex: Whenever $v\in h$ and we have $h\subset k$ then $v$ should be also contained in $k$. Compare Figure~\ref{fig_29} part 2).

\begin{definition}\label{def:4.4}
For a halfspace system $(H, \less, \star)$ the \emph{cube complex $X(H)$} associated to this triple is defined as follows:
\begin{enumerate}
 \item The set of \emph{vertices} $V(H)$ is defined by
		\begin{equation}\label{def:vertices}
		V(H)=\{v:\bar H \to H \,\vert\, v(\bar h)\neq (v(\bar k))^\star \text{ for all }\bar k, \bar h\in \bar H \}.
		\end{equation}
		We will write $v\in h$ if $v(\bar h)=h$ and say that \emph{$v$ is contained in $h$}.
		%\marginpar{picture of forbidden configuration}
 \item The set of \emph{edges} $E\subset V\times V$, i.e. the one-skeleton of $X$,  consists of all pairs of 				vertices $(v,w)$ such that 
 \[\vert \{\bar h \in \bar H \,\vert\, v(\bar h)\neq w(\bar h) \} \vert =1.\]
\end{enumerate}
\end{definition}

Condition \ref{def:vertices} basically means that either $v(\bar h)\leq v(\bar k)$ or the two values are not comparable. Also the map $v:\bar H\to H$ can be thought of as assigning to each hyperplane $\bar h$ the halfspace in $\{h, h^\star\}$  which contains $v$.

\begin{figure}[h]
	\begin{center}
		\resizebox{!}{0.4\textwidth}{\includegraphics{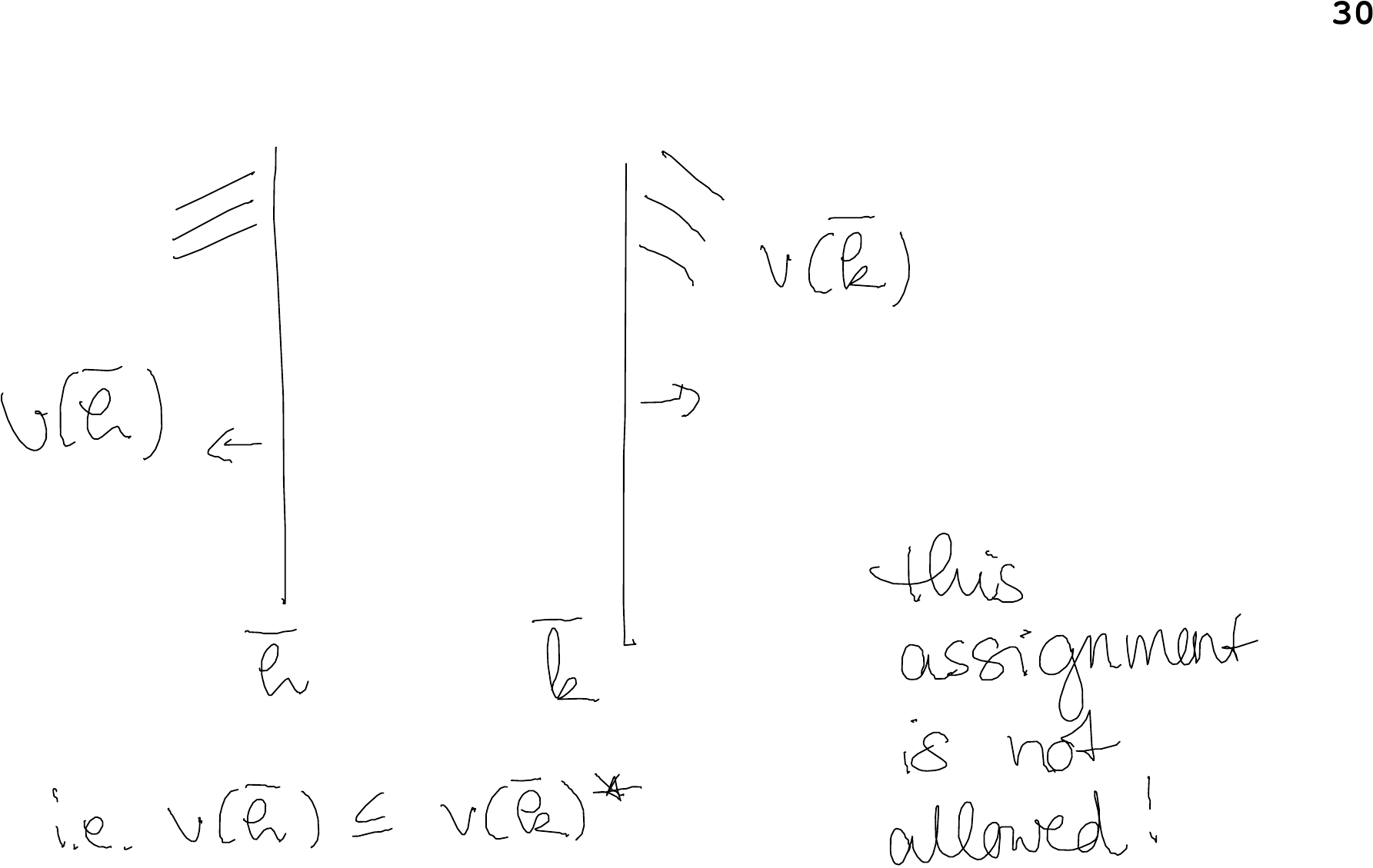}}
		\caption[links]{Illustration of a condition in Definition~\ref{def:4.4}.}.
	\end{center}
	\label{fig_30}
\end{figure}

We will prolong Definition~\ref{def:4.4} in a minute to define the higher dimensional cubes. But first let us  introduce additional notions and properties of the vertices and halfspaces. 

\begin{definition}\label{def:4.5}
 A halfspace $h$ is \emph{minimal} with respect to $v\in V(H)$ if there does not exist $k\in H$ with $v\in k$ and $k\less\h$.
\end{definition}

For examples of (non-)minimal halfspaces see Figure~\ref{fig_31}. 
  
\begin{figure}[h]
	\begin{center}
		\resizebox{!}{0.4\textwidth}{\includegraphics{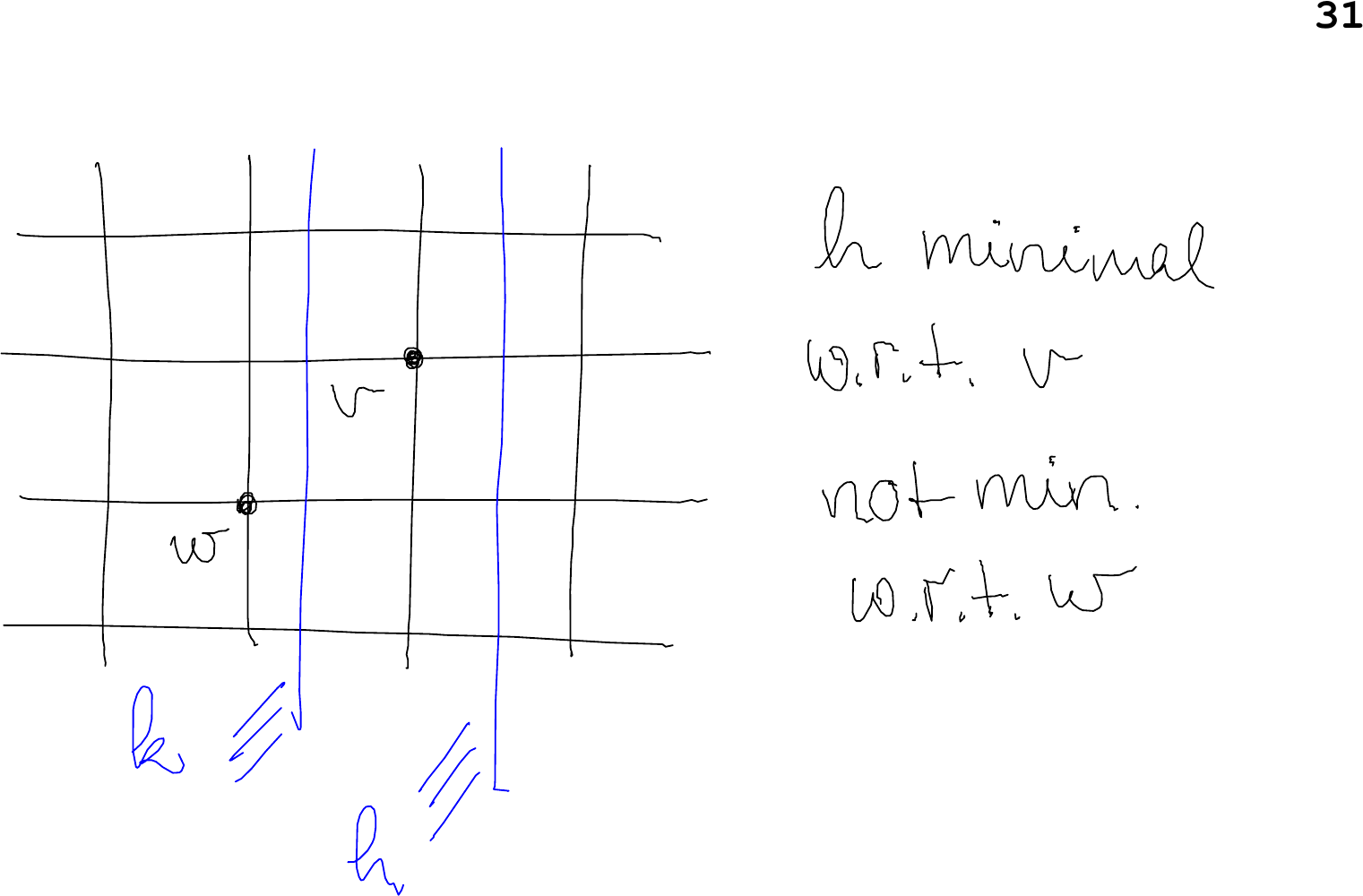}}
		\caption[links]{Minimal and non-minimal halfspaces}.
	\end{center}
	\label{fig_31}
\end{figure}

\begin{definition}\label{def:4.6}
Let $v$ be a vertex, $\bar h\in \bar H$ and define $v_{\bar h}: \bar H\to H$ by
\begin{equation}
v_{\bar h}(\bar k) = \left\{
\begin{array}{c l}     
   v(\bar k)^\star &  \text{ if } \bar k=\bar h\\
   v(\bar k) 	& \text{ else. }
\end{array}\right. 
\end{equation}
\end{definition}

For the map $v_{\bar h}$ the halfspace  $h$ is replaced by by the opposite halfspace $h^\star$ in comparison with the vertex $v$. Thus if $v_{\bar h}$ is a vertex, then $v$ and $v_{\bar h}$ are adjacent vertices in $X(H)$.

\begin{lemma}\label{le:4.7}
Let $v\in V(H), \bar h\in \bar H$ and let  $v_{\bar h}$ be as in \ref{def:4.6}. Then 
\begin{enumerate}
 \item\label{4.7.1} $v_{\bar h}$ is a vertex iff $h$ is minimal with respect to $v$ and
 \item\label{4.7.2} the neighbours of a vertex $v$ are in one-to one correspondence with the halfspaces $h$ that are minimal with respect to $v$. 
\end{enumerate}
\end{lemma}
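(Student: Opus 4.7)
The plan is to prove part \ref{4.7.1} directly by unpacking the vertex condition, then to derive \ref{4.7.2} as a near-immediate consequence.

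For part \ref{4.7.1} I would set $h := v(\bar h)$, so that $v_{\bar h}(\bar h) = h^{\star}$ and $v_{\bar h}$ agrees with $v$ on every other hyperplane. The first observation is that since $v_{\bar h}$ differs from $v$ only at $\bar h$, the defining condition of being a vertex (no halfspace chosen by $v_{\bar h}$ lies, via $\preceq$, inside the complement of another) automatically holds for every pair $(\bar k_1,\bar k_2)$ with $\bar k_1,\bar k_2 \neq \bar h$, thanks to $v$ being a vertex. So the only thing to verify is what happens when one of the two arguments is $\bar h$.

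For the forward direction, assume $v_{\bar h}$ is a vertex and suppose for contradiction that $h$ is not minimal with respect to $v$, so there exists $k \in H$ with $v \in k$ (i.e.\ $v(\bar k) = k$) and $k \prec h$ strictly; by strictness $\bar k \neq \bar h$. Applying the vertex condition of $v_{\bar h}$ to the pair $(\bar k, \bar h)$ and using that $(h^{\star})^{\star} = h$ together with the order-reversing property of $\star$ gives a comparison between $k$ and $h$ that contradicts the nesting condition once combined with $k \prec h$. For the reverse direction, assume $h$ is minimal; then I would check the two remaining cases separately. The case $(\bar k, \bar h)$ with $\bar k \neq \bar h$ amounts to showing $v(\bar k) \not\prec h$, which is exactly minimality of $h$ applied to $v(\bar k)$. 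The case $(\bar h, \bar k)$ amounts to $h^{\star}$ not being nested below $v(\bar k)^{\star}$; by order-reversal of $\star$ this is equivalent to $v(\bar k)$ not being nested below $h$, again covered by minimality. The single edge case $\bar k_1 = \bar k_2 = \bar h$ reduces to $h \neq h^{\star}$, which is built into the setup.

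For part \ref{4.7.2}, I would exhibit the bijection explicitly. By Definition~\ref{def:4.4}\ a neighbour $w$ of $v$ differs from $v$ on exactly one hyperplane $\bar h$, so $w = v_{\bar h}$; by \ref{4.7.1} the halfspace $v(\bar h)$ is then minimal with respect to $v$. The inverse map sends a minimal halfspace $h$ (necessarily with $v(\delta(h)) = h$) to $v_{\delta(h)}$, which by \ref{4.7.1} is a vertex, and which differs from $v$ on precisely $\delta(h)$, so is a neighbour of $v$. The two maps are mutually inverse by construction.

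The only real obstacle I anticipate is bookkeeping: making sure the right instances of the partial order $\preceq$, of the involution $\star$, and of the nesting condition are being invoked in each sub-case, and in particular that strict inequality in the definition of minimality translates correctly to $\bar k \neq \bar h$ so that the vertex condition can actually be applied. Once the case split is organised cleanly, each individual case collapses to a single application of order-reversal of $\star$ together with either minimality of $h$ or the nesting axiom.
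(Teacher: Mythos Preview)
Your approach is essentially the paper's: both directions of \ref{4.7.1} come down to unpacking the vertex condition at the single hyperplane $\bar h$ where $v_{\bar h}$ differs from $v$, and your direct verification of the converse is just the contrapositive of the paper's argument by contradiction. Your treatment is actually more complete than the paper's, which writes out only one of the two sub-cases in the converse and omits the proof of \ref{4.7.2} altogether; one minor cleanup is that in the forward direction the contradiction is directly with $k\preceq h$ (no appeal to the nesting axiom or to order-reversal is needed once you compute $(v_{\bar h}(\bar h))^\star=h$).
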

\begin{proof}
We first prove \ref{4.7.1}. Let $v_{\bar h}$ be a vertex. Then 
$$v_{\bar h}(\bar k)\not\less (v_{\bar h}(\bar h))^\star \text{ for all }\bar k\bar h\in \bar H.$$
Suppose that $h$ is not minimal with respect to $v$: Then $\exists k\neq h$ with $v\in k$ and $k\less h$. By Definition~\ref{def:4.6} then $v_{\bar h}(\bar k) =k$ and 
$$v_{\bar h}(\bar k)=k\leq h = (v_{\bar h}(\bar h))^\star. $$
Which contradicts the fact that $v_{\bar h}$ is a vertex. 

To prove the converse let $h$ be minimal with respect to $v$ and suppose that $v_{\bar h}$ is not a vertex. 
Then one of the two following alternatives is satisfied:
\begin{itemize}
 \item[(i)] $v_{\bar h}(\bar h)\less (v_{\bar h}(\bar k))^\star \text{ for some }\bar k\in \bar H$
 \item[(ii)] $v_{\bar h}(\bar k)\less (v_{\bar h}(\bar h))^\star \text{ for some }\bar k\in \bar H$.
\end{itemize}
Note that it can't happen that $h$ is not involved since $v$ was a vertex which only has been altered in the image of $h$. 

Suppose (i). Then
$$h^\star = (v(\bar h))^\star = v_{\bar h}(\bar h)\less (v_{\bar h}(\bar k))^\star = (v(\bar k))^\star.$$
Using the fact that $^\star$ is orientation reversing we may conclude that $v\in v(\bar k)\less h$, which contradict minimality of $h$. 
\end{proof}

{\bf Heuristic for definition of $n$-cubes:}
Let $h,k$ be transversal hyperplanes which are minimal with respect to a vertex $v$. Changing $v$ on both $h$ and $k$ at the same time we should get again a vertex in the cube complex. 
See Figure~\ref{fig_32} for an illustration of this idea. 
%\marginpar{add  picture}
In other words a pair of transversal hyperplanes should correspond to a 2-cube in the complex. 

\begin{figure}[h]
	\begin{center}
		\resizebox{!}{0.4\textwidth}{\includegraphics{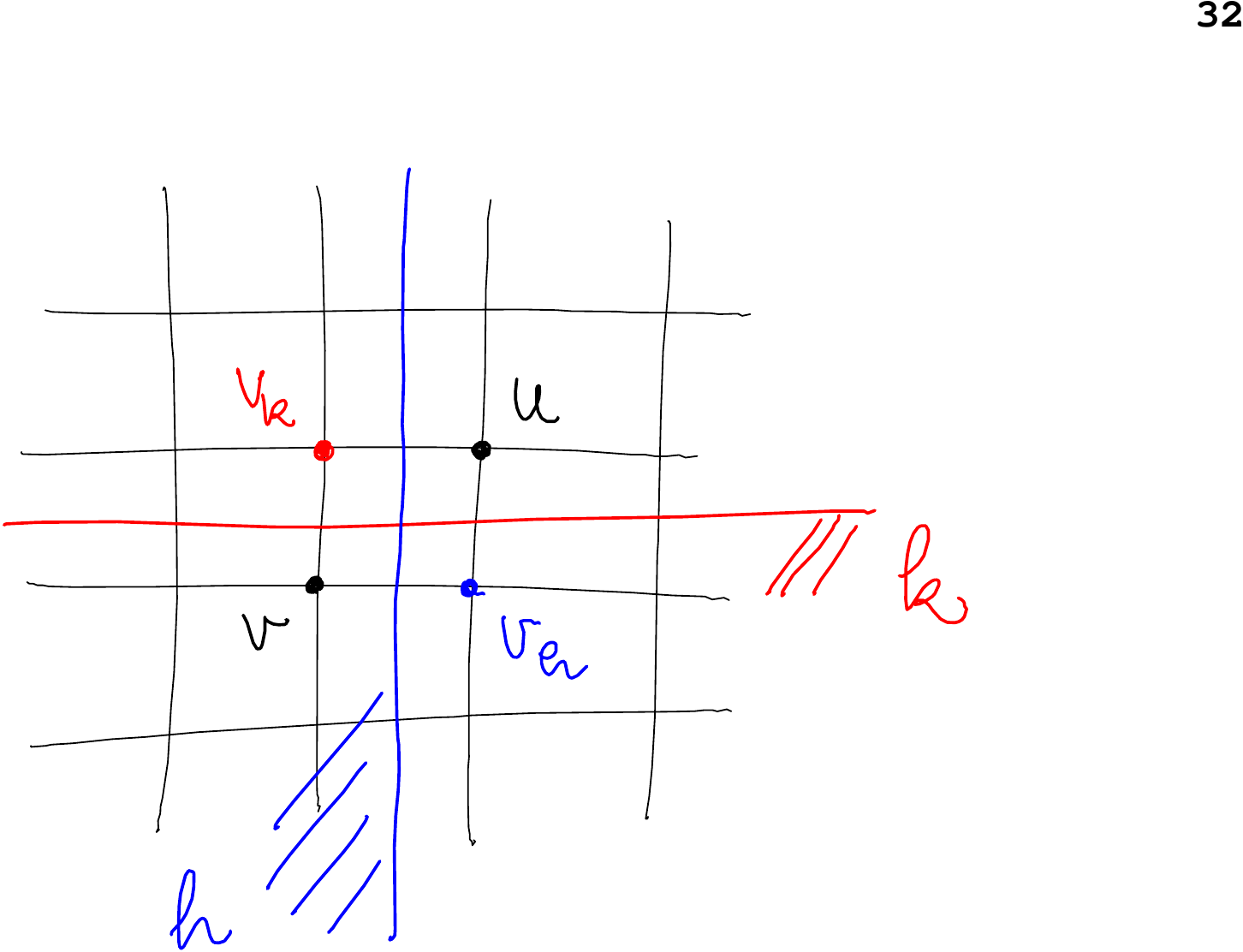}}
		\caption[links]{A 2-cube corresponds to a pair of transversal hyperplanes.}.
	\end{center}
	\label{fig_32}
\end{figure}

\begin{definition} (Continuation of \ref{def:4.4}) \label{def:4.4continued}
\begin{itemize}
 \item[(3)]\label{4.4.3} The $n$-skeleton $X^n(H)$ of $X(H)$  is defined inductively as follows: We glue an $n$-cube to a  subcomplex $Y$ of $X^n(H)$ if $Y\cong (C^n)^{n-1}$ where $(C^n)^{n-1}$ is the n-1-skeleton of the n-cube $(C^n)$.
\end{itemize}
\end{definition}

So whenever there is an empty k-cube in $X(H)$ the interior of this cube gets added. Keep adding cubes as long as there are skeleta of higher-dimensional cube with no interior. 

\begin{lemma}\label{le:4.8}
Let $C$ be an n-cube in $X(H)$ and  $v$ a vertex of $C$. The neighbours of $v$ in $C$ are of the form $v_{\bar h_i}$ for hyperplanes $\bar h_i, i=1\ldots, n$ where $v_{\bar h_i}$ is as defined in \ref{def:4.6}. 
Let $u$ be the vertex in $C$ diagonally opposite $v$, then $u$ may be obtained from $v$ by simultaneously switching out $h_i$ by $h_i^\star$ in the definition of $v$. That is 
\begin{equation}
v_{\bar h}(\bar k) = \left\{
\begin{array}{c l}     
   v(\bar k)^\star &  \text{ if } \bar k=\bar h_i \text{ for some } i=1, \ldots, n\\
   v(\bar k) 	& \text{ else. }
\end{array}\right. 
\end{equation}
 \end{lemma}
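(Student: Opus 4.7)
The plan is to induct on $n$. For $n=1$, $C$ is a single edge; its only non-$v$ vertex differs from $v$ on exactly one hyperplane $\bar h_1$ by the definition of edges in Definition~\ref{def:4.4}, this is the opposite vertex, and $h_1$ is minimal with respect to $v$ by Lemma~\ref{le:4.7}.\ref{4.7.1}.

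For the inductive step, first identify the neighbours of $v$ in $C$. The $n$ edges of $C$ at $v$ end, again by the edge condition in Definition~\ref{def:4.4}, at $n$ vertices of the form $v_{\bar h_i}$ for hyperplanes $\bar h_1, \ldots, \bar h_n$. These hyperplanes must be distinct, since distinct edges of the combinatorial cube at $v$ end at distinct vertices, while $v_{\bar h} = v_{\bar h'}$ would force $\bar h = \bar h'$. Lemma~\ref{le:4.7}.\ref{4.7.1} then gives that each $h_i$ is minimal with respect to $v$, handling the first assertion.

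The main work is locating the opposite vertex $u$, for which I would establish the intermediate claim: in the combinatorial model $[0,1]^n$ of $C$, any two parallel edges (edges sharing a coordinate direction) correspond to the same hyperplane flip. First I would verify this for a 2-cube $C'$ directly: with $n=2$ and vertices $v, v_{\bar h_1}, v_{\bar h_2}, u'$ (where $u'$ is opposite $v$), one has $u' = (v_{\bar h_2})_{\bar k}$ and $u' = (v_{\bar h_1})_{\bar l}$ for some $\bar k \neq \bar h_2$ and $\bar l \neq \bar h_1$ (to ensure $u' \neq v$). Then $u'$ differs from $v$ precisely on $\{\bar h_2, \bar k\}$ and also precisely on $\{\bar h_1, \bar l\}$, forcing these two-element sets to agree; hence $\bar k = \bar h_1$ and $\bar l = \bar h_2$, so each pair of opposite edges of $C'$ flips the same hyperplane. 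For general $n \geq 3$, the 2-skeleton of $C$ lies in $X^{n-1}(H) \subseteq X(H)$ and each 2-face is itself a 2-cube in $X(H)$; applying the 2-cube case to each 2-face, and using that any two parallel edges of $[0,1]^n$ are joined by a chain of shared 2-faces, propagates the claim from 2-faces to all of $C$.

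Given the claim, the opposite vertex $u$ is reached from $v$ along any monotone diagonal edge-path of length $n$ in $C$. Each step flips a single hyperplane, and the claim forces these to be precisely $\bar h_1, \ldots, \bar h_n$, each appearing exactly once. Consequently $u(\bar k) = v(\bar k)^\star$ for $\bar k \in \{\bar h_1, \ldots, \bar h_n\}$ and $u(\bar k) = v(\bar k)$ otherwise, matching the stated formula. Along the way one should check, using Lemma~\ref{le:4.7}, that every intermediate vertex along such a path actually lies in $V(H)$, so that the path genuinely traces through the 1-skeleton. The main obstacle is the parallel-edge identification, where the purely combinatorial cube structure attached by Definition~\ref{def:4.4continued} has to be reconciled with the hyperplane labelling coming from the edge condition in Definition~\ref{def:4.4}; once the 2-dimensional case is secured, induction on the $(n-1)$-skeleton pins the labels down globally.
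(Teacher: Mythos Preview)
Your argument is correct and, like the paper's, is an induction anchored on the $2$-cube case; the $n=2$ analysis you give is essentially identical to the paper's. The organization differs, however. The paper carries out a somewhat ad hoc induction using both the $(n-1)$- and $(n-2)$-dimensional subcubes: it applies the inductive hypothesis to the codimension-one faces $\sigma_i$ through $v$ to pin down their opposite corners $w_i$, then finishes by recognising $v', w_{n-1}, w_n, u$ as the four corners of a $2$-face. Your route is cleaner: you isolate the single combinatorial fact that parallel edges of $C$ flip the same hyperplane, prove it once for $2$-faces, propagate it through the $2$-skeleton, and then read off $u$ along any monotone edge-path. This avoids juggling several subcubes at once and makes transparent why exactly the set $\{\bar h_1,\ldots,\bar h_n\}$ appears. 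One small remark: your caveat about checking via Lemma~\ref{le:4.7} that intermediate vertices of the diagonal path lie in $V(H)$ is unnecessary, since these are already vertices of the cube $C\subset X(H)$ and hence automatically elements of $V(H)$.
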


\begin{figure}[h]
	\begin{center}
		\resizebox{!}{0.4\textwidth}{\includegraphics{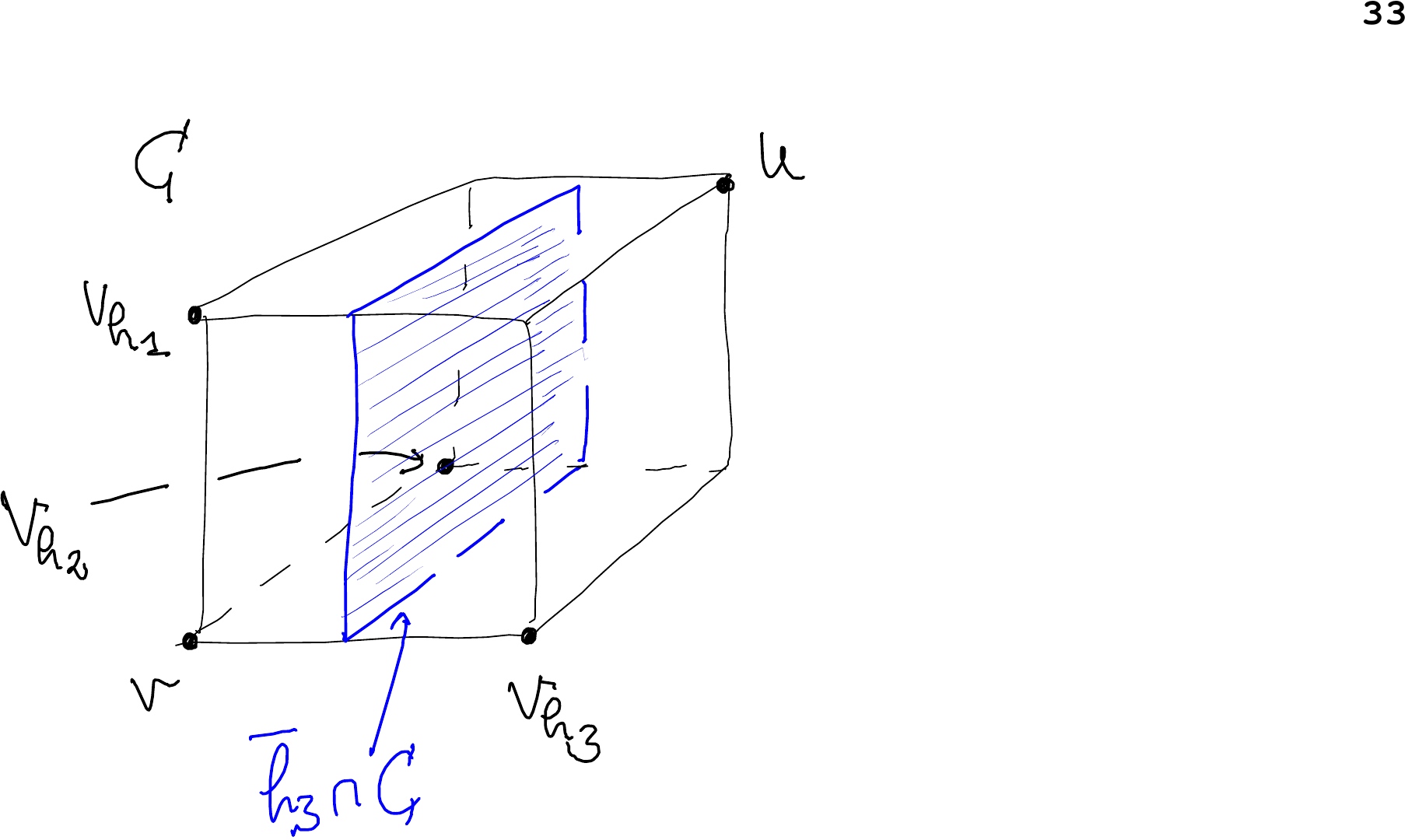}}
		\caption[links]{Illustration of Lemma~\ref{le:4.8}}.
	\end{center}
	\label{fig_33}
\end{figure}

\begin{proof}
The proof is by induction on $n$. First note that for $n=1$ there is nothing to prove. 
For $n=2$ there are four vertices in total. The vertex $v$, its neighbours $v_1, v_2$ and the diagonally opposite one $u$. We know that if $v$ label the half-spaces in $H$ in such a way that $v$ corresponds to the map $\bar h\mapsto h$ for all $\bar h\in \bar H$. By slight abuse of notation ($\bar H$ might not be countable) we may think of this map as listing all the halfspaces ``chosen by '' $v$, that is for a suitable naming of the elements of $\bar H$ and $H$ we have that   
\begin{equation*}
v \hat = (h_1, h_2, h_3,\ldots ),\, 
v_1\hat = (h_1^\star, h_2, h_3,\ldots ) \text{ and } 
v_2\hat = (h_1, h_2^\star, h_3,\ldots ).
\end{equation*}
The map that repesents vertex four needs to differ from $v_1$ and $v_2$ in precisely one position each. Hence there is no other choice than picking 
$$u \hat = (h_1^\star, h_2^\star, h_3,\ldots ).$$

\begin{figure}[h]
	\begin{center}
		\resizebox{!}{0.4\textwidth}{\includegraphics{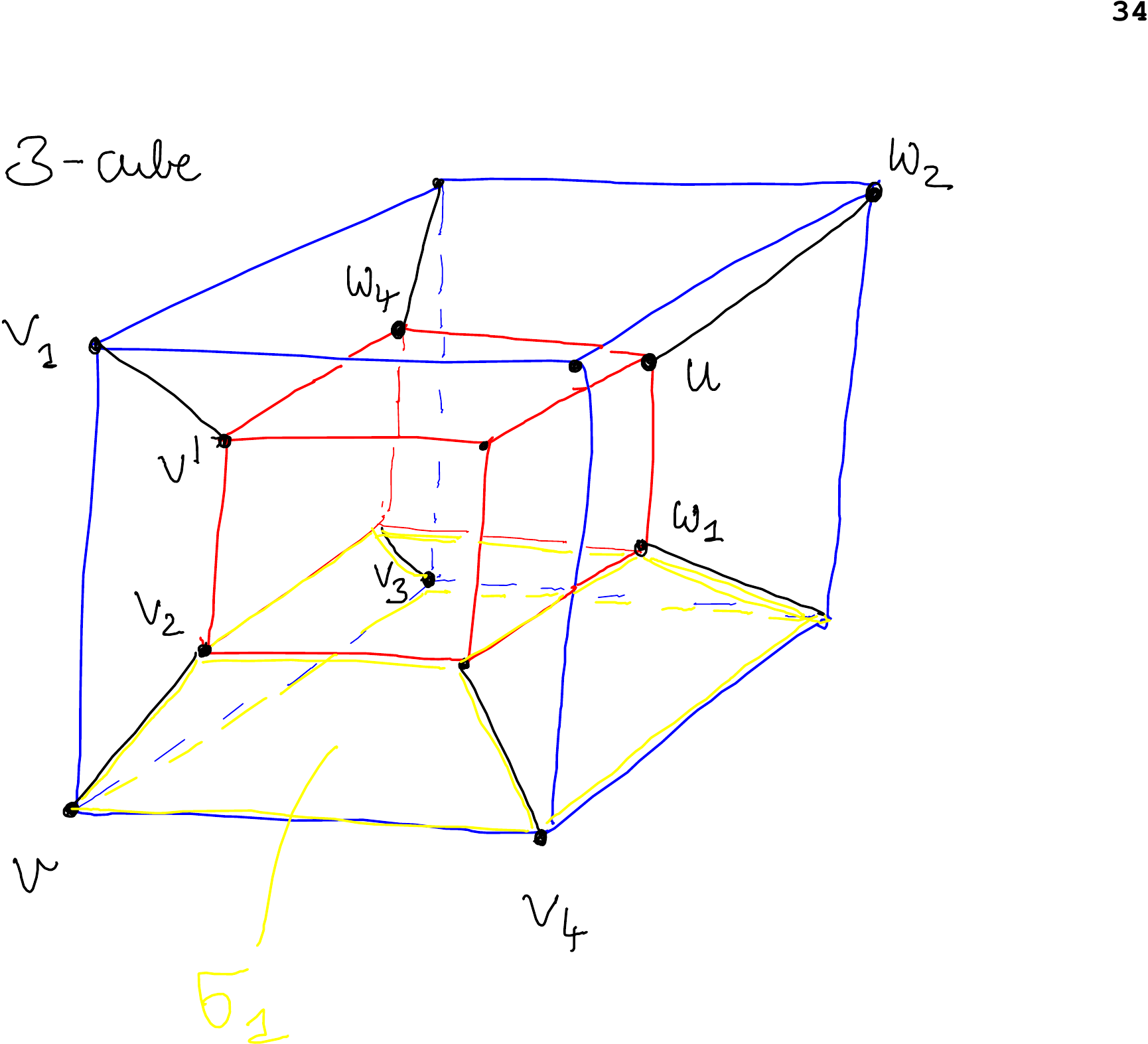}}
		\caption[links]{Illustration of the induction step in the proof of Lemma~\ref{le:4.8}.}.
	\end{center}
	\label{fig_34}
\end{figure}

Suppose now that we have shown the assertion for $n-1$ and $n-2$ and let $v$ be a vertex in an $n$-cube $C$ having neighbours $v_i=v_{\bar h_i}$ for $i=1,\ldots, n$. The vertex diagonally opposite of $v$ in $C$ is again denoted by $u$. We will write $\sigma_i$ for the codimension one face of $C$ containing all vertices $v_j, j=1,\ldots, n$ except for vertex $v_i$ and denote by $w_i$ the vertex diagonally opposite $v$ in $\sigma_i$. By induction hypothesis the vertex $w_i$ was obtained from $v$ by simultanously switching out the halfspaces $h_j$ for all $j\neq i$. That is 
\begin{equation*}
 w_i(\bar k)=\left\{
\begin{array}{c l}     
   v(\bar k)^\star &  \text{ if } \bar k=\bar h_j \text{ for some } j\neq i \\
   v(\bar k) 	& \text{ else. }
\end{array}\right. 
\end{equation*}
Consider the vertex $v'$ obtained from $v$ by simultaneously switching out the hyperplanes $h_1, \dots, h_{n-2}$. This vertex is diagonally opposite $v$ in the (n-2)-subcube of $C$ containing the vertices $v_1, \dots, v_{n-2}$. 
Now $v', w_n$ and $w_{n-1}$ are three vertices of a 2-dimensional subcube of $C$. 
Since $u$ is a common neighbour of $w_n$ and $w_{n-1}$ it has to be the fourth vertex in this cube and is obtained from $v'$ by simultaneously switching out the halfspaces $h_{n-1}$ and $h_n$. 
\end{proof}

\begin{lemma}\label{le:4.9}
Let $v$ be a vertex in $X(H)$ and $S\subset \bar H$ with $\vert S\vert =n$.
Then the vertices $w_T$, where $T$ runs through all subsets of $S$, and wich are defined by 
\begin{equation*}
 w_T(\bar k)=\left\{
\begin{array}{c l}     
   v(\bar k)^\star &  \text{ if } \bar k \in T \\
   v(\bar k) 	& \text{ if } \bar k \notin T. 
\end{array}\right. 
\end{equation*}
span an n-cube iff the following two properties hold:
\begin{enumerate}
 \item\label{4.9.1} $h\define v(\bar h) $ is minimal with respect to $v$ for all $\bar h\in S$ and
 \item\label{4.9.2} $\forall \bar h, \bar k \in S$ the image $h\define v(\bar h)$ is transversal to $k\define v(\bar k)$. 
\end{enumerate}
\end{lemma}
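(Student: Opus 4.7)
The plan is to verify both directions by case analysis on the vertex axiom $w(\bar a)\not\preceq (w(\bar b))^{\star}$, using Lemma~\ref{le:4.7} as the bridge between edges at $v$ and minimality of halfspaces, and Lemma~\ref{le:4.8} for the labelling of vertices of an $n$-cube by simultaneous halfspace switches.

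For the forward direction, suppose that the $w_T$ span an $n$-cube $C$. For each $\bar h \in S$, the vertex $w_{\{\bar h\}}$ coincides with $v_{\bar h}$ in the sense of Definition~\ref{def:4.6} and is a neighbour of $v$ in $C$, hence in particular a vertex of $X(H)$; Lemma~\ref{le:4.7}.\ref{4.7.1} then forces $v(\bar h)$ to be minimal with respect to $v$, which is (\ref{4.9.1}). For (\ref{4.9.2}), pick distinct $\bar h, \bar k \in S$ and inspect the $2$-face of $C$ carrying the four vertices $v, v_{\bar h}, v_{\bar k}, w_{\{\bar h,\bar k\}}$. Writing $h = v(\bar h)$ and $k = v(\bar k)$, the vertex axiom applied to each of these four maps, evaluated at the pairs $(\bar h,\bar k)$ and $(\bar k,\bar h)$, rules out $h\preceq k$, $k\preceq h$, $h\preceq k^{\star}$ and $h^{\star}\preceq k$ in turn, so $h$ and $k$ are transversal.

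For the reverse direction, assume (\ref{4.9.1}) and (\ref{4.9.2}). I first check that each $w_T$ is actually a vertex of $X(H)$ by a three-case analysis on where $\bar a$ and $\bar b$ lie relative to $T$. When $\bar a, \bar b \notin T$ the condition for $w_T$ coincides with the one for $v$ and is free. When exactly one of them lies in $T$, say $\bar a \in T$ and $\bar b \notin T$, the condition simplifies via $^{\star}$ to $v(\bar b) \not\preceq v(\bar a)$; minimality of $v(\bar a)$ from (\ref{4.9.1}), together with $v \in v(\bar b)$, rules this out, since a strict inequality would contradict minimality and an equality would force $\bar b = \bar a$, contradicting $\bar b \notin T$. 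When both $\bar a, \bar b \in T$ are distinct, transversality supplied by (\ref{4.9.2}) forbids the remaining relation $v(\bar a)^{\star}\preceq v(\bar b)$, and the diagonal subcase $\bar a = \bar b$ reduces to $v_{\bar a}$ being a vertex, which is provided by Lemma~\ref{le:4.7}.\ref{4.7.1}. Once every $w_T$ is known to be a vertex, the edges between $w_T$ and $w_{T\triangle\{\bar h\}}$ for $\bar h \in S$ exist by the edge condition in Definition~\ref{def:4.4}, so inductively the full $2$-, $3$-, \ldots, $(n-1)$-skeleton of a combinatorial $n$-cube is present, and Definition~\ref{def:4.4continued} then glues in the $n$-cube.

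The step I expect to be the main obstacle is the mixed case $\bar a \in T$, $\bar b \notin T$, because here the pair $\bigl(v(\bar a), v(\bar b)\bigr)$ is \emph{not} assumed transversal: one must really exploit that $v(\bar a)$ is minimal among the halfspaces containing $v$ and carefully separate the strict-inequality subcase (ruled out by minimality) from the equality subcase (ruled out because $\bar b$ would then have to lie in $T$). The remaining cases are essentially bookkeeping with the vertex axiom and the two preparatory lemmas.
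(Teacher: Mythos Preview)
Your argument is correct. The forward direction matches the paper's: minimality via Lemma~\ref{le:4.7} and transversality by inspecting the $2$-face spanned by $v,v_{\bar h},v_{\bar k},w_{\{\bar h,\bar k\}}$ (the paper phrases the latter through minimality of $h$ with respect to both $v$ and $w_{\bar k}$, you read it off directly from the vertex axiom at the four corners, but these amount to the same four inequalities).

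The reverse direction is where you diverge. The paper proceeds by induction on $\lvert T\rvert$: assuming $w_T$ is already a vertex, it shows that $h=v(\bar h)$ remains \emph{minimal with respect to $w_T$} for each $\bar h\in S\setminus T$, and then invokes Lemma~\ref{le:4.7} to conclude that $(w_T)_{\bar h}=w_{T\cup\{\bar h\}}$ is a vertex. Your three-way case split on the position of $\bar a,\bar b$ relative to $T$ verifies the vertex axiom for $w_T$ in one shot, with no induction and no appeal to Lemma~\ref{le:4.7} beyond the diagonal subcase. This is cleaner and slightly more elementary; the paper's route has the mild advantage of making explicit the intermediate fact that minimality of $h$ persists at every $w_T$ with $\bar h\notin T$, which is conceptually pleasant but not needed elsewhere. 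Your treatment of the mixed case is fine: the symmetric subcase $\bar a\notin T$, $\bar b\in T$ reduces the same way to $v(\bar a)\not\preceq v(\bar b)$ and is handled by minimality of $v(\bar b)$, so the ``say $\bar a\in T$'' is justified.
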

\begin{proof}
Suppose first that the given vertices span an n-cube. 
Minimality follows from Lemma~\ref{le:4.7}.\ref{4.7.1} as $v(\bar h)$ is minimal with respect to $v$ for all $\bar h\in S$ (here $\vert T\vert =1$).  In $C$ now $v$ is adjacent to all vertices $w_T$ where $\vert T\vert =1$. 

We will now prove transversality, that is \ref{4.9.2}. Choose $T=\{\bar h, \bar k\}\subset S$ and write $w_{\bar k}$, resp. $w_{\bar h}$,  instead of $W_{T'}$ for the vertices defined by the subssets $T'$ of $T$ containing a single hyperplane.

Lemma~\ref{le:4.8} implies that $v, w_{\bar h}, w_{\bar k}, w_T$ span a 2-cube. Further 
$h$ is minimal w.r.t.\ $v$ and $ w_{\bar k}$ and similarly  $k$ is minimal w.r.t.\ $v$ and $ w_{\bar h}$.
From the definition of minimality \ref{def:4.5}  we obtain that 
\begin{equation}\label{tec1}
\not\exists\, h'\in H \text{ such that } v\in h'\text{ or  } w_{\bar k}\in h' \text{ and } h'\less h.
\end{equation}
As well as the same statement with the roles of $h$ and $k$ switched.
Putting $h'=k$ in \ref{tec1} we get 
$$v(\bar h)=h\not\less k  \Longleftrightarrow h^\star \not\less k^\star$$
and 
$$w_{\bar k}(\bar k)=v(\bar k)^\star =k^\star \not\less h \Longleftrightarrow h^\star \not\less k.$$ 
and again similar statements with roles of $h$ and $k$ switched which rule out the remaining cases of \ref{def:4.1}. Therefore $h$ and $k$ are transversal. 

To prove the converse suppose now that \ref{4.9.1} and \ref{4.9.2} are satisfied.
We will first prove by induction on $n\define \vert T\vert$ that $w_T$ is a vertex for all $T\subset S$. 

If $n=1$ the claim follows from Lemma~\ref{le:4.7} since $v(\bar h)=h$ is minimal with respect to $v$ for all $\bar h\in S$. 

Suppose the statement is true for $\vert T\vert=n-1$. For a vertex $w_T$ and $\bar h\in S\setminus T$ we need to show that $\h\define v(\bar h)$ is minimal with respect to $w_T$ and then apply Lemma~\ref{le:4.7} to obtain the assertion for $=\vert T\vert=n$.

We know that $h$ is minimal with respect to $v$, i.e.\ there is no $\bar k\in H$ such that $v(\bar k)=k$ and $k\less h$. 
For all $k\in H\setminus T$  we have that  $v(\bar k)=w_T(\bar k)$ and hence 
\begin{equation}\label{*}
\not\exists\, \bar k\in H\setminus T \text{ such that }w_T(\bar k) =k \text{ and }k\less h.
\end{equation}
Further by the definition of $w_T$ we know for all $k\in T$ that $(v(\bar k))^\star =w_T(\bar k)$. 

But $h$ is transversal to $k\define v(\bar k)$ for all $\bar k\in T$ and hence $h\not\less k$,  $h\not\less k^\star$, $h^\star\not\less k$ and $h^\star\not\less k^\star$. From this we deduce
\begin{equation}\label{**}
\not\exists\, \bar k\in T \text{ such that } w_T(\bar k)=k^\star \text{ and } k^\star\less h.
\end{equation}
Combining equations (\ref{*}) and (\ref{**}) we may deduce that $h$ is minimal with respect to $w_T$ and hence, by Lemma~\ref{le:4.7} the map $w_T $ is in fact a vertex.

It is easy to see that the vertices defined like this do span the one-skeleton of an $n$-cube. And by constrcution of $X(H)$ the $n$-cube itself has to exist.
\end{proof}

\begin{figure}[h]
	\begin{center}
		\resizebox{!}{0.4\textwidth}{\includegraphics{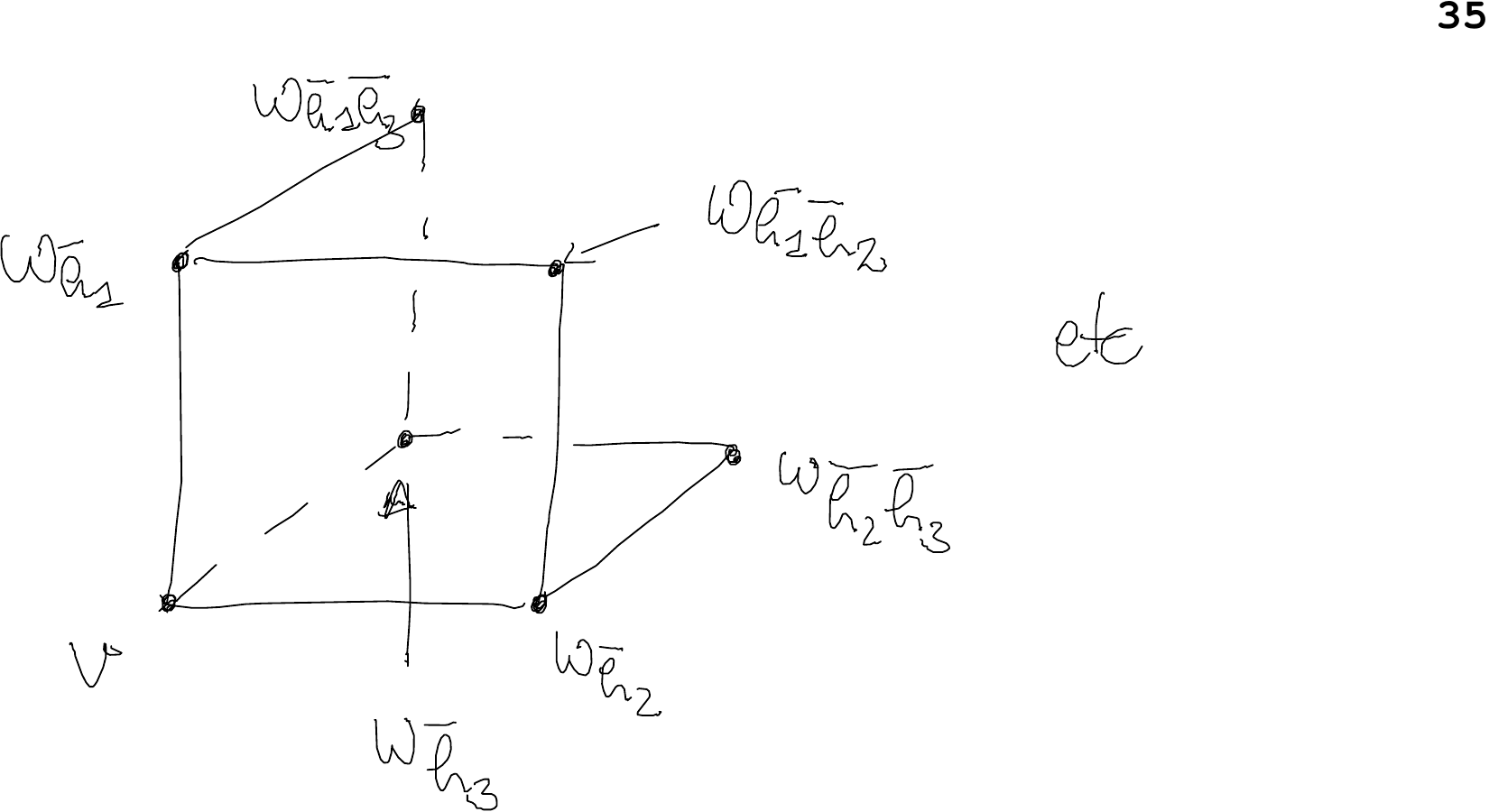}}
		\caption[links]{A set of $n$ pairwise transitive hyperplanes spans an $n$-cube. }
	\end{center}
	\label{fig_35}
\end{figure}

We are now ready to prove the main theorem of this section. 

\begin{proof}[Proof of Theorem~\ref{thm:4.3}]
From Lemma~\ref{le:4.9} we obtain that the $n$-cubes in $X(H)$ are in one-to-one correspondence with the families of $n$ pairwise transversal hyperplanes in $H$. It hence remains to prove that $X(H)$ is locally $\cat(0)$ and each connected component is simply connected. 

Consider a vertex $v:\bar H\to H$ in $X\define X(H)$.
In the link $\lk_X(v)$ there are no bigons, since otherwise there would exist pockets in $X$, compare Figure 28 below. %~\ref{fig_36}. 
But pockets can't exist since the two hyperplanes which define neighbours $p,q$ of $v$ uniquely determine a third vertex $u$ if they are transversal. 

\begin{figure}[h]	\label{fig_36}
	\begin{center}
		\resizebox{!}{0.4\textwidth}{\includegraphics{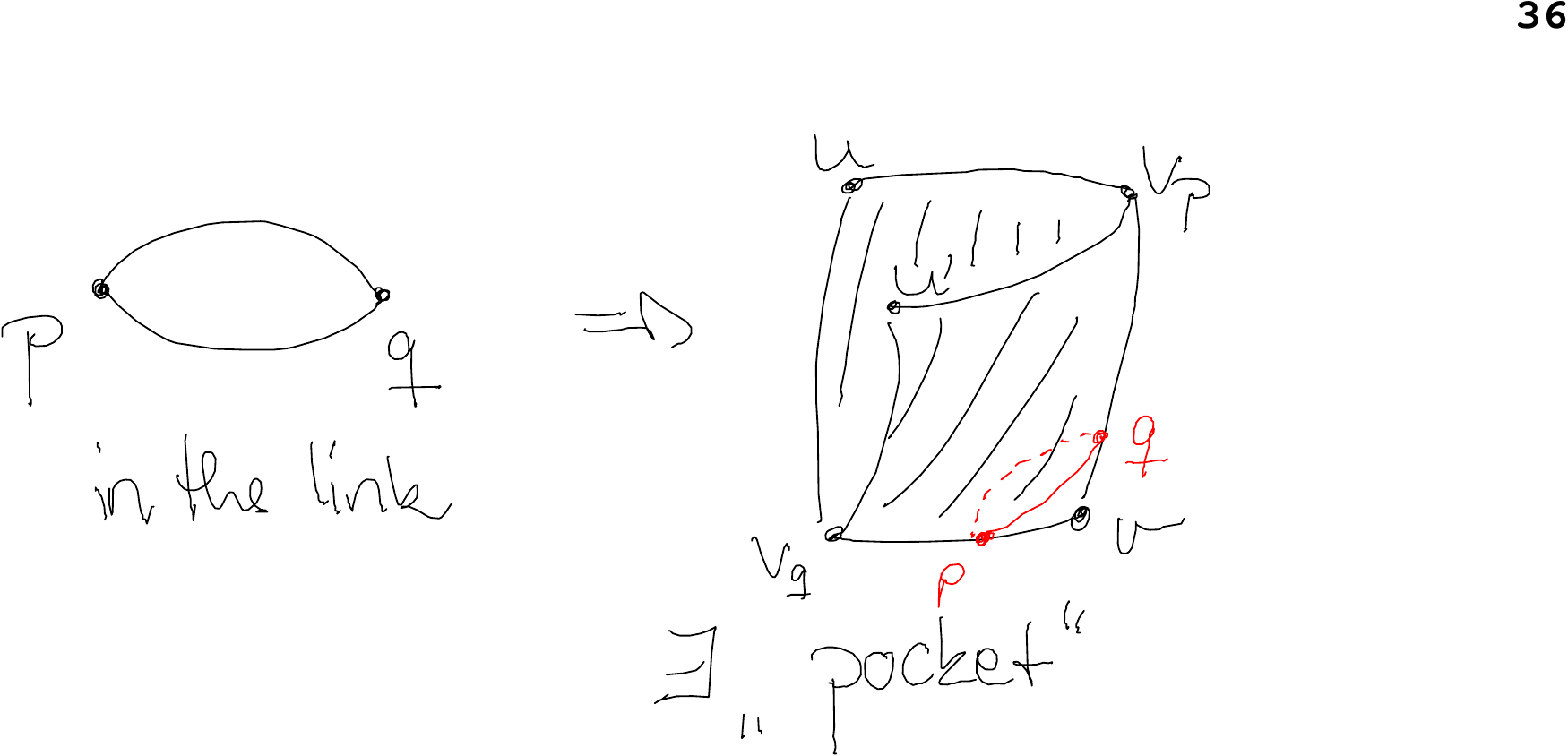}}
		\caption[links]{The existence of bigons in links violates the uniqueness of a square determined by transversal hyperplanes.}.
	\end{center}
\end{figure}

Suppose there is an $n-1$-skeleton of an $n$-simplex in $\lk_X(v)$. Then there are $n$ hyperplanes $\bar h_1, \dots, \bar h_n$. such that each $\bar h_i$ defines a new vertex $v_i$ adjacent to $v$. Since each pair $\bar h_i, \bar h_j$ spans a square $Q_{ij}$ (the corresponding edge exists in the link, hende $Q_{ij}$ exists) we conclude by Lemma~\ref{le:4.9} applied to $Q_{ij}$ that $h_i$ is transversal to $h_j$ for al $i\neq j$. 
Applying Lemma~\ref{le:4.9} to $h_1, \dots, h_n$ we see that there exists an $n$-cubehaving $v$ as a vertex and the $Q_{ij}$ as faces. Hence the $n-1$-skeleton is the boundary of an $n$-simplex and thus $X$ is locally $\cat(0)$ by Proposition~\ref{prop:2.16}. 

Let now $Y$ be a conected component of $X$. We need to prove that $Y$ is simply connected.

Let us make the following observations (the proof of which we leave as an exercise):
\begin{itemize}
 \item Closed paths in the 1-skeleton $Y^{(1)}$ of $Y$ without backtracking do contain an even number of edges.
 \item Write $d_1(v,y)$ for the number of hyperplanes separating $v$ and $w$. This defines a metric on the set of vertices of $X$ and induces a metric on $Y^{(1)}$.
\end{itemize}
Let $v_0$ be a vertex in $Y$ and $l$ a closed path in $Y^{(1)}$ containing $v_0$. Let further $v$ be a vertex on $l$ having maximal distance (in $Y^{(1)}$ with respect to $d_1$)  to $v_0$. There are possibly more than one such vertex. We pick any of them.
Choose neighbours $a,b$ of $v$ such that $v$ differs from $a$, respectively $b$, in precisely one hyperplane $\bar h_a$, respectively $\bar h_b$, and such that $v(\bar h_x)=h_x$ for both $x=a$ and $x=b$. Compare Figure 29. %~\ref{fig_37}

\begin{figure}[h]
	\begin{center}
		\resizebox{!}{0.4\textwidth}{\includegraphics{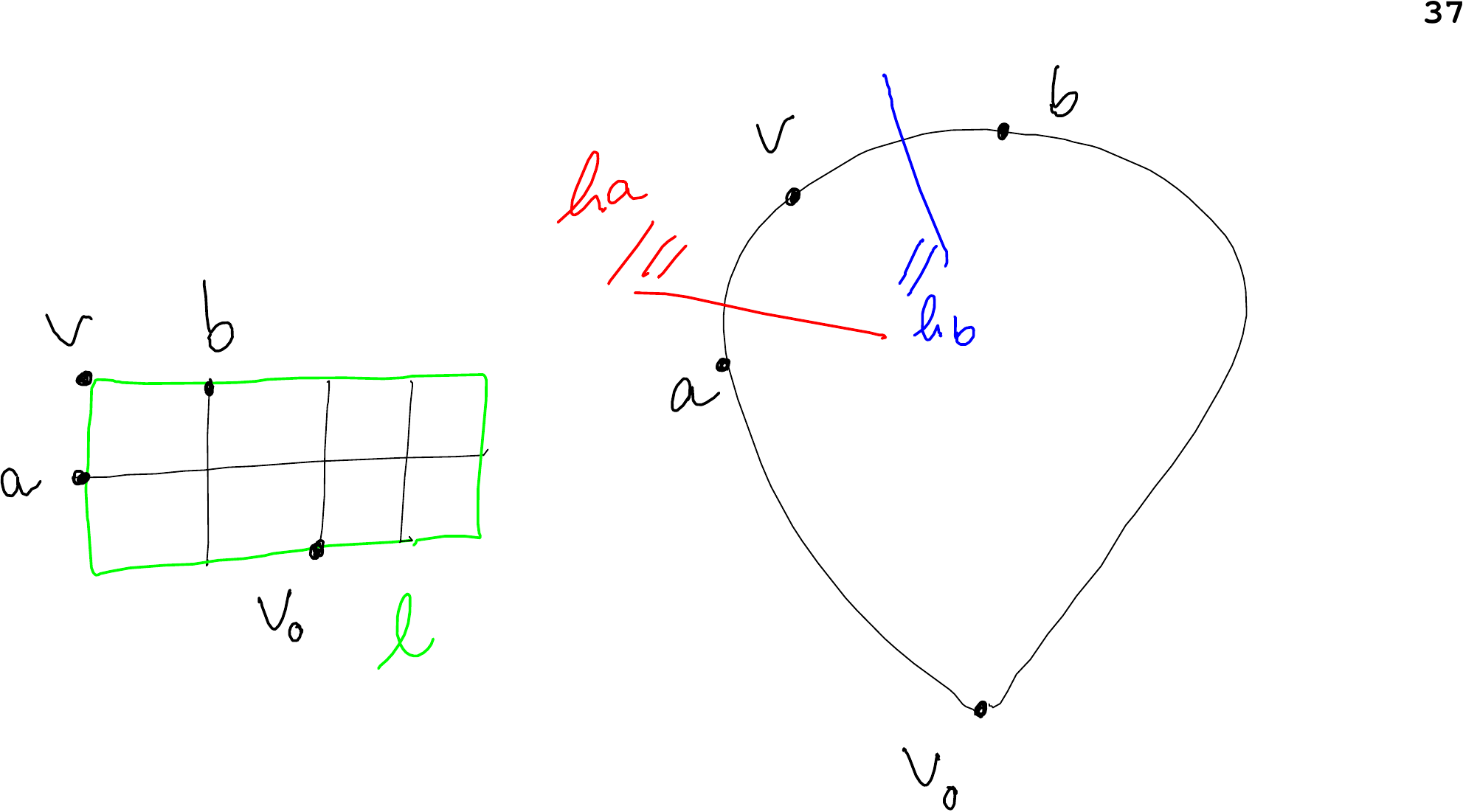}}
		\caption[links]{Closed paths in the proof of Theorem~\ref{thm:4.3}.}.
	\end{center}
	\label{fig_37}

\end{figure}

Claim 1: $\bar h_a$ and $\bar h_b$ are transversal. 
In order to show this claim we have to verify three inequalities: First $h_a\not\less h_b\not\less h_a$, then $ h_a\not\less h_b^\star$ and finally $h_a^\star \not\less h_b$. 

By construction $h_a$ and $h_b$ are minimal w.r.t.\ $v$. Hence there does not exist any $k$ with $v\in k$ and $k\less h_a$ respectively $k\less h_b$. In particular we have that $v(\bar h_a)=h_a\not\less h_b$ and $v(\bar h_b)=h_b\not\less h_a$. 

The halfspace $h_b^\star$ is minimal with respect to $b$. Hence there does not exist a hyperplane $\bar k$ with $b(\bar k)=k$ and $k\less h_b^\star$ and we arrive at the fact that $b(\bar h_a)=h_a \not\less h_b^\star$.

To prove the last inequality $h_a^\star \not\less h_b$ we first show a second claim.

Claim 2: $v_0(\bar h_a) =h_a^\star$ and $v_0(\bar h_b) =h_b^\star$.
We only prove $v_0(\bar h_a) =h_a^\star$ (by contradiction) as the proof of the second assertion is analogous.

Suppose $v_0(\bar h_a) =h_a$. The vertices $v$ and $v_0$ then agree on $\bar h_a$ and we have that $\bar h_a\in \{\bar h\,\vert\, v(\bar h)\neq v_0(\bar h) \}\ddefine S$. The distance between $v$ and $v_0$ is $d_1(v,v_0)=\vert S\vert$ and 
\begin{equation*}
a(\bar h)=\left\{
\begin{array}{c l}     
   v(\bar h) &  \text{ if } \bar h\neq \bar h_a \\
   v(\bar h)^\star& \text{ if }\bar h=\bar h_a. 
\end{array}\right. 
\end{equation*}
Therefore $\{\bar h\,\vert\,v_0(\bar h)\neq a(\bar h)\} = S\cup\{\bar h_a\}$ and 
$d_1(v_0, a)=d_1(v_0, v)+1$ which contradicts the choice of $v$. Hence Claim 2. 

From Claim 2 we may deduce that 
$h_a^\star=v_0(\bar h_a) \not\less v_0(\bar h_b )^\star ={h_b^\star}^\star = h_b$ and therefore that Claim 1 is true.

We have thus shown that $h_a$ and $h_b$ are transversal and define a 2-cube in $Y$ having vertices $v, a, b$ and $v_{ab}$, where $v_{ab}$ is given by 
\begin{equation*}
v_{ab}(\bar h)=\left\{
\begin{array}{c l}     
   v(\bar h)^\star &  \text{ if } \bar h\neq \bar h_a \text{ or }  \bar h= \bar h_b \\
   v(\bar h) & \text{ else }
\end{array}\right. 
\end{equation*}
By construction and Claim 2 $d(v_0, v_{ab})= d(v_0, v)-2$. 

\begin{figure}[h]
	\begin{center}
		\resizebox{!}{0.4\textwidth}{\includegraphics{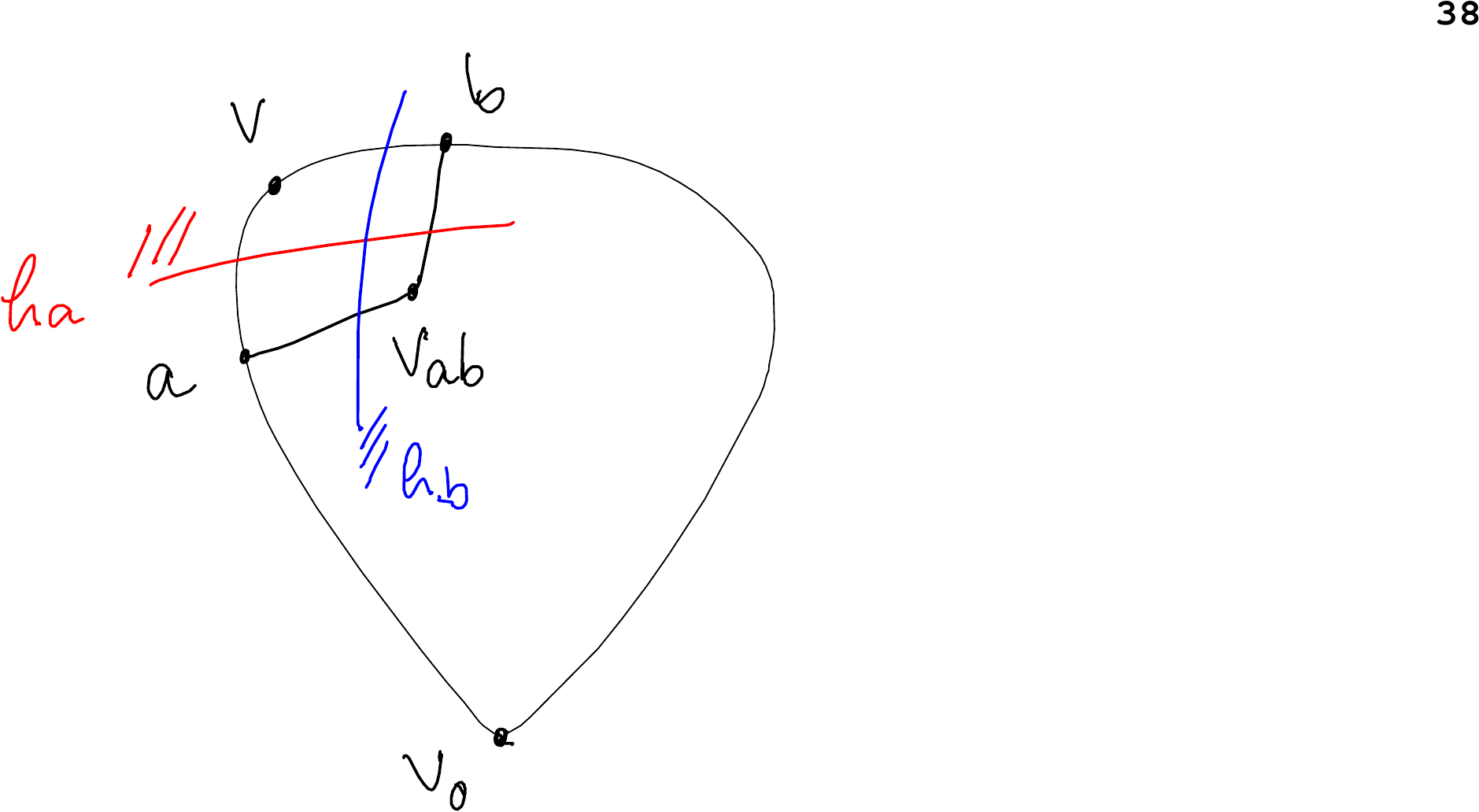}}
		\caption{Defining a replacement $l'$ of the original loop.}.
	\end{center}
	\label{fig_38}
\end{figure}

Define (as illustrated in Figure 30)  %~\ref{fig_38}) 
a new loop $l'$ by replacing the two edges connecting $a,v, b$ by the path $a, v_{ab}, b$ and possibly deleting any newly obtained backtracks. Then, since  $d(v_0, v_{ab}) < d(v_0, v)$ we obtain by this procedure a homotopy contracting $l$ to $v_0$. Hence the assertion.
\end{proof}

%\begin{remark}
%Suggested for further reading are:
%\begin{itemize}
% \item Sageev: Ends of group pairs...
% \item Nica: Cubulating spaces with walls
% \item Chatterji-Niblo: From wall spaces to $\cat(0)$-cube complexes
% \item Niblo-Reeves: Coxeter groups act on $\cat(0)$-cube complexes
%\end{itemize}
%\end{remark}

\begin{example}\label{ex:4.10}
Ad hoc examples of $X(H)$:
\begin{enumerate}
 \item Standard cubulation of $\R^2$. See Figures~\ref{fig_39} and~\ref{fig_40}.
% \item Cubulation of $\R^2$ (another set of hyperplanes)
 \item Cubulation of $\tilde A_2$. See Figures~\ref{fig_43} and~\ref{fig_45}.
\end{enumerate}
\end{example}
%\marginpar{draw lots of pictures}

\begin{figure}[h]
	\resizebox{!}{0.4\textwidth}{\includegraphics{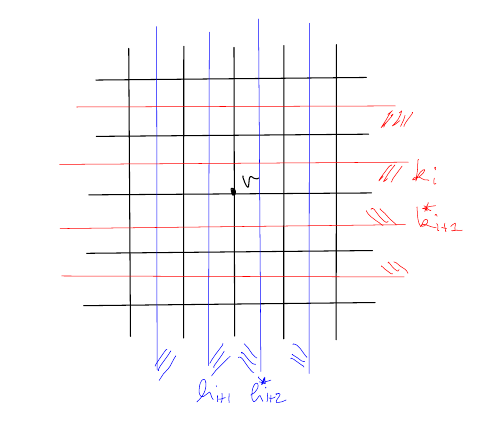}}
	\caption{Hyperplanes for the standard cubulation of $\R^2$.}.
	\label{fig_39}
\end{figure}

\begin{figure}[h]
	\resizebox{!}{0.3\textwidth}{\includegraphics{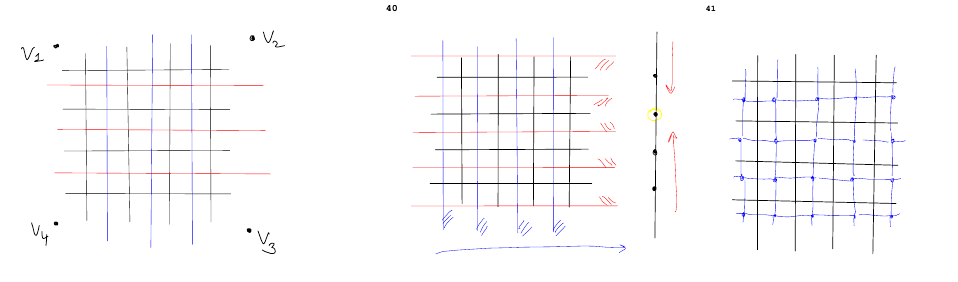}}
	\caption{The construction not only describes the (dual of the) original cubes (right hand side) but also points on the "boundary" (left side and middle).}.
	\label{fig_40}
\end{figure}

\begin{figure}[h]
	\resizebox{!}{0.4\textwidth}{\includegraphics{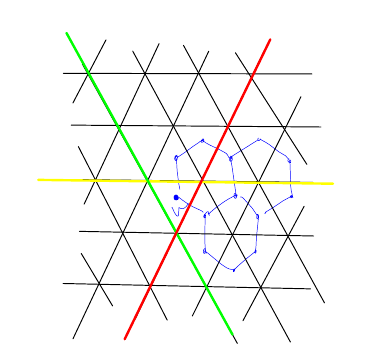}}
	\caption{Hyperplanes for the equilateral triangulation of the plane are all the ones parallel to the green, yellow and red one. In blue we see some of the vertices in the resulting cubical complex.}.
	\label{fig_43}
\end{figure}

\begin{figure}[h]
\begin{center}
	\resizebox{!}{0.3\textwidth}{\includegraphics{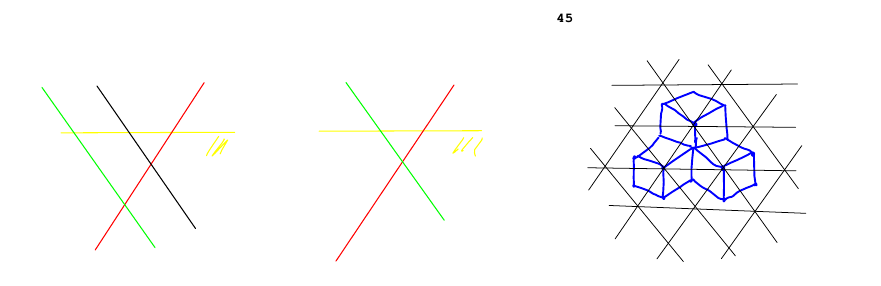}}
	\caption[links]{Any triple of pairwise transversal hyperplanes induces a 3-cube.}.
	\label{fig_45}
\end{center}
\end{figure}

%- Roller duality hier einfügen. 

\begin{remark}
One can prove 
\begin{itemize}
 \item Constructing $X(H)$ with $H$ the halfspace system of a cubical complex $C$ yields a compactification of $C$ with the Roller boundary. For details see \cite{Roller} 
 \item One can prove that an action of a group $G$ on a cubical complex $C$ can be extended to a certain nice subset of the Roller boundary. See \cite{NevoSageev}.
\end{itemize}
\end{remark}

Example~\ref{ex:4.10} suggest that one should be able to cubulate Coxeter groups. We will show that this is in fact possible. But let us first recall some basic facts about geometric group theory and Coxeter groups  and -complexes as well as their presentations.

%%%%%%%%%%%%%%%%%%%%%%
%
% Elementary notions in geometric group theory
%
%%%%%%%%%%%%%%%%%%%%%%

\newpage
\section{Elementary notions in geometric group theory}

Given a set of generators $S$ and a set $R$ of relations in $S$, i.e. words in $S\cup S^{-1}$,  we may consider the following universal construction of a group 
\begin{equation}\label{def:5.0}
\langle S\vert R\rangle\define F(S)\diagup \langle R^F\rangle
\end{equation}
where $F(S)$ is the free group generated by $S$ and $R^F$ is the image of $R$ under conjugation by $F(S)$, that is the subgroup of $F(S)$ generated by $R$.
Note that there is a homomorphism $\phi: F(S)\to \langle S\vert R\rangle$. 

A group $G$ is \emph{finitely generated}, resp. \emph{finitely presented},  if there exists $S$ and $R$ such that $G$ is isomorphic to $\langle S\vert R\rangle$ and $S$ is finite, respectively $S$ and $R$ are finite.

\begin{definition}\label{def:5.1}
The \emph{Cayley graph} $\Cay=\Cay(G,S)$ of a finitely generated group $G$ with generating set $S$ is defined as follows: 
The vertices of $\Cay$ are $G$ and there is an edge $(g,h)\in G\times G$ if $gs=h$ for some $s\in S$. The edge $(g, gs)$ is then labeled by $s$. If $s$ is of order two in $G$ we consider $(g,gs)$ not as a double edge but  as an oriented edge from $g$ to $gs$. 
\end{definition}

\begin{example}\label{ex:5.2}
\begin{enumerate}
 \item $\Cay(\Z, \{1\})$, see Figure~\ref{fig_47}. 
 \item $\Cay(\Z, \{2,3\})$, see Figure~\ref{fig_47}. 
 \item $\Cay(F_2, \{a, b\})$, see Figure~\ref{fig_49}. 
 \item $\Cay(\Z^2, \{a, b\})$, see Figure~\ref{fig_50}.
 \item $\Cay(\Z\diagup{\Z_6}, \{1\})$, see Figure~\ref{fig_50}.
\end{enumerate}
%\marginpar{Draw pictures!}
\end{example}

\begin{figure}[h]
	\begin{center}
		\resizebox{!}{0.3\textwidth}{\includegraphics{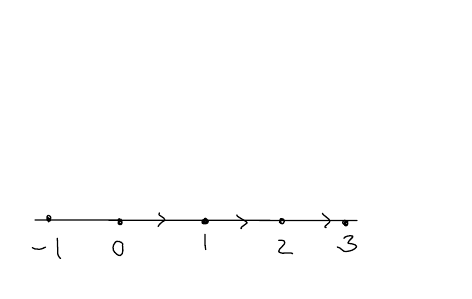}}
		\resizebox{!}{0.2\textwidth}{\includegraphics{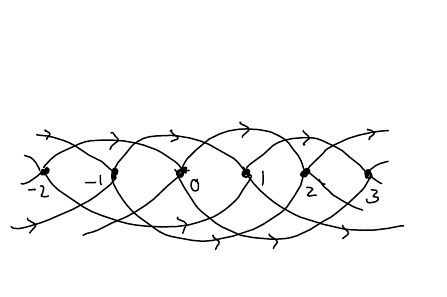}}
		\caption{On the left $\Gamma(\Z, \{1\})$, on the right $\Gamma(\Z, \{2,3\})$.}
	\label{fig_47}
	\end{center}
\end{figure}

\begin{figure}[h]
	\begin{center}
		\resizebox{!}{0.3\textwidth}{\includegraphics{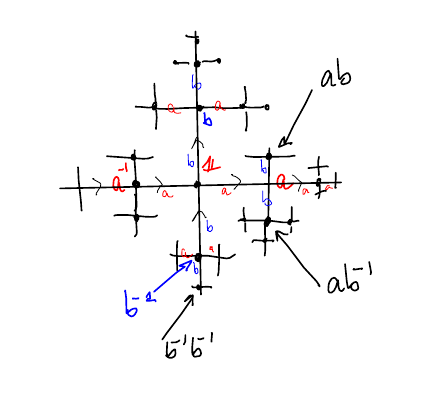}}
		\caption{The Cayley graph $\Gamma(F_2, \{a,b\})$.}
	\end{center}
	\label{fig_49}
\end{figure}

\begin{figure}[h]
	\begin{center}
		\resizebox{!}{0.3\textwidth}{\includegraphics{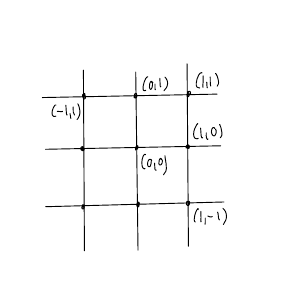}}
		\resizebox{!}{0.3\textwidth}{\includegraphics{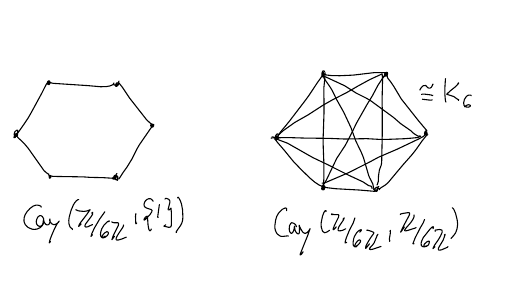}}
		\caption{On the left $\Gamma(\Z^2, \{a,b\})$, middle and right the Cayley graphs of $\Z\diagup \Z_6$ with two different generating sets.}
	\label{fig_50}
	\end{center}
\end{figure}

\begin{prop}\label{prop:5.3}
The following hold for any group $G$ generated by $S$. 	
\begin{itemize}
 \item $\Cay(G,S)$ is connected
 \item $\Cay(G,S)$ is regular and each vertex is contained in $\vert S\cup S^{-1}\vert$ edges. 
 \item $\Cay(G,S)$ is locally finite if and only if $S$ is finite.
 \item $G$ acts on $\Cay(G,S)$ via left multiplication, that is $g\dot v=g v$. 
\end{itemize}
\end{prop}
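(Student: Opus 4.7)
My plan is to address the four bullet points in order, since each is largely independent and follows quickly from the definition of the Cayley graph together with basic group theory. None of the four assertions is genuinely hard; the whole proposition is essentially an unpacking of Definition~\ref{def:5.1}, so the ``obstacle'' is mostly bookkeeping.

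For the first bullet, I would fix $g,h \in G$ and use the fact that $S$ generates $G$ to write $g^{-1}h = s_1^{\epsilon_1}\cdots s_n^{\epsilon_n}$ with $s_i \in S$ and $\epsilon_i \in \{\pm 1\}$. Then the sequence of vertices $g,\; gs_1^{\epsilon_1},\; gs_1^{\epsilon_1}s_2^{\epsilon_2},\; \ldots,\; h$ forms a path in $\Cay(G,S)$, since consecutive vertices differ by right-multiplication by an element of $S \cup S^{-1}$, which by definition corresponds to an edge (read as an edge labelled $s$ in the forward direction, or the reverse of an edge labelled $s$ otherwise). Hence $\Cay(G,S)$ is connected.

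For the second bullet I would observe that the neighbours of a vertex $g$ are precisely the vertices of the form $gs$ with $s \in S \cup S^{-1}$: the edges with tail $g$ are $(g,gs)$ for $s\in S$, and the edges with head $g$ are $(gs^{-1}, g)$ for $s\in S$ (which is the same as saying $g$ is adjacent to $gs^{-1}$). These neighbours are all distinct, since $gs = gs'$ in $G$ forces $s=s'$. The convention that an order-two generator $s=s^{-1}$ contributes only one (oriented) edge rather than a double edge is exactly what makes the count come out to $|S \cup S^{-1}|$ rather than $|S|+|S^{-1}|$, and this gives regularity with the claimed valence. The third bullet is then immediate: local finiteness means every vertex has finite valence, and by the second bullet every vertex has valence $|S \cup S^{-1}|$, which is finite iff $S$ is finite (bounded above and below by $|S|$ and $2|S|$).

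For the fourth bullet I would define the map $G \times \Cay(G,S) \to \Cay(G,S)$ on vertices by $g \cdot v \define gv$ and extend affinely on edges. I need to check three things: (i) it is a group action on vertices, which follows from associativity of multiplication in $G$ and $e\cdot v = v$; (ii) it sends edges to edges, which holds because $(v,vs)$ is an edge with label $s\in S$ and its image $(gv, gvs) = (gv, (gv)s)$ is again an edge with the same label $s$; (iii) the action is well-defined on the whole graph by edge-wise linear extension. The action is free on vertices (since $gv=v$ forces $g=e$) and preserves labels, but those stronger properties are not claimed here and can be recorded separately if needed. The only mildly subtle point in the whole proof is verifying in~(ii) that the order-two convention is respected: when $s^2=e$, the single oriented edge $(v,vs)$ is mapped to the single oriented edge $(gv, gvs)$, so no double-counting or inconsistency arises.
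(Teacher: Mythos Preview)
Your proof is correct and entirely standard. The paper itself does not supply a proof of this proposition at all --- it is stated without proof, presumably because each item follows immediately from unpacking Definition~\ref{def:5.1}, exactly as you do. So there is nothing to compare against; your write-up simply fills in what the lecture notes leave as an implicit exercise.
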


One can consider the Cayley graph of a group as a metric space and define the so called \emph{word metric} on $\Cay$. 

\begin{definition}\label{def:5.4}
Let $G$ be a finitely generated group and $\Cay=¸\Cay(G,S)$ its Cayley graph with respect to $S$. 
The distance $d_S(g,h)$ of two vertices $g$ and  $h$ is defined to be the minimal number of edges of a path connecting $g$ and $h$ in $\Cay$. We call $d_S$ the \emph{word metric} on $G$ with respect to $S$. The \emph{length} of an element $g\in G$, denoted by $l(g)$, is the distance between $g$ and the identity, i.e. $l(g)\define d_S(g, 1)$.
\end{definition}

It is easy to see (exercise!) that $d_S:G\times G \to \N$ is in fact a metric on the set of vertices of $\Cay$. 

\begin{definition}\label{def:5.5}
A map $f:X\to Y$, with $X,Y$ metric spaces, is a \emph{quasi-isometric embedding} if there exist constants $C;D\in \R _{> 0}$ such that for all $x,y\in X$
\begin{equation*}
 \frac{1}{C}d_X(x,y) -D \leq d_Y(f(x),f(y)) \leq Cd_X(x,y) +D.
\end{equation*}
We call $f$ a \emph{quasi-isometry} if the image is $R$-dense in $Y$, i.e.\ 
\begin{equation*}
\exists R<\infty \text{ such that } \forall y\in Y \;\exists x\in X \text{ with } d_Y(f(x),y) <R.
\end{equation*}
\end{definition}

\begin{prop}\label{prop:5.6}
Any pair of Cayley graphs $\Cay(G,S), \Cay(G,T)$ for two generating systems $S$ and $T$ of a group $G$ are quasi-isometric. Further $(G,d_S)$ is quasi-isometric to the Cayley graph $\Cay(G,S)$.  
\end{prop}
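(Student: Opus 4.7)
The plan is to handle the two claims separately, starting with the easier one. For $(G,d_S)$ versus $\Cay(G,S)$, the natural inclusion $\iota: G\hookrightarrow \Cay(G,S)$ sending each group element to the corresponding vertex is already an isometric embedding: by construction of the graph metric on $\Cay(G,S)$, the distance between two vertices equals the minimal number of edges in a path, which is precisely $d_S(g,h)$. Since every point of $\Cay(G,S)$ lies on some edge, it is within distance $\tfrac12$ of a vertex, so $\iota$ has $\tfrac12$-dense image. Thus $\iota$ is a quasi-isometry with constants $C=1$, $D=0$, $R=\tfrac12$.

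For the first claim, I would fix two finite generating sets $S$ and $T$ of $G$ and consider the identity map on the vertex set $G$, then extend it to a map $\Cay(G,S)\to\Cay(G,T)$ afterwards. The key quantitative input is the pair of constants
\begin{equation*}
C_1\define \max_{s\in S} d_T(1,s),\qquad C_2\define \max_{t\in T} d_S(1,t),
\end{equation*}
which are finite by finite generation. If $g,h\in G$ and $g^{-1}h=s_1s_2\cdots s_n$ is a geodesic word in $S\cup S^{-1}$ (so $n=d_S(g,h)$), then replacing each $s_i$ by a geodesic word in $T\cup T^{-1}$ of length at most $C_1$ yields a word of length $\leq C_1 \cdot d_S(g,h)$ representing $g^{-1}h$; hence $d_T(g,h)\leq C_1\cdot d_S(g,h)$. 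The symmetric argument gives $d_S(g,h)\leq C_2\cdot d_T(g,h)$, so with $C=\max(C_1,C_2)$ we get
\begin{equation*}
\tfrac{1}{C}\,d_S(g,h)\leq d_T(g,h)\leq C\cdot d_S(g,h),
\end{equation*}
which is a quasi-isometric embedding $(G,d_S)\to(G,d_T)$ with $D=0$, surjective on vertex sets.

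To upgrade this to a quasi-isometry of the full Cayley graphs, I would combine it with the first claim: compose the identity $(G,d_S)\to(G,d_T)$ with $\iota_T:(G,d_T)\hookrightarrow\Cay(G,T)$ and note that $\iota_S$ already quasi-isometrically identifies $\Cay(G,S)$ with $(G,d_S)$. Quasi-isometric embeddings with dense image compose to quasi-isometric embeddings with dense image, picking up additive error at most $1$ on each side when extending definitions from vertices to interior points of edges. This produces the desired quasi-isometry $\Cay(G,S)\to\Cay(G,T)$.

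The proof contains no deep obstacle; the only care needed is in organizing the constants so that the extension from a map on vertex sets to a map on the geodesic 1-complex does not spoil the quasi-isometry inequalities. The trick is simply to absorb the $\tfrac12$-slack arising from interior points of edges into the additive constant $D$, which works because the Cayley graphs are geodesic metric spaces on which every point lies within $\tfrac12$ of a vertex.
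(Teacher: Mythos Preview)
Your argument is correct and is precisely the standard one; the paper itself does not give a proof here but leaves it as an exercise with a reference to \cite[p.~138ff]{BH}. One small point worth making explicit: you tacitly assume $S$ and $T$ are finite when you take the maxima $C_1,C_2$, which is consistent with the paper's standing convention (Definition~\ref{def:5.1}) but not literally stated in the proposition.
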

\begin{proof}
 We leave the proof as an exercise. See also \cite[p. 138ff]{BH}.
\end{proof}

\begin{example}\label{ex:5.7}
The following maps are quasi-isometries:
\begin{itemize}
 \item $f:\Z^2\hookrightarrow\R^2$ where we equip $\Z^2$ with the restricted Euclidean metric.
 \item $f:\R\to\Z : x\mapsto \lfloor x\rfloor$
 \item $f:X\to \{pt\}$ for any bounded metric space $X$.
\end{itemize}
The following maps are \underline{not} quasi-isometries:
\begin{itemize}
 \item $f:\R\to\R:n\mapsto n^2$
 \item any map from $(R^2, d_{eucl})$  to  the hyperbolic plane $\h^2$ with its standad metric. 
\end{itemize}
 The following groups are quasi-isometric:
\begin{itemize}
 \item $\Z$ and $\Z\star\Z$
 \item any pair of finite groups
\end{itemize}
\end{example}

\begin{definition}\label{def:5.8}
Let $G$ be a group, $\mathcal{C}$ a category and $X\in \mathrm{Obj}(\mathcal{C})$. A \emph{group action} of $G$ on $X$ is a homomorphism $G\to \Aut_\mathcal{C}(X)$, that is a family of elements $\left \{ f_g\in\Aut(X)\right\}_{ g\in G}$ such that 
\begin{equation*}
f_g\circ f_h =f_{gh} \;\;\forall g,h\in G. 
\end{equation*}
We write $g.x$ for $f_g(x)$ and shorthand $G\gact X$ for the existance of a group action of $G$ on $X$.
\end{definition}

Take for example $\mathcal{C}$ to be the category of topological spaces and $\Aut_\mathcal{C}(X)=Homeo(X)$ or let $\mathcal{C}$ be the category of cubical complexes and $\Aut_\mathcal{C}(X)$ the automorphisms respecting the cubical structure. 

\begin{definition}\label{def:5.9}
We say that a given action $G\gact X$ is \emph{geometric} if 
\begin{enumerate}
 \item the action is properly discontinuous, i.e.\ for all compact sets $K\subset X$ the set $\left \{ g\in G \mvert  g.K\cap K \neq \emptyset \right\}$ is finite
 \item $X\diagup G$ is compact w.r.t.\ the quotient topology and
 \item $G$ acts by isometries.
\end{enumerate}
\end{definition}

\begin{remark}\label{rem:5.10}
Recall that a proper map is one for which all preimages of compact sets are compact. With this notion for an action to be properly discontinuous in the sense mentioned above is equivalent to the map $G\times X\to X\times X : (g,x)\mapsto (x, g.x)$ being proper while $G$ carries the discrete topology. 
\end{remark}

\begin{thm}(\v{S}varc-Milnor Lemma)\label{thm:5.11}
Suppose $G$ acts geometrically on a proper\footnote{Recall that a space is \emph{proper} if every closed ball $B_r(x)=\{y\in X \vert d(x,y)\leq r\}$ is compact.}  geodesic metric space $X$ then $G$ is finitely generated and quasi-isometric to $X$.
\end{thm}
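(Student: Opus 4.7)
The plan is to build a finite generating set of $G$ using cocompactness, and then show that the orbit map $\phi\colon G\to X$, $g\mapsto g.x_0$, implements a quasi-isometry between $(G,d_S)$ and $(X,d_X)$.

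First I would fix a basepoint $x_0\in X$. Since $X/G$ is compact, there exists a radius $R>0$ such that the translates $\{g.B_R(x_0) : g\in G\}$ cover $X$; this uses that the quotient map is open together with compactness of $X/G$. I would then define the candidate generating set
\[
S \define \{ s\in G \setminus\{1\} \mvert s.\overline{B_{2R}(x_0)}\cap \overline{B_{2R}(x_0)}\neq\emptyset \}.
\]
Finiteness of $S$ follows from proper discontinuity applied to the compact set $K=\overline{B_{2R}(x_0)}$.

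Next I would show that $S$ generates $G$ by a standard ``tracking a geodesic'' argument. Given $g\in G$, let $\gamma\colon[0,L]\to X$ be a geodesic from $x_0$ to $g.x_0$, where $L=d_X(x_0,g.x_0)$. Pick subdivision points $0=t_0<t_1<\dots<t_n=L$ with $t_{i+1}-t_i\leq R$ and $n\leq \lceil L/R\rceil$. For each $i$ use the cocompactness constant to choose $g_i\in G$ with $d_X(g_i.x_0,\gamma(t_i))\leq R$, taking $g_0=1$ and $g_n=g$. Then the triangle inequality and the isometry property give
\[
d_X(x_0, g_i^{-1}g_{i+1}.x_0) = d_X(g_i.x_0,g_{i+1}.x_0) \leq 2R + (t_{i+1}-t_i)\leq 3R,
\]
so $s_i\define g_i^{-1}g_{i+1}$ lies in $S$ (after enlarging $R$ slightly if needed, so that the two closed balls of radius $2R$ actually intersect when their centers are at distance $\leq 3R$; concretely one uses $S_{3R}$ in place of $S_{2R}$). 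This writes $g=s_0s_1\cdots s_{n-1}$ as a word in $S$, so $G=\langle S\rangle$ and moreover $d_S(1,g)\leq n\leq \lceil d_X(x_0,g.x_0)/R\rceil+1$.

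Finally I would verify that $\phi\colon g\mapsto g.x_0$ is a quasi-isometry. The upper bound is immediate: with $C_1\define \max_{s\in S} d_X(x_0,s.x_0)$, any word $g=s_1\cdots s_n$ of length $n=d_S(1,g)$ gives $d_X(x_0,g.x_0)\leq nC_1 = C_1\, d_S(1,g)$ by the triangle inequality and $G$-invariance of the metric; applied to $g^{-1}h$ this gives $d_X(\phi(g),\phi(h))\leq C_1\, d_S(g,h)$. The lower bound comes from the tracking argument above: $d_S(g,h)\leq \tfrac{1}{R} d_X(\phi(g),\phi(h))+1$. Quasi-density of the image with constant $R$ is exactly the cocompactness property. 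Together these inequalities say $\phi$ is a $(\max(C_1,R),\,\max(C_1,R))$-quasi-isometry, and by Proposition~\ref{prop:5.6} $(G,d_S)$ is quasi-isometric to its Cayley graph $\Cay(G,S)$, hence to $X$.

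The main technical obstacle is the generation step: one has to be careful that the approximating elements $g_i$ yield group elements $g_i^{-1}g_{i+1}$ whose orbit pair lies back inside the compact set used to define $S$. This is purely bookkeeping with constants, but getting the radii consistent is where most mistakes happen; everything else is triangle inequalities and $G$-invariance of $d_X$.
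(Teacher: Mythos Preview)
Your argument is the standard one and is correct; the only real bookkeeping hazard is the one you already flag, namely matching the radius in the definition of $S$ with the bound $d_X(x_0,s_i.x_0)\leq 3R$ coming from the tracking step (either enlarge $S$ to use balls of radius $3R$, or note that two closed $2R$-balls whose centers are at distance $\leq 3R$ intersect along the geodesic between the centers).

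As for comparison with the paper: there is nothing to compare, since the paper does not prove the \v{S}varc--Milnor Lemma but simply refers to \cite[I.8.19]{BH}. Your outline is essentially the proof given there, and it is also consistent with the hint in Remark~\ref{rem:5.13}, where the paper records precisely the generating set $\mathcal{A}=\{g\in G\mid g.B_r(x_0)\cap B_r(x_0)\neq\emptyset\}$ (with $r$ chosen from cocompactness) that you construct.
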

\begin{proof} For a proof of this fact see \cite[I.8.19]{BH} \end{proof}

\begin{remark}
For the \v{S}varc-Milnor Lemma we may alterntively assume that $X$ is a length space which is slightly stronger than being proper and geodesic. Compare \cite[I.8.4]{BH}.
\end{remark}

\begin{example}\label{ex:5.12}
A group $G$ always acts by left-multiplication on its Cayley graph $\Gamma\define \Cay(G,S)$: 
\begin{equation*}
G\to \Aut(\Gamma) : g\mapsto l_g\define(h\mapsto gh).
\end{equation*}
Prove as an exercise that this map is well defined and a homomorphism. Moreover this action is geometric. 
\end{example}

\begin{remark}\label{rem:5.13}
\begin{enumerate}
 \item For any basepoint $x_0$ the following set $\mathcal{A}$ is a generating set for $G$ as in the \v{S}varc-Milnor Lemma:
\begin{equation*}
\mathcal{A}\define \left\{ g\in G\mvert g.B_r(x_0) \cap B_r(x_0)\neq\emptyset \right\}
\end{equation*}
 where $r>0$ such that $C\subset B_{r/3}(x_0)$ for a (fixed) compact set $C$ such that $G.C=X$. 
\item The action in \ref{ex:5.12} is free iff $S$ does not contain elements of order two. 
\end{enumerate}
\end{remark}

%%%%%%%%%%%%%%%%%%%%%%
%
% Coxeter groups
%
%%%%%%%%%%%%%%%%%%%%%%

\newpage
\section{Cubulating Coxeter groups}

In this section we will apply Sageev's cubulation method to Coxeter groups and illustrate how it can be carried out for this interesting and important class of groups. The goal is to find a good candidate for a set of hyperplanes and show that it does indeed satisfy the requirements of Definition~\ref{def:4.1}. This chapter is based in \petra{\cite{???}} but we give a new proof of the main result which is more straightforward and much shorter. 

Let me first remind you about Coxeter groups and their simplicial complexes. 

\begin{definition}\label{def:6.1}
A \emph{Coxeter matrix} $M=(m_{ij})_{i,j}$ is a symmetric matrix with $m_{ij}=1$ if $i=j$ and such that $2\leq m_{ij}\in\N_\cup\{\infty\}$ for all other $i,j$.
\end{definition}

\begin{definition}\label{def:6.2}
A \emph{Coxeter group} $(W,S)$ is a group $W$ with presentation
\begin{equation*}
 \langle s_1, \ldots, s_n \vert\; (s_is_j)^{m_{ij}}=1\rangle
\end{equation*}
where $(m_{ij})_{i,j}$ is a Coxeter matrix. Here we mean by $(s_is_j)^{\infty}$ that there is no relation for the elements $s_i, s_j\in S$.
\end{definition}

\begin{definition}\label{def:6.3}
A \emph{Coxeter diagram} is a graph $T$ associated to a Coxeter group $(W,S)$ with Coxeter matrix $M=(m_{ij})_{i,j}$ as follows:
There is one vertex $i$ in $T$ for each element $s_i$ in $S$.
There is an edge between $i$ and $j$ in $T$ iff $m_{ij}\geq 3$ and these edges are labeled with $m_{ij}$ in case $m_{ij}\geq 4$.
\end{definition}
%
%\begin{example}\label{ex:6.4}
%% \marginpar{draw picture of Coxeter diagrams}
%\end{example}

\begin{remark}\label{rem:6.5}
 Coxeter groups may be seen as abstract versions of reflection groups, that is subgroups of a group of automorphisms of say a Euclidean  space, a sphere or the hyperbolic plane,  which are generated by reflections.
 Moreover reflection groups may be seen as linear presentations of a Coxeter group. Here the relation $s_i$ amnd $s_j$ with order $m_{ij}$ of their product may be seen as reflections along hyperplanes which meet at an angle of $\frac{\pi}{m_{ij}}$. There exists a classification of finite reflection groups and one of Euclidean reflection groups. Details can be found in \cite{Humphreys} for example.
\end{remark}

\begin{example}\label{ex:6.6}
Here are a few examples of Coxeter groups:
\begin{enumerate}
 \item Type $\tilde A_2$ is an example of a Euclidean reflection group wit presentation $W+\langle s_1,s_2,s_3 \vert; s_i2=(s_is_j)3=1 \forall i\neq j\rangle$.
 \item Type $I_2(p)$ is an example of a spherical reflection group with presentation $W=\langle s_1.s_2 \; (s_1s_2)^p=s_i2=1, \text{ for } i=1,2 \rangle$.
 \item Any triangle with angles $\frac{\pi}{k}, \frac{\pi}{l}, \frac{\pi}{m}$ with $\frac{1}{k}\frac{1}{l}\frac{1}{m}<1$ gives rise to a reflection subgroup $W$ of the automorphisms group of the hyperbolic plane with presentation
 $W=\langle s_1, s_2, s_3\vert\; s_i2=(s_1s_2)^k=(s_1s_2)^l=(s_1s_2)^m=1, \text{ for all } i\rangle$.
 \item Symmetry groupf of regular polytopes are Coxeter groups as well. The symmetric group $S_n$ for example is the symmetry group of a regular n-simplex. An n-cube has symmetry group $BC_n$ and an Ikosaeder has $H_3$ as its symmetry group. %See \petra{Figure \ref{fig:diagrams} for the corresponding Coxeter diagrams.}
\end{enumerate}

In Figure~\ref{fig_52} you can find visualizations of the first three examples as reflection groups.
%\marginpar{draw pictures}
\end{example}

\begin{figure}[h]
	{\center	
		\resizebox{!}{0.65\textwidth}{\includegraphics{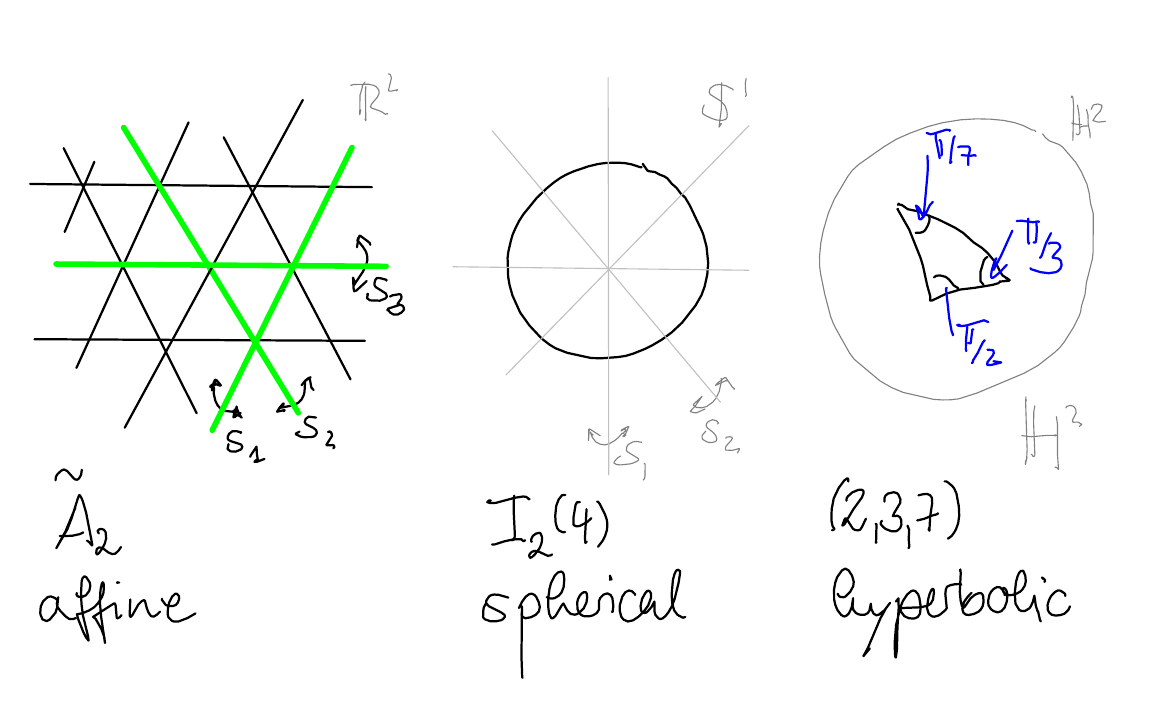}}
		\caption{Examples of Coxeter groups as reflection groups.}
		\label{fig_52}
	}
\end{figure}

\begin{definition}[\bf Deletion Condition]\label{def:6.7}
The following crucial condition a Coxeter group $(W,S)$ satisfies is known as the \emph{deletion condition}:
\begin{itemize}
 \item[(D)] $w=s_1\ldots s_m$ with $m>l_S(w)$ then there exists $i<j$ such that $w=s_1 \ldots \hat{s_i} \ldots \hat{s_i}\ldots s_m$.
\end{itemize}
Equivalent formulations of the deletion condition are the \emph{exchange condition}
\begin{itemize}
 \item[(E)] $w\in W$, $s\in S$ where $w$ has the reduced presentation $w=s_1\ldots s_d$. Then either $l_S(sw)=d+1$ or there exists an $i$ such that $w=s s_1 \ldots \hat{s_i} \ldots s_d$.
\end{itemize}
and the \emph{folding condition}
\begin{itemize}
 \item[(F)] $w\in W$, $s, t\in S$ with $l_S(sw)=l_S(w)+1$ and $l_S(wt)=l_S(w)+1$ then either $l_S(swt)=l(w)+2$  or $swt=w$.
\end{itemize}
\end{definition}

For a proof that these conditions are equivalent and satisfied by Coxeter groups see \cite{AB} or \cite[Thm 3.3.4 p.38]{Davis}.

\begin{definition}\label{def:6.8}
A \emph{reflection} $r$ in a Coxeter group $(W,S)$ is a conjugate $wsw^{-1}$ of a generator $s\in S$. The \emph{wall} $M_r$ is the set of edges $(u,v)$ in the Cayley graph of $W$ such that $u=rv$.
\end{definition}

Note that the edge $(w,ws)$ is contained in $M_r$ for $r=wsw^{-1}$. Also, each edge in $\Gamma_W$ is contained in a unique wall and we have a one-to-one correspondence between walls $M_r$ and reflections $wsw^{-1}$ in $W$.

\begin{lemma}\cite[2.4]{Ronan}\label{le:6.9}
Let $(W,S)$ be a Coxeter group and  $u,v$ adjacent vertices in $\Gamma_W$. Then $d_S(x,u)=d_S(x,v)\pm 1$ for all $x\in W$.
\end{lemma}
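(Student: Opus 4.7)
The plan is to combine the triangle inequality (which shows the two distances differ by at most $1$) with a parity argument (which shows they cannot be equal). So first I would note that, since $u$ and $v$ are adjacent, $d_S(u,v) = 1$; the triangle inequality in $\Gamma_W$ then forces
\[
\bigl| d_S(x,u) - d_S(x,v) \bigr| \;\leq\; d_S(u,v) \;=\; 1,
\]
so the difference lies in $\{-1, 0, +1\}$. It therefore suffices to rule out the case $d_S(x,u) = d_S(x,v)$.

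For that, I would invoke the fact that in any Coxeter group $(W,S)$, the length $l_S(w)$ of an element has a well-defined parity. Indeed, every defining relator $(s_i s_j)^{m_{ij}}$ has even word length $2 m_{ij}$, so the assignment $s \mapsto -1$ for each $s \in S$ extends to a well-defined homomorphism $\epsilon: W \to \{\pm 1\}$ with $\epsilon(w) = (-1)^{l_S(w)}$ for every reduced expression; equivalently, $l_S(w) \bmod 2$ depends only on $w$, not on the chosen expression.

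Since $u,v$ are adjacent, $v = us$ for some $s\in S$, and hence $x^{-1}v = (x^{-1}u)\,s$. Applying $\epsilon$ gives $\epsilon(x^{-1}v) = -\epsilon(x^{-1}u)$, so
\[
d_S(x,u) = l_S(x^{-1}u) \not\equiv l_S(x^{-1}v) = d_S(x,v) \pmod{2}.
\]
In particular the two distances cannot be equal, and together with the bound $|d_S(x,u)-d_S(x,v)|\leq 1$ this yields $d_S(x,u) = d_S(x,v) \pm 1$.

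The only nontrivial step is the well-definedness of the parity of $l_S$, which I expect to be the main point to justify carefully; it is however immediate from inspecting the Coxeter presentation, and could alternatively be obtained as a direct consequence of the deletion condition $(D)$ of Definition~\ref{def:6.7}, since a single application of $(D)$ shortens a word by exactly two letters.
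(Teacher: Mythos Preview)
Your proof is correct, and takes a somewhat different route from the paper's own argument.

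The paper argues directly with the deletion condition: starting from a reduced word $x^{-1}u = s_1\cdots s_d$, it appends the letter $s$ (where $v = us$) to obtain a word of length $d+1$ for $x^{-1}v$. Either this word is reduced and $d_S(x,v) = d+1$, or it is not, in which case condition~(D) allows the removal of \emph{two} letters, forcing $d_S(x,v) \leq d-1$; combined with the obvious lower bound this gives $d_S(x,v) = d-1$.

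Your argument instead separates the two ingredients cleanly: the triangle inequality bounds the difference by $1$, and the sign homomorphism $\epsilon: W \to \{\pm 1\}$ rules out equality on parity grounds. This is arguably tidier and more conceptual, and it makes explicit that the only real content is the well-definedness of length parity --- which, as you note, follows either from the evenness of the relators or from a single application of~(D). The paper's version has the advantage of staying entirely within the combinatorial framework of paths in $\Gamma_W$ and using the just-introduced condition~(D) directly, without needing to introduce the sign character; your version has the advantage of isolating exactly what is being used and of being shorter.
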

\begin{proof}
A minimal presentation of $x^{-1}u=s_1\dots s_d$ corresponds to a geodesic path $\gamma$ in $\Gamma_W$ from $x$ to $u$ consisting of edges which are labeled with $s_1, s_2$, etc. And the vertices $x, xs_1, xs_1s_2, \ldots ,u=xs_1\cdots s_d$ are appear along $\gamma$ in this order. 

Elongate $\gamma$ by one edge with label $s$ so that it goes to $v=us$. This corresponds to the product $x^{-1}us$. Now either $l(x^{-1}v)=l(x^{-1}u)+1$ or $l(x^{-1}v) < l(x^{-1}u)$. From (D) we can conclude that there exist two letters in the presentation which may be deleted and $l(x^{-1}v)=l(x^{-1}u)-1$ which finishes the proof. 
\end{proof}

\begin{definition}\label{def:6.10}
Let $\gamma=(c_0, c_1, \ldots, c_k)$ be a path in $\Gamma_W$. We say that $\gamma$ \emph{crosses} the wall $M_r$ if there exists $1\leq i\leq k$ such that the reflection $r$ maps $c_{i-1} $ to $c_i$ and vice versa.  
\end{definition}

\begin{lemma}\label{le:6.11}
Let $(W,S)$ be a Coxeter group with Cayley graph $\Gamma$. 
\begin{enumerate}
 \item\label{6.11.1} A minimal path in $\Gamma$ crosses each wall at most once. 
 \item\label{6.11.2} Given $x,y$ vertices in $\Gamma$ then the number of crossings of $M_r$ modulo two is indepenent of the choice of a path $\gamma:x\rightsquigarrow y$. 
\end{enumerate}
\end{lemma}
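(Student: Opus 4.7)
My plan is to treat the two parts separately: a reflection-shortening argument for (1), and a reduction to the defining Coxeter relators for (2).

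For (1), I would argue by contradiction. Suppose a minimal path $\gamma=(c_0,\ldots,c_k)$ crosses $M_r$ at two distinct positions $i<j$, so $c_i=rc_{i-1}$ and $c_j=rc_{j-1}$. Since $r$ acts on $\Gamma$ as a graph automorphism by left multiplication, the reflected segment $rc_i,rc_{i+1},\ldots,rc_{j-1}$ is a path in $\Gamma$ whose successive edge labels are exactly the labels $s_{i+1},\ldots,s_{j-1}$ of the original middle segment. Its endpoints satisfy $rc_i=c_{i-1}$ and $rc_{j-1}=c_j$. Splicing this reflected segment into $\gamma$ in place of the original segment $c_{i-1}\to c_i\to\cdots\to c_{j-1}\to c_j$ (of length $j-i+1$) and collapsing the two resulting vertex repetitions produces a valid path from $c_0$ to $c_k$ of length $k-2$, contradicting the minimality of $\gamma$.

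For (2), observe first that the assertion is equivalent to: every closed loop in $\Gamma$ crosses $M_r$ an even number of times. Indeed, concatenating a path $\gamma\colon x\rightsquigarrow y$ with the reverse of any second path $\gamma'\colon x\rightsquigarrow y$ yields such a loop, and its number of crossings of $M_r$ is the sum of the crossing counts of $\gamma$ and $\gamma'$. Given a closed loop at $x$ with edge labels $s_1,\ldots,s_k$, the word $s_1\cdots s_k$ represents $1\in W$, and therefore in the free group $F(S)$ can be trivialised by a sequence of insertions and deletions of conjugates of the defining Coxeter relators $s^2$ and $(s_is_j)^{m_{ij}}$. Geometrically, each such move alters the loop by a translate (based at some current vertex $v$) of the boundary of one of the relators.

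Thus the problem reduces to checking that, for either type of relator and every choice of basepoint $v$, the corresponding closed sub-loop crosses each wall an even number of times. The bigon case $s^2=1$ is immediate: both of its edges cross the same wall $M_{vsv^{-1}}$. For the $2m_{ij}$-gon $(s_is_j)^{m_{ij}}=1$ one has to show, by a direct computation inside the dihedral subgroup $\langle s_i,s_j\rangle$, that the $2m_{ij}$ successive edges of this loop realise the $m_{ij}$ reflections of the dihedral group, each appearing exactly twice. I expect this last dihedral bookkeeping to be the main technical point of the proof; it is a finite calculation which can be carried out either by enumerating the visited reflections $s_i,\,s_is_js_i,\,s_is_js_is_js_i,\ldots$ and exploiting the palindromic symmetry of $(s_is_j)^{m_{ij}}$, or by invoking the standard structural fact that a dihedral group of order $2m_{ij}$ contains exactly $m_{ij}$ reflections.
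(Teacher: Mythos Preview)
Your argument for part~(1) is exactly the reflection--shortening argument the paper gives.

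For part~(2) your proof is correct, but the route differs from the paper's. The paper does not reduce to closed loops; instead it invokes the solution to the word problem in Coxeter groups: any two words for the same element are connected by the moves (a)~delete $ss$, (b)~insert $ss$, and (c)~replace an alternating string $s_is_js_i\cdots$ of length $m_{ij}$ by $s_js_is_j\cdots$ of the same length. Moves (a) and (b) change the crossing count by $\pm 2$, and move~(c) leaves it unchanged (both half-relators cross the same $m_{ij}$ walls, each once). Your approach instead uses only the elementary presentation-theoretic fact that a word representing~$1$ lies in the normal closure of the relators, and then checks directly that each full relator loop crosses every wall evenly. This buys you something: you avoid appealing to the Tits/Matsumoto-type statement that braid moves and $ss$-moves suffice, which is itself a nontrivial theorem about Coxeter groups. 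The price is the explicit dihedral bookkeeping for the $2m_{ij}$-gon, which you correctly flag as the crux; it does go through (the $k$-th edge of $(s_is_j)^{m_{ij}}$ based at $v$ realises the reflection $v(s_is_j)^{k-1}s_i v^{-1}$, and since $s_is_j$ has order $m_{ij}$ each reflection occurs exactly twice). One small point: when trivialising in $F(S)$ you also need free reductions $ss^{-1}\to 1$, not just relator insertions; in the Cayley graph these are backtracks and, like the $s^2$ relator, cross a single wall twice, so they cause no trouble.
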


In other words the second assertion of the above lemma states that any path connecting $x$ and $y$ either crosses $M_r$ an even number of times or an odd number of times. So $M_r$ either does ``separate'' $x$ and $y$ or does not. 

\begin{proof}
To prove the first assertion let $\gamma=(c_0, c_1, \ldots, c_k)$ be a minimal path in $\Gamma$ and suppose there exists $M_r$ which is crossed by $\gamma$ twice, say at positions $i$ and $j$. Then the minmal path $(c_i, c_{i+1}, \dots, c_{j-1})$ is mapped onto a path of the same length by  $r$ but now connecting $r(c_i)=c_{i-1}$ and $r(c_{j-1})=c_j$. But then we may shorten $\gamma$ to the path $(c_1, \dots, c_{i-1}=r(c_i), r(c_{i+1})\dots, , r(c_{j-1})=c_j, c_{j+1}, \dots , c_k$ which contradicts minimality of $\gamma$. 
%\marginpar{picture of proof missing}

\begin{figure}[h]
	\begin{center}
		\resizebox{!}{0.3\textwidth}{\includegraphics{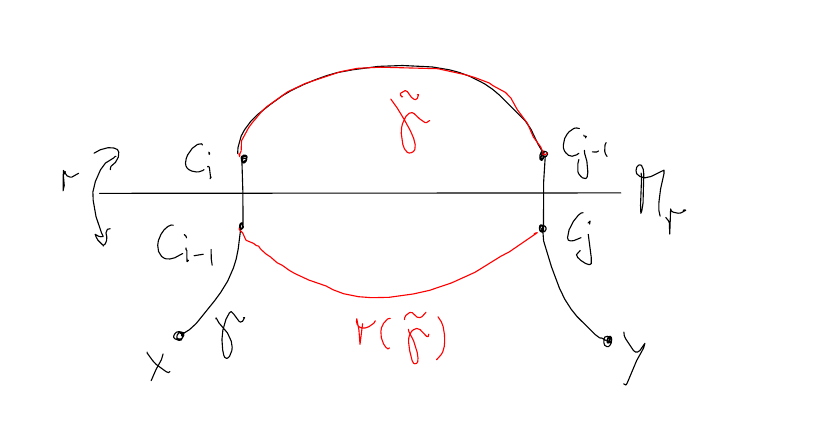}}
		\caption{A path in the proof of Lemma~\ref{le:6.11}.}
	\label{fig_53}
	\end{center}
\end{figure}

We now verify \ref{6.11.2}. Fix $x,y$ in $W$ and a (not neccessarily minimal) presentation $s_1\cdots s_d$ of $x^{-1}y$ and denote by $\gamma:x\rightsquigarrow y$ the associated path in $\Gamma$. Further let $M_r$ be a wall in $\Gamma$. 
We write  $n(\gamma)$ for the number of crossings of $M_r$ by $\gamma$. 
Suppose that $t_1\cdots t_k$ is a different presentation of  $x^{-1}y$. 

Each presentation of the element  $x^{-1}y$ in $W$ may be transformed into another presentation  by the following moves: 
\begin{enumerate}
 \item\label{6a} delete double appearance $ss$ of generators $s\in S$
 \item\label{6b} insert double appearance $ss$ of generators $s\in S$
 \item\label{6c} replace a string $s_is_js_i\cdots $ of $m_{ij}$ letters with the string $s_js_is_j\cdots$ of $m_{ij}$ letters. 
\end{enumerate}

Apply these moves to the two presentation of the element  $x^{-1}y$. It is easy to see that move \ref{6a} changes the value of $n(\gamma)$ by $-2$, move \ref{6b} by $+2$ and that  move \ref{6c} does not change the value of $n(\gamma)$. Hence the assertion. 
\end{proof}

\begin{definition}\label{def:6.12}
For a fixed wall $M_r$ in $\Gamma$ the Cayley graph of a Coxeter group $(W,S)$ we denote by  $n(\gamma)$, as in the previous lemma,  the number of crossings of $M_r$ by $\gamma$. The reflection $r$ defines two subsets  $\pm \alpha$ of the vertices of $\Gamma$, called \emph{roots},  by
\begin{equation*}
+\alpha = \{g\in W \vert\; n(g) \text{ is even} \} \text{ and } -\alpha = \{g\in W \vert\; n(g) \text{ is odd} \}.
\end{equation*}
We call $\alpha$ and $-\alpha$ \emph{opposite} in $\Gamma$. 
\end{definition}

This definition is (up to permuting the signs of the roots) independent of the choice of  connecting paths by Lemma~\ref{le:6.11}. Note also that the vertices of $\Gamma$ are precisely the union of $\alpha$ and $-\alpha$. 

For the proof of the following lemma compare also \cite[Prop. 2.6]{Ronan}.

\begin{lemma}\label{le:6.13}
Let  $u,v$ be adjacent vertices in $\Gamma$.  Define the set $H(u,v) \define\{c\in \Gamma \vert\; d_S(c,u) < d_S(c,v) \}$. For any root $\alpha$  with $u\in\alpha$ and $v\in -\alpha$ we then have  $\alpha = H(u,v)$.  
In particular $H(u,v)=H(x,y)$ for any pair of adjacent vertices $x,y$ with $x\in H(u,v)$ and $y\notin H(u,v)$. 
\end{lemma}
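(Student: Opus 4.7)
Fix the reflection $r$ associated to the edge $(u,v)$, i.e.\ the unique $r = wsw^{-1}$ with $u = rv$; the edge $(u,v)$ lies in the wall $M_r$. I would first argue that the root $\alpha$ referred to in the statement is nothing other than the one determined by $M_r$. Indeed, if $u$ and $v$ were separated by some other wall $M_{r'}$, then every path from $u$ to $v$ would cross $M_{r'}$ an odd number of times (by Lemma~\ref{le:6.11}.\ref{6.11.2}); but the single edge $(u,v)$ is contained in the unique wall $M_r$, so it crosses no other wall. Thus $M_r$ is the only wall separating $u$ from $v$, which forces $\alpha$ to be the root of $M_r$ containing $u$, and $-\alpha$ the one containing $v$.

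The central step is to characterise $H(u,v)$ via crossings of $M_r$. For an arbitrary vertex $c$, Lemma~\ref{le:6.9} tells us that $d_S(c,u)$ and $d_S(c,v)$ differ by exactly one. Suppose first that $c \in H(u,v)$, so $d_S(c,v) = d_S(c,u)+1$. Pick a geodesic $\gamma$ from $c$ to $u$ and append the edge $(u,v)$ to get a path $\gamma'$ from $c$ to $v$ of length $d_S(c,v)$; this $\gamma'$ is therefore a geodesic. By Lemma~\ref{le:6.11}.\ref{6.11.1}, $\gamma'$ crosses $M_r$ at most once, and since its final edge $(u,v)$ already crosses $M_r$, the initial segment $\gamma$ from $c$ to $u$ crosses $M_r$ zero times. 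Hence $c$ and $u$ lie on the same side of $M_r$, i.e.\ $c \in \alpha$. The case $c \notin H(u,v)$ is symmetric: a geodesic from $c$ to $v$ extended by the edge $(v,u)$ yields a geodesic from $c$ to $u$, so the original geodesic from $c$ to $v$ does not cross $M_r$, whence $c \in -\alpha$. Combining both cases gives $H(u,v) = \alpha$.

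For the \emph{in particular} part, let $x,y$ be adjacent with $x \in H(u,v)$ and $y \notin H(u,v)$. Then $x \in \alpha$ and $y \in -\alpha$, so $x$ and $y$ lie on opposite sides of $M_r$; as in the opening paragraph the edge $(x,y)$ must therefore itself lie in $M_r$. Applying the main part of the lemma to the pair $(x,y)$ (whose associated wall is again $M_r$, and whose associated root containing $x$ is again $\alpha$) yields $H(x,y) = \alpha = H(u,v)$.

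The main technical obstacle is the first step: translating the distance condition defining $H(u,v)$ into a statement about wall-crossings along a specific geodesic. The trick of appending (or prepending) the single edge $(u,v)$ to a geodesic and then invoking the ``each wall is crossed at most once'' property of Lemma~\ref{le:6.11}.\ref{6.11.1} is what makes the argument short; without that lemma one would have to track parities along arbitrary paths and the argument would become considerably more delicate.
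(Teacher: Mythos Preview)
Your proof is correct and follows essentially the same approach as the paper: both arguments hinge on Lemma~\ref{le:6.9} (distances to adjacent vertices differ by exactly one), the trick of appending the edge $(u,v)$ to a geodesic to obtain another geodesic, and Lemma~\ref{le:6.11}.\ref{6.11.1} (geodesics cross each wall at most once). The only cosmetic difference is that the paper proves the two inclusions $\alpha \subseteq H(u,v)$ and $H(u,v) \subseteq \alpha$ separately, whereas you argue symmetrically that $H(u,v) \subseteq \alpha$ and $H(v,u) \subseteq -\alpha$; your version is arguably cleaner, and your opening paragraph identifying $\alpha$ as the root of the wall $M_r$ through $(u,v)$ is a useful clarification that the paper leaves implicit.
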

\begin{proof}
We first prove that $\alpha$ is contained in $H(u,v)$. Let $c\neq u$ be a vertex in $\alpha$. By the defintion of roots the number of crossings of $M_r$ a minimal path from $c$ to $u$ is even and  by Lemma~\ref{le:6.11}-\ref{6.11.1} that number is at most one, hence a minimal path does not cross the wall $M_r$. %\marginpar{add picture}. 
In particular such a path can not contain $v$. But then Lemma~\ref{le:6.9} implies that $d(c,v)=d(c,u)+1>d(c,u)$ and hence $c\in H(u,v)$. 

To verify the opposite containment suppose now that $c\in H(u,v)$. We need to prove: there exists a path connecting $v$ and $c$ which goes via $u$.
Such a path then by construction crosses $M_r$ and has crossing number at most 1 by Lemma~\ref{le:6.11}.\ref{6.11.1}. This implies $c\notin -\alpha$.

It remains to prove the claim that there exists a path connecting $v$ and $c$ which goes via $u$. Suppose $\gamma:c\rightsquigarrow u$ is minimal. Since $u$ and $v$ are adjacent we may deduce from Lemma~\ref{le:6.9}  that 
$$l(c^{-1}us)=l(c^{-1}v)=d(c,v)=d(c,u)+1=l(c^{-1}u).$$
The path $\gamma$ corresponds to a minimal presentation of $c^{-1}u$, say $s_1\cdots s_d$. Then $s_1\cdots s_ds$ is a minimal presentation of $c^{-1}v$ and the elongation of $\gamma$ by the edge $(u,v)$ is a minimal path which connects $u$ and $v$ and crosses $M_r$. 
\end{proof}

\begin{figure}[h]
	\begin{center}
		\resizebox{!}{0.3\textwidth}{\includegraphics{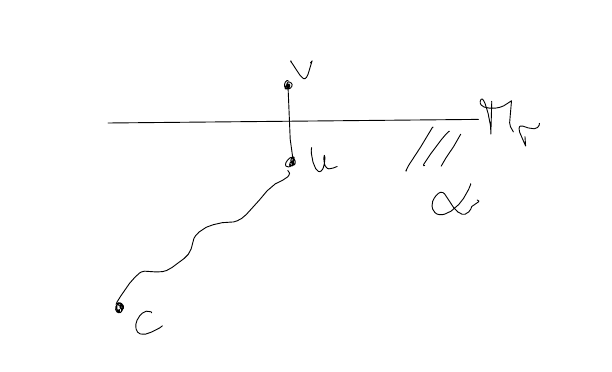}}
		\caption{A path in the proof of Lemma~\ref{le:6.13}.}
	\label{fig_54}
	\end{center}
\end{figure}

\begin{definition}(Halfspaces for $\Gamma_W$)\label{def:6.14}
Let $(W,S)$ be a Coxeter group  and let $u,v$ be adjacent vertices in the  Cayley graph $\Gamma_W$. That is $v=us$ for some $s\in S$. We write
\begin{equation*}
 H(u,v)=\left\{w\in W \mvert d_S(w,u) < d_S(w,v)\right\}.
\end{equation*}
and define 
\begin{align*}
\cH &\define\left\{ H(u,v) \mvert u,v\in W, d_S(u,v)=1\right\} \\
\star &:\cH\to \cH : H(u,v)\mapsto H(v,u)
\end{align*}
with $\cH$ being ordered by inclusion.
\end{definition}

\begin{figure}[h]
	\begin{center}
		\resizebox{!}{0.6\textwidth}{\includegraphics{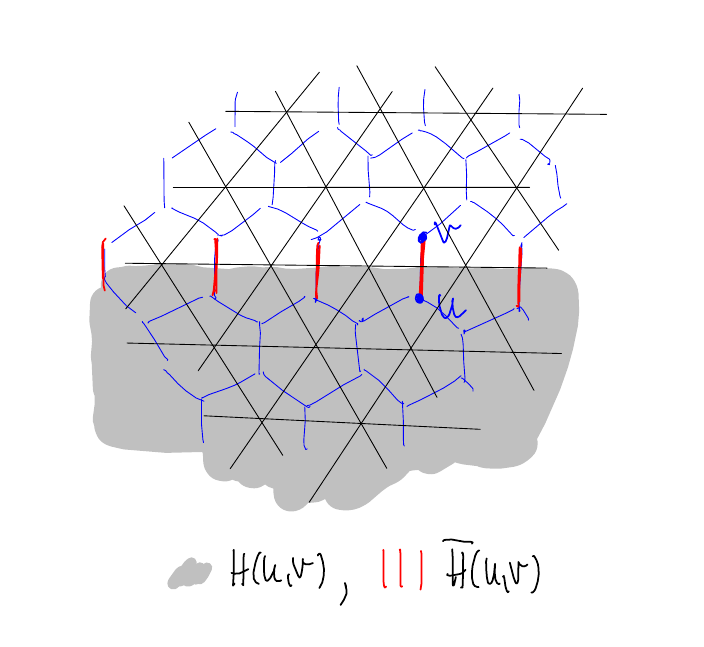}}
		\caption{The red edges are the wall $\bar H(u,v)$ of the grey shaded half space $H(u,v)$.}
	\label{fig_55}
	\end{center}
\end{figure}

\begin{prop}[(Niblo-Reeves)]\label{prop:halfspace}
	The triple $(\cH, \leq, \star)$ is a halfspace system in the sense of Definition~\ref{def:4.1}. 
\end{prop}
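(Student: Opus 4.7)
The plan is to verify the three parts of Definition~\ref{def:4.1} directly, using the analysis of walls and roots in Lemmas~\ref{le:6.9}, \ref{le:6.11}, and \ref{le:6.13}. The workhorse observation is that by Lemma~\ref{le:6.9}, for any edge $(u,v)$ in $\Gamma_W$ and any $w\in W$ one has $d_S(w,u)\neq d_S(w,v)$; consequently $H(u,v)$ and $H(v,u)$ partition $W$. In other words, at the level of sets, $H(v,u)=W\setminus H(u,v)$. From this I immediately get that $\star$ is a well-defined involution on $\cH$ (different edges $(u',v')$ with $H(u',v')=H(u,v)$ necessarily give $H(v',u')=W\setminus H(u,v)=H(v,u)$) and that it reverses the inclusion order on $\cH$.

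For the finite interval condition, fix $h_1, h_2\in\cH$ and choose $y\in h_1$ and $x\in h_2^\star$. These choices are possible since every $H(u,v)$ contains $u$ but not $v$, hence is a proper nonempty subset of $W$. Any $k\in\cH$ with $h_1\preceq k\preceq h_2$ satisfies $y\in k$ and $x\in k^\star$, so the wall $\bar k$ separates $x$ from $y$. By Lemma~\ref{le:6.11}.\ref{6.11.1}, a fixed minimal edge-path from $x$ to $y$ crosses each wall at most once, so the number of walls separating $x$ and $y$ is bounded by $d_S(x,y)<\infty$. Moreover at most one of the two halfspaces of a given wall can sit inside $[h_1,h_2]$, for otherwise $h_1\subseteq k\cap k^\star=\emptyset$. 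Therefore only finitely many such $k$ exist.

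For the nesting condition I first reduce to the case of distinct hyperplanes $\bar h\neq \bar k$ (i.e.\ $h\neq k$ and $h\neq k^\star$), and then enumerate the six ways two of the four relations $h\preceq k$, $h\preceq k^\star$, $h^\star\preceq k$, $h^\star\preceq k^\star$ could simultaneously hold, using $h^\star = W\setminus h$ and $k^\star = W\setminus k$. The pairs $\{h\subseteq k,\,h\subseteq k^\star\}$ and $\{h^\star\subseteq k,\,h^\star\subseteq k^\star\}$ each force a halfspace to lie in $k\cap k^\star=\emptyset$; the pairs $\{h\subseteq k,\,h^\star\subseteq k\}$ and $\{h\subseteq k^\star,\,h^\star\subseteq k^\star\}$ each force a halfspace to equal all of $W$; and the pairs $\{h\subseteq k,\,h^\star\subseteq k^\star\}$ and $\{h\subseteq k^\star,\,h^\star\subseteq k\}$ force $h=k$ and $h=k^\star$ respectively, both contradicting $\bar h\neq \bar k$.

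The main obstacle is really just the initial observation that $H(u,v)$ and $H(v,u)$ are exact complements in $W$; once this is in hand, the finite interval condition becomes a one-line application of the wall-crossing bound from Lemma~\ref{le:6.11}, and the nesting condition is pure set theory. No input from the deletion condition or from Lemma~\ref{le:6.13} is needed beyond what is already encoded in the definition of $\cH$ and the complementarity of $H(u,v)$ and $H(v,u)$.
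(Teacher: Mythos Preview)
Your proof is correct and follows essentially the same route as the paper's: complementarity of $H(u,v)$ and $H(v,u)$ (via Lemma~\ref{le:6.9}) handles the involution and the nesting condition, and the finite interval condition is obtained by bounding the number of separating walls by the length of a minimal path between a point of $h_1$ and a point of $h_2^\star$, using Lemma~\ref{le:6.11}. Your version is in fact tidier: you avoid the paper's preliminary normalization $H_1=H(1,t)$ via the $W$-action, and your case analysis for the nesting condition is more explicit than the paper's one-line remark.

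One small correction to your closing comment: the claim that Lemma~\ref{le:6.13} is not needed is slightly too strong. In your finite interval argument you pass from ``$x\in k^\star$ and $y\in k$'' to ``the minimal path from $x$ to $y$ crosses the wall of $k$''. Concretely, along the path there are consecutive vertices $c_{i-1}\notin k$ and $c_i\in k$, and you need the edge $(c_{i-1},c_i)$ to lie in the wall $M_r$ associated to $k$. That identification $k=H(u,v)=H(c_i,c_{i-1})$ is exactly the ``in particular'' clause of Lemma~\ref{le:6.13} (equivalently, the identification of halfspaces with roots via Definition~\ref{def:6.12}). Since you cite Lemma~\ref{le:6.13} at the outset this is not a gap, just an overstatement in the final paragraph.
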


The following proof of Proposition~\ref{prop:halfspace} is different from (and shorter than) the one given in the literature.  

\begin{proof}
It is clear that $\subseteq$ forms a partial order on the set of halfspaces and that $\star$ is an order reversing involution of the same order as $W=H(u,v) \cup H(v,u)$. It is hence left to prove the following two things: 
\begin{enumerate}
	\item The finite interval condition is satisfied, that is for all $h_1, h_2$ there are only finitely many $k\in\cH$ suchh that $h_1\leq k\leq h_2$. 
	\item The nesting condition is satisfied, that is for all halfspaces $h$ we never have $h^\star = h$ (which is equivalent to the fact that at most one of the four conditions listed in Definition~\ref{def:4.1} holds). 
\end{enumerate}  
First observe that the nesting condition (2) is easily deduced from the fact that $H(U,v)^\star= H(v,u)$ and that the intersection of $H(u,v)$ with $H(v,u)$ is empty.

\begin{figure}[h]
	\begin{center}
		\resizebox{!}{0.35\textwidth}{\includegraphics{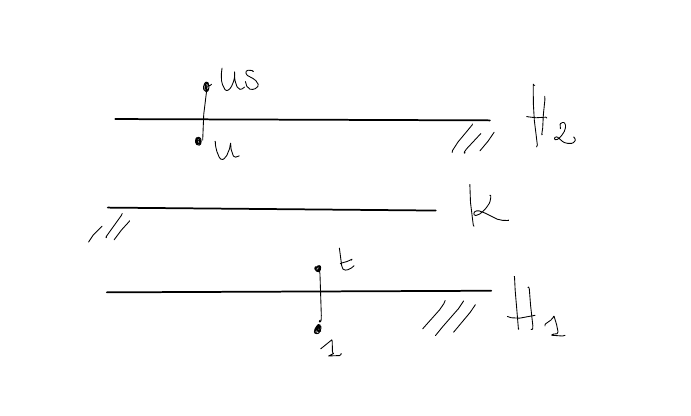}}
		\caption{Illustration of the proof of Proposition~\ref{prop:halfspace}.}
	\label{fig_56}
	\end{center}
\end{figure}

We now verify (1): Let $H_1\subsetneq H_2$ be two halfspaces and suppose without loss of generality that $H_1=H(1,t)$ for some $t\in S$ and $H_2=H(u,v)$ with $v=us, s\in S$ as shown in Figure~\ref{fig_56}. Otherwise move the $H_i$ in such a position using the left-translation action of $W$ on $\Gamma_W$. 
Observe that $t\in H_1$ and $u\notin H_2$ since otherwise $H_1=H_2$ by Lemma~\ref{le:6.13}. 

For any halfspace $K$ such that $H_1\subsetneq K\subsetneq H_2$ we may conclude that $t\in K$ and $u\notin K$ using again Lemma~\ref{le:6.13} and the fact that $K\neq H_i, i=1,2$. 

Suppose $\gamma =(1=c_0, c_1, \ldots, c_l=u)$ is a minimal path in the Cayley graph $\Gamma_W$ connecting the identity to the element $u$. Then $\gamma$ does cross the wall of $K$ exactly once (see Lemma~\ref{le:6.11}), say at the index $1\leq i\leq l-1$. That is $c_i\in K$ and $c_{i+1}\notin K$. From Lemma~\ref{le:6.13} we may then deduce that $K=H(c_i, c_{i+1})$ and may say that the wall of $K$ is defined by $\gamma$. Moreover each such path $\gamma:1\rightsquigarrow u$ defines at most $l-1$ wlls which do not necessarily all lie between $H_1$ and $H_2$. Compare Figure~\ref{fig_57}. 

Choose an element $u$ of minimal length contained in $H_2$ and adjacent to some element $v\notin H_2$. The length of $u$ then gives an upper bound on the number of possible walls between $H_1$ and $H_2$. In particular this number is finite. Such a choice of an element $u$ is possible since roots are convex subsets of $\Gamma_W$. Projecting $1$ to the wall of $-H_2$ we obtain an edge $(u,v)$ having minimal distance to $1$.  
\end{proof}

\begin{figure}[h]
	\begin{center}
		\resizebox{!}{0.35\textwidth}{\includegraphics{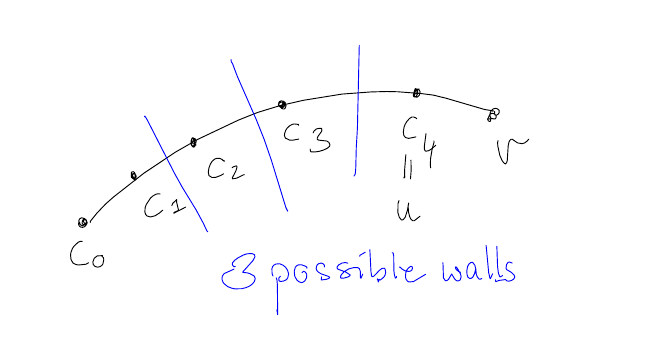}}
		\caption{Illustration of the proof of Proposition~\ref{prop:halfspace}.}
	\label{fig_57}
	\end{center}
\end{figure}

\begin{definition}
	Suppose $G$ acts on sets $X$ and $Y$. We say that a map $f:X\to Y$ is \emph{$G$-equivariant} if $f(g.x)=g.f(x)$ for all $g\in G$, $x\in X$. 
\end{definition}

\begin{prop}
	Suppose $(W,S)$ is a Coxeter group and $X(\cH)$ the associated cube complex defined in \ref{def:4.4} and \ref{def:4.4continued}. Then 
	\begin{enumerate}
		\item There exists a connected component $Y$ of $X(\cH)$ admitting a $G$-equivariant bijectiong $\Phi:\Gamma_W\to Y$. 
		\item The action of $W$ on $Y$ is properly discontinuous. 
	\end{enumerate}
\end{prop}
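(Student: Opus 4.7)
The plan is to construct $\Phi: W \to V(\cH)$ by recording, for each wall, which side contains a given $w \in W$. Concretely, for $\bar H = \delta(H(u,v)) \in \bar\cH$, set $\Phi(w)(\bar H)$ equal to the unique element of $\{H(u,v), H(v,u)\}$ containing $w$; this is well-defined since by Definition~\ref{def:6.12} the two opposite roots partition $W$. I then verify $\Phi(w) \in V(\cH)$: if $\Phi(w)(\bar H_1) \less (\Phi(w)(\bar H_2))^\star$, then $w$ would lie in $(\Phi(w)(\bar H_2))^\star$ as well as in $\Phi(w)(\bar H_2)$, contradicting their disjointness. $W$-equivariance is immediate: left multiplication sends the edge $(u,v)$ of $\Gamma_W$ to $(gu, gv)$, inducing the $W$-action $g \cdot H(u,v) = H(gu, gv)$ on $\cH$, and $\Phi(gw) = g \cdot \Phi(w)$ holds by inspection.

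Injectivity of $\Phi$ follows from Lemma~\ref{le:6.13}: for $w \neq w'$, let $(u,v)$ be the first edge of any minimal gallery from $w$ to $w'$; then $w \in H(u,v)$ while $w' \in H(v,u)$, so $\Phi(w)$ and $\Phi(w')$ disagree on $\delta(H(u,v))$. Furthermore, when $w$ and $ws$ are adjacent in $\Gamma_W$, the single edge $(w,ws)$ crosses only its own wall by Lemma~\ref{le:6.11}.\ref{6.11.1}, so $\Phi(w)$ and $\Phi(ws)$ differ in exactly one hyperplane and are thus joined by an edge in $X(\cH)$. Define $Y$ to be the connected component of $X(\cH)$ containing $\Phi(1)$; since $\Gamma_W$ is connected, $\Phi(W)$ lies in the vertex set of $Y$ and $\Phi$ extends to a graph morphism from $\Gamma_W$ into the $1$-skeleton of $Y$.

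The main obstacle is surjectivity of $\Phi$ onto the vertex set of $Y$. The approach is induction on edge-distance in $Y$ from $\Phi(1)$: given a neighbor $v$ of $\Phi(w) \in \Phi(W)$ in $Y$, Lemma~\ref{le:4.7}.\ref{4.7.2} provides a hyperplane $\bar H$ such that $\Phi(w)(\bar H) = H(a,b)$ is minimal with respect to $\Phi(w)$ and $v = \Phi(w)_{\bar H}$. Everything then reduces to the following characterization, which is the technical heart of the proof: \emph{a halfspace $H(a,b)$ containing $w$ is minimal with respect to $\Phi(w)$ if and only if its wall contains an edge of $\Gamma_W$ incident to $w$; equivalently, $H(a,b) = H(w, ws)$ for some $s \in S$.} The direction ($\Leftarrow$) is routine: any strictly smaller halfspace $K \subsetneq H(w,ws)$ containing $w$ would contain $w$ but not $ws$, so its wall would separate $w$ from $ws$, which by Lemma~\ref{le:6.11}.\ref{6.11.1} forces the wall to be that of $(w,ws)$, whence $K = H(w,ws)$. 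The direction ($\Rightarrow$) is the harder one: if no edge of the wall $\bar H(a,b)$ is incident to $w$, then the first edge $(w,ws')$ of a minimal gallery from $w$ to the nearest edge on $\bar H(a,b)$ has a different wall, and convexity of roots (a consequence of Lemma~\ref{le:6.11}) yields $H(w,ws') \subsetneq H(a,b)$, violating minimality. Granting the claim, $v = \Phi(ws)$ for the identified $s \in S$, closing the induction.

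For the second assertion, transport the $W$-action on $Y$ across $\Phi$: on vertices it becomes left multiplication on $W$, which is free. Any compact $K \subseteq Y$ lies in a finite subcomplex of $Y$ and thus meets only finitely many vertices $v_1, \dots, v_n$; if $gK \cap K \neq \emptyset$, then $g$ sends some $v_i$ to one of these vertices, a finite set of options, and by freeness each such pair $(v_i, gv_i)$ determines $g$ uniquely. Hence only finitely many $g$ satisfy $gK \cap K \neq \emptyset$, so the action of $W$ on $Y$ is properly discontinuous.
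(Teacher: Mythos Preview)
Your construction of $\Phi$, the verification that $\Phi(w)$ is a vertex, equivariance, injectivity, and preservation of adjacency are all correct and match the paper's argument. The problem lies in your surjectivity step: the characterization \emph{``$H(a,b)$ is minimal with respect to $\Phi(w)$ if and only if its wall contains an edge incident to $w$''} is false, and in fact $\Phi$ is not surjective onto the vertex set of $Y$ in general.

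A concrete counterexample is the dihedral group $W=I_2(3)=\langle s_1,s_2\mid s_i^2=(s_1s_2)^3=1\rangle$. The Cayley graph is a hexagon with three walls, one for each reflection, and every pair of distinct walls is transversal (no two halfspaces are nested). Hence every halfspace containing a given vertex is minimal with respect to that vertex. In particular $\Phi(1)$ has three minimal halfspaces, one of which is $H(s_1,s_1s_2)=\{1,s_1,s_2\}$ whose wall $M_{s_1s_2s_1}$ consists of the edges $(s_1,s_1s_2)$ and $(s_2,s_2s_1)$, neither incident to $1$. Your proposed containment $H(1,s_1)\subsetneq H(s_1,s_1s_2)$ fails outright: $H(1,s_1)=\{1,s_2,s_2s_1\}\not\subset\{1,s_1,s_2\}$. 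Convexity of roots only says that geodesics between points of a root stay in that root; it does not produce inclusions between roots whose walls cross. Globally, $X(\cH)$ here is a single $3$-cube with $8$ vertices, while $|W|=6$; the two vertices of the cube whose three chosen halfspaces have empty common intersection are not in the image of $\Phi$, yet they lie in the connected component $Y$ since they are adjacent to $\Phi(1)$.

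So the word ``bijection'' in the statement should be read loosely; what is actually established (and all the paper's proof claims) is a $W$-equivariant injective simplicial map from $\Gamma_W$ into the $1$-skeleton of $Y$. The paper makes no attempt at surjectivity.

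This also affects your argument for part~(2): you invoke freeness of the $W$-action on \emph{all} vertices of $Y$, which would follow from surjectivity but is not otherwise justified. The paper's route avoids this: for an arbitrary vertex $v\in Y$, the stabiliser $S_v=\Stab_W(v)$ acts on the finite set of vertices at combinatorial distance $d(v,\Phi(1))$ from $v$, and the kernel of this action fixes $\Phi(1)$; since $\Stab_W(\Phi(1))=\{1\}$ (by injectivity and equivariance of $\Phi$), the kernel is trivial and $S_v$ is finite. Proper discontinuity then follows from finiteness of cell stabilisers together with local finiteness of $X(\cH)$.
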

\begin{proof}
Define a map $f:\Gamma_W\to X(\cH)$ as follows: The set $M_r$ of walls in $\Gamma_W$ corresponds (not in a one-to-one fashion) with pairs of halfspaces $H(u,v)$ and $H(v,u)$ with $u,v$ being adjacent vertices in $\Gamma_W$. Write $\bar H(u,v)$ for such a pair of halfspaces and define a section $\nu_g$ for a fixed $g\in W$ by sending $\bar H(u,v)$ to that halfspace in $\bar H(u,v)$ containing $g$. 

It is clear that for each $g\in \Gamma_W$ the map $\nu_g$ defines a vertex in $H(\cH)$. The group $W$ acts on $\cH$ by $w.H(u,v)\define H(w.u, w.v)$ for all $w\in W$ which induces a transitive action of $W$ on the set $\{\nu_g \vert\ g\in W\}$ of sections by putting $w.\nu_g\define \nu_{w.g}$ for all $w$ and all $g$. 

It hence remains to show that adjacent vertices in $\Gamma_W$ are mapped to adjacent vertices in $X(\cH)$. Observe that for any pair $g\in W$ and $s\in S$ the only wall $\bar H$ on which the sections $\nu_g$ and $\nu_{gs}$ differ is defined by the edge $(g,gs)$. Since (w.o.l.g.) $g\in H$ and $gs\in H^\star$ Lemma~\ref{le:6.11} implies that $H=H(g,gs)$ and the claim follows. 

We need to show that vertex stabilizers $S_v\define \Stab_W(v)$ are finite for arbitrary vertices $v\in Y$. By construction of the map $\Phi$ in (1) each vertex $v\in Y$ is at finite distance of $\nu_1=\Phi(1)$ the image of the identity element. The orbit of $1$ under $S_v$ is the set of all vertices in $Y$ having distance $d(v_1, v)$ to $v$. In particular $S_v.v_1$ is finite. The stabilizer $S_v$ has a finite index subgroup which is contained in $S_{v_1}$ and hence $S_v$ itself has to be finite as $S_{v_1}=\{1\}$.      
\end{proof}

\begin{remark}
	One can show (by detailed inspection of the structure of Coxeter groups) that 
	\begin{enumerate}
		\item $X(\cH)$ is locally finite and finite dimensional. 
		\item The action of $W$ on $X(\cH)$ is cocompact of $W$ contains only finitely many conjugacy classes of subgroups isomorphic to a triangle group\footnote{A triangle group is a Coxeter group with presentation $\langle s_1, s_2, s_3 \vert\ s_i^2=(s_1s_2)^p=(s_2s_3)^q=(s_3s_1)^r \rangle$ where $p,q$ and $r$ are integers $\geq 2$}.   
		\item Word hyperbolic Coxeter groups $W$ act cocopactly on the associated cubical complex $X(\cH)$. 
	\end{enumerate}
%\petra{For proofs see \cite{Niblo} in particluar Theorems 4 and 5 therein. }
\end{remark}

\begin{example}
	An example of a non-cocompact action of $W$ on $X(\cH)$ is the Coxeter group 
	\[\langle r,s,t\vert \ s^2=t^2=r^2=(st)^3=(rs)^3=(rt)^3\rangle\]
	the reflection group corresponding to the tiling og the Euclidean plane by equilateral triangles. Its cubical complex is the standard cubing of $\R^3$ and the quotient of the action is a copy of the real line. 
\end{example}

We are ready to explore another conrete example of a Coxeter group and its associated cubical complex.

\begin{example}
	Cubing $PGL(2,\Z)$: 
	The group $G=PGL(2,\Z)$ is the quotient of $GL(2,\Z)$ by the matrix $\left(\begin{smallmatrix}
	-1 & 0 \\0 & -1\\\end{smallmatrix}\right)$ and is generated by the following three matrices: 
	\[\left(\begin{matrix}
	0&1\\1&0\\
	\end{matrix}\right), 
	\left(\begin{matrix}
	-1&1\\1&0\\
	\end{matrix}\right) \text{ and } 
	\left(\begin{matrix}
	1&0\\0&-1\\
	\end{matrix}\right).\] 

As an abstract group $G$ is isomorphic to the Coxeter group corresponding to the diagram with three nodes where the first two are connected by an edge with no label and the second and third node are connected with the label $\infty$. Further $G$ acts as Möbius trabsformation on the upper half-plane model of $\mathbb{H}^2$. That is 
\[\left(\begin{smallmatrix}a&b\\c&d\end{smallmatrix}\right).z =  \frac{az+b}{cz+d} .\]

For a picture of a fundamental domain of this action compare the turquoise region $C$ in Figure~\ref{fig_58}.

\begin{figure}[h]
	\begin{center}
		\resizebox{!}{0.7\textwidth}{\includegraphics{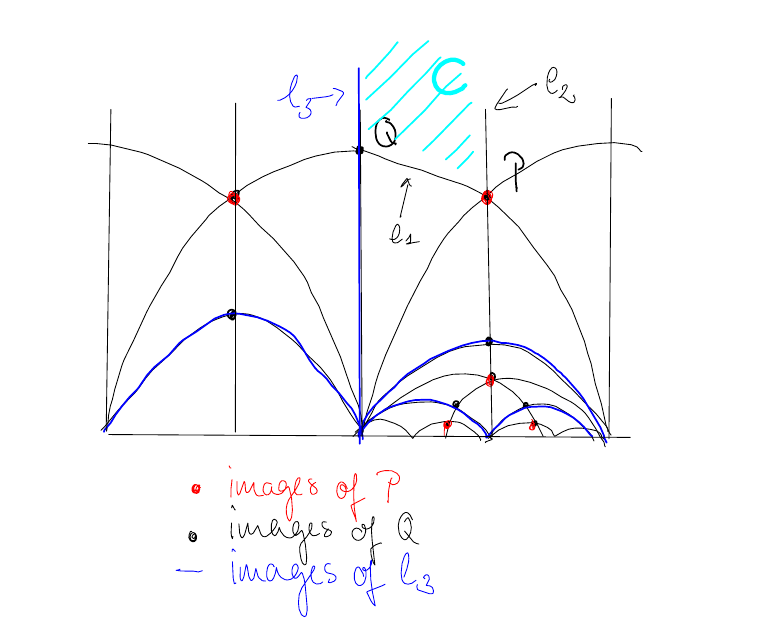}}
		\caption{Tiling of the hyperbolic plane induced by the action og $PGL(2,\Z)$. Region $C$ is a fundamental domain for this action.}
	\label{fig_58}
	\end{center}
\end{figure}

Apply now Sageev's construction to the complex forming a tiling of $\mathbb{H}^2$ by copies of the fundamental domain. Maximal cubes then correspond to families of pairwise intersecting hyperplanes. There are two classes/orbits of such families. For one the hyperplanes intersecting in $p$ or tranbslates of $p$ (see Figure~\ref{fig_58}) and on the other hand those intersecting in $q$  and its translates (see same figure).
The first class induces $3$-cubes while the second induces $2$-cubes in $X(\cH)$. Cubes corresponding to adjacent vertices $p,q$ share a face of dimension one. A piece of the resulting cubical complex is shown in Figure~\ref{fig_59} where it is also shown how the cubes fit together with the tiling of $\mathbb{H}^2$. One can also see in this picture that the action $G$ on the cube complex is in fact cocompact (look at the turquoise shaded region and its corresponding cubes).   
\end{example}

\begin{figure}[h]
	\begin{center}
		\resizebox{!}{0.7\textwidth}{\includegraphics{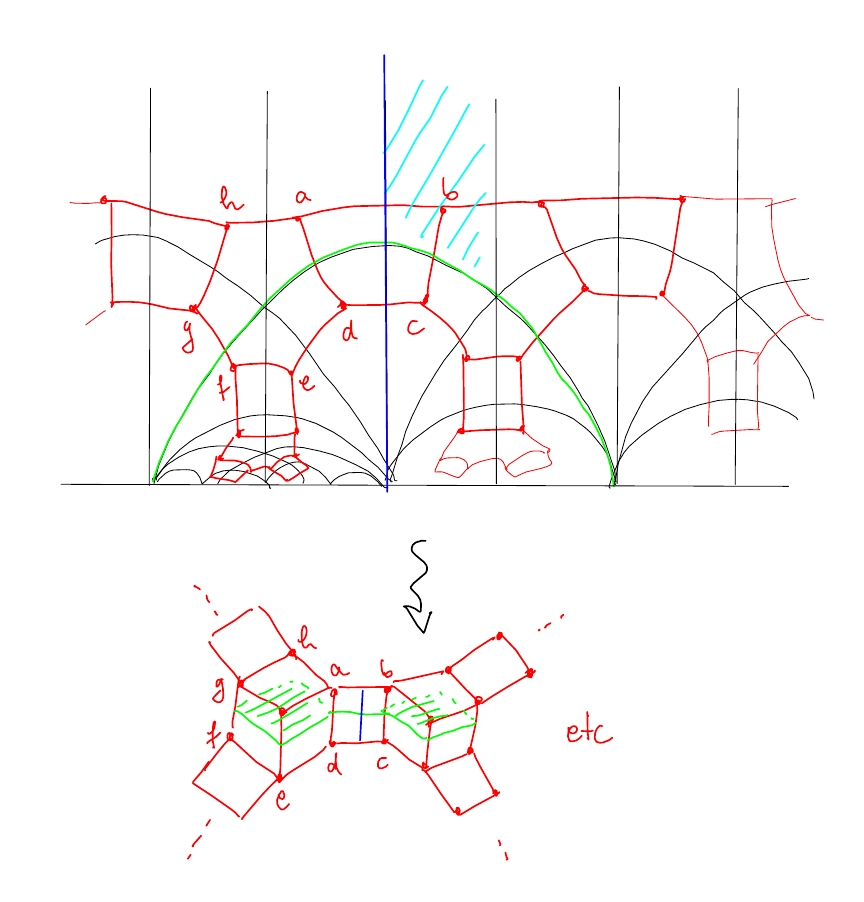}}
		\caption{The cube complex associated with $PGL(2,\Z)$.}
	\label{fig_59}
	\end{center}
\end{figure}

We end this section with two results and an open problem. 

\begin{thm}[Caprace, Niblo, Reeves]
	If a finitely generated Coxeter group $G$ does not contain a subgroup isomorphic to $(3,3,3)$ or $(4,4,2)$ then its action on the associated cubical complex is cocompact. 
\end{thm}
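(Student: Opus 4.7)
The strategy is to reduce the cocompactness statement to a finiteness assertion about $W$-orbits of families of pairwise crossing walls in $\Gamma_W$, and then to extract that finiteness from the classification of triangle Coxeter subgroups. By Theorem~\ref{thm:4.3}, cubes of $X(\cH)$ correspond bijectively and $W$-equivariantly to finite pairwise transversal sub-families of $\cH$, and by the preceding proposition the action is already properly discontinuous. Hence cocompactness is equivalent to the existence of only finitely many $W$-orbits on such families, together with a uniform bound on their sizes.

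I would first translate transversality into Coxeter-theoretic terms. Combining Definitions~\ref{def:4.1} and~\ref{def:6.14} with Lemma~\ref{le:6.13}, two halfspaces $H(u,v)$ and $H(u',v')$ are transversal precisely when the four intersections $(\pm\alpha)\cap(\pm\alpha')$ of their associated roots are all nonempty; equivalently, the walls $M_{r}$ and $M_{r'}$ share a rank-two spherical residue, and the dihedral subgroup $\langle r,r'\rangle\leq W$ is finite. Consequently, in a pairwise transversal family of walls every pair of reflections generates a finite dihedral subgroup of $W$.

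The critical case is a triple $\{M_{r_1},M_{r_2},M_{r_3}\}$ of pairwise crossing walls. The three reflections generate a rank-three Coxeter subgroup of triangle type $(p,q,r)$, which is spherical, affine, or hyperbolic according as $\tfrac{1}{p}+\tfrac{1}{q}+\tfrac{1}{r}$ is greater than, equal to, or less than $1$. In the spherical case the triple lies in a finite parabolic and contributes finitely many $W$-orbits, as $|S|<\infty$. In the hyperbolic case the triangle subgroup acts cocompactly on $\mathbb{H}^2$, so all its wall-triangles form a single orbit under the subgroup; controlling the $W$-conjugacy classes of such subgroups is subtle but tractable. In the affine case, by contrast, the Euclidean tessellation contains similar triangles of arbitrarily large diameter, producing infinitely many $W$-orbits of pairwise crossing triples. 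The affine triangle Coxeter groups are exactly $(3,3,3)$, $(4,4,2)$, and $(6,3,2)$, and the last of these contains $(3,3,3)$ as the reflection group of the equilateral sub-tessellation built from six $30\text{-}60\text{-}90$ triangles; thus excluding $(3,3,3)$ and $(4,4,2)$ already eliminates every affine obstruction.

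It remains to promote this triple-level analysis to pairwise transversal families of arbitrary size and to bound uniformly both the family size and the number of $W$-orbits. The plan is to induct on the size of the family, showing that the absence of affine triangle subgroups forces every pairwise transversal family to sit inside a ``spherical umbrella'' whose rank is bounded by $|S|$ and which contributes only finitely many $W$-orbits. The main obstacle is exactly this induction step: it requires a careful combinatorial analysis verifying that no configuration of four or more pairwise crossing walls can escape being certified, through one of its triples, as either spherical or as contained in finitely many orbits coming from hyperbolic triangle subgroups. This rests on the full machinery developed in the section --- the deletion condition of Definition~\ref{def:6.7}, the root combinatorics of Lemmas~\ref{le:6.9},~\ref{le:6.11}, and~\ref{le:6.13}, and the classification of finite and affine Coxeter groups.
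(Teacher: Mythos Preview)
The paper does not prove this theorem at all: it is stated at the end of Section~6 as a result attributed to Caprace, Niblo and Reeves, with no argument given. The only hint the paper offers is the preceding remark that the $W$-action on $X(\cH)$ is cocompact if and only if $W$ contains only finitely many conjugacy classes of triangle subgroups. So there is nothing in the paper to compare your attempt against.

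That said, your plan is heading in the right direction but is, as you yourself acknowledge, genuinely incomplete. The reduction to finitely many $W$-orbits of pairwise transversal families is correct, and the identification of the affine triangle groups $(3,3,3)$, $(4,4,2)$, $(6,3,2)$ as the source of infinitely many orbits of crossing triples is the essential geometric insight (and your observation that $(6,3,2)$ already contains $(3,3,3)$ is exactly why only two exclusions are needed). However, two real gaps remain. First, three pairwise crossing walls need not generate a rank-three triangle Coxeter group on the nose: by Deodhar--Dyer the reflection subgroup $\langle r_1,r_2,r_3\rangle$ is a Coxeter group, but its canonical generating set may differ from $\{r_1,r_2,r_3\}$, so the passage from ``pairwise finite dihedral'' to ``triangle type $(p,q,r)$'' requires justification. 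Second, the induction from triples to arbitrary transversal families is not a routine step; the actual argument in the literature uses the parallel-wall theorem and a careful analysis of how affine parabolic subgroups sit inside $W$, rather than the ``spherical umbrella'' picture you sketch. Your plan correctly isolates where the difficulty lies but does not resolve it.
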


\begin{question}
	It is not knwon yet which groups do act on CAT(0) cubical complexes. For example one problem to study would be whether all finitely generated Artin groups act cocompactly and by isometries on some CAT(0) cube complex. It is e.g. not yet knwon whether all Mapping class groups have such an action. Moreover, once a group admits an action on a CAT(0) cubical complex one might ask if it is possible that the action is proper or fixed point free and what the minimal dimension of such a complex is. 
\end{question}

\begin{thm}[Bergeron-Wise~\cite{BergeronWise}]
	Let $M$ be a closed hyperbolic 3-manifold. Then $\pi_1(M)$ acts freely and cocompactly on a CAT(0) cubical complex. 	
\end{thm}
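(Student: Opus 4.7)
My plan would be to realize this as an application of Sageev's cubulation machinery (as developed in \S4 and \S6) to a carefully chosen collection of codimension-one subgroups of $\pi_1(M)$, following the strategy of Bergeron and Wise. The heart of the argument is \emph{not} internal to cube complex theory but rather the input from hyperbolic geometry, namely the surface subgroup theorem of Kahn and Markovic.

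First I would set up the wall space. Let $\Gamma = \pi_1(M)$ acting on $\mathbb{H}^3$ with limit set $\partial \mathbb{H}^3 = S^2_\infty$. By the Kahn--Markovic theorem, for every $\epsilon > 0$ there exists a closed immersed $\pi_1$-injective quasi-Fuchsian surface subgroup $H \leq \Gamma$ whose limit set is an $\epsilon$-quasicircle in $S^2_\infty$. Crucially, the collection of such quasi-Fuchsian surface subgroups is rich enough to separate points: for any two distinct points $x, y \in S^2_\infty$, one can find a quasi-Fuchsian subgroup $H$ whose limit circle separates $x$ from $y$. Each quasi-Fuchsian $H \leq \Gamma$ has two associated ``halfspaces'' in $\mathbb{H}^3$ (the two complementary regions of its limit disk), which descend to a $\Gamma$-invariant wall structure: the walls are the $\Gamma$-translates of a finite collection of such quasi-Fuchsian surfaces, and the halfspaces $\mathcal H$ are the pairs of components of their complements in $\mathbb{H}^3$. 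I would verify that this yields a halfspace system in the sense of Definition~\ref{def:4.1}: the nesting condition is automatic, and the finite interval condition follows from the Kahn--Markovic separation property together with a standard compactness/counting argument in $\mathbb{H}^3$ (only finitely many translates of a given surface can lie between two fixed surfaces).

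Next I would invoke Theorem~\ref{thm:4.3} to obtain from this halfspace system a $\cat(0)$ cube complex $X = X(\mathcal H)$ on which $\Gamma$ acts by automorphisms (the action being induced $\Gamma$-equivariantly from the action on $\mathcal H$, as in \S6). Finite dimensionality of $X$ follows from the fact that at most $n$ pairwise transversal walls can meet at a point in $\mathbb{H}^3$ for some bounded $n$, since $\Gamma$ acts cocompactly on $\mathbb{H}^3$ and the walls meet transversally. The hard step is \emph{cocompactness}: this is the analogue of the argument in \S6, but now one cannot use the Cayley graph directly. The key technical tool is Sageev's criterion that a cocompact action results whenever the wall system satisfies a \emph{linear separation property}, i.e.\ the number of walls separating two points $x, y \in \mathbb{H}^3$ is comparable (up to additive and multiplicative constants) to $d_{\mathbb{H}^3}(x,y)$. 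This in turn is exactly the content of the Kahn--Markovic theorem combined with Bergeron--Wise's packing estimates.

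Finally, for \emph{freeness} of the action, I would argue as follows. The vertex stabilizers in $X$ are intersections of halfspace stabilizers, and the halfspace stabilizers are (conjugates of) the chosen quasi-Fuchsian surface subgroups. Since $\Gamma$ is torsion-free (being the fundamental group of a closed hyperbolic manifold) and acts on $\mathbb{H}^3$ freely, any element fixing a vertex of $X$ would have to stabilize a coherent choice of halfspace on every wall, forcing it to act trivially on the wall system; by the separation property this forces the element to be trivial in $\Gamma$. I expect the main obstacle in a complete proof to be the cocompactness step, since it requires the quantitative Kahn--Markovic input on abundance of nearly-totally-geodesic surfaces, which is itself a major theorem; once cocompactness is established, freeness and the $\cat(0)$ property are comparatively soft consequences of the general machinery already developed in \S4.
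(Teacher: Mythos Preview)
The paper does not prove this theorem at all: it is stated at the end of Section~6 purely as a citation to Bergeron--Wise, with no argument given. There is therefore nothing in the paper to compare your proposal against.

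That said, your outline is a reasonable high-level sketch of the actual Bergeron--Wise argument, and it correctly identifies the two external inputs (Kahn--Markovic for the supply of nearly-geodesic quasi-Fuchsian surfaces, and Sageev's construction from \S4) together with the division of labour between them. A couple of points where your sketch drifts from the actual proof: cocompactness in Bergeron--Wise is not obtained via a linear-separation estimate on $\mathbb{H}^3$ but via a boundary criterion --- one shows that for every pair of distinct points of $\partial\mathbb{H}^3$ some wall in the chosen finite $\Gamma$-orbit of surfaces separates them, and then invokes a general cocompactness criterion for cubulations of hyperbolic groups. Your freeness argument is also more elaborate than needed: since $\Gamma$ is torsion-free and the action on the cube complex is already proper (which follows once one has enough walls to separate any two orbit points), freeness is immediate. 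Finally, note that finite-dimensionality and properness are not automatic from the halfspace axioms of Definition~\ref{def:4.1} alone; they require the additional geometric control coming from the quasi-Fuchsian nature of the walls and the cocompactness of $\Gamma$ on $\mathbb{H}^3$.
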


%%%%%%%%%%%%%%%%%%%%%%
%
% Tits Alternative
%
%%%%%%%%%%%%%%%%%%%%%%

\newpage
\section{Tits Alternative}

In this chapter of our lecture we will discuss the Tits alternative in the setting of cube complexes. We will need some deep results from the literature - proving them would go way beyond the scope of this lecture. But excepting these beautiful results we will see some interesting interplay between group theoretic results and geometric aspects of cube complexes.

\begin{definition}\label{def:8.1}
 A group $G$ satisfies the \emph{Tits alternative} if for all its subgroups $H$ either $H$ is virtually special or $H$ contains a non-abelian free subgroup. 
\end{definition}

\begin{definition}\label{def:8.2}
 A group $H$ is \emph{virtually (P)} for some property (P) if $H$ contains a finite index subgroup having (P). 
\end{definition}

%\marginpar{give references in example}
\begin{example}\label{ex:8.3}
Examples of groups satisfying the Tits alternative:
\begin{itemize}
 \item Tits ('72): finitely generated linear groups
 \item Gromov / Ghys-de la Harpe: hyperbolic groups
 \item Bestvina-Feighn-Handel: $Out(F_n)$
 \item Ballmann-Swiatkowski, Sageev-Wise, Xie: some classes of $\cat(0)$ groups 
\end{itemize}

It is conjectured that all $\cat(0)$--groups satisfy the Tits alternative. 

Here are some non-examples:
\begin{itemize}
 \item Pak: Grigorchuk's groups
 \item Brin-Squier: Thompson's group $F$
 \item de la Harpe: some Burnside groups
\end{itemize}

Here Thompsons group $F$ is the group given by the following presentation:
$$F=\{A, B \vert\, [AB^{-1},A^{-1}BA ]=[AB^{-1}, A^{-2}BA^2]=1 \}. $$
Despite the innocent appearence of $F$ when defined in this way this group is an important (counter-)example in geometric group theory which has lot's of inetresting properties. See for example the introduction by Cannon, Floyd and Parry \cite{CFP}.
\end{example}

We will prove the following theorem:

\begin{thm}\label{thm:8.4}
Let $G$ be a finitely generated group acting properly on a finite dimensional $\cat(0)$ cube complex $X$ and suppose that there is a bound on the order of its finite subgroups. Then for all subgroups $H < G$ either $H$ is virtually special or $H$ contains a (non-abelian) free subgroup of rank 2. That is $G$ satisfies the Tits alternative. 
\end{thm}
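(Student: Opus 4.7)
The plan is to restrict the $G$-action to $H$ and exploit the hyperplane combinatorics of the $\cat(0)$ cube complex together with the hypothesis on finite subgroups. First I would pass to an $H$-essential core: by the Caprace--Sageev essential core construction there exists a nonempty $H$-invariant convex subcomplex $Y\subseteq X$ on which $H$ still acts properly and every hyperplane is $H$-essential (no halfspace sits at bounded Hausdorff distance from $H.y$ for some $y$). This reduction preserves both alternatives we are trying to prove, and it lets us apply the ping-pong machinery available in the essential setting without losing any information about $H$.

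Next I would set up the dichotomy by a double-skewering argument on the hyperplanes of $Y$. If $H$ admits two hyperbolic elements $g,h$ that skewer hyperplanes in independent fashion, meaning one can find four hyperplanes $\bar H_g^{\pm}, \bar H_h^{\pm}$ with pairwise nested/disjoint halfspaces so that large powers of $g^{\pm 1}$ (respectively $h^{\pm 1}$) push everything outside $\bar H_g^{\pm}$ into a prescribed halfspace (respectively $\bar H_h^{\pm}$), then the standard ping-pong lemma produces $\langle g^N,h^N\rangle \cong F_2$ for $N$ large. The existence of such a pair follows from the Caprace--Sageev double-skewering lemma whenever $H$ acts essentially on $Y$ without a global fixed point in the combinatorial Roller boundary $\partial_R Y$.

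In the remaining case, $H$ either fixes a point of $\partial_R Y$ or preserves a proper invariant convex subcomplex on which the dynamics collapse. Using the bound on the order of finite subgroups of $G$, I would pass to a finite index torsion-free subgroup $H_0\le H$ (the bounded orders of torsion elements combined with residual properties of the hyperplane stabilizers make this possible). On the preserved substructure, $H_0$ then acts by combinatorial translations respecting the wall structure. Invoking Haglund--Wise's virtual specialness criterion one passes to a further finite index subgroup $H_1\le H_0$ whose action on (a convex hull inside) $Y$ has quotient satisfying the four forbidden-pathology conditions of a special cube complex: no one-sided hyperplanes, no self-intersections, no self-osculations, and no inter-osculations. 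This exhibits $H$ as virtually special.

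The hardest step will clearly be the production of the virtually special finite index subgroup in the non-free case. The Haglund--Wise machinery is genuinely deep, requiring separability of hyperplane stabilizers and an inductive use of finite covers to eliminate each of the four hyperplane pathologies in turn; one also needs the bound on finite subgroup orders in an essential way to prevent torsion from obstructing the descent to a torsion-free special subgroup. In the lecture-note setting these ingredients must be quoted as black boxes (as the author forewarns), and the delicate matching between the dynamical dichotomy at infinity and the combinatorial specialness conditions on the quotient is the subtlest point of the argument.
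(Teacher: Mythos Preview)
Your proposal rests on a misreading of the statement. In these notes ``virtually special'' is a typo for ``virtually solvable'' (compare Definition~\ref{def:8.1} with the list of examples and with Theorem~\ref{thm:8.16}); the paper does not have Haglund--Wise specialness in mind at all. The author explicitly deduces Theorem~\ref{thm:8.4} from Theorem~\ref{thm:8.16}, whose conclusion is the dichotomy \emph{virtually finitely generated abelian} versus \emph{contains $F_2$}, and whose proof runs by induction on $\dim X$: one invokes Sageev's theorem that a fixed-point-free action yields a hyperplane $J$ with $\Ends(G,\Stab_G(J))>1$, applies the induction hypothesis to $\Stab_G(J)$ acting on $J$, and then feeds the resulting virtually abelian stabilizer into the Dunwoody--Swenson algebraic torus theorem, finishing with a Bass--Serre case analysis and Bridson's lemma.

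Even taken on its own terms, your argument has real gaps. The step ``bound on finite subgroup orders $\Rightarrow$ finite-index torsion-free subgroup'' is false in general; bounded torsion does not imply virtual torsion-freeness without additional input (such as residual finiteness, which you have not established). The Haglund--Wise criterion you invoke needs a cocompact action to even speak of the quotient as a compact special cube complex, and no cocompactness is assumed here. Finally, the Caprace--Sageev dichotomy is not simply ``ping-pong versus fixed point on $\partial_R Y$'': the genuine obstruction to double-skewering is a Euclidean factor or a product decomposition, and you would still need to analyse the virtually abelian case that arises there---which is exactly what the paper's route via Theorem~\ref{thm:8.16} handles.
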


\begin{remark}\label{rem:8.5}
The assumption of finite dimensionality of $X$ is necessary since Thompson's group acts on an infinite dimensional $\cat(0)$ cube complex but is known to not satisfy the Tits alternative. 

Also necessary is the assumption on the bound of the order of subgroups of $G$. To see this consider the following example:
Let $G=G_1\subset G_2\subset \ldots $ be an ascending sequence of finite groups $G_i$. Associated to $G$ there is a coset tree $T$  built from left-cosets of the subgroups $G_i$ in $G$ (compare \cite{Serre} for details). 
By construction $G$ admits a proper left-action on $T$. But $G$ does not satisfy the assertion of Theorem~\ref{thm:8.4}.
\end{remark}

We will now start preparing ourselfs for the proof of Theorem~\ref{thm:8.4} by quickly recalling basic stuff on three topics: 
\begin{itemize}
 \item (relative) ends of groups
 \item Fuchsian groups and
 \item HNN-extensions and free amalgamations.
\end{itemize}

\subsection*{Ends of groups}

We will introduce a notion that will allow us to study the behaviour at infinity of a group $G$.

\begin{definition}\label{def:8.6}
We say that a locally finite graph $\Gamma$ has (at most) \emph{$m$ ends} if for each finite set of edges $F$ in $\Gamma$ one has that $m$ is the smallest integer such that $\Gamma\setminus  F$ has at most $m$ different connected components. This will be denoted by $\Ends(\Gamma)=m$. 
A group $G$ has (at most) \emph{$m$ ends} if $\Ends(G)\define\Ends(\Gamma(G,S))=m$ with $\Gamma(G,S)$ being a Cayley graph of $G$. 
\end{definition}

Once can show that the number of ends of a group is well defined. 
\begin{prop}\label{prop:8.8}
The number of ends $\Ends(G)$ is quai-invariant, i.e.\ independent of the choice of a generating set for $G$. 
\end{prop}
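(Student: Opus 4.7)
The plan is to prove the stronger statement that the number of ends is a quasi-isometry invariant of locally finite graphs; combined with Proposition~\ref{prop:5.6}, which tells us that any two Cayley graphs of $G$ for finite generating sets are quasi-isometric, this yields the claim. Because both Cayley graphs share the vertex set $G$, I would not invoke general quasi-isometry theory but instead compare $\Cay(G,S)$ and $\Cay(G,T)$ directly, using only a change of edge set.

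First I would fix the bilipschitz data. Since $S$ and $T$ are finite generating sets, there is $C\geq 1$ such that every $s\in S$ is a product of at most $C$ elements of $T$, and vice versa. This gives $\tfrac{1}{C}d_T\leq d_S\leq Cd_T$ on $G$ and, more importantly, an \emph{edge replacement} lemma: every $S$-edge $(g,gs)$ can be realized by a $T$-path of combinatorial length $\leq C$ whose vertices all lie in the $T$-ball of radius $C$ around $g$, and symmetrically with the roles of $S$ and $T$ exchanged.

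Next I would reformulate Definition~\ref{def:8.6} in a more convenient form. For a locally finite graph $\Cay$ with basepoint $v_0$,
\[
\Ends(\Cay)=\sup_{n\geq 0}\#\{\text{unbounded components of }\Cay\setminus B(v_0,n)\},
\]
because every finite edge set $F$ is contained in some such ball (local finiteness) and every such ball is finite; the right-hand side is independent of $v_0$. The core comparison step is then the following. Fix $n$ and set $m\define\lfloor n/C\rfloor-C$. Given a combinatorial path $\gamma$ in $\Cay(G,S)\setminus B_S(1,n)$ between vertices $g,h$, replace each $S$-edge of $\gamma$ by the corresponding $T$-path from the edge-replacement lemma; every vertex on the new $T$-path has $T$-distance to $1$ at least $\tfrac{1}{C}d_S(1,\cdot)-C>m$, so the concatenation is a $T$-path in $\Cay(G,T)\setminus B_T(1,m)$ joining $g$ to $h$. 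The same argument in reverse converts $T$-paths avoiding $B_T(1,n)$ into $S$-paths avoiding $B_S(1,n/C-C)$. These two conversions induce well-defined maps between the sets of unbounded components of $\Cay(G,S)\setminus B_S(1,n)$ and of $\Cay(G,T)\setminus B_T(1,m)$, and composing them returns a component to itself once we pass to a sufficiently large ball. Taking suprema on both sides forces $\Ends(\Cay(G,S))=\Ends(\Cay(G,T))$.

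The main obstacle I anticipate is showing that the two conversions really are mutually inverse on components in the limit, i.e.\ that distinct unbounded $S$-components are never fused into one when reinterpreted in $\Cay(G,T)$. The fix is monotonicity: if two vertices land in different components of $\Cay(G,S)\setminus B_S(1,n_0)$, then after the conversion they may lie in the same component of $\Cay(G,T)\setminus B_T(1,m)$ for small $m$, but because $B_S(1,n_0)\subset B_T(1,Cn_0)$, enlarging the $T$-ball enough re-separates them. Combined with the fact that the count of unbounded components is monotone in the radius and bounded by $\Ends(\Cay)$, both suprema stabilize to the same finite or infinite value, completing the proof.
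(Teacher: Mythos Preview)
The paper does not actually prove this proposition: it restates it as Theorem~\ref{thm:8.8} and gives only the reference \cite[I.8.29]{BH}. Your proposal therefore supplies what the paper omits, and it does so via the standard route --- the bilipschitz comparison of the two word metrics together with the edge-replacement trick and the reformulation of $\Ends(\Gamma)$ as $\sup_n \#\{\text{unbounded components of }\Gamma\setminus B(v_0,n)\}$. This is essentially the argument one finds in \cite{BH}, so in spirit you and the paper agree; you simply carry it out.

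Two minor comments. First, your reformulation counts \emph{unbounded} components, whereas Definition~\ref{def:8.6} as written counts all components of $\Gamma\setminus F$; your version is the correct one (otherwise $\Z^2$ would have more than one end), but a sentence acknowledging this discrepancy would be in order. Second, your last two paragraphs, worrying about the conversions being ``mutually inverse on components in the limit'', are more elaborate than necessary. It is cleaner to argue asymmetrically: if $\Cay(G,S)\setminus B_S(1,n)$ has $k$ unbounded components, choose representatives $g_1,\dots,g_k$ and observe that for $m$ roughly $Cn$ any $T$-path joining $g_i$ to $g_j$ outside $B_T(1,m)$ would convert back to an $S$-path outside $B_S(1,n)$, which is impossible. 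Hence $\Ends(\Cay(G,T))\geq k$, and symmetry gives equality. This avoids the monotonicity-and-fusion bookkeeping entirely.
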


Let's now have a look at some examples. 
\begin{thm}\label{thm:8.8}
The number of ends $\Ends(G)$ of a group $G$ is quasi-invariant, i.e.\ independet of the choice of a generating set $S$ of $G$.  
\end{thm}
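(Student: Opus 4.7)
The plan is to reduce the theorem to the more general statement that the number of ends is a quasi-isometry invariant for locally finite connected graphs (or, equivalently, for proper geodesic metric spaces). By Proposition~\ref{prop:5.6}, any two Cayley graphs $\Gamma(G,S)$ and $\Gamma(G,T)$ of a finitely generated group $G$ are quasi-isometric, so this reduction immediately yields the theorem. First I would reformulate the definition of ends: $\Ends(\Gamma) \leq m$ iff for every finite set $K$ of vertices of $\Gamma$ the subgraph $\Gamma \setminus K$ has at most $m$ infinite (equivalently, unbounded) connected components, and $\Ends(\Gamma)$ is the supremum over all finite $K$.

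The core step is to build, given a $(C,D)$-quasi-isometry $f\colon \Gamma_1 \to \Gamma_2$ between two locally finite connected graphs and a finite set $K_2 \subset \Gamma_2$, a finite set $K_1 \subset \Gamma_1$ together with an injection from infinite components of $\Gamma_2 \setminus K_2$ to infinite components of $\Gamma_1 \setminus K_1$. Concretely, I would pick a radius $R > CD + D + C$ (large enough to absorb the quasi-isometry defect) and set
\[
K_1 \,\define\, \bigl\{ v \in \Gamma_1 \mvert d_2(f(v), K_2) \leq R \bigr\}.
\]
Local finiteness of $\Gamma_1$ together with properness of $f$ (which is automatic from the quasi-isometry inequality) makes $K_1$ finite. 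Symmetrically, using a quasi-inverse $\bar{f}\colon \Gamma_2 \to \Gamma_1$ (which exists since $f$ is a quasi-isometry), I would run the same construction in the other direction.

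Next I would show: if $x, y \in \Gamma_1 \setminus K_1$ can be joined by a path $\gamma$ in $\Gamma_1$ avoiding $K_1$, then $f(x)$ and $f(y)$ lie in the same component of $\Gamma_2 \setminus K_2$. For each edge $(v, v')$ of $\gamma$, the points $f(v)$ and $f(v')$ are at distance at most $C + D$ in $\Gamma_2$, and since $\Gamma_2$ is geodesic (for graphs, just take a shortest edge-path), I connect them by a short path. Each such short path has length $\leq C + D < R$, and its endpoints have $d_2(\cdot, K_2) > R$ by the definition of $K_1$, so by the triangle inequality this connecting segment stays outside $K_2$. Concatenating over the edges of $\gamma$ produces an actual path in $\Gamma_2 \setminus K_2$ from $f(x)$ to $f(y)$. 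Moreover, $f$ sends infinite subsets to infinite subsets (by the lower bound in the quasi-isometry inequality), so distinct infinite components in $\Gamma_1 \setminus K_1$ have images landing in distinct infinite components of $\Gamma_2 \setminus K_2$ once we refine $K_2$ to swallow all the bounded components of $\Gamma_2 \setminus K_2$ that meet the image.

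Combining the bound in both directions yields equality of the supremums, hence $\Ends(\Gamma_1) = \Ends(\Gamma_2)$. The main obstacle is the bookkeeping in the previous paragraph: a quasi-isometry is not a bijection and may merge or split components, so the correct injectivity statement requires either passing to coarse components or, as sketched above, simultaneously enlarging $K_1$ and $K_2$ to absorb all the bounded components that the quasi-isometry could otherwise use as detours. Making this precise is the technical heart of the proof; everything else is definition-chasing.
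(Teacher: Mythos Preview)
Your approach is correct and is in fact the standard argument. The paper does not actually give a proof of this theorem: it simply refers the reader to \cite[I.8.29]{BH}, and the argument there proceeds along exactly the lines you outline --- show that the number of ends is a quasi-isometry invariant of proper geodesic metric spaces (equivalently, locally finite connected graphs), and then invoke Proposition~\ref{prop:5.6}. So your proposal is already more detailed than what the paper itself provides.

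One small remark on the write-up: the direction of the injection gets slightly tangled in your third paragraph. What your construction actually produces is a well-defined map from components of $\Gamma_1\setminus K_1$ to components of $\Gamma_2\setminus K_2$ (since connected pieces of $\Gamma_1\setminus K_1$ have connected coarse images in $\Gamma_2\setminus K_2$). To get the inequality $\Ends(\Gamma_2)\le\Ends(\Gamma_1)$ you then need that every \emph{infinite} component of $\Gamma_2\setminus K_2$ is hit by the image of some \emph{infinite} component of $\Gamma_1\setminus K_1$; this uses coarse density of $f$ plus the absorption of bounded components that you flag at the end. You clearly see the issue, but it would be worth stating the map in the correct direction from the outset.
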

\begin{proof}
For a proof of this fact see \cite[I.8.29]{BH}. 
\end{proof}

%\marginpar{draw pictures for the following example}
\begin{example}\label{ex:8.7}
 \begin{enumerate}
  \item The group $G=<s,t\vert\, s^3=t^3=1>$ is a finite reflection group with $\Ends(\Gamma(G, \{s,t\})=0$. 
  \item Consider $G=\Z^2$ presented as $<a,b\vert\, ab=ba>$, then $\Ends(G)=1$. 
  \item $G=\Z$ with generating set $\{1\}$ or $\{2,3\}$ has two ends.
  \item Finally the free group $F_2=<a,b>$ is an example of a group with infinitely many ends. 
  \end{enumerate}
\end{example}

How about examples for groups with 3,4,5,... ends? Any ideas? Better not, since the following theorem can be shown:

%\marginpar{give reference}
\begin{thm}(Hopf 1944)\label{thm:8.9}
Each finitely generated group $G$ satisfies $$\Ends(G)\in \{0,1,2, \infty\}.$$  
\end{thm}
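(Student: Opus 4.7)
The plan is to argue by contradiction. Since $\Ends(G)\geq 0$ is trivial and the values $0, 1, 2, \infty$ are all realized by Example~\ref{ex:8.7}, it suffices to rule out any finite value $n\geq 3$. By Theorem~\ref{thm:8.8} the number of ends does not depend on the choice of generating set, so fix a finite $S$ and work in the locally finite Cayley graph $\Gamma = \Cay(G,S)$. Suppose for contradiction that $3\leq n = \Ends(G) <\infty$. The goal is to exhibit a finite edge set whose removal from $\Gamma$ produces strictly more than $n$ unbounded components.

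Choose a finite edge set $K$ realizing the $n$ ends, so that $\Gamma\setminus K$ has exactly $n$ unbounded components $C_1,\dots,C_n$. Since $\Gamma$ is locally finite and $K$ is finite, only finitely many additional bounded components can appear, and absorbing each of them into $K$ we may assume every component of $\Gamma\setminus K$ is unbounded. By Proposition~\ref{prop:5.3} the group $G$ acts on $\Gamma$ by graph automorphisms via left multiplication, so for any $g\in G$ the translate $gK$ is again a finite edge set and $\Gamma\setminus gK$ decomposes into $n$ unbounded components $gC_1,\dots,gC_n$.

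The key step is to choose $g\in G$ ``deep enough'' in the component $C_1$ so that simultaneously (i) $gK\subset C_1$ and (ii) after relabeling the $gC_j$ so that $gC_1$ denotes the component of $\Gamma\setminus gK$ containing $K$, each of the remaining translates $gC_2,\dots,gC_n$ is contained in $C_1$. Such a $g$ exists because $C_1$ is an unbounded locally finite subgraph and therefore (by König's lemma) contains a geodesic ray $\rho$; taking $g=\rho(T)$ for $T$ much larger than $\mathrm{diam}(K)$ gives (i). For (ii), observe that each $gC_j$ with $j\neq 1$ is a connected subgraph disjoint from $gK$, and any path from such a vertex to $K$ would have to cross $gK\subset C_1$ before reaching $K$, so $gC_j\cap K=\emptyset$; hence $gC_j$ lies in a single component of $\Gamma\setminus K$, which by the choice of $g$ is $C_1$.

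With such $g$ fixed, consider the components of $\Gamma\setminus(K\cup gK)$. The sets $C_2,\dots,C_n$ are disjoint from $gK$ (since $gK\subset C_1$), so they remain distinct unbounded components. The sets $gC_2,\dots,gC_n$ are disjoint from $K$ (each is contained in $C_1$, which is disjoint from $K$), so they too persist as distinct unbounded components. The two families are disjoint from one another, since the first lies in $C_2\cup\cdots\cup C_n$ and the second in $C_1$. This produces at least $2(n-1)$ distinct unbounded components of $\Gamma\setminus(K\cup gK)$, and for $n\geq 3$ we have $2(n-1)\geq n+1>n$, contradicting $\Ends(G)=n$. The main obstacle is step (ii): rigorously pinning down the notion of ``deep in $C_1$'' and verifying that all peripheral translates $gC_j$ ($j\neq 1$) fall inside $C_1$ is the only genuinely delicate geometric point, while the rest of the argument is essentially a counting exercise once the correct $g$ is produced.
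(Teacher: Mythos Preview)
Your argument is correct and is the standard ``double cut'' proof of Hopf's theorem. The point you flag in (ii) is easily handled once you arrange $K$ to be the edge set of a ball (so that its incident vertex set $V_K$ is connected): for $g$ far enough along $\rho$, every vertex incident to $gK$ lies in $C_1$, and all of $V_K$ lies in a single component of $\Gamma\setminus gK$, which you call $gC_1$. Each $gC_j$ with $j\neq 1$ must contain an endpoint of some edge of $gK$ (since $\Gamma$ is connected), hence meets $C_1$, and therefore lies entirely in $C_1$. With this, the count $2(n-1)>n$ goes through.

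The paper's proof takes a genuinely different route. Instead of cutting twice and counting components, it exploits the induced action of $G$ on the assumed finite set of ends: the kernel $H$ of $G\to\mathrm{Homeo}(\Ends(G))$ has finite index, so one may pick a sequence $\gamma_n\in H$ tending to a third end $e_3$. Geodesic rays $r_1,r_2$ issuing from $1$ toward $e_1,e_2$ are then translated by $\gamma_n$; since $\gamma_n$ fixes each end, $\gamma_n r_i$ must eventually return to the same end as $r_i$, which forces both translated rays to pass through a fixed ball $B_\rho(1)$ at parameters $t,t'>2\rho$. This yields $d(\gamma_n r_1(t),\gamma_n r_2(t'))<2\rho$, contradicting the isometry property together with $d(r_1(t),r_2(t'))>2\rho$. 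Your approach is more elementary and immediately quantitative (it actually shows $\Ends(G)\geq 2(n-1)$ whenever $\Ends(G)\geq n\geq 2$, forcing $\Ends(G)=\infty$), while the paper's argument is more dynamical, foregrounding the end-permutation representation---a viewpoint that resurfaces later in the chapter when Fuchsian groups and their limit sets appear.
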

\begin{proof}%[Sketch of a proof]
The proof (taken from \cite[p.146/147]{BH}) is by contradiction. 
Let $G$ be a group with $\Ends(G)<\infty$ and suppose $e_1, e_2, e_3$ are three pairwise diefferent ends of $G$. 
Write $\Gamma$ for a Cayley graph of $G$ on which $G$ acts by left-multiplication. This action induces a homeomorphism $\psi:G\to Homeo(\Ends(G))$ whose kernel will be denoted by $H=\ker(\psi)$. 

Choose geodesic rays $r_i:[0,\infty) \to \Gamma$, $i=1,2$ such that $r_i(0)$ is the identity in $G$ and $r_i$ lies eventually in the connected component defined by $e_i$, $i=1,2$. 

Since $H$ has finite index in $G$ there exist a constant $\epsilon$ such that all vertices of $\Gamma$ are contained in $B_\epsilon(H)$. 

One can show that there exists a proper map $r_3:[0,\infty)\to \Gamma$ such that $r_3$ lies eventually in the connected component defined by $e_3$, such that $d(r_3(n), 1)\geq n$ and such that $r_3(n)\in H$ for all $n$. Recall that for a proper map preimages of compact sets are compact. Consider the sequence $\gamma_n\define r_3(n)$, $n\in \N$.  

Choose $\rho> 0$ such that the three sets $r_i([\rho, \infty)$ are contained in different connected components of $\Gamma\setminus B_\rho(1)$. Such a choice is possible for $\rho$ big enough since $\Gamma$  has three ends. 
%\marginpar{draw picture}

For $t,t'>2\rho$ we have that $d(r_1(t), r_2(t')) > 2\rho$ as a path connecting the two points has to go via $B_\rho(1)$. 

The action of $H$ on the set of ends of $G$  is trivial hence $\gamma_i .r_i$ is contained in the same end as $r_i$ for both $i=1$ and $i=2$. 

For big enought $n>3\rho$ the point $\gamma_n.r_i(0)$ is contained in a different component as $r_i([\rho,\infty))$ for $i=1,2$. We conclude that then the path $\gamma_n.r_i$ runs through $B_\rho(1)$ for both $i$ and lies finally in the same connected component as $r_i$. 
Therefore 
$$\gamma_n.r_1(t)\in B_\rho(1) \text{ and } \gamma_n.r_2(t')\in B_\rho(1)$$ for suitably chosen $t,t'>2\rho$ (depending on the sequence $(\gamma_n)$) and we obtain that 
$$d(\gamma_n.r_1(t), \gamma_n.r_2(t')) < 2\rho.$$

But on the other hand $\gamma_n$ is an isometry and thus 
$$d(\gamma_n.r_1(t), \gamma_n.r_2(t'))=d(r_1(t), r_2(t')) > 2\rho$$
and we arrive at a contradiction. 
\end{proof}

\begin{remark}(Hopf'44, Stallings '68)\label{rem:8.10}
Hopf classified the groups having a given number of ends as follows: 
\begin{itemize}
 \item $\Ends(G)=0 \Leftrightarrow G$ is finite. 
 \item $\Ends(G)=1 \Leftrightarrow G$ is virtually $\Z$. 
 \item $\Ends(G)=2 \Leftrightarrow G$ is virtually infinite cyclic.% i.e.\ contains an infinite finite index subgroup generated by a single element.
\end{itemize}

Stallings showed that the following three conditions are equivalent:
\begin{enumerate}
 \item $\Ends(G)>1$
 \item $G=A\star_C B$ a free amalgamation or $G=A_{\star C}$ a HNN-extension with $C$ being finite, $\vert A\diagup C\vert \geq 3$ and $\vert B\diagup C\vert \geq 2$.
 \item $G$ admits a non-trivial  action\footnote{i.e.\ without global fixed points.} on a simplicial tree. 
\end{enumerate}

The equivalence of the last and second to last item in Stallings result are shown using Basse-Serre theory. In general the proofs of these facts are rather involved and will not be presented here. Note that Niblo \cite{Niblo}  gave a very nice geometric proof of Stallings theorem. 
\end{remark}

We now turn to a relative notion of ends of a group. 

\begin{definition}\label{def:8.11}
Let $G$ be again a finitely generated group and $H$ a subgroup in $G$. The number of \emph{ends of $G$ relative to $H$}, denoted by $\Ends(G,H)$ is the number of ends $\Ends(\Gamma/H)$ of the quotient of the Cayley graph $\Gamma$ of $G$ by the natural left-action of $H$.  
\end{definition}

Equivalently we could define $\Ends(G,H)$ to be the number of ends $\Ends(X)$ where $X$ is the graph with vertices the left-cosets of $H$ with edges between $Hx$ and $Hxg$ for all $g\in S$, where $S$ is a finite generating set of $G$. 

Analogously to Stallings result we have the (even more difficult) following theorem which is due to {Dunwoody and Swenson.}% \cite{Dunwoody-Swenson}.}

\begin{thm}\label{thm:8.12}
Let $G$ be a finitely generated group and $H< G$ a subgroup.  If $H$ is a virtually polycyclic group with $\Ends(G,H)>1$ then one of the following possibilities holds
\begin{enumerate}
 \item $G$ is virtually polycyclic
 \item ther exists a short exact sequence $1\to P\to G\to G/P \to 1$ where P is virtually polycyclic and $G/P$ not an elementary Fuchsian group. 
 \item $G$ decomposes as an amalgamation $A\star_C B$ or HNN extension $A_{\star C}$ over a virtually polycyclic group $C$.
\end{enumerate}
\end{thm}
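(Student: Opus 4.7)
The plan is to combine the theory of $H$-almost invariant sets with the Sageev cubulation machinery of Section~4 and Bass--Serre theory, running an induction on the Hirsch length of the virtually polycyclic subgroup $H$.

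First, I would translate the hypothesis $\Ends(G,H)>1$ into the existence of an $H$-almost invariant subset $A\subset G$: a set satisfying $H\cdot A=A$ whose symmetric difference with every translate $gA$ has finite image in the coset space $H\backslash G$, and such that both $A$ and its complement have infinite image in the coset graph $H\backslash\Gamma$. This is the relative analogue of the ``side of a separating compact set'' in a Cayley graph that powers Stallings' classical theorem (Remark~\ref{rem:8.10}). Its existence follows from the cohomological interpretation of relative ends.

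Second, I would feed the family $\{gA \mid g\in G\}$ into the halfspace--system formalism of Definition~\ref{def:4.1} and Theorem~\ref{thm:4.3}, yielding a $G$-action on a $\cat(0)$ cube complex $X=X(\cH)$ whose hyperplane stabilisers are commensurable with $H$, hence virtually polycyclic. Let $P$ be the kernel of this action. Two mutually exclusive possibilities occur: either $G/P$ fixes a vertex of $X$, forcing $G$ to be commensurable with $H$ and placing us in case~(1); or the action is essential, in which case one can collapse a suitable single $G$-orbit of hyperplanes to obtain a nontrivial $G$-action on a simplicial tree $T$ without a global fixed point.

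Third, I would analyse the tree $T$ via Bass--Serre theory. It expresses $G$ as a graph of groups with edge groups commensurable with subgroups of~$H$, hence virtually polycyclic. If the decomposition is nontrivial with at least one edge orbit, we are in case~(3). The remaining possibility is that every further attempt to split $G/P$ over a virtually polycyclic subgroup fails; by Dunwoody's accessibility theorem applied to the quotient (using that $P$ is virtually polycyclic and hence finitely presented), the only non-splittable piece that can survive is a two-dimensional orbifold-like piece whose fundamental group is an elementary Fuchsian group, which is precisely case~(2).

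The main obstacle is the last step: one must control the kernel $P$ and rule out non-splittable quotients beyond the Fuchsian alternative. This uses a careful commensurator analysis for virtually polycyclic subgroups, together with JSJ/accessibility results in the spirit of Dunwoody--Sageev, which lie well beyond the scope of these notes and explain why the theorem is only quoted here.
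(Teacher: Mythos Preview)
The paper does not prove this theorem at all: it is stated immediately after the sentence ``Analogously to Stallings' result we have the (even more difficult) following theorem which is due to Dunwoody and Swenson'' and is then used as a black box in the proof of Theorem~\ref{thm:8.16}. So there is no ``paper's own proof'' to compare your attempt against; you are sketching an argument where the notes deliberately cite an outside result.

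That said, two remarks on your sketch. First, a genuine slip: in your third step you conclude that the surviving non-splittable piece has fundamental group an \emph{elementary} Fuchsian group and identify this with case~(2). This is backwards. Elementary Fuchsian groups are virtually cyclic and fall under case~(1); case~(2) in the statement (and as used in the proof of Theorem~\ref{thm:8.16}) is that $G/P$ is a \emph{non}-elementary Fuchsian group, which is exactly the kind of two-dimensional orbifold piece that refuses to split further over a virtually polycyclic subgroup. Second, your overall strategy --- almost invariant sets, Sageev cubulation, then Bass--Serre and accessibility --- is a reasonable modern heuristic and is close in spirit to later work of Niblo, Sageev, Scott and Swarup, but it is not the route Dunwoody and Swenson actually take; their argument proceeds via tracks and patterns on $2$-complexes together with a delicate accessibility argument, and the passage from ``action on a cube complex'' to ``the three alternatives are exhaustive'' hides exactly the hard work you flag in your final paragraph. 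Your honesty there is well placed: the commensurator and JSJ-type analysis needed to pin down cases~(1)--(3) is the substance of the theorem, and your outline does not supply it.
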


Let me introduce some of the notions mentioned in this theorem.

\begin{definition}\label{def:8.13}
 A group $G$ is \emph{polycyclic} if there exists a sequence of subgroups $G=G_0, G_1, \ldots, G_n=1$ such that
 \begin{itemize}
  \item $G_{i+1}$ is normal in $G_i$ for all $i$
  \item the quotient $G_i\diagup G_{i+1}$ is cyclic for all $i$. 
 \end{itemize}
\end{definition}

A good way to think about a poycyclic group is thinking of a ``tower of cyclic groups'' which is justified by the fact that iterated semidirect products of cyclic groups are polycyclic. 

Other (important) examples of polycyclic groups are finitely generated abelian groups. 

\subsection*{Fuchsian groups}

\begin{definition}\label{def:8.14}
A group $G$ is a \emph{Fuchsian group} if $G$ is a discrete subgroup of $PSL_2(\R)$.  
\end{definition}

Any Fuchsian group does in particular admit a discrete action on the hyperbolic plane $\h^2$. 

Consider the action of a Fuchsian group on the Poincar\'e disc model $D$ of the hyperbolic plane $D$. Viewing $D$ as a disk in the Euclidean plane orbit $G.z$, for some $z\in D$ of the $G$ action may have \emph{limit points} in the boundary circle. One can prove

\begin{prop*}
The set of limit points of the action of $G$ on $D$ is independent of the choice of the basepoint $z$ and $G$ either has $0,1,2$ or $\infty$ limit points.  
\end{prop*}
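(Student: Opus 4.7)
My plan is to address the two assertions separately, with the second being the main content.

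For basepoint independence, I would use that $G$ acts by isometries, so for any two basepoints $z, z' \in D$ the hyperbolic distance $d_{\h^2}(gz, gz') = d_{\h^2}(z,z') =: R$ is a constant independent of $g$. The key geometric fact is that in the Poincar\'e disc a hyperbolic ball of fixed radius $R$ centered at a point $p$ has Euclidean diameter tending to $0$ as $p$ approaches $\partial D$. Consequently, if $g_n z \to \xi \in \partial D$ in the Euclidean topology, then $g_n z'$ also converges to $\xi$, since it lies in a hyperbolic $R$-ball about $g_n z$ whose Euclidean diameter shrinks to $0$. Swapping the roles of $z$ and $z'$ shows that the two limit sets coincide, so one may speak of a well-defined limit set $\Lambda = \Lambda(G) \subset \partial D$.

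For the cardinality dichotomy, I would first record that $\Lambda$ is closed in $\partial D$ and $G$-invariant, both following from continuity of the extended action on $\overline{D}$ together with the orbit interpretation above. The nontrivial claim is that $|\Lambda| \geq 3$ forces $|\Lambda| = \infty$. I would argue as follows. Since $G$ is discrete in $PSL_2(\R)$, each nontrivial element is elliptic, parabolic, or hyperbolic. If $G$ contains no hyperbolic element and no two parabolic elements with distinct fixed points, the standard classification of elementary Fuchsian groups forces $|\Lambda| \leq 1$. Combined with $|\Lambda| \geq 3$ this produces a hyperbolic $h \in G$ whose attracting and repelling fixed points $p_+, p_- \in \partial D$ both belong to $\Lambda$, since $h^{\pm n} z$ converges to $p_\pm$ for any $z \in D$ not fixed by $h$.

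Now pick a third point $\xi \in \Lambda \setminus \{p_+, p_-\}$ and a sequence $(g_n) \subset G$ with $g_n z \to \xi$. The conjugates $h_n := g_n h g_n^{-1}$ are hyperbolic with fixed points $g_n p_\pm$, which lie in $\Lambda$ by $G$-invariance. Since $\xi \notin \{p_+, p_-\}$, the pairs $\{g_n p_+, g_n p_-\}$ accumulate near $\xi$ (using the same ``shrinking ball'' principle applied to neighbourhoods of $\xi$) and are pairwise distinct for infinitely many $n$, producing infinitely many limit points. The main obstacle I anticipate is the input from the classification of elementary Fuchsian groups: the step ruling out that an infinite discrete subgroup of $PSL_2(\R)$ with $|\Lambda| \geq 3$ consists only of elliptic elements and parabolics with a common fixed point. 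This rests on standard but external facts about trace dynamics in $PSL_2(\R)$; bypassing them would require a more hands-on ping-pong construction producing a hyperbolic element directly.
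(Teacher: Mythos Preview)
The paper does not actually prove this proposition; it is stated as a known fact (``One can prove'') and used only to set up the definition of an elementary Fuchsian group. So there is no paper proof to compare against, and your write-up supplies content the notes omit.

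Your basepoint-independence argument is correct and is the standard one. In the dichotomy argument, however, the step where you apply the ``shrinking ball principle'' to the boundary points $p_\pm$ is not justified: that principle says that if $g_n z \to \xi$ for one interior point $z$ then $g_n w \to \xi$ for every \emph{interior} point $w$, because $g_n w$ stays in a hyperbolic ball of fixed radius about $g_n z$. For $w \in \partial D$ no such ball exists, and in fact $g_n w$ need not converge to $\xi$ (one of $p_+,p_-$ could be the ``pole'' of the sequence $(g_n)$ in the convergence-group sense). So as written the accumulation claim for $\{g_n p_+, g_n p_-\}$ has a gap.

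The fix is much simpler than the conjugation argument you attempt: once you have a hyperbolic $h$ with fixed points $p_\pm \in \Lambda$ and a third point $\xi \in \Lambda \setminus \{p_+,p_-\}$, just look at the orbit $\{h^n \xi : n \in \Z\} \subset \Lambda$. North--south dynamics of $h$ on $\partial D$ gives $h^n \xi \to p_+$ and $h^{-n}\xi \to p_-$, with all points pairwise distinct since $\xi$ is not fixed by $h$. This already yields infinitely many limit points, and it avoids any appeal to how $g_n$ acts on the boundary. Your honest flag about needing external input to produce a hyperbolic element is accurate; that step genuinely relies on the classification of elementary subgroups of $PSL_2(\R)$.
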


\begin{definition}\label{def:8.14a}
We say that $G$ is \emph{elementary Fuchsian} if it has a  finite number of limit points. 
\end{definition}

Moreover if $G$ has infinitely many limimt points then it contains a copy of the free group $F_2$ on two generators. The Cayley graph of $F_2$ may be embedded in $D$ in such a way that its endpoints correspond to the limit points. 

\begin{example}\label{ex:8.14b}
The group $PSL_2\Z$ is a Fuchsian group having $\infty$-many limit points.   
\end{example}

\subsection*{HNN-extensions and free amalgamations}

We will now define certain constructions to produce groups. First we define free amalgamated products of a pair of groups $G_1, G_2$ which ``share'' a subgroup. This constructions gives us a group $G$ with the property that $G_i$ embeds in $G$ and the isomorphic subgroups are conjugate in $G$. 
 
\begin{definition}\label{def:8.15a}
Let $G_i\define <S_i\vert\, R_i>$, $i=1,2$ be two groups with subgroups $H_i< G_i$ such that there is an isomorphism $\phi:H_1\to H_2$.  Then the \emph{free amalgamated product} of $G_1$ and $G_2$ over $H\cong H_i$, $i=1,2$ is defined by
\begin{equation*}
 G_1\star_HG_2 \define <S_1\sqcup S_2\vert\, R_1\sqcup R_2\sqcup\{\phi(h)h^{-1} \,\forall h\in H_1\} >.
\end{equation*}
\end{definition}

\begin{remark}\label{def:8.15b}
It is clear that $G_i$ is a subgroup of the amalgam. More generally an amalgamated product of more than two groups is described by the formula
\begin{equation*}
 \star_{i\in I}G_i \define <\bigsqcup_{i\in I} S_i\,\vert\, \bigsqcup_{i\in I} R_i \sqcup \{\phi_i(h)\phi_j(h)^{-1} \;\forall h\in H_1\;\forall i,j\in I\} >.
\end{equation*}
\end{remark}

Next let us define HNN-extensions which mimic amalgamated products but with two isomorphic subgroups of a single group. The main feature of this constructions is that a given group $G$ is embedded in a larger group $G'$ such that its isomorphic subgroups are conjugate in $G'$ through a fixed isomorphism. 

\begin{remark}\label{rem:8.15c}
The name HNN-extension refers to Graham {H}igman and Bernhard and Hanna {N}eumann who first introduced this concept. %See \cite{HNN}.
\end{remark}

\begin{definition}\label{def:8.15d}
Suppose $G=<S\,\vert\, R>$ is a group and $\phi:H_1\to H_2$ an isomorphism between two subgroups of $G$. The \emph{HNN-extension} of $G$ by a stable generator conjugating $H_1$ with $H_2$ is defined by 
\begin{equation*}
G\star_H\define <S\sqcup \{t\}> \,\vert\, R\sqcup\{tht^{-1}=\phi(h) \; \forall h\in H_1\} 
\end{equation*}
\end{definition}
 
A common generalization of free amalgamations and HNN-extensions is the concept of a graph of groups. See \cite{BH} for more on this topic. 

We now dip our toes into Bass-Serre theory and define trees associated to the two constructions. 

\begin{definition}\label{rem:8.15d}
The \emph{Bass-Serre tree} associated to a free amalgamated product $G=A\star_C B$ is defined as follows:
$$T\define (G\times[0,1])\diagup_\sim $$
where the equivalence relation $\sim$ is induced by the three relations 
$(ga, 0)\sim (g,0)$, $(gb,1)\sim (g,1)$ and $(gh,t)\sim (g,t)$ for all group elements $g\in G$, $a\in A$, $b\in B$ and $h\in C$ and parameters $t\in [0,1]$.  
The left-translation of $G$ on $G\times[0,1]$ are compatible with these relations. I.e.\ $G$ acts on $T$ by isometries. 
\end{definition}
Compare also \cite[p.355 and Thm. II.11.18]{BH}.
%\marginpar{add picture of Bass-Serre trees}

\begin{definition}\label{rem:8.15e}
The \emph{Bass-Serre tree} associated to an HNN-extension $G=A \star_C$ is defined as follows:
$$T\define (G\times[0,1])\diagup_\sim $$
where the equivalence relation $\sim$ is induced by $(g,s)\sim (g\phi(h),s)$, $(g,0)\sim(ga, 0) \sim(gt,1)$ for all $g\in G$, $a\in A$, $h\in C$ and $s\in[0,1]$ and with $t$ being the conjucating parameter from Definition~\ref{def:8.15d}. 
\end{definition}

Again $G$ acts on $T$ by isometries and the quotient of this action is a complex made out of a single vertex and an edge with both ends glued to that vertex. The stabilizer of an edge in $T$ is isomorphic to $C$, the stabilizer of a vertex isomorphic to $A$. The number of edges connected to a same vertex in $T$ equals $[A:C]$.

We will need the following normal form theorems due to Britton.

\begin{thm}\label{thm:8.15f} 
Suppose $G=A\star_C B$. Choose $a_i\in A$ and $b_i\in B$, $i=0,1,\dots,n$ such that $a_i\notin C$ for all $i>0$ and $b_i\notin C$ for all $i< n$. Then $$a_0b_0a_1b_1\dots a_nb_n \neq 1$$ in $G$. Such a presentation of an element is \emph{of reduced form} and every $g\neq 1$ can be writen in reduced form.
\end{thm}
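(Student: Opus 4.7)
The plan is to prove both parts via the action of $G = A \star_C B$ on its Bass--Serre tree $T$, defined earlier in this section. Recall from that construction that $v_A = [(1,0)]$ has stabilizer $A$, $v_B = [(1,1)]$ has stabilizer $B$, the edge joining them has stabilizer $C$, and the neighbors of $v_A$ (resp.\ $v_B$) correspond bijectively to the cosets $A/C$ (resp.\ $B/C$). In particular, $a \cdot v_B = v_B$ iff $a \in C$, and symmetrically for elements of $B$ acting on $v_A$.

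For the existence of a reduced form, I would take any $g \neq 1$ written as a product of elements of $A \cup B$, combine consecutive letters from the same factor using its group law to produce an alternating word, and then absorb any interior letter that lies in $C$ into an adjacent letter from the opposite factor (which is legitimate since $C \subseteq A \cap B$). Each such absorption strictly decreases the length of the word, so the procedure terminates in a word of the claimed reduced shape.

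For the non-triviality claim $a_0 b_0 \cdots a_n b_n \neq 1$, I would trace how the successive partial products (reading the word right to left) move a suitable base vertex through $T$. Each interior letter $a_i$ with $i > 0$ or $b_i$ with $i < n$ lies outside $C$ and therefore does not stabilize the edge just traversed by the previous letter; applying it pushes the trace along a \emph{fresh} edge of $T$ rather than backtracking along the previous one. The partial products thus carry $v_A$ (or $v_B$, depending on the outer letters) along a path in $T$ without backtracking, which in a tree is automatically geodesic. Since this geodesic has strictly positive length, $w$ does not fix the chosen base vertex and hence $w \neq 1$ in $G$.

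The main technical obstacle will be making the ``no backtracking'' step precise: at each stage one must verify that the newly applied letter lies in a $C$-coset distinct from the one defining the previous edge, and this is precisely where the hypothesis $a_i, b_i \notin C$ at interior indices is invoked. A secondary subtlety is the treatment of the boundary letters $a_0$ and $b_n$, which are allowed to lie in $C$; this is handled by choosing the starting vertex ($v_A$ or $v_B$) according to the relevant parity, or equivalently by peeling these boundary letters off and applying the main argument to the remaining alternating core. Once these points are in place the conclusion is automatic, since $1 \in G$ acts as the identity on $T$.
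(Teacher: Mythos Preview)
The paper does not actually prove this theorem: it is stated, attributed to Britton, and then used as a black box in the proof of Theorem~\ref{thm:8.16}. So there is no proof in the paper to compare against.

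That said, your approach via the action on the Bass--Serre tree is correct and is the standard modern argument (essentially the one in Serre's \emph{Trees}). The path
\[
v_A,\; a_0 v_B,\; a_0 b_0 v_A,\; a_0 b_0 a_1 v_B,\; \dots,\; (a_0 b_0 \cdots a_n b_n) v_A
\]
consists of consecutively adjacent vertices, and the condition $a_i\notin C$ (resp.\ $b_i\notin C$) for interior indices is exactly what forces $P_{k+1}\neq P_{k-1}$ at each step, i.e.\ no backtracking. Your identification of this as the crux is right, as is your remark that the outer letters $a_0,b_n$ need separate (easy) handling. The reduction argument for producing a reduced form is also fine; the only thing to make explicit is that an absorption may create two adjacent letters from the same factor, which you then recombine, and the total length still drops, so termination is clear.

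Compared with Britton's original combinatorial/van-der-Waerden style argument, your geometric route is cleaner and explains \emph{why} the theorem holds: a nontrivial reduced word moves a vertex along a nondegenerate geodesic in a tree, so it cannot be the identity. It also makes the parallel with the HNN case (Theorem~\ref{thm:8.15g}) transparent, since both follow from the same ``no backtracking in the Bass--Serre tree'' principle.
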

\begin{thm}\label{thm:8.15g} 
For $G=A\star_C=<S\sqcup \{t\} \,\vert\, R\sqcup\{tht^{-1}=\phi(h), \; \forall h\in C_1\}>$, where the group $A=<S\vert\, R>$ and $C\cong C_1\cong C_2$ with $C_i$ a subgroup of $A$ and $\phi:C_1\to C_2$ an isomorphism. Then  
$$g=a_0t^{m_1}a_1t^{m_2}a_2 \dots t^{m_n}a_n\neq 1$$
for all $m_i\in \Z\setminus\{0\}$ and all $a_i\in A$ chosen such that $a_i\notin C_1$ if $m_i<0$ as well as $a_i\notin C_2$ if $m_i>0$.  
Again such an element $g$ is \emph{of reduced form} and every non-trivial $g\in G$ may be written in reduced form. 
\end{thm}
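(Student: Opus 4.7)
My plan is to use the action of $G = A\star_C$ on the Bass-Serre tree $T$ defined in Definition~\ref{rem:8.15e}, exactly in parallel with how one typically proves Britton's lemma. Let $v_0 \in T$ denote the vertex represented by $(1,0)$; its stabilizer is $A$, and its neighbours are the vertices $\{av_1 : a \in A\}$ where $v_1 = [(1,1)] = [(t,0)]$, with the $A$-orbit of $v_1$ being in bijection with the cosets $A/C_2$ (equivalently, the edges at $v_0$ are indexed by $A/C_1$ on one side and $A/C_2$ on the other, according to whether they are traversed by $t$ or $t^{-1}$). The strategy is to show that if $g = a_0 t^{m_1} a_1 \cdots t^{m_n} a_n$ is written in reduced form, then $d_T(v_0, g \cdot v_0) \geq n$ (in particular nonzero), so $g \neq 1$.

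The central technical step is a ``no-backtracking'' lemma: for any $a \in A$, the path from $v_0$ to $t \cdot v_0$ and the path from $v_0$ to $a t^{-1} \cdot v_0$ share the edge $[v_0, t v_0]$ \emph{iff} $a \in C_1$ (since $t C_1 t^{-1} = C_2$ fixes $v_0$ after conjugation), and symmetrically for the other sign. I would formalise this as a lemma stating: if $m, m' \in \mathbb{Z} \setminus \{0\}$ have opposite signs and $a \in A$, then the concatenated path $v_0 \to t^m v_0 \to t^m a t^{m'} v_0$ is geodesic of length $|m| + |m'|$ precisely when $a$ avoids the corresponding $C_i$; otherwise the first step of $t^{m'}$ cancels the last step of $t^m$.

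With that lemma in hand, I would argue by induction on $n$. Set $w_0 = v_0$, and define $w_k = a_0 t^{m_1} \cdots a_{k-1} t^{m_k} \cdot v_0$. Using the reduced-form hypothesis (no pinches between $t^{m_i}$ and $t^{m_{i+1}}$, enforced precisely by the conditions on $a_i$) together with the no-backtracking lemma, the path $w_0, w_1, \ldots, w_n$ is a geodesic in $T$ of total length $\sum_i |m_i| \geq n > 0$. Multiplying by $a_n$ in the final step only moves within $\Stab(w_n) = a_0 t^{m_1} \cdots a_{n-1} t^{m_n} A t^{-m_n} a_{n-1}^{-1} \cdots a_0^{-1}$, and in particular does not return us to $v_0$ since $w_n \neq v_0$. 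Hence $g \cdot v_0 \neq v_0$, so $g \neq 1$. The statement that every non-trivial element admits a reduced-form presentation is then an easy consequence of the defining relations $t h t^{-1} = \phi(h)$: any occurrence of $t c_1 t^{-1}$ with $c_1 \in C_1$ can be replaced by $\phi(c_1) \in A$ and absorbed into adjacent $A$-letters, shortening the $t$-syllable length, so after finitely many steps one reaches a reduced form.

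The main obstacle will be verifying the no-backtracking lemma with the correct bookkeeping of which coset ($C_1$ or $C_2$) obstructs cancellation on which side; this is where the asymmetry of the relations $tht^{-1}=\phi(h)$ (with $\phi:C_1\to C_2$) translates into the asymmetric hypothesis on the $a_i$'s. Everything else, including the comparison with the free amalgamated product case (Theorem~\ref{thm:8.15f}, which admits an entirely analogous proof using the Bass-Serre tree of Definition~\ref{rem:8.15d}, with the two $A$- and $B$-orbits of vertices), is bookkeeping on top of the tree action.
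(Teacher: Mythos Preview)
The paper does not give a proof of this theorem: it is stated as a classical background result (Britton's normal form theorem for HNN-extensions) and then used without proof in the sequel. So there is no ``paper's proof'' to compare your approach against.

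Your Bass--Serre tree strategy is the standard modern route to Britton's lemma and is essentially correct \emph{once one knows that $T$ is a tree}. That is where there is a genuine gap. You take for granted that the quotient $T=(G\times[0,1])/\!\sim$ of Definition~\ref{rem:8.15e} is simply connected, but the paper never proves this. With that particular definition of $T$ (as opposed to Serre's construction of the universal cover of a graph of groups, where the tree structure is built in from the start), showing that $T$ has no circuits is essentially equivalent to Britton's lemma itself: a nontrivial loop in $T$ based at $v_0$ is exactly what a reduced word equal to $1$ would trace out, and conversely. So as written your argument is circular. To close the gap you must either (a) construct $T$ differently --- e.g.\ build a tree directly from the coset data and then verify that the generators of $G$ act on it satisfying the defining relations, so that simple connectivity is immediate by construction; or (b) prove Britton's lemma by an independent method (van der Waerden's trick, letting $G$ act on the set of reduced sequences, is the classical device) and deduce afterwards that $T$ is a tree.

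One minor bookkeeping point: from the relation $(g,0)\sim(gt,1)$ in Definition~\ref{rem:8.15e} one gets $[(1,1)]=[(t^{-1},0)]$, not $[(t,0)]$. This does not affect your strategy, but it will matter when you check which coset condition ($a_i\notin C_1$ versus $a_i\notin C_2$) matches which sign of the adjacent exponent.
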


\begin{remark}\label{thm:8.15h}
>From Theorems~\ref{thm:8.15f} and  \ref{thm:8.15g} one can deduce
\begin{itemize}
 \item The natural homeomorphisms $A\to A\star_C$ and $A,B\to A\star_C B$ are injective. 
 \item If $C_1\neq A$ and $C_2\neq A$ then $A \star_C$ contains a subgroup isomorphic to $F_2$. 
 \item Is $[A:C]>2$ and $[B:C]>2$ then $A\star_C B$ contains a subgroup isomorphic  to $F_2$. 
\end{itemize}
\end{remark}

The following theorem will directly imply the Tits alternative \ref{thm:8.4}. Just apply \ref{thm:8.16} to any subgroup $H$ of $G$ in the setting of \ref{thm:8.4}.

\begin{thm}\label{thm:8.16}
 Let $G$ be a finitely generated group acting properly discontinous on a finite dimensional cubical complex $X$ and suppose $G$ has an upper bound on the order of its finite subgroups. Then either 
 \begin{enumerate}
  \item\label{8.16.1} $G$ contains a free subgroup of order two or
  \item\label{8.16.2} $G$ is virtually finitely generated abelian. 
 \end{enumerate}
\end{thm}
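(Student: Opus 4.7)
The plan is to induct on $d = \dim X$. For $d = 0$ the complex is a discrete set of points, so proper discontinuity together with the uniform bound on orders of finite subgroups forces $G$ itself to be finite, and $G$ is then trivially virtually finitely generated abelian. I thus focus on $d \geq 1$.

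For the inductive step, assume the theorem for proper actions on cube complexes of dimension less than $d$. If $G$ is finite we are done, so assume $G$ is infinite and fix a hyperplane $H \subset X$. By Proposition~\ref{prop:3.3}, $H$ is itself a CAT(0) cube complex of dimension at most $d - 1$, and its setwise stabilizer $G_H$ acts properly on $H$ while inheriting the bound on finite subgroups. By the inductive hypothesis, $G_H$ either contains a copy of $F_2$ --- in which case so does $G$ and we are done --- or $G_H$ is virtually finitely generated abelian, hence in particular virtually polycyclic. I henceforth assume the latter alternative.

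The next step is to use the splitting of $X$ induced by $H$. Proposition~\ref{prop:3.3} yields two halfspaces $h, h^c$, and using properness together with the finite-dimensionality of $X$ one verifies that the coset graph for $G_H \leq G$ disconnects upon removing the edges dual to the $G$-orbit of $H$, giving $\Ends(G, G_H) \geq 2$. I then invoke Theorem~\ref{thm:8.12} with $G_H$ playing the role of the virtually polycyclic subgroup. In its first alternative $G$ is virtually polycyclic, and a flat-torus / Bieberbach-type argument for the proper $G$-action on the finite-dimensional CAT(0) cube complex $X$ upgrades this to $G$ being virtually $\Z^n$. In its second alternative there is a short exact sequence $1 \to P \to G \to G/P \to 1$ with $G/P$ not elementary Fuchsian, hence possessing infinitely many limit points and therefore containing $F_2$, which lifts to $F_2 \leq G$. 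In the third alternative $G$ decomposes as $A *_C B$ or $A *_C$ over a virtually polycyclic $C$, and Britton's normal form (Theorems~\ref{thm:8.15f} and~\ref{thm:8.15g} together with Remark~\ref{thm:8.15h}) produces an embedded $F_2$ unless the splitting is degenerate --- $[A : C] = [B : C] = 2$ in the amalgam or a trivial HNN --- in which case $G$ is virtually $C$, itself virtually polycyclic, and the first alternative applies.

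The principal obstacle will be rigorously establishing $\Ends(G, G_H) \geq 2$. The hyperplane obviously separates $X$ into two deep halfspaces, but converting this geometric separation into a disconnection of the coset graph of $G_H$ requires that infinitely many cosets genuinely accumulate on each side and that no removed edge can be routed around in the quotient; both facts rely on properness and demand care in distinguishing the cases $G \cdot h \neq G \cdot h^c$ (giving an amalgam) from $G \cdot h = G \cdot h^c$ (giving an HNN extension). A secondary subtlety is the virtually-polycyclic-to-virtually-abelian upgrade needed in the first case of Dunwoody--Swenson, which I will invoke from standard CAT(0) theory rather than develop here.
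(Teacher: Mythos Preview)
Your overall architecture---induction on $\dim X$, apply the inductive hypothesis to a hyperplane stabiliser, then feed the resulting virtually polycyclic subgroup into Dunwoody--Swenson and analyse the three outcomes---is exactly the paper's strategy. The case analysis at the end is also essentially the same, though the paper handles the degenerate amalgam $[A:P]=[B:P]=2$ and the degenerate HNN by looking at the Bass--Serre tree (a line) and exhibiting $G$ as virtually $P\ltimes\Z$, while you phrase this as ``$G$ is virtually $C$''; these amount to the same thing.

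The genuine gap is precisely where you flagged it. You write ``fix a hyperplane $H\subset X$'' and then assert that properness and finite-dimensionality give $\Ends(G,G_H)\geq 2$. This is \emph{false} for an arbitrary hyperplane. Take $G=\Z^2$ acting on the standard cubing of $\R^2$ with a single edge (a ``whisker'') attached at every lattice point; this is a finite-dimensional CAT(0) cube complex with a proper $\Z^2$-action. The hyperplane $H$ sitting in any one whisker has trivial stabiliser $G_H=\{1\}$, so $\Ends(G,G_H)=\Ends(\Z^2)=1$. Your proposed verification (``infinitely many cosets accumulate on each side'') fails here because one halfspace is bounded. The paper does not attempt this for a generic hyperplane; instead it invokes Sageev's theorem (Theorem~\ref{thm:8.18}), which guarantees the \emph{existence} of some hyperplane $J$ with $\Ends(G,\Stab_G(J))>1$ whenever $G$ acts without global fixed point. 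That existence statement is a substantial result in its own right and is used as a black box. You need either to cite it or to explain how you select the hyperplane.

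A smaller point: for the upgrade from virtually polycyclic to virtually abelian the paper does not use a flat-torus/Bieberbach argument directly but rather Bridson's lemma (Lemma~\ref{le:8.19}): cellular actions on complexes with finitely many shapes are semisimple, and virtually solvable groups acting properly and semisimply on CAT(0) spaces are virtually abelian. Your ``flat-torus'' gesture is in the right spirit but is not quite the mechanism; the key input is semisimplicity of the action, which comes from the cubical structure rather than from any flat.
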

\begin{proof}
The proof is by induction on $n=\dim(X)$.  For $n=0$ $G$ has to be finite (since otherwise a properly discontinuous action does not exist) and we are in case (\ref{8.16.2}). 
Suppose the theorem is true for $\dim(X)< n$. Then again, if $G$ is finite we are in case (\ref{8.16.2}). Hence w.l.o.g.\ we may assume that $G$ is infinite. By assumption $G$ does not have a global fixed point on $X$ and we may apply the following result:

\begin{thm}\cite{Sageev}\label{thm:8.18}
If $G$ is a group acting without a global fixed point on a $\cat(0)$ cubical complex $X$ of dimension $<\infty$, then there exists a hyperplane $J$ in $X$ such that $\Ends(G, \Stab_G(J))>1$.  
\end{thm}

We put $H\define \Stab_G(J)$ with $J$ the hyperplane obtained from the theorem just mentioned. The action of $H$ on the $\cat(0)$ cube complex $J$ is properly discontinuous and we deduce from the induction hypothesis that $H$ either is virtually f.g.\ abelian or contains a free subgroup of rank two. 

Suppose \ref{8.16.1} is true. Then there exists $F_2<H<G$ and the asserion follows. If \ref{8.16.2} is satisfied by $H$, then $H$ is in particular virtually solvable and we may apply the algebraic torus theorem \ref{thm:8.12}. Hence $G$ satisfies one of the following three possibilities:
\begin{enumerate}
 \item\label{8.18.1} $G$ is virtually polycyclic.
 \item\label{8.18.2} $G$ contains a non-elementary abelian Fuchsian quotient with virtually polycyclic kernel. 
 \item\label{8.18.3} $G$ decomposes as a product (free amalgamation or HNN-extension) over a virtually polycyclic subgroup. 
\end{enumerate}
We are dealing with each of the three possibilities separately and will show that in any case $G$ satisfies the assertion of our theorem.

\emph{Case \ref{8.18.1}:} We apply a Lemma of Bridson (see \ref{le:8.19} below) to  prove that $G$ is virtually abelian. Item \ref{8.19.1} implies that item \ref{8.19.2} is applicable and since polycyclic implies solvable the assertion follows. Here's the lemma:
\begin{lemma}\label{le:8.19}
\begin{enumerate}
 \item\label{8.19.1} Suppose $G$ admits a cellular action on a $\cat(0)$ complex $X$ which is built out of finitely many shapes. Then the action of $G$ on $X$ is semisimple with a discrete set of translation lengths. 
 \item\label{8.19.2} Suppose $G$ admits a semisimple action, properly discontinuous action on a $\cat(0)$ space $X$. Then every virtually solvable subgroup of $G$ is virtually abelian. (In particular $G$ is virtually abelian if it is virtually solvable.)
\end{enumerate}
\end{lemma}
%\begin{proof} For a proof of this lemma see \petra{\cite{???}}. \end{proof}

A \emph{semisimple action} is an action for which each group element is a semisimple isometry\footnote{The class of semisimple isometries of a metric space is split into two subclasses. Isometries $f$ with $\vert f\vert>0$ are called \emph{hyberbolic} and isometries with translation length $\vert f\vert=0$ are \emph{elliptic}. Non-semisimple isometries are called \emph{parabolic}.}, that is there exists $x_0\in X$ such that $d(g.x_0, x_0)$ equals the translation length $\vert g\vert\define \inf\{d(g.x,x)\,\vert\, x\in X\}$ of $g$.

\emph{Case \ref{8.18.2}:} This is the easiest case as the non-elementary Fuchsian quotient does contain a copy of $F_2$. Hence $G$ does. 

\emph{Case \ref{8.18.3}:} Suppose $G=A\star_P B$ or $G=A\star_P$ for some virtually polycyclic $P$. Then by Lemma~\ref{le:8.19}.\ref{8.19.1} the action of the group $G$ on $X$ is semisimple and item \ref{8.19.2} implies that $P$ needs to be virtually abelian. 

Let us first consider the case with $G=A\star_P B$. The normal form theorem \ref{thm:8.15f} implies that if $[A:P]> 2$ or $[B:P]>2$ then $G$ does contain a free subgroup of rank two. 
We may hence suppose that $[A:P]=2=[B:P]$. 
In this case the Bass-Serre tree $T$ is a line and any edge in $T$ is stabilized by a conjugate of $P$. This imples then that there exists an intex two subgroup $G'$ of $G$ which acts by translation on $T$ such that $\ker(G'\to \Z) \cong P$. Hence $G'\cong P\ltimes \Z$ is polycyclic and $G$ is thus vortually polycyclic and (by Lemma~\ref{le:8.19}) virtually abelian. 

Suppose now $G=C\star_P$. Then there exist subgroups $P_1, P_2$ of $C$ isomorphic to $P$ via $\phi:C_1\to C_2$ isomorphic to one another. The normal form theorem \ref{thm:8.15g} now implies that if $[C:P_1]>1$ and $[C:P_2]>1$ then $G$ contains a free subgroup of rank two.
One can show that if $[C:P_1]=1$ then $[C:P_2]=1$ and vice versa. 
And we may conclude that $G$ is isomorphic to $P\ltimes\Z$, is virtually polycyclic an hence virtually abelian. Thus the assertion. 
\end{proof}

\newpage

\section{Appendix: Phylogenetic trees}  % and moving robots}

This section (which actually appeared after Section 6 in my lecture) is of completely different flavor as the rest of these notes. The reason for this is, that this was our "Christmas lecture". The last class I gave in the week of Christmas eve I wanted to show the students an  application of cube complexes which appeared in the literature (more or less) recently.
%
%The material presented here is based on two articles, one by Ardila,  Owen and Sullivant \cite{AOS} and the other by Billera, Holmes and Vogtmann \cite{BHV}. Rigorous proofs are omitted since the main purpose was to get a glimpse of the areas where cube complexes make an appearance and are used not to prove something about groups but genes and robots. 
% Enough words. Let's get started. % with the phylogenetic trees. 

%\subsection{The space of phylogenetic trees}

This section is based on {\cite{BHV}} which is maybe the most prominent example of an application of cubical complexes outside geometric group theory. The starting point is the following: 

{\bf Setting:}\newline
 One wants to study and graphically illustrate hierarchical connections between different species by a given uncertainty of the given data or unvcertainty in the process of creation of the data one is working with. For example if data is based on statistical methods different attempts to measure the evolution of the species will lead to different models and different graphical results. 

To give you two examples to have in mind you may think of either the genetic evolution of biological species and how theit genetic code differs or the evolution of languages where it is hard to measure how they differ and whether or not two languages have common ancestors. 

We will use \emph{rooted trees} to present the evolution of a set of species with the following interpretations/connections: 

\begin{itemize}
	\item The leaves are labeled and each leave stands for an existing species. 
	\item Nodes having a common ancestor are  closely related. 
\end{itemize}

Moreover we do want to incode more than just having a common ancestor so we scale the trees and let them be metric spaces with different edge lengths. 

\begin{itemize}
	\item the lengths of edges in the tree encode additional information (e.g. time needed to branch, amount of genes different from the ancestor, number of mutations needed, ...)
\end{itemize}

%\petra{ADD PICTURES HERE}

{\bf Problems:} The main difficulties when looking at such models are 

\begin{itemize}
	\item How can we measure distances between two trees?
	\item How can we interpolate between two trees?
	\item How can we calculate probability of appearance of a given set of trees in the space of all possible trees? Can one put a measure on the space of all trees? 
\end{itemize}

The model suggested in \cite{} does allow for all these things. Let's have a more detailed look. 

\begin{definition}
	An \emph{n-tree} is a simplicial tree $T$ with a fixed vertex of valency $\geq 2$ called \emph{root}, $n$ vertices of valency $1$, the \emph{leaves}, and valency $\geq 3$ for all other (interior) vertices. An edge in $T$ is called \emph{interior}. if it does not contain a leave. 
	
	A \emph{labeling} of an $n$-tree $T$ is a fixed bijection from $\{1,2,\ldots, n\}$ to the set of leaves of $T$. If we equip $T$ with a fixed labeling we say that $T$ is \emph{labeled}. 
	
	A \emph{metric} tree $T$ is a fixed geometric realization of $T$ where all edge-lengths are chosen in $\R^+$. 
	  
\end{definition}

\begin{figure}[h]
	\begin{center}
		\resizebox{!}{0.3\textwidth}{\includegraphics{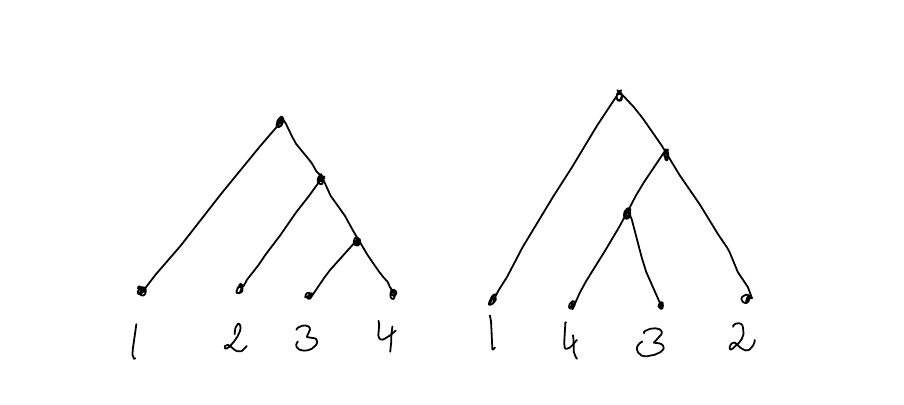}}
		\caption{Identical metric labeled 4-trees.}
	\label{fig_60}
	\end{center}
\end{figure}

\begin{remark}
	Alternatively we may think of a metric tree as a combinatorial tree with an additional  labeling of its edges where the labels are just the lengths of these edges in the geometric realization.  
\end{remark}

\begin{example}
The two trees in Figure~\ref{fig_60} are identical metric labeled trees. 	
\end{example}	
	
\begin{prop}
	An $n$-tree has $\leq n-2$ interior edges where equality holds for binary trees. 
\end{prop}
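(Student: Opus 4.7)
The plan is to set up a double counting argument on vertices, edges, and valencies, exploiting the fact that trees satisfy $|E|=|V|-1$. Let $T$ be an $n$-tree. I would first partition the vertex set into three classes: the $n$ leaves, the root, and the remaining $m$ interior vertices (other than the root). This gives $|V|=n+1+m$ and hence $|E|=n+m$. Next I would partition the edge set into edges that contain a leaf and edges that do not. Since interior vertices and the root have valency $\geq 2$, no edge is incident to two leaves (provided $n\geq 2$; the case $n=1$ is degenerate and can be handled separately), so there are exactly $n$ edges incident to leaves and therefore exactly $m$ interior edges.

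The key step is then the handshake lemma applied to the valency conditions from the definition of an $n$-tree. Summing valencies gives
\[
2|E|=2(n+m)=\sum_{v\in V}\mathrm{val}(v)=n\cdot 1+\mathrm{val}(\mathrm{root})+\sum_{v\text{ other interior}}\mathrm{val}(v).
\]
Plugging in the lower bounds $\mathrm{val}(\mathrm{root})\geq 2$ and $\mathrm{val}(v)\geq 3$ for each of the $m$ remaining interior vertices yields $2(n+m)\geq n+2+3m$, which rearranges to $m\leq n-2$. Since $m$ is the number of interior edges, this establishes the bound.

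For the equality case, I would trace back through the inequality: the bound is tight precisely when $\mathrm{val}(\mathrm{root})=2$ and every non-root interior vertex has valency $3$. This is exactly the combinatorial definition of a binary (fully resolved) rooted tree, so equality holds for binary $n$-trees and only for them.

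I do not expect a real obstacle here; the only subtlety is bookkeeping around low values of $n$ and making sure the partition of edges into leaf edges and interior edges is actually a partition. This is fine as soon as $n\geq 2$, since then the root cannot itself be a leaf and no edge has two leaf endpoints.
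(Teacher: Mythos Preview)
Your argument is correct and takes a genuinely different route from the paper. The paper proceeds by induction on $n$: it argues that every $n$-tree is obtained from an $(n-1)$-tree by attaching one more leaf, observes that this operation creates at most one new interior edge, and then handles the equality case by noting that binary trees are exactly those that cannot be ``enlarged'' (by splitting a high-valency vertex) without adding leaves. Your approach bypasses induction entirely: you combine the tree identity $|E|=|V|-1$ with the handshake lemma and the valency constraints to get the inequality $m\leq n-2$ in one line, and the equality case falls out immediately as the case where all valency lower bounds are attained. Your counting argument is cleaner and yields the ``only if'' direction of the equality statement for free, whereas the paper's inductive sketch leaves a small gap (one has to check that every $n$-tree really does arise from some $(n-1)$-tree by leaf attachment). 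On the other hand, the paper's approach is more pictorial and fits the phylogenetic intuition of growing trees one species at a time.
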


\begin{figure}[h]
	\begin{center}
		\resizebox{!}{0.4\textwidth}{\includegraphics{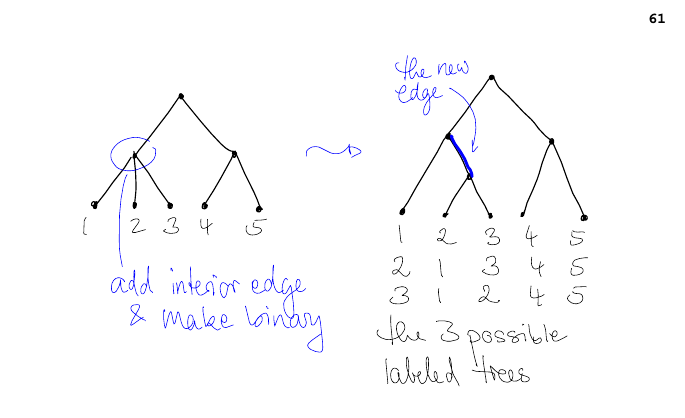}}
		\caption{Enlargement of a labeled 5-tree to a binary one.}
	\label{fig_61}
	\end{center}
\end{figure}

\begin{proof}
	(Sketch) The proof is by induction on $n$. For $n=2$ there is only one possible tree with no interior edges. An $n$-tree may be thought of as arising from an $(n-1)$-tree by "gluing" an additional leave. It is easy to see (draw yourself some pictures or look at Figure~\ref{fig_62}) that this will result in at most one more interior edge when gluing two edges to an existing leave. Hence we have at most $(n-1)+1-2=n-2$ interior edges in the $n$-tree. Binary trees can not be enlarged (while non-binary ones can) without adding leaves so they are the ones with the maximal amount of interior edges. Compare Figure~\ref{fig_61}.  
\end{proof}

\begin{figure}[h]
	\begin{center}
		\resizebox{!}{0.4\textwidth}{\includegraphics{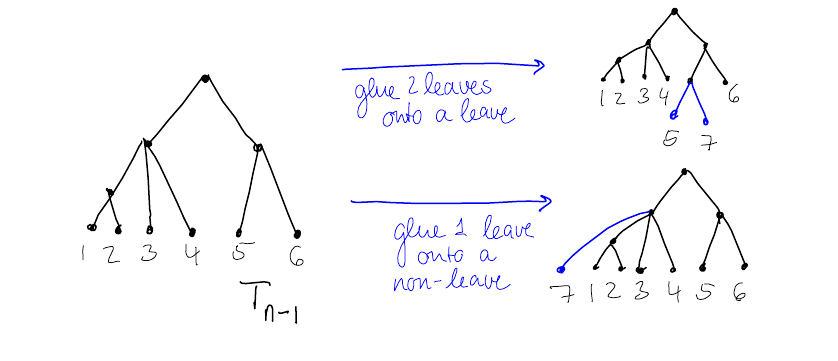}}
		\caption{Constructing an n-tree from an $(n-1)$-tree.}
	\label{fig_62}
	\end{center}
\end{figure}

\begin{definition}[Space of phylogenetic trees]
	
Let $T$ be a binary labeled $n$-tree with interior edges enumerated by $e_1, e_2, \ldots, e_{n-2}$. Associate to $T$ a quadrant $Q_T\cong[0,\infty)^{n-2}$ where each point $p=(p_1, p_2, \ldots, p_{n-2})^t$ in $Q_T$ corresponds to a metric tree which is a geometric realization of $T$ whose interior edge $e_i$ has length $p_i$ for all $i$. 

Note that the faces of $Q_T$ correspond to non-binary $n$-trees where certain interior edges are collapsed (habe length 0). Define the \emph{space of phylogenetic $n$-trees $\mathcal{T}_n$} to be the quotient 
$$
\mathcal{T}_n \define \bigcup_{T\in \mathcal{B}_n} Q_T \diagup \sim , 
$$
where $\mathcal{B}_n$ stands for the set of binary $n$-trees and where we denote by $\sim$ the equivalence relation on points in quadrants induced by equivalence of labeled metric $n$-treed. That is points $p\in Q_T$ and $p'\in Q_{T'}$ are identified in the quotient if and only if $p$ and $p'$ represent the same labeled metric $n$-tree. 
\end{definition}

\begin{prop}[(Schröder 1870)]
	There exist $(2n-3)!! = \frac{(2n-2)!}{2^{n-2}(n-1)!}$ binary labeled $n$-trees. 
\end{prop}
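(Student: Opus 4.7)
The plan is to prove the formula by induction on $n$, establishing the recursion $B_n = (2n-3)\,B_{n-1}$, where $B_n \define |\mathcal{B}_n|$ denotes the number of binary labeled $n$-trees. The base case $n=2$ is immediate: the unique binary $2$-tree has a root of valency $2$ joined to the two leaves labeled $1$ and $2$, giving $B_2 = 1 = 1!!$.

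For the inductive step I would set up a \emph{leaf-removal map} $\Phi:\mathcal{B}_n\to\mathcal{B}_{n-1}$ and show that every fiber has cardinality exactly $2n-3$. Given $T\in\mathcal{B}_n$, let $v$ be the neighbour of the leaf labeled $n$. If $v$ is an interior non-root vertex (valency $3$ in $T$), then after deleting leaf $n$ and its incident edge the vertex $v$ has valency $2$, and I suppress $v$ by merging its two remaining incident edges into a single edge. If $v$ is the root, then for $n\geq 3$ the root's other child $u$ must be an interior non-leaf vertex (otherwise $T$ would have only two leaves, contradicting $n \geq 3$), and I delete leaf $n$, its incident edge, and the old root, promoting $u$ to the new root (which then has valency $2$, as required). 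In either case the output is a binary labeled $(n-1)$-tree.

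For the fiber count, every preimage of a fixed $T'\in\mathcal{B}_{n-1}$ arises in exactly one of two ways: (a) choose an edge $e$ of $T'$, subdivide it by inserting a new interior vertex $w$, and attach leaf $n$ to $w$; or (b) create a brand-new root $r'$ above the old root $r$ of $T'$ and attach leaf $n$ as the second child of $r'$ (the old root $r$ then becomes an interior non-root vertex of valency $3$). A quick handshake-lemma calculation, using $(n-1)\cdot 1 + 2 + k\cdot 3 = 2\cdot(\text{edges})$ together with $\text{edges} = \text{vertices}-1$, shows that a binary $(n-1)$-tree has $k=n-3$ interior non-root vertices and hence $2n-4$ edges. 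Option (a) contributes $2n-4$ preimages and option (b) contributes one more, so $|\Phi^{-1}(T')| = 2n-3$. Iterating the recursion yields $B_n = (2n-3)(2n-5)\cdots 3\cdot 1 = (2n-3)!!$, and splitting $(2n-2)!$ into its even and odd factors gives $(2n-2)! = 2^{n-1}(n-1)!\cdot(2n-3)!!$, from which the claimed closed form follows (with the minor correction that the denominator should read $2^{n-1}(n-1)!$ rather than $2^{n-2}(n-1)!$, as one checks against $B_3 = 3$).

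The main delicate point will be verifying that operations (a) and (b) are disjoint and together exhaust the preimages of $\Phi$. This is best seen by reading off where leaf $n$ sits in the result: under construction (a) the parent of leaf $n$ is the subdivision vertex $w$, which has valency $3$ and is an interior non-root vertex, while under construction (b) the parent of leaf $n$ is the new root $r'$, which has valency $2$. Hence the two cases are distinguishable from the output, and distinct edges chosen under (a) give distinct insertion vertices and thus distinct trees. Checking that applying $\Phi$ to each of these $2n-3$ constructions really returns $T'$ is a routine verification using the two clauses in the definition of $\Phi$.
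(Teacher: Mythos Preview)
Your argument is correct and is the standard inductive proof of Schr\"oder's formula: remove the leaf labeled $n$, analyze the two cases according to whether its neighbour is the root or an interior vertex, and count the $2n-3$ ways to reattach it. The edge count for a binary $(n-1)$-tree and the disjointness/exhaustiveness of constructions (a) and (b) are handled cleanly.

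There is nothing to compare against, however: the paper does not supply a proof of this proposition. It is simply stated with the attribution ``(Schr\"oder 1870)'' and immediately followed by an example computing $|\mathcal{B}_3|$ and $|\mathcal{B}_4|$. So your proof is not an alternative to the paper's argument but rather fills a gap the paper deliberately leaves.

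Your observation about the exponent is also correct: since $(2n-2)! = 2^{n-1}(n-1)!\,(2n-3)!!$, the closed form should read $\frac{(2n-2)!}{2^{n-1}(n-1)!}$, and the paper's $2^{n-2}$ is a typo (as the check $B_3=3$ versus $\tfrac{4!}{2\cdot 2!}=6$ confirms).
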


\begin{example}
	\begin{enumerate}
		\item The set $\mathcal{B}_3$ contains $(2\cdot 3 -3)!!=3$ labeled binary $3$-trees. Hence $\mathcal{T}_3$ contains three quadrants of dimension one (i.e.rays) glued together at the origin.
		\item The set $\mathcal{B}_4$ contains fifteen elements and each tree defines a $2$-dimensional quadrant. At each face of dimension one meet three such quadrants.  
	\end{enumerate}
\end{example}

Note that there are many embeddings of $\mathcal{T}_3$ into $\mathcal{T}_4$ (and in general of $\mathcal{T}_k$ into $\mathcal{T}_n$ for arbitrary $k<n$.  

\begin{thm}
	The link of the origin in $\mathcal{T}_4$ is the Petersen graph. 
\end{thm}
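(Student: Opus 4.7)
The plan is to identify the link $L$ of the origin in $\mathcal{T}_4$ with the Kneser graph $K(5,2)$ on the set $\{r,1,2,3,4\}$ (where $r$ denotes the root) --- vertices are the $2$-element subsets, edges connect disjoint pairs --- which is the Petersen graph. First I observe that $\mathcal{T}_4$ is the union of the $15$ orthants $Q_T \cong [0,\infty)^2$, one per binary labeled $4$-tree $T \in \mathcal{B}_4$, and that the link of the origin inside a single $[0,\infty)^2$ is an arc ($1$-simplex) with endpoints on the two coordinate rays. Hence $L$ is a graph with $15$ edges (one per binary $T$) whose vertex set is in bijection with $4$-trees carrying exactly one positive-length interior edge.

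Next I would enumerate the vertices via splits of $\{r,1,2,3,4\}$. Let $T$ be a $4$-tree with a unique interior edge $e$. Since $e$ is not incident to any leaf, both its endpoints are either the root or interior vertices; but if neither endpoint of $e$ were the root, then the root (of valence $\geq 2$) would require additional non-leaf edges to connect it to $T$, producing further interior edges --- a contradiction. Thus $e$ is incident to $r$ on one side and to an interior vertex $v$ on the other, and the valence requirements $\mathrm{val}(r)\geq 2$ and $\mathrm{val}(v)\geq 3$ force at least one leaf on the $r$-side and at least two leaves on the $v$-side of $e$. Cutting $e$ therefore yields an unordered $(2,3)$-split of $\{r,1,2,3,4\}$, and every such split is realised by a unique such $T$. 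This matches the vertices of $L$ bijectively with the $2$-element subsets of $\{r,1,2,3,4\}$, of which there are $\binom{5}{2}=10$.

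Then I would analyse the edge relation. A binary $T \in \mathcal{B}_4$ has two interior edges; viewing $T$ as an unrooted binary tree on the five pendant labels $\{r,1,2,3,4\}$, these induce two $(2,3)$-splits with $2$-parts $A$ and $B$ respectively. A direct inspection of the four pairwise intersections $A \cap B$, $A \cap B^c$, $A^c \cap B$, $A^c \cap B^c$ shows that compatibility fails whenever $|A \cap B|=1$ (all four intersections are nonempty) and holds whenever $A \cap B = \emptyset$; in the latter case the fifth label is uniquely placed in the middle of the caterpillar realising both splits. As a consistency check, the number of unordered pairs of disjoint $2$-subsets of a $5$-set is $\tfrac{1}{2}\binom{5}{2}\binom{3}{2}=15=|\mathcal{B}_4|$. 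Hence $L$ is exactly the graph on $2$-subsets of a $5$-set with edges the disjoint pairs --- the Kneser graph $K(5,2)$, i.e.\ the Petersen graph.

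The main obstacle is the second step: carefully justifying that the unique interior edge of a one-edge $4$-tree must be incident to the root, which then pins down the split structure and forces both parts of the induced partition of $\{r,1,2,3,4\}$ to have size $\geq 2$. Once this tree-to-split dictionary is in place, the remainder of the argument is routine combinatorial bookkeeping.
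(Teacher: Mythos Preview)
Your argument is correct. The paper itself gives no proof of this statement --- it explicitly leaves it as an exercise --- so there is nothing to compare against. Your approach via the bijection between rooted $4$-trees and unrooted trees on the five pendant labels $\{r,1,2,3,4\}$, reducing the link to the Kneser graph $K(5,2)$, is the standard clean way to do this exercise. The count of vertices ($10$), edges ($15$), and the split-compatibility analysis (two $(2,3)$-splits of a $5$-set are compatible iff their $2$-parts are disjoint) are all sound; the step you flag as the ``main obstacle'' --- that the unique interior edge of a one-interior-edge $4$-tree must be incident to the root --- is indeed the only point requiring a moment's thought, and your connectivity argument handles it.
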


The proof of this theorem is left as an exercise and we now focus on metric properties of $\mathcal{T}_n$.

\begin{definition}
We define the metric $d$ on $\mathcal{T}_n$ to be the unique length metric such that on each quadrant the restriction $d\vert_{Q_T}$ equals the restriction of the euclidean metric.  	 
\end{definition}

\begin{thm}\label{thm:tree-space CAT(0)}
	For all $n$ the space $\mathcal{T}_n$ carries the structure of a CAT(0) cubical complex. 
\end{thm}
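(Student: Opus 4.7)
The plan is to apply Gromov's link condition (Theorem~\ref{thm:2.14}/Proposition~\ref{prop:2.16}) together with simple connectedness. First, I would endow $\mathcal{T}_n$ with a cube complex structure by taking the standard cubing of each orthant $Q_T\cong [0,\infty)^{n-2}$ into unit cubes $\prod_{i=1}^{n-2}[a_i,a_i+1]$ with $a_i\in\N$. The identifications built into $\sim$ only collapse certain coordinates to $0$, so they respect this cubing: a face of a cube in $Q_T$ on which some coordinates vanish is identified with the analogous face of a cube in $Q_{T'}$ obtained by reconfiguring the corresponding contracted edges. This produces a well-defined finite-dimensional cube complex (of dimension $n-2$) whose length metric agrees with the metric $d$ defined on $\mathcal{T}_n$.

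Simple connectedness, in fact contractibility, is easy: the origin $0$ (the fully degenerate tree with all interior edges of length $0$) lies in every orthant $Q_T$, and the straight-line homotopy $F_s(p)=(1-s)p$ on each $Q_T$ is compatible with the identifications (scaling edge lengths uniformly preserves equivalence), so it descends to a deformation retraction of $\mathcal{T}_n$ onto $\{0\}$. Thus $\mathcal{T}_n$ is simply connected, and by Cartan--Hadamard it suffices to check that $\mathcal{T}_n$ is locally $\cat(0)$, i.e.\ that every vertex link is a flag simplicial complex.

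The links at vertices other than the origin are unproblematic. If $v$ is a cube vertex in the interior of a face that lies in $k\geq 1$ different orthants, then locally $\mathcal{T}_n$ looks like $\R^{m}\times \bigl(\text{cone on a flag complex of splits}\bigr)$ for appropriate $m$; equivalently, the link decomposes as a spherical join of $(k-1)$-dimensional simplices with all-right spherical complexes of smaller trees, and joins of flag complexes are flag. The only serious case is the link $L$ of the origin. A vertex of $L$ corresponds to an interior edge of some binary $n$-tree, equivalently to a \emph{split} $A\mid B$ of the leaf set $\{1,\dots,n\}$ (the partition induced by removing that edge). A collection of splits spans a simplex of $L$ precisely when the corresponding interior edges can be realized simultaneously in one $n$-tree. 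The classical splits-equivalence theorem says that a family of splits is realizable in a single tree if and only if the splits are \emph{pairwise compatible}, where $A\mid B$ and $A'\mid B'$ are compatible iff at least one of $A\cap A'$, $A\cap B'$, $B\cap A'$, $B\cap B'$ is empty. Hence if every pair of vertices of $L$ is joined by an edge, all the splits are pairwise compatible, so they are jointly realizable, hence span a simplex of $L$. This is exactly the flag condition.

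The main obstacle is this last combinatorial input -- recognizing that edges of $L$ encode pairwise split compatibility and invoking the fact that pairwise compatibility of splits implies joint compatibility (which is a standard result in phylogenetic combinatorics, due essentially to Buneman). Once that identification is made, Gromov's link condition gives local $\cat(0)$, and contractibility promotes this to $\cat(0)$ globally, completing the proof.
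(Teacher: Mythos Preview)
Your argument is essentially correct and complete; in fact it is considerably more detailed than what the paper offers. The paper does not really prove the theorem: it simply remarks that one may either argue ``directly'' or apply Berestovskii's theorem that a Euclidean cone is $\cat(0)$ if and only if its base is $\cat(1)$, applied to $Y=\lk_{\mathcal{T}_n}(0)$; it then only checks the small case where the link is the Petersen graph and leaves the general case open.

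So your route genuinely differs from the paper's hint. The paper exploits the global cone structure of $\mathcal{T}_n$ over the origin, reducing everything in one stroke to the single link $\lk_{\mathcal{T}_n}(0)$ being $\cat(1)$; by Proposition~\ref{prop:2.16} this is again the flag condition, and one would then need exactly the Buneman splits argument you supply. You instead cube the orthants, invoke Cartan--Hadamard, and run Gromov's link condition at \emph{every} vertex, handling non-origin vertices via the join decomposition. Your approach buys you an explicit cubical structure and stays entirely within the machinery developed earlier in the notes; the Berestovskii approach is slicker because the cone structure makes all non-origin links irrelevant, but it imports an outside theorem and, as written in the paper, still leaves the general flag verification undone. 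Either way the crux is the pairwise-implies-joint compatibility of splits, which you correctly isolate as the main combinatorial input.

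One small point worth tightening in your sketch: your treatment of non-origin links is a bit brisk. The precise statement is that the link at a vertex $v$ (corresponding to a tree with some positive-length interior edges and some high-valence internal nodes) is the spherical join of a cross-polytope boundary (one suspended vertex pair per positive-length edge) with the origin-links of the smaller tree spaces attached to each high-valence node; since joins of flag complexes are flag and the cross-polytope boundary is flag, this reduces to the origin case. This is what you assert, but spelling out the factorisation would make the reduction watertight.
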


One can either prove the result directly or apply the following theorem to $Y=\lk_{\mathcal{T}_n}(0)$.

\begin{thm}[Berestovskii]
	The CAT(0) cone over a metric space $Y$ is a CAT(0) space if and only if $Y$ is CAT(1). 
\end{thm}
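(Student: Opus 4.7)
The plan is to unpack the explicit construction of the Euclidean cone and reduce CAT$(0)$ versus CAT$(1)$ comparison to trigonometry in the model plane. Recall that the CAT$(0)$ cone over $(Y,d_Y)$ is the quotient $C_0Y = (Y\times [0,\infty))/(Y\times\{0\})$ with the cone apex $o$ as distinguished point, equipped with the metric
$$d\bigl((y,s),(y',t)\bigr)^2 = s^2 + t^2 - 2st\cos\bigl(\min(d_Y(y,y'),\pi)\bigr),$$
motivated by the Euclidean law of cosines. A central elementary observation is that for any two points $y,y'\in Y$ at ``$Y$-distance'' less than $\pi$, the union of the two rays from $o$ through $(y,1)$ and $(y',1)$ embeds isometrically in $C_0Y$ as a Euclidean sector with apex angle $d_Y(y,y')$; in particular the angle at $o$ between these two rays is exactly $\min(d_Y(y,y'),\pi)$.

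For the forward direction (assume $C_0Y$ is CAT$(0)$, prove $Y$ is CAT$(1)$), let $\Delta(a,b,c)$ be a triangle in $Y$ of perimeter strictly less than $2\pi$. Lift it to the triangle $\widetilde\Delta$ in $C_0Y$ with vertices $(a,1),(b,1),(c,1)$ at unit radius. The side $(a,1)(b,1)$ of $\widetilde\Delta$ is the straight Euclidean chord in the sector spanned by $\overline{ab}$, and its length is an invertible function of $d_Y(a,b)$. I would take a comparison triangle in $\mathbb{E}^2$ for $\widetilde\Delta$, then take a spherical comparison triangle in $\mathbb{S}^2$ for $\Delta(a,b,c)$; the cosine laws in $\mathbb{E}^2$ and $\mathbb{S}^2$ relate angles at the apex/center in exactly the same way. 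Applying the CAT$(0)$ inequality to two radial lifts $(p,1),(q,1)$ of points $p\in\overline{ab}$, $q\in\overline{bc}$ and using the cone metric to convert this back to an inequality between $d_Y(p,q)$ and $d_{\mathbb{S}^2}(\bar p,\bar q)$ yields the CAT$(1)$ inequality for $\Delta(a,b,c)$.

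For the reverse direction (assume $Y$ is CAT$(1)$, prove $C_0Y$ is CAT$(0)$), I would first show $C_0Y$ is a geodesic space. Given $x=(y_1,s)$ and $x'=(y_2,t)$, if $d_Y(y_1,y_2)\geq\pi$ a geodesic passes through $o$ along the two radial rays; if $d_Y(y_1,y_2)<\pi$ then by Proposition~\ref{prop:1.10}.\ref{1.10.1} applied to the CAT$(1)$ space $Y$, there is a unique geodesic $\gamma$ in $Y$ from $y_1$ to $y_2$, and the ``sector'' over $\gamma$ embeds isometrically as a Euclidean sector in which one reads off the geodesic $xx'$. With geodesics in hand, to verify the CAT$(0)$ inequality consider a triangle $\Delta(x_1,x_2,x_3)\subset C_0Y$. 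If one vertex is the apex the whole triangle sits inside a Euclidean sector and the inequality is immediate. Otherwise project each side radially to $Y$: this produces a ``shadow'' triangle $\bar\Delta$ in $Y$ whose side lengths are recovered, together with the radii $t_1,t_2,t_3$ of the $x_i$, from the cone metric. Applying the CAT$(1)$ hypothesis to $\bar\Delta$ controls the three angles at $o$ in the three subtriangles $\Delta(o,x_i,x_j)$, and combining this with comparison triangles in $\mathbb{E}^2$ for each subtriangle yields the desired CAT$(0)$ inequality for $\Delta(x_1,x_2,x_3)$.

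The main obstacle is the last step: deriving the CAT$(0)$ inequality for a generic triangle that avoids $o$ from the CAT$(1)$ inequality for its radial shadow. The trigonometric bookkeeping is delicate because one must simultaneously compare three subtriangles $\Delta(o,x_i,x_j)$ and glue the comparison triangles in $\mathbb{E}^2$ consistently around the apex. The clean way is to fix a comparison triangle $\bar\Delta$ in $\mathbb{E}^2$ for $\Delta(x_1,x_2,x_3)$ and show that, under the CAT$(1)$ hypothesis in $Y$, the angle sum at a lift of $o$ inside $\bar\Delta$ is at most $2\pi$, which is exactly where the CAT$(1)$ inequality on the shadow enters via the spherical law of cosines. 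This is the content of Bridson--Haefliger Theorem II.3.14, whose argument I would follow.
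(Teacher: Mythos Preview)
The paper does not actually prove this theorem: it is merely quoted (and attributed to Berestovskii) as a black box used to deduce Theorem~\ref{thm:tree-space CAT(0)}. So there is no argument in the paper to compare yours against.

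On the merits of your sketch: the reverse implication is essentially the Bridson--Haefliger argument you cite, and your outline of it (geodesics via flat sectors, then subdividing a generic triangle through the apex and controlling the angle sum at $o$) is the right shape.

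Your forward implication, however, has a genuine slip. You lift $p\in\overline{ab}$ and $q\in\overline{bc}$ to the points $(p,1)$ and $(q,1)$ at unit radius and then want to ``apply the CAT$(0)$ inequality'' to them. But $(p,1)$ does \emph{not} lie on the side $[(a,1),(b,1)]$ of the cone triangle $\widetilde\Delta$: that side is the Euclidean chord in the flat sector over $\overline{ab}$, and for $p$ strictly between $a$ and $b$ the chord point over $p$ has radius $r_p<1$. The CAT$(0)$ inequality for $\widetilde\Delta$ says nothing about points off its sides. The fix is to take the correct lift $\tilde p=(p,r_p)$ on the chord; then the Euclidean comparison triangle for $\widetilde\Delta$ can be realized in $\mathbb{E}^3$ as the chordal triangle inscribed in the spherical comparison triangle for $\Delta(a,b,c)$, and the cone-metric law of cosines converts the CAT$(0)$ inequality at $\tilde p$ into the CAT$(1)$ inequality at $p$. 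Alternatively one can argue via angles at the apex: the Alexandrov angle at $o$ between two cone rays equals the $Y$-distance of their directions, and the CAT$(0)$ angle comparison at $o$ then yields CAT$(1)$ for $Y$. Either route works, but the argument as you wrote it does not.
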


In case $n=3$ one can easily check that the Petersen graph is a CAT(1) space and hence conclude the statement for Theorem~\ref{thm:tree-space CAT(0)} for $n=3$.

% 
% \newpage
% \section{Special cube complexes}
% The material presented in this section was not part of my lecture in Muenster but its presentation is based on a course given by Haglund in Haifa at the YGGT II meeting in January 2013. He was so kind to allow me to present this material in this context. I think I presentation serves as a great example how to incorporate Agol's recent work into a rather basic lecture on the topic. Let's get started. 
%
%
%\newpage
%\section{Appendix: genes and robots}
%This section (which actually appeared after Section 6 in my lecture) is of a completely different flavour as the rest of these notes. The reason for this is that this was our ``Christmas lecture''. The last class I gave in the week of Christmas eve I wanted to show the students two applications of cube complexes  which appeared in the literature (more or less) recently.  The material presented here is based on two articles \ref{Vogtman-et-al} and \ref{---}. Rigorous proofs are omitted since the main purpose was to get a glimpse of the areas where cube complexes make an appearance and are used not to prove something about groups but genes and robots. 
%
%Enough words. Let's get started with the genes, phylogenetic trees that is. 

\newpage
\phantomsection
\renewcommand{\refname}{Bibliography}
\bibliography{literaturliste}

\begin{thebibliography}{BHV01}

\bibitem[AB08]{AB}
P.~Abramenko and K.~S. Brown.
\newblock {\em Buildings, {T}heory and applications}, volume 248 of {\em
  Graduate Texts in Mathematics}.
\newblock Springer, New York, 2008.

\bibitem[Bal95]{Ballmann}
W.~Ballmann.
\newblock {\em Lectures on spaces of nonpositive curvature}, volume~25 of {\em
  DMV Seminar}.
\newblock Birkh\"auser Verlag, Basel, 1995.
\newblock With an appendix by Misha Brin.

\bibitem[BH99]{BH}
M.~R. Bridson and A.~Haefliger.
\newblock {\em Metric spaces of non-positive curvature}, volume 319 of {\em
  Grundlehren der Mathematischen Wissenschaften}.
\newblock Springer-Verlag, Berlin, 1999.

\bibitem[BHV01]{BHV}
Louis~J. Billera, Susan~P. Holmes, and Karen Vogtmann.
\newblock Geometry of the space of phylogenetic trees.
\newblock {\em Adv. in Appl. Math.}, 27(4):733--767, 2001.

\bibitem[BW12]{BergeronWise}
Nicolas Bergeron and Daniel~T. Wise.
\newblock A boundary criterion for cubulation.
\newblock {\em Amer. J. Math.}, 134(3):843--859, 2012.

\bibitem[Can]{CFP}
Perry Cannon, Floyd.
\newblock Introductory notes on richard thompson's groups.

\bibitem[CN05]{CN}
Indira Chatterji and Graham Niblo.
\newblock From wall spaces to {$\rm CAT(0)$} cube complexes.
\newblock {\em Internat. J. Algebra Comput.}, 15(5-6):875--885, 2005.

\bibitem[Dav98]{Davis}
M.~W. Davis.
\newblock Buildings are {${\rm CAT}(0)$}.
\newblock In {\em Geometry and cohomology in group theory ({D}urham, 1994)},
  volume 252 of {\em London Math. Soc. Lecture Note Ser.}, pages 108--123.
  Cambridge Univ. Press, Cambridge, 1998.

\bibitem[DM99]{DM}
M.~W. Davis and G.~Moussong.
\newblock Notes on nonpositively curved polyhedra.
\newblock In {\em Low dimensional topology ({E}ger, 1996/{B}udapest, 1998)},
  volume~8 of {\em Bolyai Soc. Math. Stud.}, pages 11--94. J\'{a}nos Bolyai
  Math. Soc., Budapest, 1999.

\bibitem[Hum72]{Humphreys}
J.~E. Humphreys.
\newblock {\em Introduction to {L}ie algebras and representation theory}.
\newblock Springer-Verlag, New York, 1972.
\newblock Graduate Texts in Mathematics, Vol. 9.

\bibitem[Nic04]{Nica}
Bogdan Nica.
\newblock Cubulating spaces with walls.
\newblock {\em Algebr. Geom. Topol.}, 4:297--309, 2004.

\bibitem[NS13]{NevoSageev}
Amos Nevo and Michah Sageev.
\newblock The {P}oisson boundary of {${\rm CAT}(0)$} cube complex groups.
\newblock {\em Groups Geom. Dyn.}, 7(3):653--695, 2013.

\bibitem[Rol]{Roller}
M.~Roller.
\newblock Lecture notes oncube complexes.
\newblock {\em FIND REFERENCE}.

\bibitem[Ron89]{Ronan}
M.~Ronan.
\newblock {\em Lectures on buildings}, volume~7 of {\em Perspectives in
  Mathematics}.
\newblock Academic Press Inc., Boston, MA, 1989.

\bibitem[Sag95]{Sageev}
Michah Sageev.
\newblock Ends of group pairs and non-positively curved cube complexes.
\newblock {\em Proc. London Math. Soc. (3)}, 71(3):585--617, 1995.

\end{thebibliography}
\bibliographystyle{alpha}

\end{document}